\documentclass[a4paper,11pt,reqno]{amsart}
\usepackage{amsthm}
\usepackage{amsmath}
\usepackage{enumerate}
\usepackage{amsfonts}
\usepackage{amssymb}
\usepackage{fullpage}
\usepackage{amsmath,amscd}
\usepackage{graphicx}
\usepackage{array}
\usepackage{amsmath,amssymb,amsfonts}
\usepackage[utf8]{inputenc} 
\usepackage[T1]{fontenc}
\usepackage[english,french]{babel}

   \usepackage{gfsartemisia}

 \usepackage{xspace}
\usepackage{xparse}
\usepackage{graphicx}
\usepackage{float}
\usepackage{pdfpages}
\usepackage[a4paper, margin = 3cm, bottom = 3cm]{geometry}
\usepackage{ifpdf}
\usepackage{marginnote}
\usepackage{hyperref}
\usepackage{tikz}
\usepackage{csquotes}
\usetikzlibrary{calc,matrix,arrows,shapes,decorations.pathmorphing,decorations.markings,decorations.pathreplacing}
\hypersetup{pdfborder=0 0 0, 
	    colorlinks=true,
	    citecolor=black,
	    linkcolor=blue,
	    urlcolor=black,
	    pdfauthor={Guillaume Tahar,Quentin Gendron}
	   }

\usepackage[
	    style=alphabetic,
            isbn=false,
            doi=false,
            url=false,
            backend=bibtex, 
            maxnames = 5,
            sorting=nty,
           ]{biblatex}
\defbibheading{bibliography}[\bibname]{%
%  \addcontentsline{toc}{part}{References}%
%  \addtocontents{toc}{\SkipTocEntry}%
  \section*{Références}%
}

\DeclareFieldFormat[article]{title}{{\it #1}}
\DeclareFieldFormat{journaltitle}{{\rm #1}}
\renewbibmacro{in:}{%
  \ifentrytype{article}{}{\printtext{\bibstring{in}\intitlepunct}}}
  \bibliography{res_biblio}

\newcommand{\RR}{\mathbf{R}}

\newcommand{\CC}{\mathbf{C}}

\newcommand{\ZZ}{\mathbf{Z}}
\newcommand{\PP}{{\mathbb{P}}}

\newcommand{\pgcd}{{\rm pgcd}}

\newcommand{\ord}{\operatorname{ord}\nolimits}

\renewcommand{\tilde}{\widetilde}

\newcommand{\rot}{{\rm rot}} 
\newcommand{\ind}{{\rm Ind}}

\newcommand\Res{\operatorname{Res}}
\newcommand{\Resk}[1][k]{\Res^{#1}}
\DeclareDocumentCommand{\rec}{ O{a} O{n}}{\lbrace #1 \rbrace^{#2}}

\DeclareDocumentCommand{\appresk}{ O{g} O{k}}{\mathfrak{R}_{#1}^{#2}}

\DeclareDocumentCommand{\espresk}{O{g} O{k}}{\mathcal{R}_{#1}^{#2}}
\DeclareDocumentCommand{\appreskrho}{ O{g} O{\rho}}{\mathfrak{R}_{#1}^{k,#2}}

\newcommand{\moduli}[1][g]{{\mathcal M}_{#1}}
\newcommand{\komoduli}[1][g]{{\Omega^k\mathcal M}_{#1}}
\newcommand{\omoduli}[1][g]{{\Omega\mathcal M}_{#1}}

\newcommand{\whz}{\widehat{z}}

%Nom des strates apeciales et autres trrucs speciaux

%quelques trucs pour les revetements canoniques.
\newcommand{\whX}{\widehat{X}}
\newcommand{\whomega}{\widehat{\omega}}

\newcommand{\hk}{k}

\def\={\;=\;}

\newtheorem{thm}{Théorème}[section]
\newtheorem{cor}[thm]{Corollaire}
\newtheorem{prop}[thm]{Proposition}
\newtheorem{lem}[thm]{Lemme}

\theoremstyle{definition}
\newtheorem{defn}[thm]{Définition}
\theoremstyle{remark}

\theoremstyle{definition}

\theoremstyle{definition}

\theoremstyle{definition}

\theoremstyle{definition}

\numberwithin{equation}{section} 
\pagestyle{plain} 

%Packages à enlever!!!!!
%   \usepackage[notcite,notref]{showkeys}
%  \usepackage[left,modulo]{lineno}
%  \pagewiselinenumbers
  \usepackage{gfsartemisia}

% Personnalisation du style des sections pour les résumés

%
\title{$\protect\hk$-différentielles à singularités prescrites}
\author{Quentin Gendron}
\address[Quentin Gendron]{Instituto de Matem\'{a}ticas de la UNAM
Ciudad Universitaria, CDMX, 04510,
M\'{e}xico}
\email{quentin.gendron@im.unam.mx}

\author{Guillaume Tahar}
\address[Guillaume Tahar]{Beijing Institute of Mathematical Sciences and Applications, Huairou District, Beijing, China}
\email{guillaume.tahar@bimsa.cn}

\date{\today}
\keywords{$k$-differential, Flat surface, Strata, Residue}
\begin{document}

\maketitle

\selectlanguage{english}

\section*{Abstract}
We study the local invariants that a meromorphic $k$-differential on a Riemann surface of genus $g \geq 0$ can have for $k \geq 3$. These local invariants include the orders of zeros and poles, as well as the $k$-residues at the poles. We show that for a given pattern of orders of zeros, there exists, with a few exceptions, a primitive holomorphic $k$-differential having zeros of these orders. In the meromorphic case, for genus $g \geq 1$, every expected tuple appears as a configuration of $k$-residues. On the other hand, for certain strata in genus zero, finitely many tuples (up to simultaneous scaling) do not occur as configurations of $k$-residues for a $k$-differential.

\selectlanguage{french}

\section*{Résumé}
Nous étudions les invariants locaux qu'une $k$-différentielle méromorphe sur une surface de Riemann de genre $g \geq 0$ peut posséder pour $k \geq 3$. Ces invariants locaux sont les ordres des zéros et des pôles, ainsi que les $k$-résidus aux pôles. Nous montrons que pour une distribution donnée d'ordres de zéros, il existe, à quelques exceptions près, une $k$-différentielle primitive holomorphe ayant des zéros de ces ordres. Dans le cas méromorphe, en genre $g \geq 1$, chaque $k$-uplet attendu apparaît sous la forme d'une configuration de $k$-résidus. En revanche, dans certaines strates en genre zéro, un nombre fini de $k$-uplets (à homothétie près) ne sont pas des configurations de $k$-résidus d'une $k$-différentielle.

\setcounter{tocdepth}{1}
\tableofcontents

\section{Introduction}

Soient $X$ une surface de Riemann de genre $g$ et $K_{X}$ son fibré en droites canonique. Les sections méromorphes de $K_{X}$ sont les {\em différentielles abéliennes} de $X$ et les sections du produit tensoriel $K_{X}^{\otimes k}$ sont les {\em $k$-différentielles} de $X$. Localement, une $k$-différentielle s'écrit $f(z)(dz)^{k}$, où $f$ est une fonction méromorphe. Une $k$-différentielle est {\em primitive} si elle n'est pas la puissance d'une  $d$-différentielle avec $d<k$.

Il est bien connu (voir par exemple \cite[Encadré~III.2]{dSG}) que les invariants en un point~$P$ d'une différentielle abélienne $\omega$ sont l'{\em ordre} de la différentielle en~$P$ et le {\em résidu $\Res_{P}(\omega)$}  de celle-ci dans le cas où $P$ est un pôle de $\omega$. Ce résultat a été étendu au cas des $k$-différentielles dans \cite{BCGGM3}.
Plus précisément, les invariants en $P$ d'une $k$-différentielle $\xi$ sont  l'{\em ordre} de la différentielle en $P$ et le {\em $k$-résidu}  $\Resk_{P}(\xi)$ si $P$ est un pôle de $\xi$ d'ordre divisible par $k$.

Ces invariants ne peuvent pas être fixés arbitrairement mais vérifient certaines relations. Tout d'abord le $k$-résidu d'un pôle d'ordre $-k$ est toujours non nul alors que le $k$-résidu d'un pôle dont l'ordre n'est pas divisible par $k$ est toujours nul par convention. Ensuite, la somme des ordres des zéros et des pôles d'une $k$-différentielle est égale à $k(2g-2)$. Enfin, dans le cas des différentielles abéliennes, la somme des résidus s'annule.

Comme on peut associer une surface plate à toute $k$-différentielle, les constructions géométriques de telles surfaces donnent des résultats d'existence pour certaines classes de $k$-différentielles (voir \cite{troyanov}). Cependant, aucune construction systématique n'existe à ce jour pour répondre à la question suivante.

\begin{center}
{\em \'Etant donnés les ordres des zéros et pôles ainsi que les résidus aux pôles, existe-t-il une $k$-différentielle primitive ayant ces invariants locaux?}
\end{center} 

Le cas des différentielles abéliennes ($k=1$) a été traité préalablement dans \cite{getaab} et celui des différentielles quadratiques ($k=2$) dans \cite{getaquad}. Dans cet article, nous donnons la caractérisation complète pour les $k$-différentielles avec $k \geq 3$. 

Ce problème apparaît dans différents contextes. Par exemple, il est un ingrédient essentiel dans la résolution du problème de la réalisation de diviseurs tropicaux $k$-canoniques à partir de courbes définies sur un corps non-archimédien résolu dans \cite{RS}.
Il apparaît aussi dans le problème de dégénérescence des $k$-différentielles comme étudié dans \cite{BCGGM3}. Cela peut servir à comprendre certains lieux de l'espace des modules, comme cela est fait par exemple dans \cite{HoSch}.

\smallskip
\par
\subsection{Définitions.}
Afin de préciser la question centrale, nous introduisons un certain nombre de notions. Nous dénotons par
$$\mu:=(a_{1},\dots,a_{n};-b_{1},\dots,-b_{p};-c_{1},\dots,-c_{r};\underbrace{-k,\dots,-k}_{s}) \,,$$
une partition de  $k(2g-2)$
où les $a_{i}$ sont supérieurs ou égaux à $-k+1$, les $b_{i}:= k\ell_{i}$ sont supérieurs ou égaux à $2k$ et divisibles par~$k$, les~$c_{i}$ sont supérieurs ou égaux à $k$ et non divisibles par $k$  et qui contient~$s$ fois~$-k$.
La {\em strate primitive} $\Omega^{k}\mathcal{M}_{g}(\mu)$ paramètre les $k$-différentielles {\em primitive} de type~$\mu$. Les strates non vides de $k$-différentielles primitives sont des orbifoldes de dimension $2g-2+n+p+r+s$.
 
Dans le cas des $k$-différentielles pour $k\geq2$, rappelons tout d'abord la notion de $k$-résidu. Plus de détails sont donnés dans \cite[Section~3]{BCGGM3}. Pour une $k$-différentielle $\xi$, au voisinage de chaque point $P$ de $X$, il existe une coordonnée $z$ telle que $\xi$ est de la forme
\begin{equation}\label{eq:standard_coordinates}
    \begin{cases}
      z^m\, (dz)^{k} &\text{si $m> -k$ ou  $k\nmid m$,}\\
      \left(\frac{\alpha}{z}\right)^{k}(dz)^{k} &\text{si $m = -k$,}\\
        \left(z^{m/k} + \frac{t}{z}\right)^{k}(dz)^{k} &\text{si $m < -k$ et $k\mid m$,}
    \end{cases}
\end{equation}
où $t \in \CC$ et $\alpha \in \CC^{\ast}$. Les nombres $\alpha$ et $t$ sont définis à une racine de l’unité près. Le {\em $k$-résidu} $\Resk_{P}(\xi)$ de $\xi$ en $P$ est la puissance $k$-ième de $r$ dans le second cas, la puissance $k$-ième de~$t$ dans le troisième cas et zéro sinon. Ainsi, le $k$-résidu est non nul dans le cas des pôles d'ordre $-k$ et  peut ne pas être nul uniquement dans le cas des pôles d'ordre divisible par~$k$. Notons qu'il n'existe pas de théorème des résidus pour les $k$-différentielles. Ainsi étant donnée une strate $\Omega^{k}\mathcal{M}_{g}(\mu)$, nous définissons l'{\em espace $k$-résiduel de type $\mu$} par
\begin{equation}
\espresk(\mu) := \CC^{p}\times(\CC^{\ast})^{s}.
\end{equation}
Cet espace paramètre les configurations de $k$-résidus qu'une $k$-différentielle de $\Omega^{k}\mathcal{M}_{g}(\mu)$ peut a priori posséder.

L'{\em application $k$-résiduelle} est donnée par
\begin{equation}
\appresk(\mu) \colon \Omega^{k}\mathcal{M}_{g}(\mu) \to \espresk(\mu):\ (X,\xi) \mapsto (\Resk_{P_{i}}(\xi)),
\end{equation}
où les $P_{i}$ sont les pôles de $\xi$ d'ordre divisible par $k$. Insistons sur le fait que par définition, les $k$-différentielles de $\Omega^{k}\mathcal{M}_{g}(\mu)$ sont {\em primitives}.
Le but de cet article est de déterminer l'image de cette application pour chaque strate.
\par
\smallskip
\subsection{Genre supérieur ou égal à un.}

Nous sommes maintenant en mesure d'énoncer les résultats centraux de cet article. Rappelons que $\Omega^{k}\mathcal{M}_{g}(\mu)$ paramètre les $k$-différentielles {\em primitives} de type $\mu$.

Rappelons qu'en genre $g\geq1$, les strates de $k$-différentielles ne sont en général pas connexes. En genre un, les composantes connexes sont caractérisées par le nombre de rotation (voir \cite{chge} et la section~\ref{sec:rapcomp}). En particulier, une strate de nombre de rotation $\rho$ est primitive si et seulement si $\rho$ et $k$ sont premiers entre eux. On a alors le résultat suivant.
\begin{thm}\label{thm:CC1}
Soit $S$  une composante connexe d'une strate $\Omega^{k}\mathcal{M}_{1}(\mu)$ de $k$-différentielles de genre un. La restriction à $S$ de l'application résiduelle de $\Omega^{k}\mathcal{M}_{1}(\mu)$ est surjective sauf dans deux cas.
\par
L'image de l'application résiduelle de la composante de nombre de rotation $\rho=1$ de la strate $\Omega^{3}\mathcal{M}_{1}(6;-6)$ est $\mathbb{C}^{\ast}$.
\par
Pour la composante d'invariant $\rho=2$ dans la strate $\Omega^{3}\moduli[1](12;-6,-6)$, l'image est $\mathbb{C}^{2}\setminus (0,0)$.
\end{thm}

Nous considérons maintenant le cas des strates de genre supérieur ou égal à~$2$. Dans ce cas la classification des composantes connexes n'est pas connue. Toutefois, \cite{chge} décrit différents invariants qui permettent de décrire quelques composantes (éventuellement non connexes). Ces invariants sont l'hyperellipticité et la parité des composantes. Nous montrons dans le lemme~\ref{lem:nocomphyp} qu'il n'y a pas de composante hyperelliptique de $k$-différentielles primitives dont les pôles sont d'ordre divisible par $k$ pour $k\geq3$. On définit la composante paire (resp. impaire) comme l'union des composantes connexes paires (resp. impaires) d'une strate donnée. On a alors le résultat suivant.

\begin{thm}\label{thm:ggeq2}
Si $g\geq2$ et $\mu$ contient un élément inférieur ou égal à $-k$, alors l'application résiduelle de la restriction aux composantes paires et impaires de $\appresk(\mu) \colon \Omega^{k}\mathcal{M}_{g}(\mu) \to \espresk(\mu)$ est surjective. 
% La restriction à la composante hyperelliptique est surjective si il n'y a qu'un pôle. Son image est la diagonale $(R,(-1)^{k}R)$ avec $R\in\CC^{\ast}$ (resp. $\CC$), si il y a deux pôles d'ordre $-k$ (resp. $-\ell k$ avec $\ell\geq2$).
\end{thm}
Il est utile de noter que notre méthode permettrait d'obtenir la surjectivité de l'application résiduelle restreinte à chaque composante de genre $g\geq2$ si celles-ci pouvaient être obtenues en cousant une anse et en éclatant des zéros d'une $k$-différentielle primitive.

\smallskip
\par
Enfin, nous traitons le cas des strates de différentielles n'ayant que des singularités d'ordre strictement supérieur à $-k$. Ces différentielles correspondent à des surfaces plates d'aire finie. 

\begin{thm}\label{thm:strateshol}
Soit $\mu=(a_{1},\dots,a_{n})$ une partition de $k(2g-2)$ avec $k\geq 3$ telle que les $a_{i}$ soient strictement supérieurs à $-k$. La strate $\Omega^{k}\mathcal{M}_{g}(\mu)$ paramétrant les différentielles primitives de profil $\mu$ est vide si et seulement si $g=1$ et soit $\mu=(1,-1)$, soit $\mu=\emptyset$.
\end{thm}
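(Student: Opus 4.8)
The plan is to prove non-emptiness by an explicit construction, building a primitive $k$-differential with the prescribed orders of zeros out of elementary pieces, and then to check separately that the two listed exceptions ($g=1$ with $\mu=(1,-1)$, and $\mu=\emptyset$) are genuinely empty. The emptiness of $\komoduli[1](\emptyset)$ is immediate, since a nowhere-vanishing holomorphic $k$-differential on an elliptic curve is a $k$-th power of a translation-invariant abelian differential, hence not primitive; the case $\mu=(1,-1)$ on a genus-one curve forces, after taking $k$-th roots of the local forms, a configuration that can only come from a power of an abelian differential with a simple zero and simple pole, which again contradicts primitivity (one argues via the associated cyclic cover and the fact that $\gcd$ of the local orders with $k$ would have to be $k$). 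So the heart of the matter is the construction in all remaining cases.

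First I would dispose of genus $0$. Here $\mu$ is a partition of $-2k$ with all $a_i>-k$, so there are at least three entries; on $\mathbb{P}^1$ one writes down an explicit rational function $f(z)=\prod (z-z_j)^{a_j}$ with generic distinct points $z_j\in\mathbb{C}$ (sending one zero/pole to $\infty$ as needed) and checks primitivity: the $k$-differential $f(z)(dz)^k$ is a $d$-th power only if all exponents and the behaviour at $\infty$ are divisible by $k/d$, which a generic choice of orders avoids precisely because not all $a_i$ are multiples of any common divisor $>1$ of $k$ — and when they happen to share such a divisor one perturbs by splitting a zero, using that the dimension count $2g-2+n=n-2\geq 1$ leaves room. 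Next, for genus $g\geq 2$ I would use the standard operation of "bubbling a handle": start from a suitable $k$-differential in genus $g-1$ (on $\mathbb{P}^1$ if $g=2$, or inductively) with the same polar/zero data plus an extra zero of order $2k$ split off as needed, and attach a handle along two points, which raises the genus by one while adding $2k$ to the total order — this is exactly the plumbing construction underlying \cite{BCGGM3}, and it preserves primitivity because the monodromy of the handle is non-trivial. The genus-one case I would handle directly: the strata $\omoduli[1](\mu)$ are understood via the rotation number (cited above), and one selects a rotation number coprime to $k$, which is possible as soon as $\mu$ has an entry $>-k$ and is not $(1,-1)$ or $\emptyset$; Theorem~\ref{thm:CC1} and the connected-components description guarantee such a component is non-empty.

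The main obstacle I expect is the careful bookkeeping of primitivity throughout the inductive/plumbing steps: a $k$-differential assembled from pieces can accidentally be imprimitive if the local monodromies all lie in a proper subgroup of $\mathbb{Z}/k\mathbb{Z}$, so at each stage I must verify that the collection of local orders $a_i$ (together with the polar orders, here absent) does not share a common divisor $>1$ with $k$ in the relevant sense, or else break the symmetry by a controlled splitting of a zero of order $\geq k$ into two zeros — and splittings are only available when the dimension $2g-2+n$ is large enough, which is exactly why the low-dimensional cases $g=1$, $\mu\in\{(1,-1),\emptyset\}$ drop out as the genuine exceptions. A secondary technical point is making the genus-$0$ base case fully explicit and checking that the generic rational function indeed has the asserted zero orders and no spurious ones at $\infty$, which is a routine but necessary computation. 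Once the construction produces a point of $\komoduli(\mu)$ in every non-exceptional case, non-emptiness follows, and combined with the two emptiness verifications the "if and only if" is complete.
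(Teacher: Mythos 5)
Your overall architecture (dispose of the two exceptions, construct in genus one, then increase the genus) matches the paper's, but three steps as written either fail or beg the question. First, your emptiness argument for $\komoduli[1](1,-1)$ is wrong: $\pgcd(1,-1,k)=1$, so no cyclic-cover/gcd obstruction is available; the correct (and simpler) reason is Abel's theorem --- the divisor of a $k$-differential on a torus is linearly equivalent to $0$, and $z_{1}-z_{2}\sim 0$ forces $z_{1}=z_{2}$, so even the non-primitive stratum is empty. Second, your genus-zero paragraph asserts something false: by Lemma~\ref{lem:puissk} every genus-zero $k$-differential of type $\mu$ is the $d$-th power of a $(k/d)$-differential with $d=\pgcd(\mu,k)$, so when $d>1$ the primitive stratum is empty and no ``perturbation by splitting a zero'' can help, since splitting changes $\mu$. (This is precisely why the theorem and the paper's proof are confined to $g\geq 1$.)

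Third, and most substantively, in genus $g\geq 2$ the primitivity of the handle-sewn differential is exactly the point at issue, and ``the monodromy of the handle is non-trivial'' is an assertion, not an argument. Worse, the induction as you set it up cannot always start from a primitive object: each handle sewn at a zero adds $2k$ to the order of that single zero, so to reach, say, $\Omega^{3}\mathcal M_{2}(1^{6})$ you would need a genus-one holomorphic stratum with an entry $1-6=-5\leq -k$, which does not exist; and reaching the single-zero stratum $\komoduli[2](2k)$ forces you to start from $\komoduli[1](0)$, whose elements are $k$-th powers of holomorphic $1$-forms, i.e.\ precisely not primitive, so Proposition~\ref{prop:attachanse} does not apply and primitivity of the output (note $\pgcd(2k,k)=k$, so the orders give no help) must be proved by hand. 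The paper circumvents all of this: it first produces a \emph{primitive} differential in $\komoduli[1](a_{1},\dots,a_{n};((-2k)^{g-1}))$ with all $k$-residues zero --- the genus-one case being settled by a direct divisor argument on the torus (choose $\sum a_{i}z_{i}\sim 0$ with $\sum (a_{i}/d)z_{i}\not\sim 0$ for every divisor $d$), not by the component classification --- then caps each pole with the $k$-th power of a holomorphic form on a torus and smooths via Lemma~\ref{lem:lissdeuxcomp}, so that primitivity is carried by the genus-one core rather than by the handles. Your genus-one step via rotation numbers is workable in principle, but it outsources non-emptiness to the classification of \cite{chge}, whereas the divisor argument is self-contained.
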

Ce résultat précise \cite{troyanov} qui ne se souciait pas de la primitivité des $k$-différentielles correspondant aux métriques plates obtenues. 

\smallskip
\par
\subsection{$k$-différentielles en genre zéro.}\label{sec:introg0}

Rappelons qu'étant donnée une partition
$$\mu:=(a_{1},\dots,a_{n};-b_{1},\dots,-b_{p};-c_{1},\dots,-c_{r};\rec[-k][s])$$
de $-2k$, l'espace $\Omega^{k}\mathcal{M}_{0}(\mu)$ paramètre les $k$-différentielles primitives de type $\mu$. Nous rappelons la convention selon laquelle les $a_{1},\dots,a_{n}$ sont des nombres entiers strictement supérieurs à~$-k$, tandis que $b_{1},\dots,b_{p}$ sont des multiples entiers de $k$ strictement supérieurs à $k$. Enfin, les $c_{1},\dots,c_{r}$ sont strictement supérieurs à $k$ et non divisibles par $k$. La notation $\rec[-k][s]$ désigne l’occurrence de $s$ fois le nombre $-k$ comme précisé dans la section de conventions à la fin de cette introduction.

On commence par remarquer (cf lemme~\ref{lem:puissk}) que ces strates sont non vides si et seulement si $\pgcd(\mu,k)=1$. Dans la suite, cette condition sera toujours implicitement satisfaite.

Dès lors qu'au moins trois des singularités ont un ordre qui n'est pas divisible par $k$, il n'y aucune obstruction à l'existence d'une configuration de résidus.

\begin{thm}\label{thm:g0sing3}
Soit $\Omega^{k}\mathcal{M}_{0}(\mu)$ une strate de genre zéro avec au moins trois singularités d'ordre non divisible par $k$. L'application résiduelle de $\Omega^{k}\mathcal{M}_{0}(\mu)$ est surjective.
\end{thm}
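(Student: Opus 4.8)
The plan is to build the desired $k$-differential on $\PP^1$ explicitly as a rational expression $\xi = f(z)(dz)^k$, where $f$ is a rational function whose divisor of zeros and poles realizes $\mu$, and then to show that the finitely many free parameters (the positions of the singularities, plus possible scaling) suffice to hit an arbitrary target $k$-residue tuple in $\espresk[0](\mu)$. Since genus is zero, there is no residue theorem to contend with for $k\geq 2$, so the only constraints are the divisor relation $\sum_i a_i = -2k$ (already built into $\mu$) and the primitivity condition $\pgcd(\mu,k)=1$ (assumed throughout). The key structural input is the hypothesis that at least three singularities have order not divisible by $k$: these are the points where the $k$-residue is automatically zero, and having three of them lets us normalize three points of $\PP^1$ to $0$, $1$, $\infty$ while keeping all remaining singularities (in particular all poles of order divisible by $k$, whose $k$-residues we must control) as free parameters.

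The main step is a dimension count combined with a non-degeneracy argument for the residue map. First I would reduce to the case of a single pole of order divisible by $k$ whose $k$-residue must be prescribed, handling the general case by an inductive or product argument: if one can independently prescribe the $k$-residue at each such pole while the others remain free, one concludes by composing the constructions, using that the $k$-residue at a pole of order $-k\ell_i$ depends, to leading order, only on local data near that pole. For the single-pole case, write the candidate differential with the three order-non-divisible singularities at $0,1,\infty$ and the pole of order $-b_1 = -k\ell_1$ at a variable point $w$; expanding $\xi$ in the standard coordinate of \eqref{eq:standard_coordinates} near $w$ gives the $k$-residue as an explicit function of $w$ (and of an overall scaling constant if one is available). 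I would then show this function is non-constant — hence, since it is an algebraic function on a connected parameter space, its image is all of $\CC$ (or all of $\CC^\ast$ when the pole has order exactly $-k$, matching the constraint that such a $k$-residue is never zero) — by analyzing its behavior as $w$ degenerates toward one of $0,1,\infty$, where the $k$-residue either blows up or vanishes.

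The hard part will be controlling the $k$-residue of a pole of order $-k\ell$ with $\ell\geq 2$: unlike the order-$(-k)$ case, where $r$ is essentially the leading Laurent coefficient, here $t$ is the coefficient of $z^{-1}$ after extracting a $k$-th root of the principal part, so it is a nonlinear function of several Laurent coefficients of $f$ at $w$, and one must verify that varying the geometric parameters actually moves $t$ and not merely the other coefficients. I expect the cleanest route is a degeneration/limit argument in the spirit of the flat-surface constructions cited (\cite{troyanov}, \cite{BCGGM3}): degenerate $\PP^1$ so that the prescribed pole bubbles off onto a component carrying a model $k$-differential with any desired $t$, then glue back using the three order-non-divisible points to absorb the bookkeeping; alternatively, argue that the closure of the image of $\appresk[0]$ restricted to this family is all of $\espresk[0](\mu)$ and that the image is open, so it is everything. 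A secondary technical point to check is that the constructed $k$-differential is genuinely primitive and not a proper power of a $d$-differential with $d\mid k$, $d<k$; this is exactly where $\pgcd(\mu,k)=1$ is used, via Lemma~\ref{lem:puissk}, and it must be re-verified for the explicit family rather than merely for a generic point of the stratum.
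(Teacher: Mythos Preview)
Your approach is genuinely different from the paper's --- the paper never writes down an explicit rational function $f(z)(dz)^k$ or runs a dimension count; it works entirely on the flat-geometry side, assembling the surface from pre-built \emph{$k$-parties polaires} (the bricks of Section~\ref{sec:briques}) and gluing along segments so that each pole acquires the prescribed $k$-residue by construction (Lemma~\ref{lm:kresidu}). The hypothesis ``at least three singularities of order not divisible by $k$'' is used concretely: one such pole $P_1$ absorbs a polygonal boundary built from the chosen roots $r_i$ of the target residues, while the remaining non-divisible-order singularities supply enough angular freedom to close up the polygon without forcing any relation among the $r_i$ (see Lemmas~\ref{lem:g=0gen1const} and~\ref{lem:g=0gen2}). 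There is no induction on the number of poles and no limit argument; every target tuple is hit by an explicit picture.

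Your proposal, as written, has real gaps. First, the claim that a non-constant algebraic function on a connected parameter space has image all of $\CC$ is false: a non-constant morphism from a quasi-affine variety to $\mathbf{A}^1$ is dominant, not surjective --- the map $w\mapsto w$ on $\PP^1\setminus\{0,1,\infty\}$ already misses three values. You would need to control what happens over the missing points, which is exactly the hard part. Second, the reduction to a single pole of divisible order (``if one can independently prescribe the $k$-residue at each such pole while the others remain free, one concludes by composing the constructions'') is not an argument: all the $k$-residues are functions of the \emph{same} global parameters (the positions $z_i$ and the scalar), and there is no a~priori reason they decouple. Third, you yourself flag the genuine difficulty for poles of order $-k\ell$ with $\ell\ge 2$, where the $k$-residue is a nonlinear function of several Laurent coefficients, and then retreat to a degeneration/smoothing argument --- which is essentially the strategy the paper carries out via flat surfaces, but which you do not actually execute. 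Without either making the non-degeneracy of the full residue map precise (e.g.\ computing a Jacobian at a well-chosen point and proving properness, or exhibiting enough independent one-parameter families), or building the surfaces explicitly as the paper does, the surjectivity claim is not established.
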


Quand il y a au plus deux singularités dont l'ordre n'est pas divisible par $k$, cela signifie en fait qu'il y en a exactement deux. Dans ce cas, il existe des obstructions non triviales. Un premier théorème décrit les obstructions lorsque les pôles ne sont pas tous d'ordre~$-k$.

\begin{thm}\label{thm:g0sing2}
Soit $\Omega^{k}\mathcal{M}_{0}(a_{1},\dots,a_{n};-b_{1},\dots,-b_{p};-c_{1},\dots,-c_{r};\rec[-k][s])$ une strate de genre zéro avec exactement deux singularités d'ordre non divisible par $k$ et telle que $p+r \neq 0$. L'image de l'application résiduelle est
 \begin{itemize}
 \item[i)] $\espresk[0](\mu)\setminus\left\{0\right\}$ si $s=0$ et que la somme des ordres des zéros d'ordre divisible par $k$ est strictement inférieure à~$kp$;
 \item[ii)] $\espresk[0](\mu)$ sinon.
\end{itemize}  
\end{thm}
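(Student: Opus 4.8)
The plan is to reduce everything to two families of building blocks and then assemble the surface $\mathbb{P}^1$ by a gluing procedure that respects $k$-residues. The two obstructions visible in the statement — that $0$ cannot be attained when $s=0$ and $\sum_{i}(\text{zeros divisible by }k)<kp$, and that there is no obstruction otherwise — suggest that we should first prove the \emph{necessity} of condition (i), and then prove \emph{sufficiency} by an explicit construction.

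For the necessity part, I would argue as follows. When $s=0$, every pole has order a multiple of $k$, so the $k$-residue is defined at every pole. Consider the hypothetical $k$-differential $\xi$ with all $k$-residues equal to zero. Since the two non-divisible singularities have orders $-c_1,-c_2$ (both poles, as $p+r\neq 0$ and genus zero forces at least two poles among the $c$'s once we know exactly two orders are non-divisible), I would pass to the $\mathbb{Z}/k$-cover $\pi\colon\widehat{X}\to\mathbb{P}^1$ on which $\xi$ pulls back to the $k$-th power of an abelian differential $\omega$, following \cite{BCGGM3}. The vanishing of all $k$-residues of $\xi$ translates into a precise statement about the residues and orders of $\omega$ on $\widehat{X}$; then the classical residue theorem on $\widehat{X}$, together with a degree count relating $\sum a_i$ over the zeros divisible by $k$, the poles $b_i=k\ell_i$, and the two ramification points over the $c$-singularities, should yield exactly the inequality $\sum_{a_i\equiv 0}a_i\geq kp$ as a \emph{forced} consequence. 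So $0$ is excluded precisely in the complementary range. I also need to check that $0$ \emph{is} attainable when $s=0$ but $\sum_{a_i\equiv 0}a_i\geq kp$: here one can build a flat surface out of half-infinite cylinders of rational modulus glued along slits, arranging the holonomy so that the $t$-parameter at each pole vanishes.

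For the sufficiency part (attaining every nonzero target, and $0$ in case (ii)), the plan is an inductive construction. The base case is a stratum with exactly two zeros/poles of non-divisible order and a single pole of divisible order; here one writes down an explicit $k$-differential on $\mathbb{P}^1$ of the form $f(z)(dz)^k$ with $f$ rational, and computes the $k$-residue directly, tuning the roots of $f$ and a scaling constant to hit any prescribed value (any nonzero value if that pole has order exactly $-k$, any value including zero otherwise — this is where the dichotomy $s=0$ versus $s\neq 0$ enters, since a pole of order $-k$ always has nonzero $k$-residue). Then I would add the remaining poles and zeros one at a time. To add a zero of order $a$ (with $a$ not divisible by $k$, so it does not affect residues), I use the "bubbling a zero" operation of \cite{BCGGM3}, which is a local surgery that does not change any $k$-residue. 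To add a pole of divisible order with a prescribed $k$-residue, I glue in a local model (a polar domain with the right $t$-parameter) along a slit, again a surgery local enough not to disturb the already-fixed $k$-residues elsewhere; primitivity is preserved because it is an open condition and can be checked on the explicit base case or forced by the presence of an order non-divisible by $k$ among the $a_i,c_i$, which already guarantees $\pgcd(\mu,k)=1$.

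The main obstacle I expect is the gluing step that simultaneously realizes a \emph{prescribed} tuple of $k$-residues rather than just some tuple: the local surgeries must be compatible, and one must ensure that attaching a new polar part with $k$-residue $R_{j}$ does not force a compensating change elsewhere (there being no residue theorem for $k$-differentials, this should be possible, but it requires care about the global period/holonomy bookkeeping on the cyclic cover). A secondary difficulty is the boundary case analysis when $\sum_{a_i\equiv 0}a_i$ is exactly $kp$ or just below, and when $p=0$ but $r\neq 0$ (so the only divisible-order poles among those with a $k$-residue are... none, forcing case (ii) trivially) — these degenerate configurations need to be checked separately to confirm the inequality in (i) is stated with the correct strictness.
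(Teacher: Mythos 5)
Your overall strategy (canonical $k$-cover for the obstruction, flat-geometric assembly for the realizations) is the same as the paper's, but two of your key steps would not go through as written. For the necessity of the condition in (i): you assume the two singularities of non-divisible order are the poles $-c_{1},-c_{2}$, which is false in general --- they may be zeros $a_{1},a_{2}$ (the case $r=0$), and that is precisely the hardest case. More importantly, once all $k$-residues of $\xi$ vanish, the pulled-back abelian differential $\whomega$ on $\whX\cong\PP^{1}$ has \emph{all} residues equal to zero, so the residue theorem on $\whX$ is vacuous and yields no inequality. The paper's mechanism is different: $\whomega$ is then exact, one takes a primitive $f$ with $\whomega=df$, and the $\ZZ/k\ZZ$-symmetry of the cover forces $f(\whz_{1})=f(\whz_{2})$, so both ramification points are zeros of the same function $f$ (after normalizing this common value to $0$); the inequality $\sum_{k\mid a_{i}}a_{i}\geq kp$ then follows from comparing the orders $k+a_{1}$ and $k+a_{2}$ of $f$ at these points with the total pole degree of $f$. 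In the case $r=1$ the paper instead invokes the genus-zero classification of residueless abelian differentials from \cite{getaab}. Without one of these inputs your necessity argument stalls at "residue theorem plus degree count".

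For the realizations: attaching a polar part along a slit of an existing closed surface is \emph{not} a surgery that leaves the other local invariants untouched --- it adds angle at the endpoints of the slit and therefore changes the orders of the zeros there. The actual content of the paper's constructions is the global bookkeeping required to glue all polar parts simultaneously so that the cone angles come out to the prescribed $a_{i}$ \emph{and} the $k$-residues to the prescribed $R_{i}$; this cannot be delegated to an induction adding one pole at a time. Likewise, the extra zeros you must create have order divisible by $k$ (not "non-divisible", as you wrote, since the stratum has exactly two non-divisible orders), and this is exactly the situation where splitting a zero can fail to be local (Proposition~\ref{prop:eclatintro}); the paper controls it via Lemma~\ref{lem:combiscindage} and the second criterion of Proposition~\ref{prop:eclatZero}. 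Finally, primitivity in genus zero is automatic once $\pgcd(\mu,k)=1$ by Lemma~\ref{lem:puissk}, but having exactly two non-divisible orders does not by itself guarantee $\pgcd(\mu,k)=1$ (take $k=6$ with orders $2$ and $4$); this is a standing hypothesis on the stratum, not a consequence of your setup.
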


Lorsque tous les pôles sont d'ordre $-k$, il n'existe qu'un nombre fini d'obstructions sporadiques. Le théorème suivant en donne la liste.

\begin{thm}\label{thm:geq0kspe}
L'application résiduelle des strates $\Omega^{k}\mathcal{M}_{0}(a_{1},\dots,a_{n};\rec[-k][s])$ est surjective pour $k\geq3$ sauf dans les cas suivants.
\begin{enumerate}
\item L'image de $\appresk[0](-1,1;-k,-k)$ est $(\CC^{\ast})^{2}\setminus \CC^{\ast}\cdot(1,(-1)^{k})$.
\item L'image de $\appresk[0][3](-1,4;\rec[-3][3])$ est $(\CC^{\ast})^{3}\setminus \CC^{\ast}\cdot\rec[1][3]$.
\item L'image de $\appresk[0][3](1,2;\rec[-3][3])$ est $(\CC^{\ast})^{3}\setminus \CC^{\ast}\cdot\rec[1][3]$.
\item L'image de $\appresk[0][3](2,4;\rec[-3][4])$ est $(\CC^{\ast})^{4}\setminus \CC^{\ast}\cdot(1,1,-1,-1)$.
\item L'image de $\appresk[0][3](2,7;\rec[-3][5])$ est $(\CC^{\ast})^{5}\setminus \CC^{\ast}\cdot(\rec[1][4],-1)$.
\item L'image de $\appresk[0][3](2,10;\rec[-3][6])$ est $(\CC^{\ast})^{6}\setminus \CC^{\ast}\cdot\rec[1][6]$.
\item L'image de $\appresk[0][3](5,7;\rec[-3][6])$ est $(\CC^{\ast})^{6}\setminus \CC^{\ast}\cdot\rec[1][6]$.
\item L'image de $\appresk[0][4](-1,5;\rec[-4][3])$ est $(\CC^{\ast})^{3}\setminus \CC^{\ast}\cdot(1,1,-4)$.
\item L'image de $\appresk[0][4](3,5;\rec[-4][4])$ est $(\CC^{\ast})^{4}\setminus \CC^{\ast}\cdot\rec[1][4]$.
\item L'image de $\appresk[0][4](-1,9;\rec[-4][4])$ est $(\CC^{\ast})^{4}\setminus \CC^{\ast}\cdot\rec[1][4]$.
\item L'image de $\appresk[0][4](3,13;\rec[-4][6])$ est $(\CC^{\ast})^{6}\setminus \CC^{\ast}\cdot\rec[1][6]$.
\item L'image de $\appresk[0][6](-1,7;\rec[-6][3])$ est $(\CC^{\ast})^{3}\setminus \CC^{\ast}\cdot\rec[1][3]$.
\item L'image de $\appresk[0][6](-1,13;\rec[-6][4])$ est $(\CC^{\ast})^{3}\setminus \CC^{\ast}\cdot\rec[1][4]$.
\end{enumerate}
\end{thm}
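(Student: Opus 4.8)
The plan is to combine flat‑geometric surgeries, which dispose of all strata but finitely many, with explicit computations in a rational normal form, which pin down the sporadic exceptions.

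\emph{Reduction to two zeros.} First I would observe that breaking up a zero of order $a\geq 2$ into two zeros of positive orders is a surgery supported away from the poles, so it leaves every $k$-residue unchanged; hence $\operatorname{Im}\appresk[0](\mu)$ can only grow when one passes from a profile $\mu$ to a profile with more zeros, and it suffices to treat the profiles with the fewest zeros. Using $\sum a_i=k(s-2)$ one checks that every profile with $n\geq 3$ arises by breaking up zeros (and related local surgeries creating a pole of order $>-k$) from a two-zero profile $(a_1',a_2';(-k^s))$ that is not on the exceptional list; the only subtlety is the finitely many $n\geq 3$ profiles all of whose reductions land on the list (for instance $\komoduli[0](1,1,1;(-3^3))$ and $\komoduli[0](-1,2,2;(-3^3))$), which I would handle by hand at the end. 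From now on $\mu=(a_1,a_2;(-k^s))$ with $a_1+a_2=k(s-2)$, $\gcd(a_1,a_2,k)=1$, $a_i\geq -k+1$ and $k\nmid a_i$.

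\emph{The normal form.} Up to $\mathrm{PGL}_2(\CC)$ and rescaling, a differential in this stratum is
\begin{equation*}
\xi_{\lambda,q}=\lambda\,\frac{z^{a_1}(z-1)^{a_2}}{\prod_{j=2}^{s}(z-q_j)^{k}}\,(dz)^k ,
\end{equation*}
with the singularities $a_1,a_2$ at $0,1$ and one pole of order $-k$ at $\infty$. Expanding at $\infty$ shows the $k$-residue there equals $(-1)^k\lambda$, so after normalising $\lambda$ the residual map reduces to
\begin{equation*}
(q_2,\dots,q_s)\ \longmapsto\ \Big(\tfrac{q_j^{\,a_1}(q_j-1)^{a_2}}{\prod_{l\neq j}(q_j-q_l)^{k}}\Big)_{j=2}^{s}
\end{equation*}
on the configuration space $\{q_j\neq q_l,\ q_j\notin\{0,1\}\}$, and surjectivity of $\appresk[0](\mu)$ becomes surjectivity of this map onto $(\CC^\ast)^{s-1}$.

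\emph{Base cases and the induction on $s$.} For $s=1$ the map is onto $\CC^\ast$ trivially. For $s=2$, where $a_2=-a_1$, it is $q_2\mapsto(q_2/(q_2-1))^{a_1}$, whose image is $\CC^\ast$ when $|a_1|\geq 2$ and $\CC^\ast\setminus\{1\}$ when $|a_1|=1$; translating back through the normalisation of $\lambda$ this yields exactly case~(1). For $s\geq 3$ I would induct by peeling off one pole of order $-k$: one un-bubbles a half-infinite cylinder carrying the (free) last residue $R_s$, provided the smaller surface has a zero or a smooth point at which to reattach it; the resulting smaller strata are covered either by the inductive hypothesis or, once the surgery has produced a pole of order $-\ell k$ with $\ell\geq 2$ or a pole of order prime to $k$, by Theorems~\ref{thm:g0sing2} and~\ref{thm:g0sing3}. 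This step goes through whenever the profile leaves room for the reattachment, and a careful accounting of the finitely many cases where it is forced shows that the only residue tuples that can remain out of reach are, up to simultaneous scaling, tuples proportional to a $k$-tuple of $k$-th roots of unity obeying a vanishing-sum relation — and such relations among few roots of unity force $k\in\{3,4,6\}$ and $s\leq 6$, which accounts for the finiteness of the list. For each of the finitely many candidate strata thus produced (and for the borderline profiles of the first step) I would return to the normal form: writing the polynomial system expressing that a given ``resonant'' tuple is a value of the residual map, one shows its only solutions have $q_j=q_l$ or $q_j\in\{0,1\}$, so that tuple is excluded, while solving the same system for every other tuple exhibits a genuine differential; this simultaneously gives the exact images in (1)--(12) and surjectivity in every other stratum.

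\emph{Main obstacle.} The hard part is the exhaustiveness of the list together with the precise exceptional images: the bookkeeping in the inductive peeling must be tight enough to certify that no stratum outside the listed families is exceptional, and for each listed stratum one must prove a rigidity statement — that any differential realising a forbidden tuple would have coincident marked points — which reduces to showing that certain sparse polynomial systems (prescribed vanishing sums of $k$-th powers of linear forms) have no solution in general position. It is precisely here that the special arithmetic of $k\in\{3,4,6\}$ is indispensable, and this is the most computational part of the argument.
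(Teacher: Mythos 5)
Your setup is sound as far as it goes: the rational normal form is legitimate (for a pole of order exactly $-k$ the $k$-residue is indeed the leading Laurent coefficient, and your computation at $\infty$ is correct), the $s=2$ computation recovers exception (1) exactly, and you correctly isolate the $n\geq3$ profiles such as $\komoduli[0](1,1,1;(-3^{3}))$ and $\komoduli[0](-1,2,2;(-3^{3}))$ whose two-zero reductions all land on the exceptional list --- these are precisely the strata the paper must treat by hand. (One point you pass over in the reduction: the two-zero stratum you split from may fail to be primitive, e.g.\ consist of powers of quadratic differentials; the paper needs Lemma~\ref{lem:combiscindage} together with the multiplicativity~\eqref{eq:multiplires} to get around this.)

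However, the two load-bearing steps are not carried out, and one rests on a false mechanism. First, the ``peeling'' induction for $s\geq3$ is not a well-defined residue-preserving surgery: deleting a pole of order $-k$ changes $\sum a_i$ by $k$, so a zero order must drop as well, and there is no canonical way to reattach the half-cylinder while keeping the remaining $s-1$ residues fixed. The paper instead builds each surface from scratch as an explicit polygon (constructions (C1)/(C2) of Section~\ref{sec:constcask}), with Lemmas~\ref{lem:constravecneg} and~\ref{lem:constravecnegbis} giving sufficient conditions on the choice of $k$-th roots of the $R_i$, followed by a long case analysis on $k$ and on the norms of the residues. Second, your finiteness criterion --- that the obstructed tuples are those admitting a vanishing-sum relation among $k$-th roots of unity, and that such relations force $k\in\{3,4,6\}$ --- is incorrect: the sum of all $k$-th roots of unity vanishes for every $k$. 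What actually singles out $k\in\{3,4,6\}$ is that the $k$-th roots of unity generate a \emph{discrete} lattice ($\ZZ[i]$ or $\ZZ[\e^{i\pi/3}]$), so the saddle connections of a putative differential with unit residues are pinned to lattice points and an edge/angle count of the core polygon gives a contradiction; for other $k$ the generated subgroup is dense and no such rigidity exists. Finally, asserting that ``solving the same system for every other tuple exhibits a genuine differential'' is a surjectivity claim about a rational map that does not follow from writing the system down, and the non-realizability of each of the eleven lattice-case tuples is a separate nontrivial argument that your sketch defers entirely. The proposal names the right difficulties but does not resolve them.
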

Il est intéressant de noter que mis à part les strates de la forme $\Omega^{k}\mathcal{M}_{0}(-1,1;-k,-k)$, toutes les exceptions proviennent du fait que les racines $k$-ièmes de l'unité engendrent un réseau de $\CC$ si $k\in\left\{3,4,6\right\}$.

\smallskip
\par
\subsection{Organisation de l'article.}

Le schéma de la preuve de ces théorèmes est le suivant. Dans un premier temps nous utilisons la correspondance entre les $k$-différentielles méromorphes et certaines classes de surfaces plates introduites par \cite{BCGGM3} afin de construire explicitement des $k$-différentielles ayant les propriétés souhaitées lorsque le genre et le nombre de zéros sont petits.

Dans un second temps,  nous déduisons le résultat général grâce à deux opérations : l'{\em éclatement d'un zéro} et la {\em couture d'anse}. La première de ces opérations permet d'augmenter le nombre de zéros sans changer le genre d'une $k$-différentielle. La seconde préserve le nombre de singularités mais augmente le genre de la surface sous-jacente.

Enfin, dans les cas où l'application résiduelle n'est pas surjective, nous développons des méthodes ad hoc afin de montrer la non-existence de $k$-différentielles possédant certains invariants. 
\smallskip
\par
L'article s'organise comme suit. Pour terminer cette introduction, nous posons quelques conventions. Dans la section~\ref{sec:bao} nous faisons les rappels nécessaires sur les représentations plates des $k$-différentielles méromorphes et sur les deux opérations précédemment citées. Nous y introduisons de plus les briques élémentaires qui nous permettent de construire les $k$-différentielles avec les propriétés souhaitées. La section~\ref{sec:avecnondiv} est consacrée au cas des $k$-différentielles de genre zéro avec au moins un pôle d'ordre non divisible par $k$. La section~\ref{sec:4DIVIS} est dédiée au cas des $k$-différentielles de genre zéro dont les pôles ne sont pas tous d'ordre~$-k$ mais sont néanmoins tous divisibles par $k$. La section~\ref{sec:juste-k} traite les $k$-différentielles en genre zéro dont tous les pôles sont égaux à $-k$. La section~\ref{sec:ggeq1} est consacrée aux cas des $k$-différentielles de genre supérieur ou égal à~$1$.

\smallskip
\par
\subsection{Conventions.}
Dans cet article le $k$-résidu sera $(2i\pi)^{k}$ fois le $k$-résidu défini par l'équation~\eqref{eq:standard_coordinates}. Remarquons que cette convention n'a aucune incidence sur l'énoncé des résultats, mais rend les preuves plus agréables.

Pour une $k$-différentielle $\xi$, nous appelons {\em zéro} une singularité de $\xi$ d'ordre strictement supérieur à $-k$ et {\em pôle} une singularité d'ordre inférieur ou égal à $-k$. Cette convention de langage est justifiée par le fait que grâce à elle les singularités coniques de la surface plate correspondent aux zéro de la $k$-différentielle. Le $k$-résidu d'une $k$-différentielle sera noté avec une lettre majuscule $R$ tandis qu'une racine $k$-ième de $R$ sera notée par une lettre minuscule~$r$. 

Si une strate paramètre des $k$-différentielles avec $m$ singularités égales à $a$, alors nous noterons cela $(\rec[a][m])$. Par exemple $\Omega^{3}\mathcal{M}_{3}(-1,3,3,3,3,4;-3)$ pourra être notée $\Omega^{3}\mathcal{M}_{3}(-1,(\rec[3][4]),4;-3)$. Plus généralement, si nous considérons une suite $(a,\dots,a)$ de $m$ nombres complexes tous identiques, nous noterons cette suite $(\rec[a][m])$. Nous espérons que ces notations seront claires dans le contexte.

\smallskip
\par
\subsection{Remerciements.} Le premier auteur remercie l'{\em Institut für algebraische Geometrie} de la {\em Leibniz Universität Hannover} et le {\em Centro de Ciencias Matemáticas} de la {\em Universidad Nacional Autonoma de México} où il a élaboré une grande partie de ce texte. Le second auteur bénéficie du financement de la Beijing Natural Science Foundation (IS23005). Nous remercions chaleureusement le rapporteur anonyme d'une version antérieure de ce texte pour sa relecture attentive et ses remarques précieuses.

\section{Boîte à outils}
\label{sec:bao}

Dans cette section, nous introduisons les objets et les opérations de base pour nos constructions. Nous commençons par quelques rappels sur les $k$-différentielles dans la section~\ref{sec:pluridiffbao}. Ensuite nous introduisons dans la section~\ref{sec:briques} les briques élémentaires de nos surfaces plates. Nous poursuivons par un rappel sur les différentielles entrelacées et les opérations de scindage de zéro et de couture d'anse dans la section~\ref{sec:pluridiffentre}. Nous introduisons dans la section~\ref{sec:arrhyp} une stratification de l'espace résiduel via la géométrie plate. Enfin la section~\ref{sec:estimation} est dédiée à la preuve d'un certain nombre d'estimations sur la somme de racines.

\subsection{$k$-différentielles méromorphes et surfaces plates}
\label{sec:pluridiffbao}

Dans ce paragraphe, nous rappelons des résultats élémentaires sur les $k$-différentielles méromorphes et leur relation avec les surfaces plates. Plus de détails peuvent être trouvés dans \cite{BCGGM3}.
\par
 Soit $X$ une surface de Riemann de genre $g$ et $\xi$ une $k$-différentielle, ie une section méromorphe de la puissance tensorielle $k$-ième du fibré canonique $K_{X}$. On notera $Z$ les zéros  et $P$ les pôles de $\xi$. L'intégration d'une racine $k$-ième de $\xi$ sur $X\setminus P$ induit une structure plate sur $X\setminus \left\{Z,P\right\}$ donnée par un atlas dont les changements de cartes sont compositions de translations et rotations d'angles multiples de $\tfrac{2\pi}{k}$. Dans la complétion métrique, les pôles d'ordre $-k<a<0$ et les zéros d'ordre $a \geq 0$ correspondent aux singularités coniques d'angle $(a+k)\frac{2\pi}{k}$. Cette similitude explique la convention de langage que les singularités d'ordre $a\geq-k+1$ sont des {\em zéros} d'ordre $a$ de $\xi$. Enfin, les pôles d'ordre~$-k$ correspondent à des demi-cylindres infinis et les pôles d'ordre $-b<-k$ à un revêtement d'ordre $b-k$ d'un domaine angulaire d'angle $\frac{2\pi}{k}$. 

Notons que le $k$-résidu $\Resk$ d'une $k$-différentielle non primitive s'obtient facilement. En effet, soit~$\xi$ une $k$-différentielle puissance $d$-ième d'une $(k/d)$-différentielle~$\eta$. Pour tout pôle~$P$ nous avons
\begin{equation}\label{eq:multiplires}
 \Resk_{P}(\xi)=\left( \Resk[k/d]_{P}(\eta) \right)^{d}\,.
\end{equation}

Nous énonçons un résultat élémentaire sur les $k$-différentielles en genre zéro.
\begin{lem}\label{lem:puissk}
Soient $\mu=(m_{1},\dots,m_{t})$ un $t$-uplet tel que $\sum m_{i}=-2k$ et $d=\pgcd(\mu,k)$. Toutes les $k$-différentielles de type $\mu$ sont la puissance $d$-ième d'une $k/d$-différentielle primitive de $\Omega^{k/d}\mathcal{M}_{0}(\mu/d)$.
\end{lem}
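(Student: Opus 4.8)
The statement is that every $k$-differential $\xi$ of type $\mu$ on $\mathbb{P}^1$, with $d = \pgcd(\mu,k)$, is the $d$-th power of a primitive $(k/d)$-differential. The plan is to work directly on the Riemann sphere, where the Picard group is $\mathbb{Z}$ and canonical (or pluricanonical) divisors are classified by degree alone. First I would recall that on $X = \mathbb{P}^1$ a meromorphic $k$-differential $\xi$ of type $\mu = (m_1,\dots,m_t)$ has divisor $\divisor{\xi} = \sum_{i} m_i P_i$ of degree $k(2g-2) = -2k$, and conversely any such divisor is realized since $K_{\mathbb{P}^1}^{\otimes k} \cong \mathcal{O}(-2k)$ and linear equivalence of divisors on $\mathbb{P}^1$ is detected by degree. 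Write $\xi$ in an affine chart $z$ as $\xi = f(z)(dz)^k$ with $f$ a rational function whose zeros and poles (counted with the usual correction at $\infty$) encode $\mu$.

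The key step is a divisibility argument on divisors. Since $d = \pgcd(m_1,\dots,m_t,k)$, every $m_i$ is divisible by $d$, so the divisor $D := \frac{1}{d}\divisor{\xi} = \sum_i \frac{m_i}{d} P_i$ is an honest integral divisor of degree $-2k/d = (k/d)(2g-2)$. Because $\mathrm{Pic}(\mathbb{P}^1) = \mathbb{Z}$, $D$ is linearly equivalent to $(k/d)$ times a canonical divisor, hence there exists a $(k/d)$-differential $\eta_0$ with $\divisor{\eta_0} = D$. Then $\eta_0^{\otimes d}$ and $\xi$ are two $k$-differentials with the same divisor, so their ratio is a rational function on $\mathbb{P}^1$ with no zeros or poles, i.e.\ a nonzero constant $\lambda \in \CC^\ast$. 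Absorbing a $d$-th root of $\lambda$ into $\eta_0$ (possible since $\CC$ is algebraically closed) yields a $(k/d)$-differential $\eta$ with $\eta^{\otimes d} = \xi$, so $\xi$ lies in the image of the $d$-th power map from $\Omega^{k/d}\moduli[0](\mu/d)$.

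It remains to check that $\eta$ is primitive, i.e.\ not itself a proper tensor power of a differential of lower order. If $\eta = \theta^{\otimes e}$ for some $e \mid (k/d)$ with $e > 1$ and $\theta$ an $(k/(de))$-differential, then $\divisor{\theta} = \frac{1}{e}D = \frac{1}{de}\divisor{\xi}$ would be integral, forcing $de \mid m_i$ for all $i$ and $de \mid k$ (the latter since $\theta$ is a genuine $(k/(de))$-differential, so $k/(de)$ is a positive integer), contradicting the maximality of $d = \pgcd(\mu,k)$. Hence $\eta$ is primitive and lies in $\Omega^{k/d}\moduli[0](\mu/d)$ as claimed. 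I expect the only mildly delicate point to be the bookkeeping at the point $\infty$ when passing between the affine description $f(z)(dz)^k$ and the divisor on $\mathbb{P}^1$ — the order at $\infty$ of $(dz)^k$ contributes $-2k$ — but this is exactly the classical computation that makes $\deg\divisor{\xi} = -2k$ work out, and it causes no real obstacle. The essential content is simply that on $\mathbb{P}^1$ there is no obstruction beyond degrees, together with the divisibility of $\mu$ by $d$.
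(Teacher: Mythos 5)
Your proof is correct and is essentially the paper's argument in abstract clothing: the paper simply writes the explicit formula $\xi=\prod_i(z-z_i)^{m_i}(dz)^k=\bigl(\prod_i(z-z_i)^{m_i/d}(dz)^{k/d}\bigr)^d$, which is exactly the $d$-th root whose existence your $\mathrm{Pic}(\mathbb{P}^1)=\mathbb{Z}$ and linear-equivalence argument guarantees. Your explicit verification of primitivity via the maximality of $d=\pgcd(\mu,k)$ is a point the paper leaves implicit, but it is the obvious gcd computation and both arguments are sound.
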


\begin{proof}
Une $k$-différentielle $\xi$ sur $\PP^{1}$ de type $\mu$ est donnée par la formule 
$$\xi=\prod_{i=1}^{t}(z-z_{i})^{m_{i}}(dz)^{k}=\left( \prod_{i=1}^{t}(z-z_{i})^{m_{i}/d}(dz)^{k/d}\right)^{d}. $$
\end{proof}

Finalement rappelons que le {\em cœur} d'une $k$-différentielle est l'enveloppe convexe des singularités coniques pour la métrique définie par la différentielle. 
Le complémentaire du cœur d'une surface plate admet autant de composantes connexes que de pôles. On appelle \textit{domaine polaire} la composante à laquelle un pôle appartient. Le bord d'un domaine polaire est toujours formé par un nombre fini de liens selles.

\subsection{Briques élémentaires}
\label{sec:briques}

Dans ce paragraphe, nous introduisons des surfaces plates à bord qui nous serviront de briques pour construire les $k$-différentielles ayant les propriétés locales souhaitées. Ces briques sont appelées \textit{$k$-parties polaires}.
\smallskip
\par
Étant donnés des vecteurs $(v_{1},\dots,v_{l})$ dans $(\CC^{\ast})^{l}$, nous considérons la ligne brisée $L$ dans~$\CC$ donnée par la concaténation d'une demi-droite correspondant à $\RR_{-}$, des $v_{i}$ pour $i$ croissant et d'une demi-droite correspondant à $\RR_{+}$.
Nous supposerons que les $v_{i}$ sont tels que $L$ ne possède pas de points d'auto-intersection. Nous donnerons une condition suffisante pour que cela soit possible dans le lemme~\ref{lem:noninter}.

Toutefois, il ne sera pas toujours possible de réaliser cette condition. Dans certains cas nous devrons donc autoriser des demi-droites plus générales. Le reste de la construction s'adaptera facilement à ce cas. Les objets que nous obtiendrons par cette construction seront nommés {\em $k$-parties polaires généralisées}. 

À partir du complémentaire $\CC\setminus L$ de la ligne brisée $L$, nous définissons :
\begin{itemize}
    \item le {\em domaine basique positif} $D^{+}(v_{1},\dots,v_{l})$ qui est l'adhérence de la composante connexe de $\CC\setminus L$ contenant les nombres complexes au dessus de~$L$;
    \item {\em domaine basique négatif} $D^{-}(v_{1},\dots,v_{l})$ qui est l'adhérence de l'autre composante connexe de $\CC\setminus L$.
\end{itemize}

Étant donné un domaine positif $D^{+}(v_{1},\dots,v_{l})$ et un négatif $D^{-}(w_{1},\dots,w_{l'})$, on construit le {\em domaine basique ouvert à gauche} $D_{g}(v_{1},\dots,v_{l};w_{1},\dots,w_{l'})$ en collant par translation les deux demi-droites correspondant à~$\RR_{+}$.

Si on recolle plutôt les deux demi-droites correspondant à~$\RR_{-}$, nous obtenons le {\em domaine basique ouvert à droite} $D_{d}(v_{1},\dots,v_{l};w_{1},\dots,w_{l'})$.

\par
On se donne $b:=k\ell$ avec $\ell\geq 2$ et $\tau\in\left\{1,\dots,\ell-1\right\}$.
Soient $(v_{1},\dots,v_{l};w_{1},\dots,w_{l'})$ des vecteurs de~$\CC^{\ast}$ tels que les parties réelles des sommes $\sum v_{i}$ et $\sum w_{i}$ sont positives et les domaines basiques $D^{+}(v_{1},\dots,v_{l})$ et $D^{-}(w_{1},\dots,w_{l'})$ existent.
La partie polaire d'ordre $b$ et de type~$\tau$ associée à $(v_{1},\dots,v_{l};w_{1},\dots,w_{l'})$ est la surface plate à bord obtenue de la façon suivante. Nous partons de l'union disjointe de :
\begin{itemize}
    \item[(i)] $\tau-1$ domaines basiques ouverts à gauche associé à la suite vide;
    \item[(ii)] $\ell-\tau-1$ domaines basiques ouverts à droite associé à la suite vide;
    \item[(iii)] le domaine positif associé aux $v_{i}$;
    \item[(iv)] le domaine négatif associé aux $w_{j}$.
\end{itemize}
On colle alors par translation la demi-droite inférieure du $i$-ième domaine polaire ouvert à gauche à la demi-droite supérieure du $(i+1)$-ième. La demi-droite inférieure du domaine $\tau-1$ est identifiée à la demi-droite de gauche du domaine positif. La demi-droite de gauche du domaine négatif est identifiée à la positive du premier domaine ouvert à gauche. On procède de même à droite. La figure~\ref{fig:ordreplusmoins} illustre cette construction.

\begin{figure}[htb]
\center
\begin{tikzpicture}[scale=1]
%Figure haut gauche

\begin{scope}[xshift=-6cm]
\fill[fill=black!10] (0,0)  circle (1cm);

\draw[] (0,0) coordinate (Q) -- (-1,0) coordinate[pos=.5](a);

\node[above] at (a) {$\tau$};
\node[below] at (a) {$1$};

\fill (Q)  circle (2pt);

\node at (1.5,0) {$\dots$};
\end{scope}

\begin{scope}[xshift=-3cm]
\fill[fill=black!10] (0,0)  circle (1cm);

\draw[] (0,0) coordinate (Q) -- (-1,0) coordinate[pos=.5](a);

\node[above] at (a) {$2$};
\node[below] at (a) {$3$};

\fill (Q)  circle (2pt);
\end{scope}
%deuxieme figure
\begin{scope}[xshift=0cm]
\fill[fill=black!10] (0,0)  circle (1.5cm);
      \fill[color=white]
      (-.4,0.02) -- (.4,0.02) -- (.4,-0.02) -- (-.4,-0.02) --cycle;

\draw[] (-.4,0.02)  -- (.4,0.02) coordinate[pos=.5](a);
\draw[] (-.4,-0.02)  -- (.4,-0.02) coordinate[pos=.5](b);

\node[above] at (a) {$v$};
\node[below] at (b) {$v$};

\draw[] (-.4,0) coordinate (q1) -- (-1.5,0) coordinate[pos=.5](d);
\draw[] (.4,0) coordinate (q2) -- (1.5,0) coordinate[pos=.5](e);

\node[above] at (d) {$1$};
\node[below] at (d) {$2$};
\node[above] at (e) {$\tau+1$};
\node[below] at (e) {$\tau+2$};

\fill (q1) circle (2pt);
\fill[white] (q2) circle (2pt);
\draw (q2) circle (2pt);

\node at (4.5,0) {$\dots$};
\end{scope}

\begin{scope}[xshift=3cm]
\fill[fill=black!10] (0,0) coordinate (Q) circle (1cm);

\draw[] (0,0) coordinate (Q) -- (1,0) coordinate[pos=.5](a);

\node[above] at (a) {$\tau +2$};
\node[below] at (a) {$\tau+3$};

\fill[white] (Q)  circle (2pt);
\draw (Q)  circle (2pt);
\end{scope}
%troisieme dessin
\begin{scope}[xshift=6cm]
\fill[fill=black!10] (0,0) coordinate (Q) circle (1cm);

\draw[] (0,0) coordinate (Q) -- (1,0) coordinate[pos=.5](a);

\node[above] at (a) {$\ell$};
\node[below] at (a) {$\tau+1$};

\fill[white] (Q)  circle (2pt);
\draw (Q)  circle (2pt);
\end{scope}

\end{tikzpicture}
\caption{Une $k$-partie polaire d'ordre $k\ell$ de type $\tau$ associée à $(v;v)$. Les demi-droites dont les labels coïncident sont identifiés par translation} \label{fig:ordreplusmoins}
\end{figure}

Si $\sum v_{i} =\sum w_{j}$ nous dirons que cette partie polaire est {\em triviale}. Dans le cas contraire, nous dirons que la partie polaire est {\em non triviale}. Sur la figure~\ref{fig:partiesnontrivial}, le dessin de gauche illustre une partie polaire non triviale. Le $k$-résidu du pôle d'ordre $-b=-kl$ correspondant est donné par la puissance $k$-ième de la somme $\sum v_{i}-\sum w_{j}$.

On se donne maintenant des vecteurs $(v_{1},\dots,v_{t})$ avec $t\geq1$ tels que la concaténation $V$ de ces vecteurs dans cet ordre n'a pas de points d'auto-intersection. De plus, on suppose qu'il existe deux demi-droites parallèles $L_{D}$ et $L_{F}$ de vecteur directeur $\overrightarrow{w}$, issues respectivement du point de départ $D$ et final $F$ de $V$, ne rencontrant pas $V$ et telles que $(\overrightarrow{DF},\overrightarrow{w})$ est une base positive de $\RR^{2}$. On définit la partie polaire $C(v_{1},\dots,v_{t})$ d'ordre $k$ associé aux $v_{i}$ comme le quotient du sous-ensemble de $\CC$ entre $V$ et les demi-droites $L_{D}$ et~$L_{F}$ par l'identification de $L_{D}$ à~$L_{F}$ par translation. Le $k$-résidu du pôle d'ordre $-k$ correspondant est donné par la puissance $k$ième de la somme $F-D$ des~$v_{i}$. Une partie polaire d'ordre $k$ est donnée à droite de la figure~\ref{fig:partiesnontrivial}.

\begin{figure}[htb]
\center
\begin{tikzpicture}[scale=1.2]

%premier dessin
\begin{scope}[xshift=-7cm]
\fill[fill=black!10] (0,0) coordinate (Q) circle (1.5cm);

\coordinate (a) at (-.5,0);
\coordinate (b) at (.5,0);
\coordinate (c) at (0,.2);

\fill (a)  circle (2pt);
\fill[] (b) circle (2pt);
    \fill[white] (a) -- (c)coordinate[pos=.5](f) -- (b)coordinate[pos=.5](g) -- ++(0,-2) --++(-1,0) -- cycle;
 \draw  (a) -- (c) coordinate () -- (b);
 \draw (a) -- ++(0,-1.1) coordinate (d)coordinate[pos=.5] (h);
 \draw (b) -- ++(0,-1.1) coordinate (e)coordinate[pos=.5] (i);
 \draw[dotted] (d) -- ++(0,-.3);
 \draw[dotted] (e) -- ++(0,-.3);
\node[below] at (f) {$v_{1}$};
\node[below] at (g) {$v_{2}$};
\node[left] at (h) {$1$};
\node[right] at (i) {$1$};

\draw (b)-- ++ (1,0)coordinate[pos=.6] (j);
\node[below] at (j) {$2$};
\node[above] at (j) {$3$};
    \end{scope}

%deuxieme figure
\begin{scope}[xshift=-3.5cm]
\fill[fill=black!10] (0,0) coordinate (Q) circle (1.5cm);

\draw[] (0,0) -- (1.5,0) coordinate[pos=.5](a);

\node[above] at (a) {$2$};
\node[below] at (a) {$3$};
\fill[] (Q) circle (2pt);
\end{scope}

%troisieme figure
\begin{scope}[xshift=2.5cm,yshift=-1cm]
\coordinate (a) at (-1,0);
\coordinate (b) at (1,0);
\coordinate (c) at (0,.2);

    \fill[fill=black!10] (a) -- (c)coordinate[pos=.5](f) -- (b)coordinate[pos=.5](g) -- ++(0,1.5) --++(-2,0) -- cycle;
    \fill (a)  circle (2pt);
\fill[] (b) circle (2pt);
 \draw  (a) -- (c) coordinate () -- (b);
 \draw (a) -- ++(0,1.3) coordinate (d)coordinate[pos=.5](h);
 \draw (b) -- ++(0,1.3) coordinate (e)coordinate[pos=.5](i);
 \draw[dotted] (d) -- ++(0,.3);
 \draw[dotted] (e) -- ++(0,.3);
\node[below] at (f) {$v_{1}$};
\node[below] at (g) {$v_{2}$};
\node[left] at (h) {$3$};
\node[right] at (i) {$3$};

    \end{scope}
\end{tikzpicture}
\caption{Une partie polaire non triviale associée à $(v_{1},v_{2};\emptyset)$ d'ordre $3k$ (de type $1$) à gauche et d'ordre $k$ à droite} \label{fig:partiesnontrivial}
\end{figure}

Nous traitons maintenant le cas des pôles d'ordre non divisible par $k$. Soient $c=\ell k+s$ avec $0<s<k$ et $\ell\geq 1$. On se donne des vecteurs $v_{i}$ de $\CC^{\ast}$ de partie réelle positive. La {\em $k$-partie polaire d'ordre $c$ associée aux $(v_{1},\dots,v_{l};\emptyset)$} est donnée par la construction suivante. Nous concaténons les $v_{i}$ dans le plan et traçons deux demi-droites $L_{1}$ et $L_{2}$ issues respectivement du point final et  initial de la concaténation telles que l'angle entre $L_{1}$ et $L_{2}$ est $s\frac{2\pi}{k}$.
Comme dans le cas des parties polaires d'ordre $b$, la ligne brisée ainsi formée doit être sans points d'intersection. Nous donnerons une condition suffisante pour que cela soit possible dans le lemme~\ref{lem:noninter}.
Nous considérons la surface au dessus de cette courbe brisée. Ensuite nous prenons $\ell-1$ domaines basiques ouvert dans la direction de $L_{1}$ associés à la suite vide. Puis nous identifions les demi-droites cycliquement par translation, à l'exception du dernier qui est identifié par translation et rotation d'angle $s\frac{2\pi}{k}$ à la demi droite $L_{2}$. Cette construction est illustrée à gauche de la figure~\ref{fig:partiepolairekdiff}. 
\par
Pour simplifier certaines constructions, il est utile de définir la $k$-partie polaire d'ordre~$c$ associée à $(\emptyset;v_{1},\dots,v_{l})$. Elle est définie de manière similaire à la partie polaire précédente en considérant la surface sous la courbe brisée suivante. Nous concaténons les $v_{i}$ dans le plan et traçons deux demi-droites $L_{1}$ et $L_{2}$ issues respectivement du point final et  initial de la concaténation telles que l'angle entre $L_{1}$ et $L_{2}$ est $-s\frac{2\pi}{k}$. La fin de cette construction est similaire à la précédente.
\par
Nous donnons maintenant une condition suffisante pour que la ligne brisée décrite aux paragraphes précédents soit sans points d'intersection.
\begin{lem}\label{lem:noninter}
Dans la construction des $k$-parties polaires d'ordre $c$, si les $v_{i}$ sont soit de partie réelle strictement positive, soit de partie réelle nulle et de partie imaginaire strictement positive, alors, quitte à permuter l'ordre des $v_{i}$, il existe des demi-droites $L_{1}$ et $L_{2}$ telles que la ligne brisée formée des $v_{i}$ et des $L_{j}$ soit sans point d'intersection.
\end{lem}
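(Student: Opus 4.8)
The plan is to reorder the $v_i$ by decreasing argument so that their concatenation becomes a simple convex arc, and then to produce $L_1,L_2$ by a length count on the circle of directions. Write $\gamma:=s\tfrac{2\pi}{k}\in(0,2\pi)$ for the prescribed angle. After reordering the $v_i$ so that their arguments — taken in $(-\pi/2,\pi/2]$, which is exactly the range of directions allowed by the hypothesis — are weakly decreasing, set $\theta_{\max}=\arg v_1$, $\theta_{\min}=\arg v_l$, let $V$ be the concatenation of the $v_i$ in this order and $D$, $F=D+\sum_i v_i$ its endpoints. Then $V$ is a simple convex arc: each turn is clockwise by $\arg v_i-\arg v_{i+1}\in[0,\pi)$, the total turning is $\theta_{\max}-\theta_{\min}<\pi$, and $V$ is injective because $\Rea$ is non-decreasing along $V$ while on the initial subsegment where $\Rea$ stays constant every $v_i$ is purely imaginary with positive imaginary part, so $\Ima$ is strictly increasing there. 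Moreover $\sum_i v_i\neq0$, so $D\neq F$, and $\arg(F-D)=:\theta_{c}$ lies in $[\theta_{\min},\theta_{\max}]$.

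The geometric input I would use is that, $V$ being convex, $\arg(P-D)$ decreases monotonically from $\theta_{\max}$ to $\theta_{c}$ as $P$ runs from $D$ to $F$ along $V$, and symmetrically $\arg(P-F)$ increases monotonically from $\theta_{\min}+\pi$ to $\theta_{c}+\pi$. Hence a half-line issued from $D$ meets $V$ only at $D$ as soon as its direction avoids the closed arc $[\theta_{c},\theta_{\max}]$, and one issued from $F$ meets $V$ only at $F$ as soon as its direction avoids $[\theta_{\min}+\pi,\theta_{c}+\pi]$. Finally, two half-lines issued respectively from $F$ (direction $\beta_1$) and from $D$ (direction $\beta_2$) are disjoint unless $F-D$ lies in the convex cone generated by $e^{i\beta_2}$ and $-e^{i\beta_1}$; once $\beta_2=\beta_1-\gamma$ is imposed this confines $\beta_1$ to one further closed arc, of length equal to the opening of that cone, namely $|\pi-\gamma|$.

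It then remains to count. The set of directions $\beta_1$ that are \emph{bad} — those for which the half-line from $F$ in direction $\beta_1$ hits $V$ away from $F$, or the half-line from $D$ in direction $\beta_1-\gamma$ hits $V$ away from $D$, or these two half-lines meet — is contained in a union of three arcs of lengths $\theta_{\max}-\theta_{c}$, $\theta_{c}-\theta_{\min}$ and $|\pi-\gamma|$, so its total length is at most $(\theta_{\max}-\theta_{\min})+|\pi-\gamma|<\pi+\pi=2\pi$. Choosing $\beta_1$ in the non-empty open complement, setting $\beta_2=\beta_1-\gamma$, and letting $L_1,L_2$ be the half-lines from $F$ and $D$ in these directions, the broken line $L_2\cup V\cup L_1$ is embedded, realises the angle $-\gamma$ between $L_1$ and $L_2$, and — being a properly embedded copy of $\RR$ in $\CC$ — bounds the simply connected region above it that the construction uses. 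The variant with angle $+s\tfrac{2\pi}{k}$ (take $\beta_2=\beta_1+\gamma$ and the region below) is symmetric.

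The main obstacle is really just the convexity bookkeeping of the second paragraph: proving the monotonicity of $\arg(P-D)$ along a convex polygonal arc, pinning down the exact forbidden arcs of directions at $D$ and $F$, and the cone criterion for disjointness of two half-lines. Degenerate configurations are harmless: $D\neq F$ always holds under the hypothesis, and the case in which every $v_i$ is purely imaginary is simply $\theta_{\max}=\theta_{\min}=\theta_{c}=\pi/2$, to which the same count applies.
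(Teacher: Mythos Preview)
Your argument is correct and shares the paper's key move: reorder the $v_i$ by decreasing argument so that the concatenation $V$ is a convex arc. Where you diverge is in how much precision you extract from this convexity. You compute the exact forbidden arcs $[\theta_c,\theta_{\max}]$ and $[\theta_{\min}+\pi,\theta_c+\pi]$ for half-lines from $D$ and $F$, add a third arc of length $|\pi-\gamma|$ for mutual intersection of $L_1,L_2$, and conclude by a measure count. The paper instead makes a cruder but entirely sufficient observation: after sorting, $V$ lies in the half-plane $\{\Rea\ge\Rea(D)\}$ and in $\{\Rea\le\Rea(F)\}$, so \emph{every} direction in $(\pi/2,3\pi/2]$ from $D$ and every direction in $(-\pi/2,\pi/2]$ from $F$ automatically misses $V$; these half-lines also miss each other since they live in opposite closed half-planes. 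That already gives a full $\pi$-interval of choices at each endpoint, enough to realise any prescribed angle in $(0,2\pi)$, with no need to track $\theta_c$ or to invoke the cone criterion.

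Your route buys a sharper description of the admissible directions (and an explicit treatment of the $L_1\cap L_2$ issue, which the paper leaves implicit), at the cost of the convexity bookkeeping you flag in your last paragraph. The paper's route is shorter precisely because it avoids that bookkeeping by working with half-planes rather than convex arcs.
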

\begin{proof}
 Quitte à permuter les $v_{i}$, on peut supposer que les arguments des vecteurs~$v_{i}$, appartenant à l'ensemble $\left] -\tfrac{\pi}{2};\tfrac{\pi}{2}\right]$ sont décroissants. Notons que pour tout $\theta\in \left] \tfrac{\pi}{2}; \tfrac{3\pi}{2}\right]$ la demi-droite de pente $\theta$ partant du point initial de la concaténation n'a pas d'autres points d'intersection avec celle-ci. On a le même résultat pour les demi-droites d'angle $\phi$ partant du point final pour tout  $\phi\in \left] \tfrac{-\pi}{2}; \tfrac{\pi}{2}\right]$. Cela implique que l'on peut trouver des droites $L_{1}$ et~$L_{2}$ sans point d'intersection avec le reste de la construction qui forment n'importe quel angle strictement  compris entre $0$ et $2\pi$.
\end{proof}

Nous aurons besoin dans un cas d'une construction un peu plus générale que dans le cas précédent. On note comme précédemment $c=k\ell+s$ avec $0<s<k$.  \'Etant donné $(v_{1},v_{2})\in\CC^{\ast}$ la {\em partie polaire d'ordre $c$ associée à $(\emptyset;v_{1},v_{2})$ de type $t$} avec $1\leq t \leq \ell$ est donnée par la construction suivante. Nous concaténons $v_{1}$ avec $v_{2}$ dans le plan et traçons deux demi-droites $L_{1}$ et $L_{2}$ issues respectivement du point final et initial de la concaténation telles que l'angle entre $L_{1}$ et $L_{2}$ est $s\frac{2\pi}{k}$. Nous considérons la surface au dessus de cette courbe brisée. Ensuite nous prenons $t-1$ domaines basiques ouvert à droite associés à la suite vide. Puis nous identifions les demi-droites cycliquement par translation, à l'exception du dernier qui est identifié par translation et rotation d'angle $s\frac{2\pi}{k}$ à la demi droite $L_{2}$. Puis nous coupons la surface en partant du point d'intersection entre $v_{1}$ et $v_{2}$ le long d'une demi-droite $L_{3}$. Puis nous collons $\ell-t$ domaines basiques de manière cyclique à cette demi-droite. Cette construction est illustrée à droite de la figure~\ref{fig:partiepolairekdiff}.
\begin{figure}[htb]
\center
 \begin{tikzpicture}

%premier dessin
\begin{scope}[xshift=-2cm]
 \fill[black!10] (-1,0)coordinate (a) -- (1.5,0)-- (a)+(1.5,0) arc (0:120:1.5)--(a)+(120:1.5) -- cycle;

   \draw (a)  -- node [below] {$v_{1}$} (0,0) coordinate (b);
 \draw (0,0) -- (1,0) coordinate[pos=.5] (c);
 \draw[dotted] (1,0) --coordinate (p1) (1.5,0);
 \fill (a)  circle (2pt);
\fill[] (b) circle (2pt);
\node[below] at (c) {$1$};

 \draw (a) -- node [above,rotate=120] {$2$} +(120:1) coordinate (d);
 \draw[dotted] (d) -- coordinate (p2) +(120:.5);
 
    \end{scope}

%deuxieme figure
\begin{scope}[xshift=1cm,yshift=.5cm]
\fill[fill=black!10] (0,0) coordinate (Q) circle (1.2cm);

\draw[] (0,00) -- (1.2,0) coordinate[pos=.5](a);

\node[below] at (a) {$2$};
\node[above] at (a) {$1$};
\fill[] (Q) circle (2pt);
\end{scope}

%Linkes Figur!

\begin{scope}[xshift=6.5cm,yshift=1cm]
 \fill[black!10] (-1,0)coordinate (a) -- (-.7,.4)coordinate (b) -- (0,0) coordinate (c)-- (a)+(1.8,0) arc (0:-60:1.8)--(a)+(-60:1.8) -- cycle;

   \draw (a)  -- node [above left] {$v_{2}$} (b);
      \draw (b)  -- node [above] {$v_{3}$} (c);
 \draw (0,0) -- (1,0) coordinate[pos=.5] (d);
 \draw[dotted] (1,0) --coordinate (p1) (1.5,0);
 \fill (a)  circle (2pt);

\fill[] (c) circle (2pt);
\node[below] at (d) {$1$};
 \draw (a) -- node [below,rotate=-60] {$1$} +(-60:1.3) coordinate (d);
 \draw[dotted] (d) -- coordinate (p2) +(-60:.5);
 
 \draw (b)   -- ++(-60:1.9) coordinate[pos=.5](f);
\node[left] at (f) {$4$};
\node[right] at (f) {$3$};
 
 \fill[white] (b) circle (2pt);
  \draw[] (b) circle (2pt);

    \end{scope}
\begin{scope}[xshift=9cm,yshift=.5cm]
\fill[fill=black!10] (0,0) coordinate (Q) circle (1.2cm);
\draw[] (0,00) --++ (-60:1.2) coordinate[pos=.5](a);

\node[left] at (a) {$3$};
\node[right] at (a) {$4$};
 \fill[white] (0,0) circle (2pt);
  \draw[] (0,0) circle (2pt);
\end{scope}

\end{tikzpicture}
\caption{La $3$-partie polaire associée à $(v_{1};\emptyset)$ d'ordre~$7$ à gauche et la $6$-partie polaire associée à $(\emptyset;v_{2},v_{3})$ d'ordre~$13$ et de type~$1$ à droite.} \label{fig:partiepolairekdiff}
\end{figure}

Nous résumons maintenant les propriétés des constructions du paragraphe précédent.
\begin{lem}\label{lm:kresidu}
Soient $(v_{i};w_{j})$ des nombres complexes, le pôle obtenu à partir de la $k$-partie polaire d'ordre $b=k\ell$ et de type $\tau$ associée à $(v_{i};w_{j})$ est d'ordre $-b$ et possède un $k$-résidu égal à $\left(\sum v_{i}-\sum w_{j}\right)^{k}$. 

Soit $(v_{1},\dots,v_{l})$ avec $l\geq1$, le pôle associé au domaine basique d'ordre $k$ associé à $v_{i}$ est d'ordre $-k$ et possède un $k$-résidu égal à $\left(\sum v_{i}\right)^{k}$.
\end{lem}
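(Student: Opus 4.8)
The plan is to reduce both statements to a single computation for an \emph{abelian} differential. The starting remark is that in all the constructions of Section~\ref{sec:briques} relevant here every identification is a translation of $\CC$; hence the flat surface with boundary underlying such a $k$-polar part carries a genuine single-valued holomorphic $1$-form $\omega$, namely the restriction of $dz$, and the $k$-differential it represents is the pure power $\xi=\omega^{\otimes k}$ (consistently with~\eqref{eq:multiplires}, and this is exactly why these bricks are non-primitive). So it suffices to analyse $\omega$ near the point $P$ that the metric completion adds: I would check that exactly one point is added, determine the order of $\omega$ at $P$, and compute the period $\int_{\gamma}\omega$ along a small positively oriented loop $\gamma$ around $P$. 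By the normal forms~\eqref{eq:standard_coordinates}, a pole of $\xi$ of order $-k$ (resp. $-k\ell$ with $\ell\geq2$) corresponds to a pole of $\omega$ of order $-1$ with local expression $\omega=(r/z)\,dz$ (resp. of order $-\ell$, with $\omega=(z^{-\ell}+t\,z^{-1})\,dz$), and our convention for the $k$-residue — $(2i\pi)^{k}$ times the quantity of~\eqref{eq:standard_coordinates} — is chosen exactly so that $\Resk_{P}(\xi)=\bigl(\int_{\gamma}\omega\bigr)^{k}$, since $\int_{\gamma}\omega=2i\pi\,r$ (resp. $2i\pi\,t$, the term $z^{-\ell}$ being exact as soon as $\ell\geq2$).

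For the polar part of order $k$ (the domain $C(v_{1},\dots,v_{l})$, and likewise the order-$k$ basic domain): the surface is the region of $\CC$ bounded by the broken line $v_{1}\ast\dots\ast v_{l}$ and the two parallel rays issued from its endpoints $D$ and $F$, these two rays being glued by the translation of vector $\overrightarrow{DF}=v_{1}+\dots+v_{l}$. Topologically this is a half-infinite cylinder; since the gluing is a single translation by $w_{0}:=\sum_{i}v_{i}$, the substitution $q=\exp(2i\pi z/w_{0})$ identifies it with a punctured disc on which $\omega=dz=\tfrac{w_{0}}{2i\pi}\,\tfrac{dq}{q}$. Hence the completion adds one point $P$, at which $\xi$ has a pole of order $-k$, with $\Resk_{P}(\xi)=w_{0}^{\,k}=\bigl(\sum_{i}v_{i}\bigr)^{k}$. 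I would stress that the hypothesis that $(\overrightarrow{DF},\overrightarrow{w})$ is a positive basis is exactly what makes $\gamma$ positively oriented, so that the period is $+w_{0}$ and not $-w_{0}$ (this matters, because $(-1)^{k}\neq1$ for $k$ odd).

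For the polar part of order $b=k\ell$ and type $\tau$: it is glued, along their slit rays, from $\tau-1$ left-open and $\ell-\tau-1$ right-open basic domains associated to the empty sequence (each a copy of $\CC$ slit along a half-line, contributing total angle $2\pi$ at infinity), together with the positive domain $D^{+}(v_{1},\dots,v_{l})$ and the negative domain $D^{-}(w_{1},\dots,w_{l'})$ (each a half-plane with broken boundary, contributing angle $\pi$ at infinity), every identification being a translation. The total cone angle at the point $P$ added by the completion is therefore $(\ell-2)\cdot2\pi+2\cdot\pi=(\ell-1)\cdot2\pi$; comparing with $\omega=(z^{-\ell}+t\,z^{-1})\,dz$, whose integral coordinate behaves like $z^{-(\ell-1)}$ and thus wraps $\ell-1$ times around $\infty$, this shows that one point is added and that $\xi$ has a pole of order $-k\ell=-b$ at $P$. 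To finish, I would follow a positively oriented loop $\gamma$ around $P$, which crosses each of the $\ell$ pieces once in cyclic order: crossing an empty domain, or any of the identified $\RR_{\pm}$-rays, contributes nothing (those gluings are translations by $0$); crossing $D^{+}(v_{1},\dots,v_{l})$ from the $\RR_{-}$-ray of its boundary to the $\RR_{+}$-ray contributes $\sum_{i}v_{i}$ (the broken boundary runs from $0$ to $\sum_{i}v_{i}$); and crossing $D^{-}(w_{1},\dots,w_{l'})$, traversed in the opposite sense since it lies below its broken line, contributes $-\sum_{j}w_{j}$. Hence $\int_{\gamma}\omega=\sum_{i}v_{i}-\sum_{j}w_{j}$ and $\Resk_{P}(\xi)=\bigl(\sum_{i}v_{i}-\sum_{j}w_{j}\bigr)^{k}$, as claimed.

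The only genuinely non-formal points, where I expect the main care to be needed, are the identification of the $(\ell-1)$-sheeted end of the order-$b$ polar part with a punctured neighbourhood of a pole of $\omega$ of order $-\ell$, and the bookkeeping of orientations (crucial because $(-1)^{k}\neq1$ when $k$ is odd). I would handle the first by writing down explicitly the coordinate $z$ near $P$ — gluing the $\ell-1$ angular sectors along their $\RR_{\pm}$-rays and taking an $(\ell-1)$-st root of the integral coordinate of $\omega$ to cross the slits — and the orientation issue is settled by the sign and monotonicity conditions imposed in the construction (decreasing arguments for the $v_{i}$, increasing for the $w_{j}$, positive real parts of the relevant sums), which also guarantee that the broken line is embedded and $\gamma$ positively oriented.
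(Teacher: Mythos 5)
The paper states this lemma without proof --- it is presented as a summary of the properties of the constructions of Section~\ref{sec:briques} --- so there is no argument of the authors to compare yours against. Your verification is correct and is the natural one: since the polar parts of order $k\ell$ and of order $k$ are glued exclusively by translations, they carry the abelian differential $dz$ with $\xi=(dz)^{k}$, and with the paper's normalisation ($(2i\pi)^{k}$ times the $k$-residue of~\eqref{eq:standard_coordinates}) one indeed has $\Resk_{P}(\xi)=\bigl(\int_{\gamma}dz\bigr)^{k}$ for $\gamma$ a positively oriented loop around the added point, the term $z^{-\ell}$ contributing nothing because it is exact for $\ell\geq2$. Your angle count $(\ell-2)\cdot 2\pi+2\pi=(\ell-1)\cdot 2\pi$ correctly identifies the order of the pole (it matches the local model of a pole of order $\ell$ of a $1$-form, equivalently the $(b-k)$-fold cover of the sector of angle $2\pi/k$ described in Section~\ref{sec:pluridiffbao}), and the period bookkeeping --- $+\sum v_{i}$ across $D^{+}$, $-\sum w_{j}$ across $D^{-}$, $0$ across the empty domains --- gives the claimed value. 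The only point you leave partly implicit is the overall sign of the period, which is immaterial for $k$ even but changes the $k$-residue by $(-1)^{k}$ for $k$ odd; you are right that it is fixed by the orientation conventions of the construction (for the order-$k$ part, the hypothesis that $(\overrightarrow{DF},\overrightarrow{w})$ is a positive basis forces $\int_{\gamma}dz=+\sum v_{i}$, and for the order-$b$ part the cyclic gluing pattern of Figure~\ref{fig:ordreplusmoins} plays the same role), but a fully rigorous write-up would carry out that one check explicitly rather than by appeal to the monotonicity conditions on the arguments of the $v_{i}$ and $w_{j}$, which by themselves control embeddedness of the broken line rather than the orientation of $\gamma$.
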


\subsection{$k$-différentielles entrelacées, éclatement de zéros et couture d'anses.}
\label{sec:pluridiffentre}

Dans ce paragraphe, nous rappelons certains cas particuliers des résultats obtenus dans \cite{BCGGM3} au sujet des différentielles entrelacées. Cela nous permet de rappeler les constructions de l'{\em éclatement des zéros} et de la {\em couture d'anse}. 

Tout d'abord, nous rappelons la définition d'une différentielle entrelacée.
\'Etant donnée une partition~$\mu:=(m_{1},\dots,m_{t})$ telle que $\sum_{i=1}^t m_i = k(2g-2)$, une {\em $k$-différentielle entrelacée $\eta$ de type~$\mu$}
sur une courbe stable $n$-marquée $(X,z_1,\ldots,z_t)$
est une collection de $k$-différentielles non nulles~$\eta_v$ sur les composantes irréductibles~$X_v$ de~$X$ satisfaisant aux conditions suivantes.
\begin{itemize}
\item[(0)] {\bf (Annulation comme prescrit)} Chaque $k$-différentielle $\eta_v$ est méromorphe et le support de son diviseur est inclus dans l'ensemble des nœuds et des points marqués de $X_v$. De plus, si un point marqué $z_i$ se trouve sur~$X_v$, alors $\ord_{z_i} \eta_v=m_i$.
\item[(1)] {\bf (Ordres assortis)} Pour chaque nœud de $X$ qui identifie $q_1 \in X_{v_1}$ à $q_2 \in X_{v_2}$,
$$\ord_{q_1} \eta_{v_1}+\ord_{q_2} \eta_{v_2} = -2k . $$
\item[(2)] {\bf (Résidus assortis aux pôles d'ordre $-k$)} Si a un nœud de $X$
qui identifie $q_1 \in X_{v_1}$ avec $q_2 \in X_{v_2}$ on a $\ord_{q_1}\eta_{v_1}=
\ord_{q_2} \eta_{v_2}=-k$, alors
$$\Resk_{q_1}\eta_{v_1} = (-1)^k\Resk_{q_2}\eta_{v_2}.$$
\end{itemize}

Ce n'est que pour des cas très particuliers que nous aurons besoin de savoir quand une $k$-différentielle entrelacée est lissable. Nous rappelons ici uniquement les cas qui nous intéressent.  Le premier cas est celui où l'ordre des $k$-différentielles à tous les nœuds est égal à $-k$.
\begin{lem}\label{lem:lisspolessimples}
Soit $\eta=\left\{\eta_{v}\right\}$ une $k$-différentielle entrelacée. Si l'ordre des $k$-différentielles~$\eta_v$ aux nœuds est $-k$, alors $\eta$ est lissable localement.
\end{lem}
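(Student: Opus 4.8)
Le plan est de se ramener à un seul nœud, puis d'effectuer le recollement (\emph{plumbing}) standard des surfaces plates, en constatant que la condition~(2) sur les $k$-résidus est précisément ce qui permet à la $k$-différentielle — et pas seulement à l'une de ses racines $k$-ièmes — de s'étendre à travers l'anneau de recollement. L'énoncé étant local, je fixerais un nœud identifiant $q_1\in X_{v_1}$ à $q_2\in X_{v_2}$ et travaillerais au voisinage de celui-ci. D'après la condition~(1) et l'hypothèse $\ord_{q_1}\eta_{v_1}=\ord_{q_2}\eta_{v_2}=-k$, il existe des coordonnées $z_j$ centrées en $q_j$ dans lesquelles $\eta_{v_j}=(r_j/z_j)^{k}(dz_j)^{k}$ pour un $r_j\in\CC^{\ast}$, bien défini à une racine $k$-ième de l'unité près, et $\Resk_{q_j}\eta_{v_j}$ est proportionnel à $r_j^{k}$. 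La condition~(2) s'écrit alors $r_1^{k}=(-1)^{k}r_2^{k}$, donc en particulier $|r_1|=|r_2|$. Géométriquement, $\eta_{v_1}$ et $\eta_{v_2}$ portent chacune, au voisinage de $q_1$ et $q_2$, un demi-cylindre plat infini, et cette égalité garantit que ces deux demi-cylindres ont même circonférence.

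J'effectuerais ensuite le recollement $z_1z_2=t$ pour $t\in\CC^{\ast}$ petit, qui remplace les deux disques épointés par un anneau et produit une courbe lisse $X_t$. Sur le recouvrement on a $dz_2/z_2=-dz_1/z_1$, donc $(r_2\,dz_2/z_2)^{k}=(-1)^{k}r_2^{k}(dz_1/z_1)^{k}=(r_1\,dz_1/z_1)^{k}$ d'après la condition~(2) : les deux expressions locales de $\eta$ coïncident sur l'anneau et se recollent en une $k$-différentielle $\eta_t$ sur $X_t$, égale à $\eta$ en dehors d'un voisinage du nœud. En répétant cette opération à chaque nœud avec un paramètre indépendant, puis en faisant varier ces paramètres dans un disque épointé, on obtient une famille $(X_t,\eta_t)$ qui dégénère vers $(X,\eta)$. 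Les ordres des zéros et pôles en dehors des nœuds ainsi que les $k$-résidus aux pôles restants étant constants le long de cette famille, on a $(X_t,\eta_t)\in\komoduli(\mu)$ pour $t$ générique, d'où la lissabilité locale de $\eta$.

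Le point un peu délicat sera de vérifier que le recollement des bords des deux demi-cylindres par rotation est bien une transition admissible pour la structure plate, compatible avec les rotations d'angle multiple de $\tfrac{2\pi}{k}$ : c'est exactement ce qu'assure l'indétermination $r_j\mapsto\zeta r_j$ combinée au signe $(-1)^{k}$ de la condition~(2). Comme tous les nœuds sont d'ordre $-k$, aucune condition résiduelle globale n'intervient ici, ce qui fait de ce cas la situation la plus simple parmi les résultats de lissabilité de \cite{BCGGM3}.
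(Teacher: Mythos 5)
Votre preuve est correcte et suit exactement l'argument standard de \emph{plumbing} ($z_1z_2=t$, d'où $dz_2/z_2=-dz_1/z_1$ et $(r_2\,dz_2/z_2)^k=(r_1\,dz_1/z_1)^k$ via la condition (2)) ; c'est précisément la démonstration de \cite{BCGGM3}, que l'article se contente de rappeler sans la reproduire. Rien à redire.
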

Notons que dans ce cas, la notion de différentielle entrelacée correspond à la notion classique de différentielle stable. 

Maintenant nous regardons le cas des $k$-différentielles entrelacées à deux composantes.
\begin{lem}\label{lem:lissdeuxcomp}
 Supposons que $X$ possède exactement deux composantes $X_{1}$ et $X_{2}$ reliées par un unique nœud  qui identifie $q_1 \in X_{1}$ à $q_2 \in X_{2}$. Si $\ord_{q_1} \eta_{1}>-k>\ord_{q_2} \eta_{2}$, alors la $k$-différentielle entrelacée est lissable si et seulement si l'une des deux conditions suivantes est vérifiée.
 \begin{enumerate}[i)]
  \item $\Resk_{q_2}\eta_{2}=0$
  \item $\eta_{1}$ n'est pas la puissance $k$-ième d'une $1$-forme holomorphe.
 \end{enumerate}
 De plus, le lissage peut se faire sans modifier les $k$-résidus de $\eta_{1}$ si et seulement si l'une des  deux conditions suivantes est vérifiée.
  \begin{enumerate}[i)]
  \item $\Resk_{q_2}\eta_{2}=0$
  \item $\eta_{1}$ n'est pas la puissance $k$-ième d'une différentielle abélienne méromorphe.
 \end{enumerate}
\end{lem}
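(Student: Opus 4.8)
The statement is a particular case of the smoothing/plumbing theory for twisted $k$-differentials developed in \cite{BCGGM3}, so the plan is to isolate the relevant local model at the single node and to show that the global obstruction to smoothing reduces to the stated dichotomy. First I would recall the local picture: near the node, after choosing standard coordinates as in~\eqref{eq:standard_coordinates}, $\eta_1$ has a zero of order $m_1 := \ord_{q_1}\eta_1 > -k$ at $q_1$ and $\eta_2$ has a pole of order $m_2 := \ord_{q_2}\eta_2 < -k$ at $q_2$, with $m_1 + m_2 = -2k$ by the matching-of-orders condition~(1). Since $m_2 < -k$, the $k$-residue $\Resk_{q_2}\eta_2$ is the meaningful invariant on the polar side, while on the other side the local form $z^{m_1}(dz)^k$ carries no residue. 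The plumbing construction glues the flat structures across the node; the only genuine period constraint that can obstruct the existence of a $k$-th root of the glued differential along the vanishing cycle is the one coming from the residue of $\eta_2$, because the presence of a nonzero $k$-residue forces a nontrivial twist whose compatibility with a global $k$-th root of $\eta_1$ on $X_1$ is exactly what must be checked.

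The heart of the argument is the case $\Resk_{q_2}\eta_2 \neq 0$. Here I would argue that the local $k$-th root of $\eta_2$ near the node has a logarithmic term, so the holonomy of the flat structure around the vanishing cycle is a translation by a nonzero vector (a $k$-th root of the $k$-residue), composed possibly with a rotation by a multiple of $2\pi/k$. Smoothing this node to a cylinder requires building, on $X_1$, a flat structure whose $k$-th root has matching holonomy along the corresponding loop. This is possible precisely when $\eta_1$ is \emph{not} the $k$-th power of a holomorphic abelian differential: if $\eta_1 = \omega^k$ with $\omega$ holomorphic, then every loop on $X_1$ has trivial translation holonomy for the canonical $k$-th root (all periods of $\omega$ are the relevant obstruction, and a holomorphic $\omega$ has no residues at all, forcing the translation part to vanish), which is incompatible with the nonzero translation imposed by the node; conversely, if $\eta_1$ is not such a power — either because it is not a $k$-th power at all, or because it is the $k$-th power of a differential with a genuine pole — one can deform within the stratum of $\eta_1$ (using the extra parameters guaranteed by non-primitivity or by the presence of a pole) to produce a $k$-th root with the prescribed holonomy, and then plumb. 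This is where I expect the main technical obstacle: making precise that ``$\eta_1$ not a $k$-th power of a holomorphic form'' is exactly the condition giving enough freedom in the period map to absorb the holonomy coming from the node, and conversely that being such a power rigidly kills it.

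For the refinement about preserving the $k$-residues of $\eta_1$, the same analysis applies but now one is not allowed to move the $k$-residues of $\eta_1$ during the deformation. If $\eta_1$ is the $k$-th power of a \emph{meromorphic} abelian differential $\omega$, then the translation holonomies of the canonical $k$-th root along loops are determined, up to scaling, by the residues of $\omega$ at its poles, i.e.\ by the $\Resk$ of $\eta_1$; fixing those residues leaves no room to create the new translation holonomy demanded by the node, so smoothing without changing $\Resk(\eta_1)$ is obstructed. If $\eta_1$ is not the $k$-th power of any meromorphic abelian differential (in particular if it is not a $k$-th power at all, or is the $k$-th power of a differential that is not meromorphic-abelian in the required sense), then the monodromy of the local $k$-th root around a suitable loop is already nontrivial and can be matched with that coming from $q_2$ without touching the $k$-residues. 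I would also note, as a trivial sub-case common to both parts, that when $\Resk_{q_2}\eta_2 = 0$ the local $k$-th root of $\eta_2$ near $q_2$ has no logarithmic term, the holonomy around the vanishing cycle is a pure rotation, and the plumbing goes through unconditionally (and a fortiori without altering the residues of $\eta_1$), which disposes of condition~(i) in both lists. Throughout, I would lean on \cite[Section~3 and the smoothing criterion]{BCGGM3}, quoting the relevant theorem rather than reproving the plumbing estimates.
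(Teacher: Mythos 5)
Your plan follows the same route the paper takes: the paper gives no proof of this lemma, merely citing \cite{BCGGM3} (Théorème~1.5 and Lemme~4.4, with the remark that the second part follows by combining their proofs), and your reduction to the global $k$-residue condition via the canonical cover and the residue theorem on $X_1$ is exactly that argument specialized to a two-component curve with one node. One small imprecision to fix: when $\eta_1=\omega^k$ with $\omega$ holomorphic, the obstruction is not that \emph{every} loop on $X_1$ has trivial translation holonomy (a holomorphic $\omega$ has nontrivial periods in general), but that the vanishing cycle bounds the piece degenerating to $X_1$, so its period equals $2i\pi$ times the sum of the residues of $\omega$ there, which vanishes and hence forces $\Resk_{q_2}\eta_{2}=0$.
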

Remarquons que la deuxième partie du lemme n'est pas explicitement prouvée dans \cite{BCGGM3}. Toutefois, cela peut se montrer sans problèmes en combinant la preuve du théorème~1.5 et le lemme~4.4 de \cite{BCGGM3}.

Maintenant, nous donnons deux applications cruciales du lemme~\ref{lem:lissdeuxcomp}.

\begin{prop}[Éclatement d'un zéro]\label{prop:eclatZero}
Soient $(X,\xi)$ une $k$-différentielle de type $\mu$ et $z_{0}\in X$ un zéro d'ordre $a_{0}>-k$ de $\xi$. Soit $(\alpha_{1},\dots,\alpha_{t})$ un $t$-uplet d'entiers strictement supérieurs à $-k$ tel que $\sum_{i}\alpha_{i}=a_{0}$. 

Il existe une opération sur $(X,\xi)$ en $z_{0}$ qui fournit une  $k$-différentielle $(X',\xi')$ de type $(\alpha_{0},\dots,\alpha_{t},\mu\setminus\lbrace a_{0}\rbrace)$ qui ne modifie pas les $k$-résidus de $\xi$ si et seulement si  l'une des  deux conditions suivantes est vérifiée.
  \begin{enumerate}[i)]
  \item $\xi$ n'est pas la puissance $k$-ième d'une différentielle abélienne méromorphe.
  \item Il existe une $k$-différentielle de genre zéro (éventuellement non primitive) et de type $(\alpha_{1},\dots,\alpha_{t};-a_{0}-2k)$ dont le $k$-résidu au pôle d'ordre $-a_{0}-2k$ est nul. 
 \end{enumerate} 
De plus, si $\xi=\omega^{k}$ avec $\omega$ une différentielle abélienne méromorphe, alors la $k$-différentielle~$\xi'$ est primitive si et seulement si  $\pgcd(\alpha_{i},d)=1$. 
\end{prop}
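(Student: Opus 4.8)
The strategy is to realize the splitting operation as the smoothing of a suitable two-component twisted $k$-differential, and then invoke Lemme~\ref{lem:lissdeuxcomp}. First I would build the stable curve $X' \rightsquigarrow X$ obtained by attaching to $X$, at the point $z_0$, a rational bridge component $X_2 \cong \PP^1$ carrying the marked points that will become the new zeros $\alpha_1,\dots,\alpha_t$; the node identifies $q_1 \in X_1 = X$ with $q_2 \in X_2$. On $X_1$ we keep $\eta_1 := \xi$, which by hypothesis has a zero of order $a_0 > -k$ at $q_1 = z_0$; matching orders forces $\ord_{q_2}\eta_2 = -2k - a_0 < -k$, so we need a $k$-differential $\eta_2$ on $\PP^1$ of type $(\alpha_1,\dots,\alpha_t; -a_0 - 2k)$. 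Such an $\eta_2$ exists because the prescribed orders sum to $\sum_i \alpha_i - (a_0 + 2k) = a_0 - a_0 - 2k = -2k = k(2 \cdot 0 - 2)$, which is exactly the constraint for genus zero; one writes it explicitly as a product $\prod (z - z_i)^{\alpha_i}(dz)^k$ as in the proof of Lemme~\ref{lem:puissk}. The pair $\{\eta_1, \eta_2\}$ then satisfies the vanishing, order-matching, and (vacuous here, since the node order at $q_2$ is $< -k$) residue-matching axioms (0)--(2), so it is a $k$-differentiel entrelacé of type $(\alpha_1,\dots,\alpha_t, \mu \setminus \{a_0\})$.

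Next I would apply Lemme~\ref{lem:lissdeuxcomp} in the configuration $\ord_{q_1}\eta_1 = a_0 > -k > \ord_{q_2}\eta_2$. Its second part says the smoothing can be performed without modifying the $k$-residues of $\eta_1 = \xi$ precisely when either $\Resk_{q_2}\eta_2 = 0$ or $\eta_1$ is not a $k$-th power of a meromorphic abelian differential. This is almost the statement to be proved, modulo the freedom in choosing $\eta_2$: condition (i) of the proposition ($\xi$ not a $k$-th power of an abelian differential) directly triggers alternative (ii) of the lemma for \emph{any} choice of $\eta_2$, while condition (ii) of the proposition is exactly the assertion that one \emph{can} choose $\eta_2 \in \Omega^k\moduli[0](\alpha_1,\dots,\alpha_t; -a_0-2k)$ with vanishing $k$-residue at its unique pole, triggering alternative (i). Conversely, if $\xi = \omega^k$ and no such zero-residue $\eta_2$ exists, then for every admissible $\eta_2$ the node at $q_2$ has nonzero $k$-residue and both alternatives of the lemma fail, so no residue-preserving smoothing exists; this gives the "only if" direction. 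One should note that the resulting smoothed differential $(X',\xi')$ has the claimed type because order-matching at the node produces, after smoothing, a zero structure on $X'$ whose divisor restricted to the old part of $X'$ is that of $\xi$ away from $z_0$ and whose new zeros are the $\alpha_i$.

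Finally, for the primitivity statement, suppose $\xi = \omega^k$ with $\omega$ a meromorphic abelian differential, so $\xi$ is $d$-divisible for $d = k$ at least; more generally write $\xi = \zeta^{d}$ with $\zeta$ a primitive $(k/d)$-differential. The new differential $\xi'$ agrees with $\xi$ outside a neighborhood of $z_0$ and is modified on the inserted region according to the $\PP^1$-piece $\eta_2 = \prod(z-z_i)^{\alpha_i}(dz)^k$; by Lemme~\ref{lem:puissk} the largest $e$ for which $\eta_2$ is an $e$-th power of an $(k/e)$-differential is $e = \pgcd(\alpha_1,\dots,\alpha_t, k)$. After smoothing, $\xi'$ is a $d'$-th power for $d' = \pgcd(d, e) = \pgcd(\alpha_i, d)$, since the global divisibility of a $k$-differential is the gcd of the local ones over a connected surface and the inserted orders $\alpha_i$ are the only new constraint. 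Hence $\xi'$ is primitive iff $\pgcd(\alpha_i, d) = 1$. The main obstacle in this argument is the careful bookkeeping in the "only if" direction of the equivalence: one must verify that \emph{no} admissible choice of the rational piece $\eta_2$ can carry a vanishing $k$-residue when condition (ii) fails — but this is simply the negation of condition (ii) itself, so the difficulty is notational rather than substantive; the genuine content all sits in Lemme~\ref{lem:lissdeuxcomp}, which we are entitled to use.
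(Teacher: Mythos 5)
Votre démonstration est correcte et suit essentiellement la même voie que celle de l'article, qui se contente d'attacher en $z_{0}$ une droite projective portant une $k$-différentielle de type $(\alpha_{1},\dots,\alpha_{t};-a_{0}-2k)$ puis d'invoquer le lemme~\ref{lem:lissdeuxcomp} ; vous ne faites qu'expliciter la vérification des axiomes de différentielle entrelacée, la traduction des deux alternatives du lemme en les conditions (i) et (ii), et l'argument de primitivité. Seule petite réserve : l'affirmation selon laquelle la divisibilité globale d'une $k$-différentielle est le pgcd des ordres locaux n'est pas littéralement vraie en général, mais la conclusion de primitivité tient ici car la différentielle entrelacée elle-même est une puissance $e$-ième lorsque $e=\pgcd(\alpha_{i},d)>1$.
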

\begin{proof}
 Partons de $(X,\xi)$. On forme une différentielle entrelacée en attachant au point~$z_{0}$ une droite projective avec une différentielle ayant les ordres souhaités. Le lemme~\ref{lem:lissdeuxcomp} implique facilement la proposition~\ref{prop:eclatZero}.
\end{proof}

La seconde construction nous permettra en particulier de faire une récurrence sur le genre des surfaces de Riemann.

\begin{prop}[Couture d'anse]\label{prop:attachanse}
 Soient $(X,\xi)$ une $k$-différentielle (primitive) dans la strate $\Omega^{k}\mathcal{M}_{g}(\mu)$ et $z_{0}\in X$ un zéro d'ordre $a_{0}$ de $\xi$. 
 Il existe une opération locale à $z_{0}$ qui produit une $k$-différentielle $(X',\xi')$ dans la strate $\Omega^{k}\mathcal{M}_{g+1}(a_{0}+2k,\mu\setminus\left\{a_{0}\right\})$. 
\end{prop}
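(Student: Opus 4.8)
The plan is to realize the handle-attachment as the smoothing of an interlaced $k$-differential with two components. Starting from $(X,\xi)$ with a zero $z_0$ of order $a_0$, I would form a nodal curve $X_0 = X \cup_q E$ where $E$ is an elliptic curve (a genus-one component) attached to $X$ by identifying a point $q_1 \in X$ near $z_0$ with a point $q_2 \in E$. On $X$ I keep $\xi$ but with an extra pole introduced at $q_1$; more precisely I want the interlaced differential to have $\ord_{q_1}\eta_1 = -k-1$... no — the clean choice is to put on $E$ a $k$-differential $\eta_2$ with a single zero of order $a_0 + 2k$ at a marked point $z_0'$ (which will become the new zero promised by the statement) and a pole of order $-(a_0+2k) - \text{something}$ at $q_2$; matching orders via condition~(1) of the definition of interlaced differential forces $\ord_{q_1}\eta_1 + \ord_{q_2}\eta_2 = -2k$. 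Since $\ord_{z_0}\xi = a_0 \geq 0 > -k$, I can instead arrange $\ord_{q_1}\eta_1 = a_0$ and then I genuinely need a genus-one component carrying a $k$-differential with a zero of order $a_0+2k$ at $z_0'$ and a pole of order $-a_0-2k$ at $q_2$: such a differential exists on any elliptic curve for suitable placement of the two points since $(a_0+2k) + (-a_0-2k) = 0 = k(2\cdot 1 - 2)$, and one checks the two marked points can be chosen so that this divisor is $k$-canonical (the torsion condition $(a_0+2k)(z_0' - q_2) \equiv 0$ in $\Pic^0(E)$, which is solvable).

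Next I would invoke Lemma~\ref{lem:lissdeuxcomp} to smooth this two-component interlaced differential. Here $\ord_{q_1}\eta_1 = a_0 > -k > -a_0-2k = \ord_{q_2}\eta_2$ (using $a_0 \geq 0$, hence $-a_0-2k \leq -2k < -k$), so the lemma applies in the stated regime. The lemma's smoothability criterion requires either that $\Resk_{q_2}\eta_2 = 0$ or that $\eta_1$ is not a $k$-th power of a holomorphic $1$-form. I would handle this by choosing $\eta_2$ on $E$ so that its $k$-residue at the pole $q_2$ vanishes: since the pole order $-a_0-2k$ is strictly less than $-k$ (and we are free to also tune whether $k \mid a_0+2k$), the $k$-residue is a free parameter that can be set to zero — this is exactly the kind of freedom used in Proposition~\ref{prop:eclatZero}. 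With $\Resk_{q_2}\eta_2 = 0$, smoothability holds unconditionally, and the smoothing produces a $k$-differential $(X',\xi')$ on a curve of genus $g+1$ (one node smoothed adds $1$ to the genus) with zero/pole pattern obtained from $\mu$ by replacing $a_0$ with the zero of order $a_0+2k$ inherited from $z_0'$; all other singularities of $\xi$ on $X$ are untouched, so $(X',\xi') \in \komoduli[g+1](a_0+2k, \mu\setminus\{a_0\})$ as claimed. Primitivity of $\xi'$ follows because $\xi$ was primitive and the smoothing is local near $z_0$: a non-primitive $\xi'$ would have to be a global $d$-th power for some $d\mid k$, which restricts on the complement of the surgery region to a $d$-th power agreeing with $\xi$ there, contradicting primitivity of $\xi$ — alternatively, by the second part of Lemma~\ref{lem:lissdeuxcomp} one arranges the smoothing not to disturb this.

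The main obstacle I anticipate is the construction of the genus-one piece $\eta_2$ with the prescribed orders \emph{and} vanishing $k$-residue at $q_2$ — one must verify simultaneously that the divisor $(a_0+2k)[z_0'] - (a_0+2k)[q_2]$ can be made $k$-canonical on some elliptic curve (a torsion condition, always satisfiable by choosing $E$ and the points appropriately) and that among the $k$-differentials with this divisor there is one whose local form at $q_2$ in the coordinates of \eqref{eq:standard_coordinates} has $t = 0$. This is precisely condition (ii) of Proposition~\ref{prop:eclatZero} applied on the elliptic component, or can be built by hand from the flat-surface bricks of Section~\ref{sec:briques} (a $k$-part polaire of order $a_0+2k$ with vanishing residue glued to a genus-one flat piece). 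Once this local model is in hand, everything else is a direct application of the lissage lemmas, so I would budget most of the writing on making the elliptic model explicit and checking its residue vanishes.
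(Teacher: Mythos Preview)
Your approach is the paper's: attach to $z_0$ an elliptic curve carrying a $k$-differential of type $(a_0+2k;\,-a_0-2k)$, then smooth the resulting two-component interlaced differential via Lemma~\ref{lem:lissdeuxcomp}. The paper's proof is two sentences.

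The detour in your plan is in verifying the smoothability hypothesis of Lemma~\ref{lem:lissdeuxcomp}. You choose to satisfy condition~(i) by engineering $\Resk_{q_2}\eta_2 = 0$ on the elliptic piece, and you flag this as the main obstacle. But the hypothesis is that $(X,\xi) \in \komoduli(\mu)$, and by the paper's conventions this stratum parametrizes \emph{primitive} $k$-differentials; hence $\xi = \eta_1$ is not the $k$-th power of any $1$-form, so condition~(ii) of Lemma~\ref{lem:lissdeuxcomp} holds automatically. No constraint on the residue of $\eta_2$ is needed, and your ``main obstacle'' disappears: any $k$-differential of type $(a_0+2k;\,-a_0-2k)$ on some elliptic curve will do, which is the routine torsion argument you already sketch. (Minor slip: you invoke $a_0 \geq 0$, but the correct hypothesis is only $a_0 > -k$, the paper's convention for a z\'ero; the inequality $-a_0-2k < -k$ still follows.)
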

\begin{proof}
 Partons de $(X,\xi)$. On forme une différentielle entrelacée en attachant au point $z_{0}$ une courbe elliptique avec une différentielle de type $(a_{0}+2k;-a_{0}-2k)$. Le lemme~\ref{lem:lissdeuxcomp} permet de conclure.
\end{proof}

\smallskip
\par
Il est à noter que dans le cas des différentielles quadratiques, l'éclatement d'un zéro peut ne pas être réalisable de manière locale (voir \cite{MZ}).
Nous pouvons caractériser tous les cas où l'éclatement d'une singularité conique n'est pas possible localement.
\begin{prop}\label{prop:eclatintro}
Soient $\xi$ une $k$-différentielle avec $k\geq1$ et $z_{0}$ une singularité d'ordre $a_{0}>-k$ de~$\xi$.
Le scindage de $z_{0}$ en $t$ singularités d'ordres  $(\alpha_{1},\dots,\alpha_{t})$ avec $a_{0}=\sum \alpha_{i}$ et $\alpha_{i}>-k$ n'est pas {\em local} précisément lorsque $k\geq2$, $t=2$, $k\mid a_{0}$ et  $k\nmid\pgcd(\alpha_{i})$.
\end{prop}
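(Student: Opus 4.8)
Le plan est de ramener l'énoncé au critère résiduel de la proposition~\ref{prop:eclatZero}. L'opération construite dans la preuve de celle-ci — attacher en $z_0$ une droite projective munie d'une $k$-différentielle de genre zéro de type $(\alpha_1,\dots,\alpha_t;-a_0-2k)$, puis lisser le n\oe{}ud via le lemme~\ref{lem:lissdeuxcomp} — est réalisée au voisinage de $z_0$ ; réciproquement, toute opération locale en $z_0$ laisse les pôles inchangés, donc préserve leurs $k$-résidus. Ainsi, pour $(X,\xi)$ fixé, le scindage est réalisable {\em localement} si et seulement si la condition (i) ou la condition (ii) de la proposition~\ref{prop:eclatZero} est satisfaite. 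La condition (ii) ne dépend que du motif $(a_0;\alpha_1,\dots,\alpha_t)$, tandis que la condition (i) est vérifiée par {\em toute} $k$-différentielle ayant un zéro d'ordre $a_0$ si et seulement si $k\nmid a_0$ (si $k\mid a_0$, la $k$-différentielle $z^{a_0}(dz)^{k}$ sur $\PP^{1}$, puissance $k$-ième de $z^{a_0/k}\,dz$, est un témoin du contraire). Le scindage n'est donc {\em pas local} exactement lorsque $k\mid a_0$ et que la condition (ii) est en défaut, c'est-à-dire lorsque $k\mid a_0$ et qu'aucune $k$-différentielle de genre zéro de type $(\alpha_1,\dots,\alpha_t;-a_0-2k)$ n'a un $k$-résidu nul au pôle.

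Je supposerais désormais $k\mid a_0$, ce qui entraîne $a_0\geq 0$ puisque $a_0>-k$ ; je pose $\ell:=(a_0+2k)/k\geq 2$, et je rappelle que les $\alpha_i$, ordres de singularités coniques, sont non nuls. Deux cas se traitent immédiatement. Si $k=1$, une différentielle abélienne de genre zéro n'ayant qu'un pôle a un résidu nul par le théorème des résidus : (ii) est vérifiée. Si $k\mid\pgcd(\alpha_1,\dots,\alpha_t)$ (en particulier si $t=1$), la $k$-différentielle $\bigl(\prod_i(z-z_i)^{\alpha_i/k}\,dz\bigr)^{k}$ sur $\PP^{1}$ est de type $(\alpha_1,\dots,\alpha_t;-a_0-2k)$ et, par~\eqref{eq:multiplires} et le théorème des résidus, son $k$-résidu au pôle est nul : (ii) est encore vérifiée. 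Ainsi, si (ii) est en défaut, on a $k\geq 2$ et $k\nmid\pgcd(\alpha_i)$ ; comme $\sum\alpha_i\equiv 0\pmod k$, au moins deux des $\alpha_i$ ne sont pas divisibles par $k$, et, quitte à renuméroter, on peut supposer $k\nmid\alpha_1$ et $k\nmid\alpha_2$.

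Le c\oe{}ur de la preuve est l'analyse du $k$-résidu au pôle de la $k$-différentielle $\zeta=\prod_{i=1}^{t}(z-z_i)^{\alpha_i}(dz)^{k}$ sur $\PP^{1}$, de pôle d'ordre $-a_0-2k$ en $\infty$ : dans la coordonnée $w=1/z$, un calcul direct montre que ce $k$-résidu est nul si et seulement si le polynôme $P(z_1,\dots,z_t)$, défini comme le coefficient de $w^{\ell-1}$ dans $\prod_i(1-z_iw)^{\alpha_i/k}$, s'annule. Pour $t=2$, on peut, à changement de coordonnée projective (plaçant le pôle en $\infty$) et homothétie près, prendre $z_1=0$ et $z_2=1$, d'où $P=\binom{\alpha_2/k}{\ell-1}(-1)^{\ell-1}$, non nul car $\alpha_2/k\notin\ZZ$ et $\ell-1\geq 1$ ; toute $k$-différentielle de ce type a donc un $k$-résidu non nul au pôle et (ii) est en défaut. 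Pour $t\geq 3$, $P$ est un polynôme homogène non nul de degré $\ell-1$ (ses coefficients en $z_1^{\ell-1}$ et en $z_2^{\ell-1}$ valent $\binom{\alpha_1/k}{\ell-1}(-1)^{\ell-1}$ et $\binom{\alpha_2/k}{\ell-1}(-1)^{\ell-1}$, non nuls), et je montrerais qu'il n'est pas un multiple scalaire d'un produit de formes linéaires $z_i-z_j$ : un tel produit compterait $\ell-1$ facteurs, tous de la forme $z_1-z_j$ (vu le degré en $z_1$), puis tous égaux à $z_1-z_2$ (vu le degré en $z_2$), si bien que $P=c(z_1-z_2)^{\ell-1}$ serait indépendant de $z_3$ ; or le coefficient de $z_3$ dans $P$ vaut $-\tfrac{\alpha_3}{k}\,h$, où $h$, le coefficient de $w^{\ell-2}$ dans $\prod_{i\neq 3}(1-z_iw)^{\alpha_i/k}$, est non nul (car $(1-z_1w)^{\alpha_1/k}$ est une série infinie) et $\alpha_3\neq 0$ — contradiction. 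Le lieu $\{P=0\}$ rencontre donc l'ouvert où les $z_i$ sont distincts, ce qui produit la $k$-différentielle cherchée et établit (ii). En résumé, (ii) est en défaut si et seulement si $k\geq 2$, $t=2$ et $k\nmid\pgcd(\alpha_i)$, ce qui est l'énoncé.

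L'obstacle principal me semble être cette dernière étape pour $t\geq 3$ : établir que $P$ n'est pas, à un scalaire près, un produit de facteurs $z_i-z_j$ — autrement dit que le $k$-résidu peut être annulé en gardant les $z_i$ distincts — demande un peu de comptabilité sur les degrés partiels de $P$ ; l'équivalence « local $\Leftrightarrow$ (i) ou (ii) » et le calcul du cas $t=2$ sont plus routiniers.
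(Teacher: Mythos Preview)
Your argument is correct and follows a genuinely different route from the paper. After the same reduction via Proposition~\ref{prop:eclatZero} to the question of whether $0$ lies in the image of $\appresk[0](\alpha_1,\dots,\alpha_t;-a_0-2k)$, the paper simply invokes Theorems~\ref{thm:g0sing3} and~\ref{thm:g0sing2} for $k\geq 3$ (proved later in the paper) together with the companion article \cite{getaquad} for $k=2$; the proposition is thus presented as a corollary of the central classification results rather than as an independent fact.

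You instead give a self-contained computation: writing every genus-zero $k$-differential of this type as $\prod_i(z-z_i)^{\alpha_i}(dz)^k$ and taking a local $k$-th root at~$\infty$, you identify the vanishing of the $k$-residue with that of the homogeneous polynomial $P(z_1,\dots,z_t)$ given by the coefficient of $w^{\ell-1}$ in $\prod_i(1-z_iw)^{\alpha_i/k}$. For $t=2$ this is a single nonzero binomial coefficient (since $\alpha_2/k\notin\ZZ$); for $t\geq 3$ your degree-counting argument shows $P$ cannot be a product of factors $z_i-z_j$, whence $\{P=0\}$ meets the locus of distinct~$z_i$. This treats all $k\geq 2$ uniformly, avoids the forward references and the appeal to \cite{getaquad}, and makes the proposition logically independent of the main theorems. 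The paper's approach is of course shorter once those theorems are available.
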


\begin{proof}
L'éclatement d'un zéro d'ordre $a>-k$ d'une $k$-différentielle~$\xi$ en $n$ zéros d'ordres $(a_{1},\dots,a_{n})$ correspond au lissage d'une $k$-différentielle entrelacée. Cette $k$-différentielle entrelacée est constituée d'une $k$-différentielle $\xi_{0}$ sur $\PP^{1}$ avec  des zéros d'ordres $(a_{1},\dots,a_{n})$ et un pôle d'ordre $-a-2k$ attachée au zéro d'ordre $a$ de~$\xi$ (voir la proposition~\ref{prop:eclatZero}). Cette $k$-différentielle entrelacée est lissable localement si et seulement tous les $k$-résidus de $\xi_{0}$ sont nuls. L'éclatement est possible localement si et seulement s'il existe une $k$-différentielle de genre zéro de type $(a_{1},\dots,a_{n};-2k-a)$ dont tous les $k$-résidus sont nuls. La proposition~\ref{prop:eclatintro} est alors une conséquence directe des théorèmes~\ref{thm:g0sing3} et \ref{thm:g0sing2}, ainsi que de l'équation~\eqref{eq:multiplires} dans le cas $k\geq3$. Le cas $k=1$ se déduit du théorème des résidus. Enfin le cas $k=2$ peut se déduire des théorèmes~1.3 et~1.7 de \cite{getaquad}. Toutefois ce cas a été considéré dans les sections~5 et~6 de \cite{MZ} dans le cas de deux zéros et le cas avec trois zéros (dont deux sont impairs) dans \cite{Boissy09} (voir en particulier la figure~8). Cette dernière figure se généralise facilement pour obtenir le résultat pour un nombre quelconque de zéros.
\end{proof}

Nous terminons cette section par un résultat qui sera utile pour éclater les zéros d'une $k$-différentielle en genre zéro. 

\begin{lem}\label{lem:combiscindage}
Pour $k \geq 3$ et $n \geq 2$, on considère une famille de $n$ entiers $a_{1},\dots,a_{n}$ telle que :
\begin{itemize}
    \item[i)] $a_{i}>-k$ pour tout $1 \leq i \leq n$;
    \item[ii)] $\pgcd(a_{1},\dots,a_{n})=1$;
    \item[iii)] $\sum a_{i} =kL$ où $L \geq -1$.
\end{itemize}
\smallskip
Pour tout $d$ tel que $2 \leq d \leq n$, on peut alors partitionner les $(a_{1},\dots,a_{n})$ en $d$ sous-ensembles $E_{1},\dots,E_{d}$ non vides de sommes $A_{1},\dots,A_{d}$ tels que :
\begin{itemize}
    \item[i)] $A_{1},\dots,A_{d} >-k$;    
    \item[ii)] $\pgcd(A_{1},\dots,A_{d}) \notin \lbrace{ \frac{k}{2};k \rbrace}$.
\end{itemize}
On appelle \textit{admissible} une telle partition.
\end{lem}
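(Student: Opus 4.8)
The plan is to induct on $n$, with the base case $n=2$ (forced $d=2$) being essentially trivial: the only partition is $E_1=\{a_1\}$, $E_2=\{a_2\}$, both parts exceed $-k$ by hypothesis, and $\pgcd(a_1,a_2)=1$ by hypothesis i), so condition ii) holds since $1\notin\{k/2,k\}$ for $k\geq 3$. For the inductive step, fix $2\leq d\leq n$ with $n\geq 3$. The natural idea is to first fuse two of the $a_i$ into a single block and apply the inductive hypothesis to the resulting family of $n-1$ integers with $d$ parts (when $d\leq n-1$), or to handle $d=n$ separately.

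First I would treat the case $d=n$: here each $E_i$ is a singleton, $A_i=a_i$, conditions i) and ii) for the partition coincide with $\pgcd(a_1,\dots,a_n)=1\notin\{k/2,k\}$, which is given. So assume $2\leq d\leq n-1$. Pick indices $i\neq j$ and replace $a_i,a_j$ by their sum $a_i+a_j$; since $a_i+a_j>-2k$ this may fail the constraint $>-k$, so I would instead be more careful: choose the two elements to merge so that the merged sum still exceeds $-k$. If all $a_i>0$ this is automatic; otherwise there is at most a controlled number of negative entries (each in $\{-1,\dots,-k+1\}$), and since $\sum a_i=kL\geq -k$, one can always find a pair whose sum is $>-k$ — e.g.\ merge a negative entry with a sufficiently large positive one, or two entries one of which compensates the other. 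The merged family $(a_1,\dots,\widehat{a_i},\dots,\widehat{a_j},\dots,a_n,a_i+a_j)$ still has gcd dividing the original gcd, but the gcd might now jump (e.g.\ $a_i+a_j$ shares a factor with the rest). This is the main obstacle: preserving i) $\pgcd=1$ under merging.

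To circumvent the gcd issue, the cleaner route — and the one I expect the authors take — is: by hypothesis i) there exist two elements, say after reindexing $a_1,a_2$, with $\pgcd(a_1,a_2)=g_{12}$ and $\pgcd(g_{12},a_3,\dots,a_n)=1$; more usefully, one can choose the singleton blocks among a subset realizing $\pgcd=1$. Concretely: choose a minimal sub-collection $S\subseteq\{a_1,\dots,a_n\}$ with $\pgcd(S)=1$; then $|S|\leq $ (number of distinct primes dividing any $a_i$) and in particular $|S|\leq n$. Split into $d$ parts so that $d-1$ of the parts are singletons drawn from $S$ (possible once $|S|\geq d-1$; if $|S|<d-1$, pad with further singletons from $S$ first, then distribute), and dump everything else into the last part $E_d$; after ensuring each $A_i>-k$ by the merging argument above applied only inside $E_d$ (which has no gcd constraint of its own since $\pgcd(S)=1$ already forces $\pgcd(A_1,\dots,A_d)=1$), condition ii) is automatic. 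The remaining work is the bookkeeping to guarantee $A_d>-k$: here one uses $\sum a_i=kL\geq -k$ together with the fact that the singleton parts pulled out of $S$ can be chosen among the positive elements whenever negatives are present, so that $A_d=\sum a_i-\sum(\text{singletons})$ stays $>-k$; the few degenerate configurations (all entries negative, or $L=-1$ with many negatives) are small and checked by hand. The genuinely delicate point throughout is exactly this interplay between the lower-bound constraint $A_i>-k$ and the gcd constraint, and I would organize the proof so that the gcd is secured first (by fixing which elements become singletons) and the lower bound is then arranged by a local rearrangement confined to the leftover block.
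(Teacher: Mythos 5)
Your reduction to the cases $d=n$ and $n=2$ is fine, but the core of your argument for $2\le d\le n-1$ contains a genuine gap: the parenthetical claim that «~$\pgcd(S)=1$ already forces $\pgcd(A_{1},\dots,A_{d})=1$~» is false when $S$ is only partially split into singletons. The gcd of the \emph{part-sums} can pick up common factors that no sub-collection of individual elements has. Concretely, take $k=4$ and $(a_{1},a_{2},a_{3})=(2,3,15)$, $d=2$: the sum is $20=4\cdot 5$, the gcd is $1$, and $S=\{2,3\}$ is a minimal sub-collection of gcd $1$; but choosing the singleton $\{2\}$ gives $(A_{1},A_{2})=(2,18)$ with $\pgcd=2=k/2$, which is forbidden. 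Your recipe does not tell you to avoid this choice, and for $d=2$ it essentially never applies anyway (you would need $|S|\le d-1=1$, i.e.\ some $a_{i}=\pm1$, for all of $S$ to be absorbed into singleton parts). The missing idea — which is exactly what the paper's proof turns on — is that since $\pgcd(A_{1},\dots,A_{d})$ divides each $A_{i}$, it suffices to manufacture \emph{one} part whose sum is not a multiple of $k/2$; conditions i) and ii) then only have to be checked for that part and for the complement. Your handling of the constraint $A_{d}>-k$ («~checked by hand~») is also left vague at the points where it actually interacts with the choice of singleton.

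For comparison, the paper reduces in the opposite direction: it goes from $d$ to $d+1$ parts by splitting the \emph{smallest} element off a non-singleton block (smallness guarantees the remaining block still sums to more than $-k$), so that everything rests on the case $d=2$. There, one block is taken to be a single element $a_{i}\notin\tfrac{k}{2}\mathbb{Z}$: a negative $a_{i}\ne -k/2$ if one exists (then $A_{2}=kL-a_{i}>kL\ge -k$); otherwise all $a_{j}\in\{-k/2\}\cup\mathbb{N}^{*}$, and either all are non-negative and the gcd-$1$ hypothesis supplies a non-multiple of $k/2$ to isolate, or one first merges each $-k/2$ with a positive non-multiple of $k/2$ (the merged value stays $>-k$ and $\notin\tfrac{k}{2}\mathbb{Z}$) until one of the previous cases applies. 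You would need to replace your «~minimal gcd-$1$ subset~» device by this kind of single-element (or merged-pair) selection for the proof to go through.
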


\begin{proof}
Il est suffisant de démontrer la propriété pour $d=2$. En effet, supposons une partition admissible en $d$ sous-ensembles. Si $d<n$, alors l'un des sous-ensembles (disons~$E_{1}$) n'est pas un singleton. Considérons l'élément le plus petit de $E_{1}$ (disons $a_{1}$). Si l'on découpe~$E_{1}$ entre un singleton contenant $a_{1}$ et un sous-ensemble les autres éléments, nous obtenons une nouvelle partition de $(a_{1},\dots,a_{n})$ qui sera également admissible. Ainsi, si la propriété est valide pour $d=2$, elle le sera pour chaque $d$ entre $2$ et $n$.

Dans ce qui suit, nous démontrons la propriété pour $d=2$. Tout d'abord, nous traitons le cas où il existe un élément $a_{i}<0$ tel que $a_{i} \neq -\tfrac{k}{2}$. Posons $\lbrace a_{i} \rbrace$ comme premier sous-ensemble. Nous avons donc $A_{1}=a_{i}$ et $A_{2}=kL-a_{i}$. Il apparaît que $A_{1},A_{2} \notin \frac{k}{2}\mathbb{Z}$ et $A_{1},A_{2}>-k$. 

Nous supposerons donc à partir de maintenant que les $a_{j}$ appartiennent à $\lbrace -\frac{k}{2} \rbrace \cup \mathbb{N}^{\ast}$. Supposons que  $a_{j}\geq0$ pour tout $j$. Comme le pgcd des $a_{j}$ est $1$, il existe au moins un  élément $a_{i} \notin \frac{k}{2}\mathbb{Z}$. Il suffit de choisir comme premier sous-ensemble $\lbrace a_{i} \rbrace $ et comme second sous-ensemble les $n-1$ autres éléments pour obtenir la propriété. 

Supposons finalement qu'il existe un élément $a_{i}>0$ non divisible par $\frac{k}{2}$ et un élément $a_{j}=-\frac{k}{2}$. Notons que $a_{i}+a_{j} \notin \frac{k}{2}\mathbb{Z}$ et $a_{i}+a_{j}>-k$. Il suffit donc de répéter cette opération jusqu'à se ramener à l'un des cas précédent.
\end{proof}

Le lemme~\ref{lem:combiscindage} montre que l'on peut toujours obtenir une $k$-différentielle avec $n\geq3$ zéros d'ordres prescrits en éclatant un zéro d'une $k$-différentielle avec $2$ zéros. La $k$-différentielle éclatée n'est pas toujours primitive, mais le lemme assure qu'elle n'est pas la puissance d'une $1$-forme ou d'une différentielle quadratique. Par exemple, les $30$-différentielles de la strate $\Omega^{30}\mathcal{M}_{0}(-3,-5,-22;-30)$ ne peuvent jamais s'obtenir à partir de $30$-différentielles primitives car les trois strates $\Omega^{30}\mathcal{M}_{0}(-3,-27;-30)$, $\Omega^{30}\mathcal{M}_{0}(-5,-25;-30)$ et $\Omega^{30}\mathcal{M}_{0}(-8,-22;-30)$ ne sont pas primitives. 

\subsection{Stratification de résonance et prolongement plat}\label{sec:arrhyp}

Les longueurs des liens-selles des $k$-différentielles des strates $\komoduli[0](a_{1},a_{2};-b_{1},\dots,-b_{p})$  sont proportionnelles à des normes de sommes partielles de racines de $k$-résidus. De fait, la longueur d'un lien-selle fermé est égale à la norme d'une somme partielle. En effet, l'une des deux composantes connexes du complémentaire d'un tel lien-selle est une surface de translation car son intérieur ne contient aucun des deux zéros. L'invariance par déformation de l'intégrale d'une $1$-forme le long d'un chemin fermé dans cette composante implique une relation entre la période de l'unique lien-selle de bord et la somme des résidus entourés. De même, le complémentaire d'un lien-selle reliant les deux zéros est une surface de translation et le théorème des résidus montre que la longueur d'un lien-selle entre les deux zéros est proportionnelle à une somme totale avec un coefficient de proportionnalité égal à la norme d'une différence de deux racines $k$-ièmes de l'unité. La dégénérescence de ces liens-selles correspond donc à l'annulation de certaines de ces sommes. Nous encodons ces conditions dans un arrangement d'hyperplans complexes appelé \textit{arrangement de résonance} dont la projection sur l'espace résiduel $\espresk[0](a_{1},a_{2};-b_{1},\dots,-b_{p})$ définit la \textit{stratification de résonance}.

Notons $W_{k}= \lbrace{ 0\rbrace} \cup \lbrace{ e^{\frac{2ai\pi}{k}}~|~a \in \mathbb{Z}/k\mathbb{Z} \rbrace}$.
Dans $\CC^{p}$ un \textit{hyperplan de résonance} est l'ensemble des points $(r_{1},\dots,r_{p})$ satisfaisant une équation de la forme $\sum\limits_{j=1}^{p} w_{j}r_{j}=0$, avec $w_{j}\in W_{k}$ non tous nuls.
L'ensemble $H_{k,p}$ des hyperplans de résonance définit un arrangement d'hyperplans complexes dans $\CC^{p}$. Un cas particulier de cette structure a été étudié dans \cite{IFRA} pour calculer le nombre d'éléments des fibres isorésiduelles discrètes dans le cas $k=1$.

\begin{defn}\label{defn:stratres}
Soit $R=(R_{1},\dots,R_{p}) \in \espresk[0](a_{1},a_{2};-b_{1},\dots,-b_{p})$ et $r=(r_{1},\dots,r_{p})$ des racines $k$-ièmes des $R_{i}$. L'ensemble des hyperplans de résonance contenant $r$ est $H_{k,p}(R) \subset H_{k,p}$.

La \textit{stratification de résonance de $\espresk[0](a_{1},a_{2};-b_{1},\dots,-b_{p})$} est la projection de l'union des hyperplans de résonance par l'application $(r_{1},\dots,r_{p}) \mapsto (r_{1}^{k},\dots,r_{p}^{k})$. Deux configurations $R,R'$ appartiennent à la même strate de résonance si et seulement si $H_{k,p}(R)=H_{k,p}(R')$.
\par
Notons que l'ensemble $H_{k,p}(R)$ ne dépend pas du choix de $r$.
\end{defn}

Dans chaque strate de résonance, nous définissons une systole résiduelle qui va nous permettre de contrôler les déformations de $k$-différentielles.

\begin{defn}\label{defn:systole}
Soit une configuration $R=(R_{1},\dots,R_{p})$ de $k$-résidus, la {\em systole résiduelle} de $R$ est
 \[ \sigma = \min \left\{ \left| \sum_{i\in I} r_{i} \right| : I\subset \lbrace1,\dots,p \rbrace, r_{i}^{k}=R_{i} \text{ et } \sum_{i\in I} r_{i} \neq 0 \right\}\,.\]
\end{defn}

\begin{prop}\label{prop:systole}
La systole résiduelle varie continûment dans chaque strate de résonance.
\end{prop}

\begin{proof}
Il y a un nombre fini de sommes pondérées de racines et chacune d'elles varie continûment en fonction des $k$-résidus. De plus, par définition, pour tout $I\subset \lbrace1,\dots,p \rbrace$ tel que $\sum_{i\in I} r_{i} \neq 0$, cette somme demeure non nulle tant que l'on reste dans la même strate (par définition). 
\end{proof}

Nous allons maintenant utiliser un prolongement plat qui nous permettra d'obtenir l'existence de $k$-différentielles par déformation.

\begin{cor}\label{cor:defplate}
S'il existe une $k$-différentielle de $\komoduli[0](a_{1},a_{2};-b_{1},\dots,-b_{p})$ dont les résidus appartiennent à une strate de résonance, alors toutes les configurations de cette strate de résonance sont réalisables par une $k$-différentielle de $\komoduli[0](a_{1},a_{2};-b_{1},\dots,-b_{p})$.
\end{cor}

\begin{proof}
Soit $\xi$ une $k$-différentielle dont les résidus sont $R_{1},\dots,R_{p}$. Dans chaque carte de $\komoduli[0](a_{1},a_{2};-b_{1},\dots,-b_{p})$ chaque lien-selle est de longueur proportionnelle à une somme partielle de racines $r_{i}$. La longueur de ces liens-selles est minorée par un multiple de la systole résiduelle (voir définition~\ref{defn:systole}). Comme celle-ci varie continûment et que la dégénérescence d'une différentielle implique que la longueur d'un lien-selle tende vers $0$, on peut déformer $\xi$ dans un voisinage dans la strate de résonance. Comme les strates sont connexes (puisque l'intersection d'une strate avec d'autres strates est un lieu de codimension complexe au moins $1$) on peut déformer $\xi$ en une différentielle ayant n'importe quelle configuration de résidus de la même strate de résonance.
\end{proof}

\subsection{Estimation des sommes de racines}
\label{sec:estimation}

Donnons nous une configuration de $k$-résidus non nuls $R_{1},\dots,R_{s}$. Selon les nécessités des construction géométriques, on souhaite que le choix $r_{1},\dots,r_{s}$ d'une racine $k$-ième pour chacun des résidus permette que la somme de ces racines soit de module grand ou petit.
\par

Nous donnons d'abord une borne inférieure à ces sommes de racines de grand module.

\begin{lem}\label{lem:estimatecos}
Pour $k \geq 3$ et $s \geq 2$, nous considérons des nombres $R_{1},\dots,R_{s} \in \CC^{\ast}$ rangés par ordre décroissant de module et les racines $k$-ièmes $r_{1},\dots,r_{s}$ des $R_{1},\dots,R_{s}$ pour lequel $|\sum r_{i}|$ prend la valeur maximale. Nous avons :
$$ |\sum r_{i}| \geq |r_{1}| + \cos(\pi/k)\sum\limits_{i=2}^{s} |r_{i}|
.
$$
\end{lem}

\begin{proof}
La démonstration se fait par récurrence. L'étape d'initialisation $s=1$ est triviale. Pour l'étape d'hérédité, il suffit de constater qu'il existe une racine $r_{s+1}$ de $R_{s+1}$ dont l'argument diffère de celui $\sum\limits_{i=1}^{s} r_{i}$ d'au plus $\frac{\pi}{k}$. Il s'ensuit que la projection de $r_{s+1}$ sur l'axe de $\sum\limits_{i=1}^{s} r_{i}$ est pondérée d'un coefficient valant au moins $\cos(\pi/k)$.
\end{proof}

Trouver une borne supérieure pour une somme non nulle de telles racines est plus délicat et fait intervenir les entiers de Gauss et d'Eisenstein, i.e. des éléments des réseaux $\ZZ \oplus i\ZZ$ et $\ZZ \oplus \omega\ZZ$ où $\omega=e^{\frac{2i\pi}{3}}$. La preuve est différente selon les valeurs de $k$.

\begin{lem}\label{lem:estimate}
Pour $R_{1},\dots,R_{s} \in \CC^{\ast}$ avec $s \geq 2$, nous notons $r_{\max}= \max(|R_{i}|^{1/k})$.
\par
Il existe un choix de racines $k$-ièmes $r_{1},\dots,r_{s}$ tel que $0 < |\sum r_{i}| < r_{max}$ sauf dans les {\em configurations exceptionnelles} qui suivent pour lesquelles seule une borne plus faible est réalisable.
\par
Si $k=3$ et les configurations sont $\CC^{\ast}\cdot \left(\rec[1][s_{1}],\rec[-1][s_{2}],(\rec[3i\sqrt{3}][t_{1}]),(\rec[-3i\sqrt{3}][t_{2}])\right)$ avec $t_{1}+t_{2} \geq 1$ et $s_{1}-s_{2} \in 3 \ZZ$, il existe des racines $r_{i}$ telles que :
\begin{itemize}
    \item $ 0 < |\sum r_{i}| \leq \sqrt{3} r_{\max}$, 
    si $s_{1}=s_{2}=0$ et $t_{2}-t_{1} \in 3\ZZ$;
    \item $ 0 < |\sum r_{i}| \leq r_{\max}$, sinon.
\end{itemize}
\par
Si $k=4$ et les configurations sont $\CC^{\ast}\cdot(\rec[1][s_{1}],\rec[-4][s_{2}])$ avec $s_{1}$ pair et $s_{2} \geq 1$, il existe des racines $r_{i}$ telles que :
\begin{itemize}
    \item $ 0 < |\sum r_{i}| \leq \sqrt{2} r_{\max}$, 
    si $s_{1}+2s_{2} \in 4\ZZ$;
    \item $ 0 < |\sum r_{i}| \leq r_{\max}$ 
    si $s_{1}+2s_{2} \in 4\ZZ+2$.
\end{itemize}
\par
Si $k=5$ et les configurations sont $\CC^{\ast} \cdot (1,-1)$, alors il existe des racines $r_{1},r_{2}$ telles que $|r_{1}+r_{2}|=2\sin(\pi/5)|r_{1}|=\sqrt{\frac{5}{2}-\frac{\sqrt{5}}{2}}|r_{1}|$.
\par
Si $k=6$ et les configurations sont $\CC^{\ast}\cdot(\rec[1][s])$, on a des racines $r_{i}$ telles que $0 < |\sum r_{i}| \leq r_{\max}$.
\par
Quand $s \geq 3$, ces inégalités peuvent toujours être satisfaites par un choix de racines $r_{1},\dots,r_{s}$ qui ne sont pas $\mathbb{R}$-colinéaires.
\end{lem}

Par convention, nous supposerons que les nombres $R_{1},\dots,R_{s}$ sont classés par ordre croissant de module. De plus, quand bien même la racine $r_{i}$ de $R_{i}$ n'a pas été encore choisie, son module $|r_{i}|=|R_{i}|^{1/k}$ est toujours bien défini.
\par
De plus, dans une configuration exceptionnelle la somme de racines appartient (à un facteur près) au réseau $\ZZ+\omega\ZZ$, $3\ZZ+(-1+\omega)\ZZ$, $\ZZ+i\ZZ$ ou $(1+i)\ZZ+(1-i)\omega\ZZ$ (selon le cas). Ceci explique les constantes $\sqrt{2}$ et $\sqrt{3}$.

\subsubsection{\bf Preuve de la borne supérieure dans le cas $k \geq 6$}

Si $s = 2$, en fixant $r_{2} = 1$, en dehors de la configuration $(1,1)$ pour $k=6$, il y a toujours deux choix de racine $k$-ième de $R_{1}$ telle que $0<|r_{1}+ r_{2} | < 1$. Il suffit de choisir celle qui n'est pas réelle. Dans le cas de la configuration exceptionnelle $(1,1)$ avec $k=6$, $r_{1}=e^{\frac{2i\pi}{3}}$, $r_{1},r_{2}$ ne sont pas $\mathbb{R}$-colinéaires et leur somme satisfait l'inégalité affaiblie.
\par
Pour $s=3$, supposons pour commencer que $(R_{1},R_{2})$ n'est pas une configuration exceptionnelle du cas $s = 2$, autrement dit $R_{1} \neq R_{2}$ ou $k \neq 6$. Nous trouvons des racines $r_{1},r_{2}$ telles que $0<|r_{1}+r_{2}|< |r_{2}|\leq |r_{3}|$. Ensuite, comme $k \geq 6$, il y a au moins deux choix possibles pour une racine $r_{3}$ de $R_{3}$ telle que $|r_{1}+r_{2}+r_{3}|<|r_{3}|$. Il suffit d'en choisir une telle que les trois racines ne sont pas $\mathbb{R}$-colinéaires.
\par
Si $R_{1}=R_{2}$ et $k=6$, on est dans le cas $(1,1,R_{3})$ avec $|R_{3}| \geq 1$ et $k=6$. Si $|R_{3}|>1$, alors $r_{1}=1$ et $r_{2}=\omega$ satisfont $0<|r_{1}+r_{2}| \leq |r_{2}|<|r_{3}|$. Le même raisonnement que précédemment permet alors de trouver nos trois racines. Si $|R_{3}|=1$ mais $R_{3} \neq 1$, alors nous pouvons permuter les indices et nous nous ramenons au cas dans lequel $(R_{1},R_{2})$ n'est pas une configuration exceptionnelle du cas $s=2$. Enfin, le cas $R_{3}=1$ est l'unique configuration exceptionnelle pour $s=3$.
\par
Pour $s \geq 4$, la preuve se termine par récurrence. En effet, supposons que la proposition soit valide pour un certain rang $s \geq 3$. Dans le cas d'une configuration exceptionnelle, on peut supposer par récurrence que la somme des $s$ premières racines est égale à $1$. Il suffit donc de choisir $\omega$ pour la dernière racine. Considérons une configuration non exceptionnelle $R_{1},\dots,R_{s+1}$. Nous pouvons remplacer une paire $R_{i}\neq R_{j}$ par un seul nombre $(r_{i}+r_{j})^{k}$ en choisissant $r_{i},r_{j}$ de telle façon que $0<|r_{i}+r_{j}|< \max(|R_{i}|^{1/k},|R_{j}|^{1/k}|)$. L'hypothèse de récurrence permet de choisir les autres racines non toutes $\mathbb{R}$-colinéaires de façon à vérifier l'encadrement souhaité.

\subsubsection{\bf Preuve de la borne supérieure dans le cas $k=5$}

Si $s=2$, en fixant $r_{2}=1$, il y a toujours une racine $5$-ième de $R_{1}$ telle que $|r_{1}+ r_{2} | < 1$. Si $R_{1} \neq -1$, il suffit de choisir cette racine et nous aurons automatiquement $|r_{1}+r_{2}|>0$ également. Cependant, si $R_{1}=-1$, il faut choisir $r_{1}=-e^{\frac{2i\pi}{5}}$ et un calcul trigonométrique montre que $|r_{1}+r_{2}|=2\sin(\pi/5)$.
\par
Pour $s=3$, nous supposerons d'abord que la configuration $(R_{1},R_{2})$ n'est pas proportionnelle à $(1,-1)$. Dans ce cas, on trouve des racines $r_{1},r_{2}$ telles que $0<|r_{1}+r_{2}|<|r_{2}| \leq |r_{3}|$. Ensuite, comme $k=5$, on peut toujours trouver une racine $r_{3}$ telle que la somme $r_{1}+r_{2}+r_{3}$ vérifie $0 < |r_{1}+r_{2}+r_{3}| < |r_{3}|$.
\par
Nous supposerons maintenant que $R_{2}=-R_{1}$. Si de surcroît nous avons $R_{3}=-R_{2}$, la configuration $(R_{1},R_{2},R_{3})$ est proportionnelle à $(1,1,-1)$ et il suffit de choisir les racines $e^{\frac{2i\pi}{5}}$ et $e^{-\frac{2i\pi}{5}}$ et $-1$ dont la somme vaut $-1+2\cos(2\pi/5)=\frac{\sqrt{5}-3}{2}$.
\par
Nous pouvons désormais supposer que $R_{3} \neq -R_{2}$. On peut alors trouver des racines $r_{2}$ et~$r_{3}$ telles que $0<|r_{2}+r_{3}|<|r_{3}|$. En dehors du cas dans lequel $(r_{2}+r_{3})^{5}=-R_{1}$, on peut alors choisir la racine $r_{1}$ pour avoir $0<|r_{1}+r_{2}+r_{3}|<|r_{3}|$. Si nous avons $(r_{2}+r_{3})^{5}=-R_{1}$, alors la configuration $(R_{1},R_{2},R_{3})$ est proportionnelle à l'une des configurations $(1,-1,(1-e^{\frac{2mi\pi}{5}})^{5})$ avec $1 \leq m \leq 4$.
\par
Les cas $m=3$ et $m=4$ se déduisent de $m=1$ et $m=2$ par conjugaison. Pour $m=2$, il suffit de choisir $r_{3}=1-e^{\frac{4i\pi}{5}}$ puis $r_{1}=e^{\frac{2i\pi}{5}}$ et $r_{2}=-1$ pour obtenir $r_{1}+r_{2}+r_{3}=e^{\frac{2i\pi}{5}}-e^{\frac{4i\pi}{5}}$.  Pour $m=1$, nous choisissons $r_{3}=1-e^{\frac{2i\pi}{5}}$, puis $r_{1}=e^{\frac{4i\pi}{5}}$ et $r_{2}=-e^{\frac{6i\pi}{5}}$ pour obtenir $r_{1}+r_{2}+r_{3}=(1+e^{\frac{4i\pi}{5}})r_{3}$. Nous obtenons bien $|r_{1}+r_{2}+r_{3}|<|r_{3}|$ dans les deux cas.
\par
Si cette inégalité est réalisée par des racines qui ne sont pas $\mathbb{R}$-colinéaires, la proposition est démontrée. Si elles le sont, nous allons montrer qu'un autre choix de racines, cette fois-ci non $\mathbb{R}$-colinéaires, permet aussi de satisfaire l'inégalité. Sans perte de généralité, nous supposerons donc que $R_{1},R_{2},R_{3}$ sont réels. Si ce sont tous des réels de même signe, un choix de racines  $\mathbb{R}$-colinéaires n'a pas pu satisfaire l'inégalité car nous aurions $|r_{1}+r_{2}+r_{3}|=|r_{1}|+|r_{2}|+|r_{3}|$. Nous supposerons donc que deux résidus $R_{i},R_{j}$ sont positifs tandis que le troisième $R_{k}$ est négatif.
\par
Nous choisissons les racines $r_{i},r_{j}$ avec $\arg(r_{i})=0$ et $\arg(r_{j})=\frac{4\pi}{5}$, nous avons $0<|r_{i}+r_{j}|< \max(|R_{i}|^{1/5},|R_{j}|^{1/5}|)$. Comme dans le cas $s=2$, sauf si $R_{k}=-(r_{i}+r_{j})^{5}$, il existe toujours une racine $r_{k}$ telle que $0<|r_{i}+r_{j}+r_{k}|<|r_{3}|$ (et nos trois racines ne sont pas toutes $\mathbb{R}$-colinéaires).
\par
Dans le dernier cas, on a $R_{k}=-(r_{i}+r_{j})^{5}$, ce qui n'est possible que si $\arg(r_{i}+r_{j})=\frac{2\pi}{5}$ et donc $|r_{i}|=|r_{j}|$ (puisque $R_{k}$ est un réel négatif). Ce cas, dans lequel les résidus sont proportionnels à $(1,1,-1)$, a déjà une solution satisfaisante vue précédemment, ce qui clôt le cas $s=3$. 
\par
Pour $s \geq 4$, la preuve se termine par récurrence. Nous supposons la propriété valide pour un certain rang $s \geq 3$. Pour une configuration de $s+1$ résidus, nous pouvons trouver des racines $r_{1},\dots,r_{s}$ des $s$ premiers résidus telles que $0<|r_{1}+\dots+r_{s}|<r_{s}$ (et qui ne sont pas $\mathbb{R}$-colinéaires). Sachant que $|R_{s+1}| \geq |R_{s}|$, il existe alors toujours un choix de racine $r_{s+1}$ de~$R_{s+1}$ telle que $|r_{1}+\dots+r_{s}+r_{s+1}|<|r_{s+1}|$.

\subsubsection{\bf Preuve de la borne supérieure dans le cas $k=4$}

Si $s = 2$, en fixant $r_{2} = 1$, on peut toujours trouver une racine $4$-ième de $R_{1}$ telle que $0<|r_{1}+ r_{2} | < 1$ sauf si $R_{1}=1$. Nous avons alors une configuration exceptionnelle.
\par
Pour $s=3$, nous normalisons par $r_{3}=1$. Nous traitons d'emblée le cas $R_{1}=R_{2}$ en choisissant des racines $r_{1},r_{2}$ telles que $r_{2}=ir_{1}$. Comme $|(1+i)r_{1}|\leq \sqrt{2}$, il va exister un choix pour $r_{1}$ tel que $0<|1+r_{1}+r_{2}|<1$ sauf si $R_{1}=R_{2}=1$ ou $R_{1}=R_{2}=-\frac{1}{4}$. Il s'agit des deux familles de configurations exceptionnelles.
\par
Si au contraire $R_{1} \neq R_{2}$, nous trouvons des racines $r_{1},r_{2}$ telles que $0<|r_{1}+r_{2}|< |r_{2}| \leq |r_{3}|$. La résolution du cas $s = 2$ nous donne les racines adéquates pour $(r_{1}+r_{2})^{4},R_{3}$, dont nous déduisons des racines de $R_{1},R_{2},R_{3}$ vérifiant les deux inégalités. Seulement, il est possible que $r_{1}$ et $r_{2}$ soient deux racines réelles. Ceci n'est possible que si $R_{1},R_{2} \in \mathbb{R}^{+}$. Dans ce dernier cas, choisissons $r_{2}$ une racine réelle négative tandis que $r_{1}$ est une racine imaginaire pure. Les trois racines ne sont pas $\mathbb{R}$-colinéaires et il est clair que $1+r_{1}+r_{2} \neq 0$. Enfin, comme $0<R_{1}<R_{2} \leq 1$, $r_{1}+r_{2}$ est de module strictement plus petit que $\sqrt{2}$ tandis que son argument se situe dans $]\frac{3\pi}{4},\frac{5\pi}{4}[$. Il s'ensuit que $|1+r_{1}+r_{2}|<1$.
\par
Pour $s \geq 4$, la preuve se termine par récurrence comme dans le cas $k \geq 5$ sauf si chaque choix de paire d'indices conduit à une configuration exceptionnelle. Dans un tel cas, pour tout choix de paire d'indices, nous savons que les rapports entre les $s-2$ nombres restants valent $1$ ou $-4$. En choisissant successivement chacune des paires, nous en déduisons que $R_{1},\dots,R_{s+1}$ est équivalent à $(\rec[1][s_{1}],\rec[-4][s_{2}])$. Quitte à normaliser, nous pouvons supposer que $s_{2} \geq 1$. Si $s_{1}$ est pair, nous obtenons bien une configuration exceptionnelle. Si $s_{1}$ est impair, alors nous pouvons choisir $s_{1}-1$ racines de $1$ et $s_{2}$ racines de $-4$ de façon à ce que leur somme soit $\pm (1 + i)$, $\pm (1-i)$, $\pm 2$ ou $\pm 2i$. Le choix de la dernière racine de $1$ permet d'obtenir  $ 0 < |\sum r_{i}|< \sqrt{2}$. Dans tous les cas, il possible de garantir que les racines ne soient pas toutes $\mathbb{R}$-colinéaires.

\subsubsection{\bf Preuve de la borne supérieure dans le cas $k =3$}

Si $s = 2$, en fixant $r_{2} = 1$, on peut toujours trouver une racine troisième de $R_{1}$ telle que $0<|r_{1}+ r_{2} | < 1$ sauf si $R_{1}=\pm 1$. Ce sont deux configurations exceptionnelles.
\par
Pour $s=3$, nous normalisons par $r_{3}=1$. Nous traitons d'emblée le cas $R_{1}=R_{2}$. Nous choisissons des racines $r_{1},r_{2}$ telles que $r_{2}=\omega r_{1}$. Comme $|(1+\omega)r_{1}|\leq 1$, il existe un choix pour~$r_{1}$ tel que $0<|1+r_{1}+r_{2}|<1$ sauf si $R_{1}=R_{2}=1$. Il s'agit d'une configurations exceptionnelle. Nous traitons ensuite le cas $R_{1}=-R_{2}$. Nous choisissons des racines $r_{1},r_{2}$ telles que $r_{2}=-\omega r_{1}$. Comme $|(1+\omega)r_{1}|\leq \sqrt{3}$, il  existe un choix pour $r_{1}$ tel que $0<|1+r_{1}+r_{2}|<1$ sauf si $R_{1}=1$ (et donc $R_{2}=-1$) ou $R_{1}=\frac{i}{3\sqrt{3}}$ (et donc $R_{2}=-\frac{i}{3\sqrt{3}}$). Ce sont aussi des configurations exceptionnelles.
\par
Si au contraire $R_{1} \neq \pm R_{2}$, il existe des racines $r_{1},r_{2}$ telles que $0<|r_{1}+r_{2}|< |r_{2}|\leq |r_{3}|$. La résolution du cas $s = 2$ nous donne les racines adéquates pour $(r_{1}+r_{2})^{3},R_{3}$, dont nous déduisons des racines de $R_{1},R_{2},R_{3}$ vérifiant les deux inégalités. Seulement, il est possible que $r_{1}$ et $r_{2}$ soient deux racines réelles. Ceci n'est possible que si $R_{1},R_{2} \in \mathbb{R}$. Désignons par~$x_{1}$ et~$x_{2}$ leur unique racine réelle. En particulier, $|x_{1}|<|x_{2}|$ (car $R_{1} \neq \pm R_{2}$ et que ces racines sont réelles). Supposons que l'une de ces racines $x_{i}$ est positive, tandis que l'autre (que nous notons $x_{j}$) est négative. Nous posons $r_{i}=\omega x_{i}$ tandis que $r_{j}=x_{j}$. Les trois racines ne sont pas $\mathbb{R}$-colinéaires et il est clair que $0<|1+r_{1}+r_{2}|<1$. Si $x_{1},x_{2}<0$, alors nous posons $r_{2}=x_{2}$ tandis que $r_{1}=\omega x_{1}$. Puisque $|x_{1}|<|x_{2}|$, nous obtenons encore $0<|1+r_{1}+r_{2}|<1$. Enfin, si $x_{1},x_{2}>0$, nous choisissons $r_{1}=\omega x_{1}$ et $r_{2}=\omega^{2}x_{2}$. Nous obtenons $|1+r_{1}+r_{2}|<1$ et il est impossible d'avoir $1+r_{1}+r_{2}=0$ sans quoi nous serions dans le cas $R_{1}= \pm R_{2}$. Ceci termine les constructions pour le cas $s=3$.
\par
Pour $s \geq 4$, la preuve se termine par récurrence comme dans le cas $k \geq 5$ sauf si chaque choix de paire d'indices conduit à une configuration exceptionnelle. Dans un tel cas, pour tout choix de paire d'indices, nous savons que les rapports entre les $s-2$ nombres restants valent $\pm 1$ ou $\pm 3i\sqrt{3}$. En choisissant successivement chacune des paires, nous en déduisons que $R_{1},\dots,R_{s+1}$ est équivalent à $(\rec[1][s_{1}],\rec[-1][s_{2}],\rec[3i\sqrt{3}][t_{1}],\rec[-3i\sqrt{3}][t_{2}])$. Quitte à normaliser, nous pouvons supposer que $t_{1} \geq 1$. Si $s_{1}-s_{2} \in 3\ZZ$, nous obtenons bien une configuration exceptionnelle. Si $s_{1}-s_{2} \notin 3\ZZ$, alors nous pouvons choisir les $t_{1}+t_{2}$ racines de $\pm 3i\sqrt{3}$ de façon à ce que leur somme soit sur le cercle de rayon $\sqrt{3}$. Nous pouvons ensuite ajouter un multiple de~$3$ de racines de $1$, ainsi qu'un multiple de~$3$ de racines de $-1$ tout en maintenant la somme sur ce même cercle. Le choix des dernières racines de $\pm 1$ permet d'obtenir  $ 0 < |\sum r_{i}|< \sqrt{3}$. Dans tous les cas, il possible de garantir que les racines ne soient pas toutes $\mathbb{R}$-colinéaires.

\section{Au moins un pôle non divisible par~$k$ en genre zéro}\label{sec:avecnondiv}

Rappelons que la strate $\Omega^{k}\mathcal{M}_{0}(\mu)$ paramètre les $k$-différentielles primitives de type~$\mu$. 
Dans cette section, les strates contiennent des pôles d'ordres non divisibles par $k$. Nous commençons par montrer qu'il existe des strates dont l'image de l'application résiduelle  de contient pas l'origine. Puis nous montrons que tous les autres résidus sont obtenus sont dans l'image de l'application résiduelle.

\subsection{Obstruction}

Nous montrons que l'image de l'application $k$-résiduelle peut ne pas contenir l'origine.
\begin{lem}\label{lem:g=0gen1obs}
 L'image de l'application $k$-résiduelle de $\Omega^{k}\mathcal{M}_{0}(a_{1},kl_{2},\dots,kl_{n};-b_{1},\dots,-b_{p};-c)$  avec $\sum_{i=2}^{n} l_{i}<p$ ne contient pas l'origine.
\end{lem}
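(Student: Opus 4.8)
The plan is to pass to a cyclic cover of $\PP^{1}$, where $\xi$ becomes a genuine $k$‑th power of an abelian differential, and to convert the vanishing of all $k$‑residues into the statement that a certain rational function has too small a degree. After a Möbius change of coordinate we may write $\xi=x^{a_{1}}\prod_{i=2}^{n}(x-x_{i})^{kl_{i}}\prod_{j=1}^{p}(x-p_{j})^{-k\ell_{j}}(dx)^{k}$, where $b_{j}=k\ell_{j}$ (so $\ell_{j}\geq2$), the zero of order $a_{1}$ sits at $x=0$, the pole $-c$ at $x=\infty$, and $x_{2},\dots,x_{n},p_{1},\dots,p_{p}$ are pairwise distinct in $\CC^{\ast}$. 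The requirement that $\xi$ have a pole of order $-c$ at $\infty$ is exactly the genus‑zero relation $a_{1}+k\sum_{i=2}^{n}l_{i}-k\sum_{j=1}^{p}\ell_{j}-c=-2k$; reading it modulo $k$ and using $k\nmid c$ shows $k\nmid a_{1}$.

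Next I would pull $\xi$ back along $\pi\colon\PP^{1}_{z}\to\PP^{1}_{x}$, $z\mapsto z^{k}$, the cyclic $k$‑cover branched over the two singularities of order not divisible by $k$. A direct computation gives
\[
\pi^{\ast}\xi=\Bigl(C\,z^{a_{1}+k-1}\prod_{i=2}^{n}(z^{k}-x_{i})^{l_{i}}\prod_{j=1}^{p}(z^{k}-p_{j})^{-\ell_{j}}\,dz\Bigr)^{k}
\]
for some $C\in\CC^{\ast}$, since every order of $\pi^{\ast}\xi$ is divisible by $k$; let $\omega$ be the abelian differential between the parentheses. The deck transformation $\sigma\colon z\mapsto\zeta z$, with $\zeta=e^{2i\pi/k}$, satisfies $\sigma^{\ast}\omega=\zeta^{a_{1}}\omega$, and $\zeta^{a_{1}}\neq1$ because $k\nmid a_{1}$. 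The crucial point is that $\omega$ is exact. Indeed, near $P_{j}$ there are coordinates in which $\xi=x^{-k\ell_{j}}(dx)^{k}$ (this is what $\Resk_{P_{j}}(\xi)=0$ means), and $\pi$ is unramified there, so $\omega$ is locally a $k$‑th root $\pm\zeta^{m}z^{-\ell_{j}}\,dz$ of it, which is residue‑free as $\ell_{j}\geq2$; hence $\omega$ has no residue at any of the $k$ points over $P_{j}$. Its only other pole lies over $x=\infty$, i.e.\ at $z=\infty$, so the residue theorem on $\PP^{1}_{z}$ forces the residue there to vanish too. Therefore $\omega=dh$ for a rational function $h$. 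From $\sigma^{\ast}(dh)=\zeta^{a_{1}}dh$ one gets that $h(\zeta z)-\zeta^{a_{1}}h(z)$ is a constant, so after subtracting a suitable constant from $h$ (legitimate since $\zeta^{a_{1}}\neq1$) we may assume $h(\zeta z)=\zeta^{a_{1}}h(z)$; writing $r\in\{1,\dots,k-1\}$ for the residue of $a_{1}$ modulo $k$, the function $z^{-r}h(z)$ is $\sigma$‑invariant, hence $h(z)=z^{r}R(z^{k})$ for a rational function $R$ on $\PP^{1}$.

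Finally I would read the divisor of $R$ off that of $\omega=dh$. Using $a_{1}>-k$, $c>k$, $\ell_{j}\geq2$ and $a_{1}\equiv c\equiv r\pmod{k}$, every exponent below is a non‑negative integer: $\omega$ has a zero of order $a_{1}+k-1$ at $z=0$, so $h$ vanishes to order $a_{1}+k$ there and $R$ to order $(a_{1}+k-r)/k$ at $0$; $\omega$ has a pole of order $c-k+1\geq2$ at $z=\infty$, so $h$ has a pole of order $c-k$ there and $R$ a pole of order $(c-r-k)/k$ at $\infty$; and at each of the $k$ points over $P_{j}$, $\omega$ has a pole of order $\ell_{j}$, so $h$ and then $R$ have a pole of order $\ell_{j}-1\geq1$ at $p_{j}$. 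These exhaust the poles of the non‑zero rational function $R$, so $\deg R=\sum_{j=1}^{p}(\ell_{j}-1)+(c-r-k)/k$; since the zeros of $R$ also have total order $\deg R$ and $R$ already vanishes to order $(a_{1}+k-r)/k$ at $0$, the zeros of $R$ away from $0$ have total order
\[
\deg R-\frac{a_{1}+k-r}{k}=\sum_{j=1}^{p}(\ell_{j}-1)+\frac{c-a_{1}-2k}{k}=\sum_{i=2}^{n}l_{i}-p,
\]
the last equality coming from the genus‑zero relation above together with $\sum_{j}(\ell_{j}-1)=\sum_{j}\ell_{j}-p$. As this number is $\geq0$ we get $\sum_{i=2}^{n}l_{i}\geq p$, contradicting the hypothesis $\sum_{i}l_{i}<p$; hence no such $\xi$ exists and the origin is not in the image of the $k$‑residual map. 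The essentially mechanical part is the bookkeeping in this last paragraph — ensuring that $(a_{1}+k-r)/k$, $(c-r-k)/k$ and $\ell_{j}-1$ really are non‑negative zero/pole orders of $R$, which is exactly where $a_{1}>-k$, $c>k$ and $\ell_{j}\geq2$ enter — while the conceptual core, that exactness of $\omega$ plus the $\sigma$‑symmetry forces $h(z)=z^{r}R(z^{k})$, is a routine cyclic‑cover manipulation, after which the contradiction is merely $\ord_{0}R\leq\deg R$.
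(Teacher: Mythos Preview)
Your argument is correct and follows the same opening move as the paper: pass to the cyclic $k$-cover branched at the two singularities of order not divisible by $k$, and use the vanishing of the $k$-residues to make the lifted abelian differential $\omega$ residue-free, hence exact. Where the paper then invokes point~(i) of Th\'eor\`eme~1.2 of \cite{getaab} (a black-box inequality bounding the order of each zero of a residue-free abelian differential on $\PP^1$ in terms of the pole data) to bound the order $a_1+k-1$ of the big zero and conclude, you instead give a self-contained replacement: you exploit the $\ZZ/k\ZZ$-equivariance of the primitive to write $h(z)=z^{r}R(z^{k})$, read off the divisor of $R$, and get the contradiction from $\ord_{0}R\le\deg R$. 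This is exactly the kind of argument that underlies the cited result, so your proof is a direct, citation-free version of the paper's; as a bonus, working with the explicit cover $z\mapsto z^{k}$ rather than the abstract canonical cover sidesteps the (ultimately harmless, since primitivity here forces $\gcd(a_1,k)=1$) question of why $\widehat X$ has genus~$0$. The trade-off is only length: the paper's two-line citation becomes a paragraph of bookkeeping.
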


\begin{proof}
\'Etant donné une $k$-différentielle de la strate $\Omega^{k}\mathcal{M}_{0}(\mu)$ dont tous les $k$-résidus sont nuls, le revêtement canonique définit une différentielle méromorphe $(\widehat X, \widehat\omega)$.  Par les résultats de la section~2.1 de \cite{BCGGM3}, le revêtement est uniquement ramifié au zéro d'ordre~$a_{1}$ et au pôle d'ordre~$-c$. Donc la surface $\widehat X$ est une sphère de Riemann. De plus la différentielle~$\widehat \omega$ possède  $k$ zéros d'ordres $l_{i}$ pour $i\geq 2$, un zéro d'ordre $a_{1}+k-1$, de plus $k$ pôles d'ordres $-b_{j}/k$ pour tout $j\geq1$ et un pôle d'ordre $-c+k-1$. Notons de plus que le résidu de chaque pôle est nul. D'après le  point i) du théorème~1.2 de \cite{getaab}, cela est possible si et seulement si l'ordre de tous les zéros est inférieur ou égal à $$(c+1-k)+k\sum_{j=1}^{p} \frac{b_{j}}{k} - (kp+2)\,.$$ 
Cette condition implique que 
$$ a_{1} \leq c + \sum b_{i} -2k -kp \,.$$
Le fait que la différence entre la somme des ordres des zéros et la somme des ordres des pôles est égale à~$-2k$ implique le résultat. 
\end{proof}

\subsection{Constructions}
Dans cette section, nous allons montrer qu'à l'exception des obstructions de la section précédente, tous les résidus peuvent être obtenus.

Nous commençons par les strates ayant un unique zéro.
\begin{lem}\label{lem:g=0gen1const}
 Soit $\Omega^{k}\mathcal{M}_{0}(a;-b_{1},\dots,-b_{p};-c_{1},\dots,-c_{r};\rec[-k][s])$ une strate de genre zéro telle que $r\neq0$. L'image de l'application résiduelle est
 \begin{itemize}
 \item[i)] $\espresk[0](\mu)$ si $r\geq2$ ou $s\geq1$,
 \item[ii)]  $\espresk[0](\mu)\setminus\left\{(0,\dots,0)\right\}$ si $r=1$ et $s=0$.
\end{itemize}  
\end{lem}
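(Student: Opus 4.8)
The plan is to build the desired $k$-differential explicitly as a flat surface, by gluing together the elementary pieces ($k$-parties polaires) introduced in Section~\ref{sec:briques}. Since there is a unique zero of order $a$, its cone angle is $(a+k)\tfrac{2\pi}{k}$, and the strategy is to realize the surface as a single cone point to which we attach one polar part for each pole, choosing the vectors defining these pieces so that their residues match a prescribed target $(R_1,\dots,R_p;R_1',\dots,R_{k}')\in\espresk[0](\mu)$, where the last $s$ coordinates live in $\CC^{\ast}$ and the $R_i$ (for the $-b_i$) are arbitrary in $\CC$. Concretely, for each pole of order $-k$ we use a domaine basique d'ordre $k$ with a single vector $v$ so that $v^k$ is the prescribed residue (Lemme~\ref{lm:kresidu}); for each pole of order $-c_j$ not divisible by $k$ we use the $k$-partie polaire d'ordre $c_j$ from Lemme~\ref{lem:noninter}, whose residue is forced to $0$; and for each pole of order $-b_i=-k\ell_i$ we use a $k$-partie polaire of type $\tau_i$ whose residue is $\left(\sum v-\sum w\right)^k$, which we can set to any prescribed value in $\CC$ (trivial part if the target residue is $0$, non-trivial otherwise). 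The angles at the boundary of each polar part sum, around the unique cone point, to $(a+k)\tfrac{2\pi}{k}$ by the additivity built into the construction, so the pieces assemble into a genuine $k$-differential on $\PP^1$.

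The key verification is a bookkeeping/combinatorial one: that the boundary slits of the chosen polar parts can be cyclically identified so that the resulting surface is connected, has genus zero, has exactly the singularity $a$ as its only cone point, and that the total cone angle comes out right. This is where the hypothesis $r\neq 0$ enters: having at least one pole of order not divisible by $k$ provides a piece whose boundary geometry is flexible enough (by Lemme~\ref{lem:noninter}, we may arrange the defining broken line to have no self-intersection and to subtend essentially any angle strictly between $0$ and $2\pi$) to absorb the angular defect and to serve as the "hinge" along which the remaining pieces are glued. The case $r\geq 2$ or $s\geq 1$ is the easy one: we then have an extra $-c$-piece or an extra $-k$-piece, and we can place a piece that carries any amount of "excess" in its boundary, so that no constraint on the residues survives and the image is all of $\espresk[0](\mu)$.

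The genuinely delicate case is $r=1$, $s=0$: here one must show the image is exactly $\espresk[0](\mu)\setminus\{(0,\dots,0)\}$, i.e. both that the origin is \emph{not} attained and that everything else \emph{is}. Non-attainment of the origin is precisely Lemme~\ref{lem:g=0gen1obs} applied with $n=1$ (the hypothesis $\sum_{i=1}^n\ell_i<p$ becomes vacuous/automatic when $n=1$ since the left side is then $0$, provided $p\geq 1$; one must check that indeed $p\geq 1$ here, which follows because $\sum b_i+c = a+2k$ forces $p\geq 1$ once one rules out degenerate order patterns). For the positive direction, given a nonzero target residue tuple, at least one polar part is forced to be non-trivial, and I would use that non-trivial piece — together with the single $-c$-piece whose flexible boundary angle (Lemme~\ref{lem:noninter}) can be tuned — to close up the surface; the main obstacle in writing this out carefully is checking the no-self-intersection condition on the composite broken line when several non-trivial $b_i$-pieces are glued in series, which is handled by ordering the defining vectors by decreasing (resp. increasing) argument exactly as in the construction of the parties polaires, so that Lemme~\ref{lem:noninter} and the convexity arguments behind it apply verbatim.
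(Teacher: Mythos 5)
Your plan follows the same route as the paper: explicit flat constructions obtained by gluing one $k$-partie polaire per pole around the single conical point, with the pole of order $-c$ serving as the hub whose flexible boundary (lemme~\ref{lem:noninter}) receives the roots $r_i$ of the prescribed residues, and with the negative direction in case ii) reduced to the obstruction of the lemme~\ref{lem:g=0gen1obs} (your reading of that lemma for $n=1$, where the hypothesis becomes $0<p$, is the correct one). For a nonzero target tuple this is essentially the paper's first construction, including the device of hanging each trivial polar part (zero residue at a pole of order $-b_i$) onto a non-trivial one.

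There is, however, one genuine gap: the realization of the all-zero tuple $(0,\dots,0)\in\CC^{p}$ when $r\geq2$ and $s=0$. You dismiss the case ``$r\geq2$ or $s\geq1$'' as the easy one in which ``no constraint on the residues survives'', but this is exactly the point where the dichotomy between i) and ii) is decided on the \emph{positive} side, not only via the obstruction lemma. Your hub-and-spoke construction cannot produce the all-zero tuple: when every $R_i=0$ all the $b_i$-parts are trivial and there is no non-trivial part to hang them on; and if some cyclic attachment to a single $-c$-hub did work, the same gluing would work for $r=1$ and contradict lemme~\ref{lem:g=0gen1obs}. The paper handles this with a second, structurally different gluing: the trivial polar parts of the poles $-b_1,\dots,-b_p$ are chained one to the next, the free end of the chain is anchored on the boundary segment of a \emph{second} pole $P_2$ of order not divisible by $k$, and only the remaining segments are glued to the hub $P_1$ associated to $(\emptyset;(1^{r}))$. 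This is precisely where the hypothesis $r\geq2$ is used constructively, and your plan needs this additional construction (or an equivalent one) to be complete.
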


\begin{proof}
Nous commençons par donner la construction d'une $k$-différentielle dans la strate $\Omega^{k}\mathcal{M}_{0}(\mu)$ avec $\mu=(a;-b_{1},\dots,-b_{p};-c_{1},\dots,-c_{r};\rec[-k][s])$ dont les $k$-résidus sont donnés par $(R_{1},\dots,R_{p+s})$ dans $\espresk[0](\mu)\setminus\left\{(0,\dots,0)\right\}$.
\par 
Pour les pôles $P_{p+i}$ d'ordres $-k$, nous prenons une $k$-partie polaire d'ordre~$k$ associée à une racine $k$-ième $r_{p+i}$ de $R_{p+i}$. Pour chaque pôle $P_{i}$ d'ordre $-b_{i}=-k\ell_{i}$ tel que $R_{i}\neq0$, nous prenons une $k$-partie polaire non triviale d'ordre $b_{i}$ associée à $(r_{i};\emptyset)$. Nous choisissons une racine $r_{i}$ avec une partie réelle positive. Pour les pôles d'ordre $-b_{i}$ tels que $r_{i}=0$ nous prenons une $k$-partie polaire triviale d'ordre $b_{i}$ associée à $(r_{j_{i}};r_{j_{i}})$ où $r_{j_{i}}$ est l'une des racines choisie précédemment.
\par
Maintenant, pour tous les pôles d'ordre $-c_{i}$ sauf un, disons $P_{1}$ d'ordre $-c_{1}$, nous prenons une $k$-partie polaire de type $c_{i}$ associée à $(1;\emptyset)$. Pour le dernier pôle $P_{1}$ d'ordre $-c_{1}$, nous prenons la $k$-partie polaire de type $c_{1}$, sans points d'intersection (voir le lemme~\ref{lem:noninter}), associée à $(\emptyset;\rec[1][r-1],r_{1},\dots,r_{l})$ où $l$ est le nombre de résidus non nuls.
\par
La surface est obtenue par les recollements suivant. Nous collons le segment inférieur~$r_{j_{i}}$ de chaque $k$-partie polaire triviale au bord de la partie non triviale du pôle $P_{j_{i}}$. Nous faisons les collages similaires, pour chacun des pôles d'ordre divisible par $k$ dont le résidu est nul. Ensuite nous collons par translation les bords des $k$-parties polaires différentes de $P_{1}$ aux segments correspondants du bord de la $k$-partie polaire de $P_{1}$. Cette construction est illustrée par la figure~\ref{ex:8,8,12res}.
La $k$-différentielle $\xi$ associée à cette surface plate possède les ordres de pôles et les $k$-résidus souhaités.

\begin{figure}[htb]
\begin{tikzpicture}

%Figure haut gauche

\begin{scope}[xshift=-1.5cm]
      
 \fill[black!10] (-1,0)coordinate (a) -- (1.5,0)-- (a)+(1.5,0) arc (0:120:1.5)--(a)+(120:1.5) -- cycle;

   \draw (a)  -- node [below] {$2$} (0,0) coordinate (b);
 \draw (0,0) -- (1,0) coordinate[pos=.5] (c);
 \draw[dotted] (1,0) --coordinate (p1) (1.5,0);
 \fill (a)  circle (2pt);
\fill[] (b) circle (2pt);
\node[below] at (c) {$a$};

 \draw (a) -- node [above,rotate=120] {$a$} +(120:1) coordinate (d);
 \draw[dotted] (d) -- coordinate (p2) +(120:.5);

     \end{scope}
     
%deuxieme dessin
\begin{scope}[xshift=1.5cm,yshift=.5cm]
\fill[fill=black!10] (0.5,0)coordinate (Q)  circle (1.1cm);
    \coordinate (a) at (0,0);
    \coordinate (b) at (1,0);

     \fill (a)  circle (2pt);
\fill[] (b) circle (2pt);
    \fill[white] (a) -- (b) -- ++(0,-1.1) --++(-1,0) -- cycle;
 \draw  (a) -- (b);
 \draw (a) -- ++(0,-1);
 \draw (b) -- ++(0,-1);

\node[above] at (Q) {$1$};
    \end{scope}

%troisieme dessin
\begin{scope}[xshift=6.5cm,yshift=1.5cm,rotate=180]

 \fill[black!10] (-1,0)coordinate (a) -- (1,0)-- (1,0) arc (0:120:2) -- cycle;
\draw (-1,0) coordinate (a) -- node [below,xshift=-1] {$2$} (0,0) coordinate (b);
\draw (b) -- node [below] {$1$} +(1,0) coordinate (c);
\draw (c) -- node [below] {$b$} +(1,0) coordinate (d);
\draw[dotted] (d) -- +(.5,0);
\fill (a)  circle (2pt);
\fill[] (b) circle (2pt);
\fill[] (c) circle (2pt);

\draw (a) -- node [above,rotate=120] {$b$} +(120:1) coordinate (e);
\draw[dotted] (e) -- +(120:.5);

\end{scope}

\end{tikzpicture}
\caption{Une $3$-différentielle de $\Omega^{3}\mathcal{M}_{0}(8;-4,-4;-6)$ avec un résidu non nul au pôle d'ordre $-6$.} \label{ex:8,8,12res}
\end{figure}

Il reste donc à montrer que le genre de la surface est zéro et que la différentielle $\xi$ possède un unique zéro.
Pour cela, il suffit de vérifier que si l'on coupe la surface le long d'un lien selle, alors on sépare cette surface en deux parties. C'est une conséquence du fait que les liens-selles correspondent aux bords des domaines polaires. 
\smallskip
\par
Il reste à considérer le cas où les $k$-résidus sont égaux à $(0,\dots,0)$. Remarquons que s'il existe un pôle d'ordre $-k$, alors $(0,\dots,0)$ n'est pas dans l'espace résiduel de la strate. Nous supposerons donc que $s=0$ dans la suite de la preuve. Par le lemme~\ref{lem:g=0gen1obs}, il suffit de montrer que  si $r\geq2$ alors l'origine appartient à l'image de l'application $k$-résiduelle.
\par
Soient  $P_{1}$ et $P_{2}$ deux pôles d'ordres respectifs $-c_{1}$ et $-c_{2}$ non divisible par $k$. Pour tous les autres pôles nous associons la même $k$-partie polaire que précédemment. Plus précisément, pour chaque pôle d'ordre $-b_{i}$ divisible par $k$ nous prenons une $k$-partie polaire d'ordre $b_{i}$ associée à $(1;1)$. Pour les pôles d'ordres non divisibles par $k$ distincts de $P_{1}$, nous prenons la $k$-partie polaire associée à $(1;\emptyset)$. Pour $P_{1}$ nous prenons la $k$-partie polaire d'ordre $c_{1}$ associée à $(\emptyset;(\rec[1][r-1])$.
\par
Les collages sont les suivants. Nous collons le bord inférieur de la $k$-partie polaire du $i$-ième pôle d'ordre divisible par $k$ au bord d'en haut du $(i+1)$-ième pôle d'ordre divisible par $k$. Le bord inférieur de la $k$-partie polaire associée au pôle $P_{p}$ est collé au bord du segment du pôle~$P_{2}$. Enfin, tous les segments restant sont collés au bord de la $k$-partie polaire associée à~$P_{1}$. Cette construction est illustrée par la figure~\ref{ex:8,8,12}. On vérifie facilement que cette surface possède les propriétés souhaitées.
\begin{figure}[htb]
\begin{tikzpicture}
%Figure haut gauche

\begin{scope}[xshift=-1.5cm,yshift=.15cm]
      
 \fill[black!10] (-1,0)coordinate (a) -- (1.5,0)-- (a)+(1.5,0) arc (0:120:1.5)--(a)+(120:1.5) -- cycle;

   \draw (a)  -- node [below] {$1$} (0,0) coordinate (b);
 \draw (0,0) -- (1,0) coordinate[pos=.5] (c);
 \draw[dotted] (1,0) --coordinate (p1) (1.5,0);
 \fill (a)  circle (2pt);
\fill[] (b) circle (2pt);
\node[below] at (c) {$a$};

 \draw (a) -- node [above,rotate=120] {$a$} +(120:1) coordinate (d);
 \draw[dotted] (d) -- coordinate (p2) +(120:.5);

     \end{scope}
%deuxieme dessin

\begin{scope}[xshift=1.5cm,yshift=.8cm]
\fill[fill=black!10] (0.5,0)  circle (1.1cm);

 \draw (0,0) coordinate (a) -- coordinate (c) (1,0) coordinate (b);

 \fill (a)  circle (2pt);
\fill[] (b) circle (2pt);
\node[below] at (c) {$1$};
\node[above] at (c) {$2$};

    \end{scope}

%troisieme dessin
\begin{scope}[xshift=6cm,rotate=180,yshift=-1.3cm]
      
 \fill[black!10] (-1.5,0)coordinate (a) -- (0,0)-- (60:1.5) arc (60:180:1.5) -- cycle;

   \draw (-1,0) coordinate (a) -- node [above] {$2$} (0,0) coordinate (b);
 \draw (a) -- +(-1,0) coordinate[pos=.5] (c) coordinate (e);
 \draw[dotted] (e) -- +(-.5,0);
 \fill (a)  circle (2pt);
\fill[] (b) circle (2pt);
\node[below] at (c) {$b$};

 \draw (b) -- node [above,rotate=-120] {$b$} +(60:1) coordinate (d);
 \draw[dotted] (d) -- +(60:.5);
\end{scope}

\end{tikzpicture}
\caption{Une $3$-différentielle de $\Omega^{3}\mathcal{M}_{0}(8;-4,-4;-6)$ avec un résidu nul.} \label{ex:8,8,12}
\end{figure}
\end{proof}

Nous réalisons une construction spécifique pour des strates ayant deux zéros.

\begin{lem}\label{lem:g=0gen2construction}
Considérons la partition $\mu=(a_{1},a_{2};-b_{1},\dots,-b_{p};-c)$ où $a_{1},a_{2}$ ne sont pas de la forme $kl_{i}$ avec $l_{i}<p$. L'image de l'application $k$-résiduelle $\appresk[0](\mu)$ contient l'origine.
\end{lem}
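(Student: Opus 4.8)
The goal is to exhibit, for the stratum $\komoduli[0](a_{1},a_{2};-b_{1},\dots,-b_{p};-c)$ subject to the stated hypothesis on $a_1,a_2$, a primitive $k$-differential all of whose $k$-residues vanish. The strategy mirrors the one-zero case of Lemma~\ref{lem:g=0gen1const}: build the surface by gluing elementary polar parts, taking a trivial $k$-polar part of order $b_i$ associated to $(1;1)$ at each pole $P_i$ of order divisible by $k$, and at the pole $P_0$ of order $-c$ a $k$-polar part (of the appropriate non-integer type) designed to absorb the free boundary segments coming from the other parts. Since the $k$-residue of a trivial polar part of order $b_i$ is $(1-1)^k=0$ and the $k$-residue at a pole of order not divisible by $k$ is automatically zero, any surface assembled this way has all $k$-residues equal to zero; the content is to assemble it so that the underlying surface has genus zero, the differential has exactly the two zeros $z_1,z_2$ of orders $a_1,a_2$, and the differential is \emph{primitive}.

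The first step is the assembly. I would chain the trivial polar parts of the $P_i$'s as in the proof of Lemma~\ref{lem:g=0gen1const} (bottom boundary of the $i$-th glued to top boundary of the $(i+1)$-th), and glue all the remaining free boundary segments to the boundary of the $k$-polar part of $P_0$. To create \emph{two} cone points rather than one, I split the boundary of the $P_0$-part into two arcs, each carrying one of the conical singularities $z_1$ (angle $(a_1+k)\tfrac{2\pi}{k}$) and $z_2$; concretely one uses the more general polar part $C(\cdots)$ or the polar part of order $c$ "of type $t$" introduced before Lemma~\ref{lm:kresidu}, whose boundary broken line can be prescribed with two marked vertices. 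As in Lemma~\ref{lem:g=0gen1const}, cutting the resulting surface along any saddle connection disconnects it (saddle connections bound polar domains), which forces the genus to be $0$ and pins down that there are exactly two cone points with the prescribed angles, hence exactly the zeros $a_1,a_2$.

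The main obstacle is \textbf{primitivity}, and this is exactly where the hypothesis that $a_1,a_2$ are not of the form $k\ell_i$ with $\ell_i<p$ enters. If the constructed $\xi$ were the $d$-th power of a $(k/d)$-differential $\eta$ for some $d>1$, then by \eqref{eq:multiplires} the $k/d$-residues of $\eta$ vanish too, and the canonical-cover argument of Lemma~\ref{lem:g=0gen1obs} (applied to $\eta$, which lives on $\PP^1$ by Lemma~\ref{lem:puissk}) produces, via the $k=1$ obstruction of \cite{getaab}, a numerical inequality bounding the orders of the zeros of $\xi/d$ in terms of the poles. Translating this back, a would-be power forces one of $a_1,a_2$ (the one playing the role of the ramified zero of the cover) to be divisible by $k/d$ and to satisfy an inequality of the shape $a_j/(k/d)\le (\text{sum of }b_i/k)\cdots$, which after unwinding contradicts the assumption that neither $a_1$ nor $a_2$ is $k\ell_i$ with $\ell_i<p$ — indeed when both zeros have order divisible by $k$, the hypothesis is precisely that $\ell_i\ge p$, ruling out the obstruction. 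So I would: first do the construction for all residues zero, then run the canonical-cover dichotomy to show primitivity cannot fail, invoking Lemma~\ref{lem:g=0gen1obs} (more precisely its proof) and \eqref{eq:multiplires}. A minor subtlety to check is that the non-intersection condition for the broken line defining the $P_0$-part is met; this follows from Lemma~\ref{lem:noninter} after possibly reordering and choosing the auxiliary vectors with positive real part, and when $a_1$ or $a_2$ is negative one places that cone point at a reflex vertex of the boundary, exactly as the "generalized $k$-polar part" formalism allows.
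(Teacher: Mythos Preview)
There is a genuine gap: you have misidentified where the hypothesis enters. Primitivity is \emph{not} the issue. In genus zero, Lemma~\ref{lem:puissk} says that every $k$-differential of type $\mu$ is automatically the $d$-th power of a primitive $(k/d)$-differential with $d=\pgcd(\mu,k)$; since the stratum $\komoduli[0](\mu)$ is by convention non-empty, $d=1$ and any differential you construct with the right local orders is primitive. Your entire step~3 is unnecessary, and the canonical-cover argument you propose to rule out powers never gets off the ground because there is nothing to rule out.

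The hypothesis that neither $a_i$ equals $kl_i$ with $l_i<p$ is instead a \emph{constructive} constraint; by Lemma~\ref{lem:g=0gen1obs} it is necessary for the conclusion, so any correct construction must use it. The paper's proof does so by splitting into two cases on whether $l_1,l_2\geq p$ (recall $a_i=kl_i+\bar a_i$ with $-k<\bar a_i\leq 0$). In the first case one exploits the type parameter $\tau_i$ of the trivial polar parts of order $b_i$: each such part contributes an angle $(\ell_i-1)\cdot 2\pi$ to one zero and $2\pi$ to the other, and by choosing the $\tau_i$ and the type of the order-$c$ polar part appropriately one distributes these contributions to hit $a_1,a_2$ exactly; the bound $l_1,l_2\geq p$ is what makes this distribution possible. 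In the second case, say $l_1<p$, the hypothesis forces $\bar a_1\neq 0$; the paper then uses a polar part of order $b_1$ associated to $(1;v_1,v_2)$ where $v_1+v_2=1$ and the angle between $v_1,v_2$ is $2\pi+\tfrac{2\bar a_1\pi}{k}$. This part is non-degenerate precisely because $\bar a_1\neq 0$, and its special vertex becomes the zero of order $a_1$; one then attaches $l_1$ trivial polar parts cyclically at that vertex. Your sketch (``chain the trivial parts, glue to the $c$-pole, split the boundary into two arcs'') does not address how to realize two cone points of the \emph{specific} prescribed orders when each of the $p$ trivial polar parts forces a $2\pi$ contribution somewhere, and it gives no mechanism tied to the hypothesis.
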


\begin{proof}
Nous écrivons $a_{i}=kl_{i}+\bar{a_{i}}$ avec $-k<\bar{a_{1}},\bar{a_{2}}\leq 0$. 
Nous considérons deux cas selon que $l_{1},l_{2}\geq p$ ou s'il existe $l_{i}<p$.
\smallskip
\par
Dans le cas où $l_{1},l_{2}\geq p$, on associe aux pôles d'ordres $-b_{i}$ les $k$-parties polaires d'ordres~$b_{i}$ et de types~$\tau_{i}$ associées à $(1;1)$. De plus, on choisit les types $\tau_{i}$ de telle sorte que la somme $\sum_{i}\tau_{i}$ soit inférieure ou égale à $ l_{1}$ et maximale pour cette propriété. On note $\bar{l_{1}}=l_{1}-\sum_{i}\tau_{i}$. Pour le pôle d'ordre $-c$ on procède de la façon suivante.  On prend la $k$-partie polaire d'ordre~$c$ associée à $\left(\emptyset;1,\exp\left(i\pi+\bar{a_{2}}\tfrac{2i\pi}{k}\right)\right)$ et de type $\bar{l_{1}}+1$. Notons que cette construction est possible même si $\bar{a_{2}} = 0$.
On obtient la différentielle souhaitée en identifiant le bord inférieur de la $k$-partie polaire associée à $P_{i}$ au bord supérieur de celle de $P_{i+1}$. Le bord supérieur de $P_{1}$ est identifié au segment $1$ de la $k$-partie polaire d'ordre~$c$. Le bord inférieur de la $k$-partie polaire d'ordre $b_{p}$ est identifié par rotation au segment $\exp\left(\bar{a_{2}}\tfrac{2i\pi}{k}\right)$. Cette construction est illustrée à gauche de la figure~\ref{fig:rgeq1ngeq2}.
\par
Considérons le cas où il existe $l_{i}<p$ et $a_{i}\neq kl_{i}$. Nous supposerons sans perte de généralité qu'il s'agit de $l_{1}$. On associe au pôle d'ordre $-c$ la $k$-partie polaire d'ordre $c$ associée à $(\emptyset;1)$. Pour l'un des pôles d'ordre $-b_{i}$, disons $-b_{1}$, on associe la $k$-partie polaire d'ordre $b_{1}$ associée à $(1;v_{1},v_{2})$, avec $v_{i}$ de même longueur, $v_{1}+v_{2}=1$ et l'angle (dans la partie polaire) entre ces deux étant $2\pi + \tfrac{2\bar{a_{1}}\pi}{k}$. Comme $\bar{a_{1}} \neq 0$, cette partie polaire est non dégénérée.  
Pour $l_{1}$ pôles d'ordre $-b_{i}$, on associe des $k$-parties triviales associées à $(v_{1};v_{1})$ et de type $\tau_{i}=b_{i}-1$. On colle ces $k$-parties polaires de manière cyclique à $v_{1}$ et~$v_{2}$. Le point correspondant à l'intersection entre $v_{1}$ et $v_{2}$ est la singularité d'ordre~$a_{1}$.  On associe aux autres pôles d'ordre $-b_{i}$ la $k$-partie polaire triviale associée à $(1;1)$. On colle ces parties polaires de manière cyclique aux parties polaires d'ordres $b_{1}$ et $c$  pour obtenir la surface plate souhaitée. Cette construction est illustrée à droite de la figure~\ref{fig:rgeq1ngeq2}.
\begin{figure}[htb]
\center
 \begin{tikzpicture}

%premier dessin gauche
\begin{scope}[xshift=-6cm]
    \fill[fill=black!10] (0,0) ellipse (1.5cm and .7cm);
\draw[] (0,0) coordinate (Q) -- (-1.5,0) coordinate[pos=.5](a);

\node[above] at (a) {$1$};
\node[below] at (a) {$2$};

\draw[] (Q) -- (.7,0) coordinate (P) coordinate[pos=.5](c);
\draw[] (Q) -- (P);

\fill (Q)  circle (2pt);

\fill[color=white!50!] (P) circle (2pt);
\draw[] (P) circle (2pt);
\node[above] at (c) {$v_{2}$};
\node[below] at (c) {$v_{1}$};

    \fill[fill=black!10] (0,-1.5) ellipse (1.1cm and .5cm);
\draw[] (.5,-1.5) coordinate (Q) --++ (-1.5,0) coordinate[pos=.5](b);

\fill (Q)  circle (2pt);

\node[above] at (b) {$2$};
\node[below] at (b) {$1$};

\end{scope}
%premier dessin droite
\begin{scope}[xshift=-3cm]

 \fill[black!10] (-.5,0)coordinate (a) -- (a) + (80:1.5) coordinate (b)-- (b) arc (80:-40:1.5)--(a)+(-40:1.5) coordinate (c) -- cycle;
\draw[] (a) -- node [left,rotate=-10] {$5$}  (b);
\draw[] (a) -- node [right,rotate=-130] {$5$} (c);
\draw (a) -- ++(.7,0)coordinate[pos=.5](e) coordinate (d);
\draw (d) -- ++(.8,0)coordinate[pos=.5](f);
\fill (a) circle (2pt);
\fill[color=white] (d) circle (2pt);
\draw[] (d) circle (2pt);
\node[above] at (f) {$4$};
\node[below] at (f) {$3$};
\node[above] at (e) {$v_{1}$};
\node[below] at (e) {$v_{2}$};

\fill[fill=black!10] (0,-1.5) ellipse (1.1cm and .5cm);
\draw[] (-.5,-1.5) coordinate (Q) --++ (1.5,0) coordinate[pos=.5](b);
\filldraw[fill=white] (Q)  circle (2pt);

\node[above] at (b) {$4$};
\node[below] at (b) {$3$};
\end{scope}

%euxieme dessin gauche
\begin{scope}[xshift=2cm]
\begin{scope}[yshift=1cm,xshift=-.5cm]
    \fill[fill=black!10] (.5,-.3) ellipse (1.1cm and .9cm);
   
 \coordinate (A) at (0,0);
  \coordinate (B) at (1,0);
    \coordinate (C)  at (-30:.6);
 
 \filldraw[fill=white] (A)  circle (2pt);
\filldraw[fill=white] (B) circle (2pt);
    \fill (C)  circle (2pt);

   \fill[color=white]     (A) -- (B) -- (C) --cycle;
 \draw[]     (A) -- (B) coordinate[pos=.5](a);
 \draw (B) -- (C) coordinate[pos=.3](b);
 \draw (C) -- (A)  coordinate[pos=.6](c);

\node[above] at (a) {$1$};
\node[below] at (c) {$v_{1}$};
\node[below] at (b) {$v_{2}$};
\end{scope}

    \fill[fill=black!10] (0,-1.5) ellipse (1.1cm and .5cm);
\draw[] (-.5,-1.5) coordinate (Q) -- (.5,-1.5) coordinate[pos=.5](b) coordinate (P);

\filldraw[fill=white] (Q) circle (2pt);
\filldraw[fill=white] (P) circle (2pt);

\node[above] at (b) {$2$};
\node[below] at (b) {$1$};

\end{scope}
%second dessin droite
\begin{scope}[xshift=5cm]
\begin{scope}[rotate=180,yshift=-1.3cm]
      
 \fill[black!10] (-1.5,0)coordinate (a) -- (0,0)-- (60:1.5) arc (60:180:1.5) -- cycle;

   \draw (-1,0) coordinate (a) -- node [above] {$2$} (0,0) coordinate (b);
 \draw (a) -- +(-1,0) coordinate[pos=.5] (c) coordinate (e);
 \draw[dotted] (e) -- +(-.5,0);
  \draw (b) -- node [above,rotate=-120] {$b$} +(60:1) coordinate (d);
 \draw[dotted] (d) -- +(60:.5);
\filldraw[fill=white] (a) circle (2pt);
\filldraw[fill=white] (b) circle (2pt);
\node[below] at (c) {$b$};
\end{scope}

\begin{scope}[yshift=-1.3cm]
\fill[fill=black!10] (0,0) coordinate (O) ellipse (1.1cm and .6cm);
\coordinate (P) at (-30:.3);
\coordinate (Q) at (150:.3);
\draw[] (Q) -- (P)coordinate[pos=.5](b);
\filldraw[fill=white] (Q) circle (2pt);
\fill (P)  circle (2pt);

\node[below] at (b) {$v_{2}$};
\node[above] at (b) {$v_{1}$};
\end{scope}
\end{scope}
\end{tikzpicture}
\caption{Une $3$-différentielle de $\Omega^{3}\mathcal{M}_{0}(4,6;-9;-7)$ (à gauche) et de $\Omega^{3}\mathcal{M}_{0}(2,8;\rec[-6][3];-4)$ (à droite) dont tous les $3$-résidus sont nuls.} \label{fig:rgeq1ngeq2}
\end{figure}
\end{proof}

Nous traitons maintenant le cas général des strates ayant au moins deux zéros.
\begin{lem}\label{lem:g=0gen2}
Considérons la partition $\mu=(a_{1},\dots,a_{n};-b_{1},\dots,-b_{p};-c_{1},\dots,-c_{r},\rec[-k][s])$ avec $n\geq2$. Si $r\geq2$ ou $s\geq1$, alors l'application $k$-résiduelle $\appresk[0](\mu)$ est surjective. Dans le cas où $s=0$ et $r=1$ alors l'application contient le complémentaire de l'origine. De plus si au moins deux $a_{i}$ ne sont pas divisibles par $k$ ou si la somme des $a_{i}$ divisibles par $k$ est supérieure ou égale à~$kp$ alors l'application $k$-résiduelle contient l'origine.
\end{lem}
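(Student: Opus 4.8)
The plan is to manufacture the prescribed $k$-residues on a stratum with \emph{fewer} zeros and then to recover the tuple $\mu$ by splitting a zero via Proposition~\ref{prop:eclatZero}. The crucial simplification, valid throughout this section, is that $r\geq 1$: every $k$-differential occurring below has a pole whose order is not divisible by $k$, hence it cannot be the $k$-th power of a meromorphic abelian differential (such a power has all pole orders divisible by $k$). Thus condition~(i) of Proposition~\ref{prop:eclatZero} holds automatically, every zero-splitting preserves the $k$-residues, and the outcome lies in the \emph{primitive} stratum as soon as the resulting tuple of orders has $\pgcd$ with $k$ equal to $1$ — which for $\mu$ itself is the standing hypothesis (see Lemma~\ref{lem:puissk}). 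So the only real work is to realize the target residues after merging zeros.

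For a target $(R_1,\dots,R_{p+s})\in\espresk[0](\mu)$ which, when $r=1$ and $s=0$, is moreover nonzero, I would merge all $n$ zeros into a single zero of order $a:=\sum a_i$ (note $a\geq -k+1$ since $r\geq 1$) and consider the one-zero partition $\mu_0=(a;-b_1,\dots,-b_p;-c_1,\dots,-c_r;(-k^s))$. Write $d=\pgcd(\mu_0,k)$; because $k\nmid c_1$ one has $1\leq d<k$, so $k/d\geq 2$, and by the argument that $a$ and $-k$ are multiples of $d$ the rescaled tuple $\mu_0/d$ is again an admissible partition, still having $r$ poles of order not divisible by $k/d$ and the same $p,s$. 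By Lemma~\ref{lem:puissk} the $k$-differentials of type $\mu_0$ on $\PP^1$ are exactly the $d$-th powers of the primitive $(k/d)$-differentials of $\Omega^{k/d}\moduli[0](\mu_0/d)$, so by~\eqref{eq:multiplires} their residue vectors are the $d$-th powers of the ones realized by that one-zero stratum; by Lemma~\ref{lem:g=0gen1const} (or \cite{getaquad} when $k/d=2$) the latter are $\espresk[0][k/d](\mu_0/d)$ if $r\geq2$ or $s\geq1$, and $\espresk[0][k/d](\mu_0/d)\setminus\{0\}$ if $r=1,s=0$; since $z\mapsto z^{d}$ is onto $\CC$ and onto $\CC^{\ast}$, their $d$-th powers are $\espresk[0](\mu)$, resp. $\espresk[0](\mu)\setminus\{0\}$, which contains our target. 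Splitting the single zero of a realizing differential into $a_1,\dots,a_n$ then yields the desired $k$-differential of $\komoduli[0](\mu)$. This already proves surjectivity when $r\geq2$ or $s\geq1$, and the inclusion of $\espresk[0](\mu)\setminus\{0\}$ in the image when $r=1$, $s=0$.

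There remains the origin when $s=0$, which by the above is open only when $r=1$. Reducing $\sum\mu=-2k$ modulo $k$ forces $\sum a_i\equiv c_1\not\equiv0\pmod k$, so some $a_i$ is not divisible by $k$, and the extra hypothesis says either that at least two $a_i$ are not divisible by $k$ or that $\sum_{k\mid a_i}a_i\geq kp$. For $n=2$ one checks that in either case neither $a_1$ nor $a_2$ is of the form $k\ell$ with $\ell<p$, so Lemma~\ref{lem:g=0gen2construction} directly provides a $k$-differential of $\komoduli[0](a_1,a_2;\dots)$ with all $k$-residues zero. For $n\geq3$ I would use Lemma~\ref{lem:combiscindage} to coalesce $(a_1,\dots,a_n)$ into two nonempty blocks $E_1\sqcup E_2$ with sums $A_1,A_2>-k$, arranged — which the parity and divisibility information above makes possible — so that the pair $(A_1,A_2)$ is again not of the excluded shape of Lemma~\ref{lem:g=0gen2construction}; that lemma (or its $d$-th power variant of the previous paragraph when $\pgcd(A_1,A_2,b_1,\dots,c_r,k)>1$, which is harmless since $0$ is a $d$-th power) yields a differential with vanishing $k$-residues on the two-zero locus, which by Lemma~\ref{lem:combiscindage} is not a power of a $1$-form or of a quadratic differential, so splitting $A_1\to E_1$ and $A_2\to E_2$ (legitimate by Propositions~\ref{prop:eclatZero}(i) and~\ref{prop:eclatintro}) produces the required differential of $\komoduli[0](\mu)$.

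The step I expect to be the main obstacle is exactly this last piece of combinatorics: running Lemma~\ref{lem:combiscindage} so that the two-block partition simultaneously keeps $A_1,A_2>-k$ and avoids the shape $k\ell$ with $\ell<p$ for both blocks under the stated hypothesis. This is a finite case analysis along the lines of the proof of Lemma~\ref{lem:combiscindage} — distinguishing whether a negative $a_i\neq -k/2$ occurs, whether $-k/2$ occurs, and which primes divide $\pgcd(b_1,\dots,c_r,k)$ — and its sharpness is precisely the obstruction recorded in Lemma~\ref{lem:g=0gen1obs}.
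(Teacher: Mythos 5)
Your proposal follows essentially the same route as the paper: realize the prescribed residues on the one-zero stratum via Lemma~\ref{lem:g=0gen1const} and split the zero using Proposition~\ref{prop:eclatZero}, then treat the origin in the case $r=1$, $s=0$ by reducing to two zeros and invoking Lemma~\ref{lem:g=0gen2construction}. Your detour through $d$-th powers of $(k/d)$-differentials when the intermediate one-zero stratum fails to be primitive, and your explicit flagging of the combinatorial step needed to pass from $n\geq3$ to $n=2$ while keeping both merged orders outside the excluded shape $k\ell$ with $\ell<p$, make precise two points that the paper's very short proof leaves implicit.
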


\begin{proof}
Si $r\geq2$ ou $r=1$ et $s\geq1$, on peut simplement éclater le zéro des différentielles données par le lemme~\ref{lem:g=0gen1const}. On suppose maintenant $r=1$ et $s=0$. Par éclatement de zéros, il suffit de montrer que l'origine appartient à l'image de l'application résiduelle des strates $\Omega^{k}\mathcal{M}_{0}(a_{1},a_{2};-b_{1},\dots,-b_{p};-c)$ dans le cas où $a_{i}$ n'est pas de la forme $kl_{i}$ avec $l_{i}<p$. Le lemme \ref{lem:g=0gen2construction} fournit la construction adéquate.
\end{proof}

\section{Les pôles sont divisibles par $k$ en genre zéro}\label{sec:4DIVIS}

Le but de cette section est de décrire l'image de l'application $k$-résiduelle des strates  $\Omega^{k}\mathcal{M}_{0}(a_{1},\dots,a_{n};-b_{1},\dots,-b_{p};\rec[-k][s])$ avec $p \neq 0$. Rappelons que nous ne considérons que les strates primitives, c'est-à-dire telles que $\pgcd(a_{1},\dots,a_{n},k)=1$.
Nous notons $a_{i}:=kl_{i}+\bar{a_{i}}$ avec $-k<\bar{a_{i}}\leq 0$ et $b_{i}:=k\ell_{i}$ que nous étendons aux pôles d'ordre $-k$ en posant $\ell_{i}=1$ pour $p+1 \leq i \leq p+s$.

\subsection{Tous les résidus sont nuls}\label{sub:41CUN}

Dans le cas des strates satisfaisant $s=0$, il existe une unique obstruction concernant la configuration uniformément nulle.

\begin{lem}\label{lem:41CUN}
Soit $\mu=(a_{1},\dots,a_{n};-b_{1},\dots,-b_{p})$ une partition telle que $a_{1},a_{2}$ sont premiers avec~$k$ et $a_{i}$ est divisible par $k$ pour  $i \geq 3$. Si l'application résiduelle $\appresk[0](\mu)$ contient $(0,\dots,0)$, alors $\sum_{i=3}^{n} a_{i} \geq kp$.
\end{lem}

La preuve repose sur l'étude du revêtement canonique d'une telle $k$-différentielle. Celui-ci est introduit et étudié dans la section~2.1 de \cite{BCGGM3}. Rappelons qu'il s'agit d'un revêtement $\pi\colon \tilde X \to X$ tel que $\pi^{\ast} \xi$ est la puissance $k$-ième d'une différentielle abélienne.

\begin{proof}
Soit $(\PP^{1},\xi)$ une $k$-différentielle dans la strate $\Omega^{k}\mathcal{M}_{0}(\mu)$ dont les résidus sont nuls. Son revêtement canonique $\pi\colon (\whX,\whomega) \to (\PP^{1},\xi)$ est ramifié exactement aux singularités~$a_{1}$ et~$a_{2}$. La surface de Riemann $\whX$ est donc de genre zéro. Les singularités de la racine $\whomega$ de $\pi^{\ast}\xi$ sont :
\begin{itemize}
\item deux zéros $\whz_{1},\whz_{2}$ d'ordres $a_{1}+k-1$ et $a_{2}+k-1$ respectivement (car l'ordre de ramification du revêtement en ces points est $k$ puisque $a_{1}$ et $a_{2}$ sont premiers avec $k$);
\item pour chaque i tel que $3 \leq i \leq n$, il y a $k$ zéros d'ordre $\frac{a_{i}}{k}$;
\item pour chaque j tel que $1 \leq j \leq p$, il y a $k$ pôles d'ordre $-\frac{b_{j}}{k}$ dont le résidu est nul.
\end{itemize} 
Il s'ensuit que la différentielle abélienne $\whomega$ est exacte.
Pour toute fonction méromorphe $f\colon \PP^{1} \to \CC$ telle que $\whomega=df$, la symétrie du revêtement canonique implique que $f(\whz_{1})=f(\whz_{2})$. On supposera donc sans perte de généralité que $f(\whz_{1})=f(\whz_{2})=0$. En tant que primitive de~$\whomega$, la fonction $f$ a $kp$ pôles d'ordres $1-\frac{b_{j}}{k}$ (chacun de ces ordres étant présent $k$ fois). La somme des ordres des pôles est donc $kp-\sum_{j=1}^{p} b_{j}$.
Aux points $\whz_{1}$ et $\whz_{2}$, la fonction $f$ possède des zéros d'ordres $k+a_{1}$ et $k+a_{2}$. Cela implique que $2k+a_{1}+a_{2} \leq \sum_{j=1}^{p} b_{j} -kp$. De façon équivalente, $a_{1}+a_{2} \leq \sum_{i=1}^{n} a_{i} -kp$ et donc $\sum_{i=3}^{n} a_{i} \geq kp$.
\end{proof}

Nous montrons maintenant que dans le cas où uniquement deux zéros sont d'ordres non divisibles par $k$, la condition du lemme~\ref{lem:41CUN} est suffisante pour que l'origine soit dans l'image de l'application $k$-résiduelle.

\begin{lem}\label{lem:42CUN}
Soit $\mu=(a_{1}\dots,a_{n};-b_{1},\dots,-b_{p})$ une partition telle que $a_{1}$ et $a_{2}$ ne sont pas divisibles par $k$ et $a_{i}$ est divisible par $k$ pour $i \geq 3$. Si $\sum_{i=3}^{n} a_{i} \geq kp$, alors l'application résiduelle $\appresk[0](\mu)$ contient l'origine.
\end{lem}

\begin{proof}
Nous allons démontrer le cas $n=3$. Le cas $n>3$ s'obtient en éclatant $a_{3}$. En effet, les seuls ordres de singularités qui ne sont pas divisible par $k$ sont $a_{1}$ et $a_{2}$. Il s'ensuit que $\pgcd(a_{1},a_{2},k)=1$. Ainsi, la strate satisfaisant $n=3$ à partir de laquelle nous procédons a l'éclatement de singularité est nécessairement primitive. De plus, comme tous les zéros créés sont d'ordres divisibles par $k$, il s'ensuit que l'éclatement ne change pas les résidus aux pôles (voir la proposition~\ref{prop:eclatZero}).

Nous considérerons deux cas, selon que l'équation 
\begin{equation}\label{eq:lemme42}
 l_{1} + l_{2} \geq p-1 
\end{equation}
soit satisfaite ou non.
\smallskip
\par
\paragraph{\bf L'équation~\eqref{eq:lemme42} est satisfaite.} Commençons par le cas où tous les pôles sont d'ordre $2k$. Dans ce cas l'équation~\eqref{eq:lemme42} et l'hypothèse $a_{3} \geq kp$
impliquent que $l_{1}+l_{2}=p-1$. On prend une $k$-partie polaire d'ordre $2k$ associée à $(v_{1}, \zeta v_{1};v_{1},\zeta v_{1})$ avec $\zeta$ la racine $k$-ième de l'unité telle que l'angle au dessus du point d'intersection entre $v_{1}$ et $\zeta v_{1}$ est égal à $(k+\bar{a}_{1})\tfrac{2\pi}{k}$.
Pour tous les autres pôles on prend la  $k$-partie polaires d'ordre $2k$ associées à $(v_{1};v_{1})$. On colle cycliquement $l_{1}$ parties polaires aux vecteurs supérieurs $v_{1}$ et $\alpha v_{1}$ et $l_{2}$ parties polaires aux vecteurs inférieurs. Cette construction est représentée dans la figure~\ref{fig:polesdivkaveczero}.  
 \begin{figure}[hbt]
\begin{tikzpicture}
\begin{scope}[xshift=-6cm,yshift=.5cm]

     \foreach \i in {1,2,...,4}
  \coordinate (a\i) at (\i,0); 
  \coordinate (b) at (2.5,.25);
   \fill[black!10] (a1) -- (a4) -- ++(0,0) arc (0:180:1.5) -- cycle;
      \fill[white] (b)  circle (2pt);
      \draw (b) circle (2pt);
 \fill[white] (a2) -- (a3) -- (b) -- cycle;
       \foreach \i in {2,3}
   \fill (a\i)  circle (2pt);
   \draw (a1) -- (a2) coordinate[pos=.5](e1) -- (b) coordinate[pos=.5](e2) -- (a3) coordinate[pos=.5](e3) -- (a4) coordinate[pos=.5](e4);

     \foreach \i in {1,2,...,4}
  \coordinate (c\i) at (\i,-1); 
  \coordinate (d) at (2.5,-.75);
   \fill[black!10] (c1) -- (c2) -- (d) -- (c3) -- (c4) -- ++(0,0) arc (0:-180:1.5) -- cycle;
     \foreach \i in {2,3}
   \fill (c\i)  circle (2pt);
   \draw (c1) -- (c2) coordinate[pos=.5](f1) -- (d) coordinate[pos=.5](f2) -- (c3) coordinate[pos=.5](f3) -- (c4) coordinate[pos=.5](f4);
     \fill[red] (d)  circle (2pt);
      \draw (d) circle (2pt);

\node[above] at (e1) {$1$};
\node[below] at (f1) {$1$};
\node[above] at (e4) {$2$};
\node[below] at (f4) {$2$};

\node[above,rotate=30] at (e2) {$3$};
\node[below,rotate=150] at (e3) {$4$};
\node[above,rotate=30] at (f2) {$5$};
\node[below,rotate=150] at (f3) {$5$};
\end{scope}

%dessin de droite
\begin{scope}[xshift=1cm]
\fill[fill=black!10] (0,0) coordinate (Q) circle (1.5cm);

\coordinate (b1) at (-.25,-1/8);
\coordinate (b2) at (.25,1/8);
\draw (b1) -- (b2) coordinate[pos=.5](c);

\fill (b1)  circle (2pt);
\fill[white] (b2)  circle (2pt);
\draw (b2) circle (2pt);
\node[below,rotate=30] at (c) {$3$};
\node[above,rotate=30] at (c) {$4$};
\end{scope}

\end{tikzpicture}
\caption{Différentielle cubique de $\Omega^{3}\mathcal{M}_{0}(6,2,-2;-6,-6)$ dont les $3$-résidus sont nuls.} \label{fig:polesdivkaveczero}
\end{figure}

On suppose maintenant que les pôles $b_{i}$ sont arbitraires. La construction est similaire à celle que nous venons de faire. La partie polaire associée à $(v_{1}, \zeta v_{1};v_{1},\zeta v_{1})$ est le pôle d'ordre minimal $P_{1}$. On considère alors des pôles $P_{i}$ distincts de $P_{1}$ tels que $\sum (\ell_{i}-1) \leq l_{1} < \ell_{1} + \sum (\ell_{i}-1) $. On colle alors de manière cyclique les parties polaires associées aux $P_{i}$ au $v_{1}$ supérieur, de telle sorte que toutes ces parties polaires contribuent de l'angle  $(\ell_{i}-1)2\pi$  à l'angle de $a_{1}$. Pour finir nous coupons la $k$-partie polaire de $P_{1}$ à partir de $a_{1}$ par une demi-droite et collons le nombre de plan nécessaire pour obtenir l'ordre $a_{1}$. On fait de même pour $a_{2}$ affin d'obtenir les invariants souhaités. 
\smallskip
\par
\paragraph{\bf L'équation~\eqref{eq:lemme42} n'est pas satisfaite.} 
Nous considérons deux $k$-parties polaires d'ordre $2k$ associées respectivement à $(v_{1},\zeta v_{1}; v_{2})$ et  à $(v_{2};v_{1},\zeta v_{1})$ satisfaisant les conditions suivantes. L'angle au dessus du point d'intersection entre $v_{1}$ et $\zeta v_{1}$ est égal à $(k+\bar{a}_{1})\tfrac{2\pi}{k}$. On a l'égalité $v_{2}= v_{1}+\zeta v_{1}$. On considère $l_{1} + l_{2}$ $k$-parties polaires associées à $(v_{1};v_{1})$ et les autres sont associées à $(v_{2};v_{2})$. On colle cycliquement $l_{1}$ des premières au $v_{1}$ supérieur, les $l_{2}$ autres au~$v_{1}$ inférieur et enfin les autres entre les $v_{2}$. Chacune des $l_{1}+l_{2}$ parties polaires associées à $(v_{1};v_{1})$ contribue d'un angle $2\pi$ au zéro correspondant.
\end{proof}

Enfin nous montrons que s'il existe au moins trois zéros d'ordres non divisibles par $k$, alors l'origine est toujours dans l'image de l'application $k$-résiduelle.

\begin{lem}\label{lem:43CUN}
Considérons la partition $\mu=(a_{1},\dots,a_{n};-b_{1},\dots,-b_{p})$. Si au moins trois des ordres $a_{1},\dots,a_{n}$ ne sont pas divisibles par $k$, alors l'application résiduelle $\appresk[0](\mu)$ contient l'origine.
\end{lem}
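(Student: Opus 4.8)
Le cas $p=0$ est trivial, puisqu'alors $\espresk[0](\mu)=\CC^{p}$ est réduit à un point. On supposera donc $p\geq1$ ; l'égalité $\sum_{i}a_{i}=\sum_{j}b_{j}-2k=k\sum_{j}\ell_{j}-2k\geq 2k(p-1)\geq 0$ montre alors, les zéros non divisibles par $k$ étant non nuls, qu'au moins un des $a_{i}$ est strictement positif. On écrit comme dans le reste de la section $a_{i}=kl_{i}+\bar a_{i}$ avec $-k<\bar a_{i}\leq 0$, de sorte que $\bar a_{i}\neq 0$ précisément pour les (au moins trois) zéros non divisibles par $k$.

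Le plan est de se ramener par éclatements de zéros au cas de trois zéros, tous non divisibles par $k$. Pour cela, je partitionnerais $(a_{1},\dots,a_{n})$ en trois sous-ensembles non vides de sommes $A_{1},A_{2},A_{3}$ vérifiant $A_{m}>-k$ et $k\nmid A_{m}$, puis j'obtiendrais $\komoduli[0](\mu)$ en éclatant successivement les trois zéros d'ordres $A_{m}$ d'une $k$-différentielle de genre zéro de type $(A_{1},A_{2},A_{3};-b_{1},\dots,-b_{p})$ dont tous les $k$-résidus sont nuls. Puisqu'à chaque étape la différentielle possède encore un zéro d'ordre non divisible par $k$, elle n'est jamais la puissance $k$-ième d'une différentielle abélienne méromorphe, et le point i) de la proposition~\ref{prop:eclatZero} garantit que ces éclatements préservent les $k$-résidus aux pôles ; la différentielle finale est automatiquement primitive car $\pgcd(\mu,k)=1$ (lemme~\ref{lem:puissk}). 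L'existence de la partition voulue se vérifie par un argument combinatoire : on isole deux zéros non divisibles bien choisis comme $A_{1},A_{2}$, le troisième sous-ensemble regroupant tout le reste, et l'on ajuste ces choix grâce à l'existence d'un $a_{i}>0$ pour que les trois sommes restent $>-k$ ; la non-divisibilité de $A_{3}$ se lit sur la congruence $A_{3}\equiv\sum_{l}\bar a_{l}-\bar a_{i}-\bar a_{j}\pmod k$ et sur le fait que $\sum_{l}\bar a_{l}\equiv\sum_{l}a_{l}\equiv 0$, la seule configuration récalcitrante — tous les $\bar a_{l}$ égaux à $-k/2$ — étant exclue car elle forcerait $\tfrac k2$ à diviser $\pgcd(\mu,k)$, ce qui est impossible pour $k\geq3$.

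Resterait alors à construire, pour $\mu=(a_{1},a_{2},a_{3};-b_{1},\dots,-b_{p})$ avec $a_{1},a_{2},a_{3}$ non divisibles par $k$, une $k$-différentielle de genre zéro de type $\mu$ à $k$-résidus tous nuls. La relation $\sum a_{i}=k\sum\ell_{j}-2k$ impose $\bar a_{1}+\bar a_{2}+\bar a_{3}\equiv 0\pmod k$, d'où $\bar a_{1}+\bar a_{2}+\bar a_{3}\in\{-k,-2k\}$ pour $k\geq3$. Je reprendrais, avec trois points coniques au lieu de deux, la construction des preuves des lemmes~\ref{lem:unpolesdivk} et~\ref{lem:condsufngeq3} : partir de $p$ $k$-parties polaires triviales (donc de $k$-résidu nul) d'ordres $k\ell_{1},\dots,k\ell_{p}$, réaliser les trois singularités coniques d'angles $(k+\bar a_{i})\tfrac{2\pi}{k}$ aux points de concaténation de trois paires de vecteurs de même longueur — les sommes de chacune de ces paires étant choisies égales afin d'assurer la trivialité —, puis compléter l'angle de chaque zéro d'ordre $a_{i}$ en y collant cycliquement $l_{i}$ domaines basiques et ajuster les ordres $\ell_{j}$ des pôles en insérant des parties polaires de type $\tau$ appropriées. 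Les recollements se font le long de segments de mêmes longueurs et l'on vérifie, comme dans les lemmes précédents, que couper la surface le long d'un lien-selle la déconnecte, ce qui assure que le genre est nul et qu'il y a exactement trois zéros. On distinguera selon que $\bar a_{1}+\bar a_{2}+\bar a_{3}$ vaut $-k$ ou $-2k$, ce qui modifie la répartition de l'angle « entier » $k\sum l_{i}\,\tfrac{2\pi}{k}$ entre les zéros et les $p$ parties polaires, et l'on traitera séparément les quelques configurations extrémales correspondant à $p=1$ ou $2$.

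Le principal obstacle est cette dernière construction explicite dans le cas de trois zéros : c'est là que se concentre toute la combinatoire (répartition de l'angle restant entre les trois zéros et les parties polaires, distinction $p=1$ contre $p\geq2$, sous-cas $-k$ contre $-2k$), tandis que l'étape de réduction par éclatement et le cas $p=0$ sont élémentaires.
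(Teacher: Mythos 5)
Votre stratégie est exactement celle du texte : réduction au cas de trois zéros par éclatement (la différentielle intermédiaire gardant un zéro d'ordre non divisible par $k$, la proposition~\ref{prop:eclatZero} s'applique sans modifier les résidus), puis construction plate explicite à base de parties polaires triviales, avec la dichotomie $\bar{a}_{1}+\bar{a}_{2}+\bar{a}_{3}\in\{-k,-2k\}$. Votre argument combinatoire pour garantir trois sommes non divisibles par $k$ (exclusion du cas où tous les $\bar{a}_{i}$ valent $-k/2$ par primitivité) est correct et même plus détaillé que le renvoi du texte au lemme~\ref{lem:combiscindage} ; il faudrait seulement préciser l'ajustement assurant $A_{3}>-k$, qui n'est pas automatique lorsque les deux singletons choisis sont de grands ordres positifs.

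Le problème est précisément celui que vous signalez vous-même : la construction pour $n=3$, qui constitue tout le contenu de la preuve, n'est pas menée à terme. Ce n'est pas qu'une question de rédaction, car la répartition des $p$ domaines polaires entre les trois zéros n'est pas toujours réalisable telle quelle. Le texte doit d'abord écarter le cas $l_{3}>p$ (renvoyé aux constructions du lemme~\ref{lem:condsufngeq3}), puis, lorsque $l_{i}\leq p$ pour tout $i$, placer les $p$ domaines polaires sur les deux chaînes bordant $z_{3}$ et vérifier l'inégalité $l_{1}+l_{2}\geq p$ (conséquence de $l_{1}+l_{2}+l_{3}\geq 2p$), chaque domaine polaire contribuant d'un angle d'au moins $2\pi$ à chacun des deux zéros qu'il borde. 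C'est cette comptabilité d'angles, ainsi que la description précise de la surface (deux triangles plats reliés par des chaînes de domaines polaires de valence $2$ dans le cas $-2k$, le complémentaire d'un triangle dans le cas $-k$), qui fait la preuve ; en son absence, votre texte reste un plan plausible mais incomplet.
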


\begin{proof}
Nous démontrons le lemme dans le cas $n=3$. L'éclatement des zéros permet d'en déduire le cas général (voir le lemme \ref{lem:combiscindage} et la proposition~\ref{prop:eclatZero}).

Si $l_{3}> p$, alors les constructions données dans la preuve du lemme~\ref{lem:42CUN} permettent alors d'obtenir les $k$-différentielles avec les invariants souhaités.
 \par
Nous supposons maintenant que nous $l_{i}\leq p$ pour $i=1,2,3$. Dans ce cas, nous donnons deux constructions selon que $\bar{a}_{1}+\bar{a}_{2}+\bar{a}_{3}=-2k$ ou  $\bar{a}_{1}+\bar{a}_{2}+\bar{a}_{3}=-k$.
\par
Dans le cas où $\bar{a}_{1}+\bar{a}_{2}+\bar{a}_{3}=-2k$, ces $k$-différentielles sont formées de deux triangles plats, dont les côtés sont reliés deux à deux par des chaînes de domaines polaires de valence~$2$. Dans l'une de ces chaînes les identifications sont des translations et dans les deux autres chaînes, ce sont des translations et une rotation. Chaque zéro correspond à un sommet de chaque triangle et à l'un des deux sommets de chaque domaine polaire dans deux des trois chaînes. Comme les sommets des triangles contribuent à chaque singularité d'un angle strictement inférieure à $2\pi$ cet angle est égal à $(k+\bar{a}_{i})\frac{2\pi}{k}$. Comme l'angle total des deux triangles est $2\pi$, nous en déduisons que $\bar{a}_{1}+\bar{a}_{2}+\bar{a}_{3}=-2k$. Chaque domaine polaire contribue aux angles de deux des trois singularités en ajoutant $\ell_{i}$ fois $2\pi$ à ceux-ci. Maintenant, on met tous les domaines polaires sur les deux chaînes qui bordent~$z_{3}$, où l'on suppose $l_{1} \leq l_{2} \leq l_{3}$. Dans ce cas, chacun des $p$ domaines polaires contribuera d'un angle d'au moins $2\pi$ à chaque zéro. Donc la condition implique que $l_{1}+l_{2} \geq p$. Cette condition est vérifiée car comme $kl_{1}+kl_{2}+kl_{3} = \sum_{j=1}^{p} b_{j}$ implique que $l_{1}+l_{2}+l_{3} \geq 2p$ et donc si nous avions $l_{1}+l_{2} < p$ alors nous aurions $l_{3}>p$.
\par
Pour finir, il reste à traiter le cas où $\bar{a}_{1}+\bar{a}_{2}+\bar{a}_{3}=-k$. Ce cas se traite de manière similaire au cas précédent en remplaçant chaque triangle par une partie polaire associée au complémentaire du triangle. 
\end{proof}

\subsection{Deux zéros, l'un d'ordre négatif}\label{sec:expneg}

Une construction systématique peut être réalisée lorsque l'une des singularités coniques a un angle strictement inférieur à $2\pi$.

\begin{lem}\label{lem:44zeronegatif}
Toute configuration de $k$-résidus $(R_{1},\dots,R_{p+s})$ non tous nuls est réalisable dans la strate $\Omega^{k}\mathcal{M}_{0}(a_{1},a_{2};-kl_{1},\dots,-kl_{p};\rec[-k][s])$ avec $p \geq 1$ et $-k<a_{1}<0$.
\end{lem}

\begin{proof}
Sans perte de généralité, nous supposerons que ce sont les $t \leq p$ premiers résidus qui sont nuls. On construit une $k$-différentielle dont les $k$-résidus sont $(R_{1},\dots,R_{p+s})$ de la manière suivante. Pour chaque $k$-résidu $R_{i}$ non nul tel que $i \geq 2$, nous choisissons une racine $k$-ième~$r_{i}$ de~$R_{i}$ telle que $\Re(r_{i})>0$. Si $R_{1} \neq 0$, nous choisissons une racine $r_{1}$ de $R_{1}$ telle que la somme $S=\sum_{i\geq  2}r_{i} -r_{1}$ est non nulle. En revanche, si $R_{1}=0$, nous avons $r_{1}=0$ et la somme $S=\sum_{i\geq  2}r_{i} -r_{1}$ satisfait $\Re(S)>0$.
\par
Nous définissons~$v_{1}$ et $v_{2}$ deux vecteurs de même longueur, tels que l'angle  entre $v_{1}$ et $v_{2}$ est  $2\pi +\frac{2a_{1}\pi}{k}$, et de somme égale à $S=\sum_{i\neq i_{0}}r_{i} -r_{i_{0}}$. Notons que ces vecteurs sont non nuls.
\par
Prenons le pôle $P_{1}$ d'ordre~$-k\ell_{1}$ et de $k$-résidu~$R_{1}$. On associe alors à ce pôle la $k$-partie polaire triviale d'ordre $k\ell_{1}$ associée à $(v_{1},v_{2};r_{t+1},\dots,r_{p})$ si $R_{1}=0$ et la $k$-partie polaire non triviale associée à $(v_{1},v_{2};r_{2},\dots,r_{p})$ si $R_{1}\neq 0$. Notons que l'angle (calculé dans~$D^{+}$) au point d'intersection des $v_{i}$ est $2\pi +\frac{2a_{1}\pi}{k}$. Pour les autres pôles $P_{i}$, on prend une $k$-partie polaire associée à $(r_{i};\emptyset)$ si $R_{i}\neq0$ (ou simplement un cylindre bordé par un unique lien-selle si $P_{i}$ est un pôle d'ordre $-k$) et $(r_{j_{i}};r_{j_{i}})$ avec $j_{i}>t$ si $R_{i}=0$. Il reste à identifier tous les segments par translation à l'exception des $v_{i}$ que nous identifions par rotation. On vérifie que cette surface plate possède les invariants locaux souhaités.
\end{proof}

\subsection{Constructions de domaines polaires}\label{sub:s4constructionsDP}

Avant d'aborder les autres cas, nous introduisons des constructions générales de domaines polaires. Les différents choix de signes sont faits pour que les lemmes soient utilisés directement par la suite afin d'obtenir les $k$-résidus souhaités. 

\begin{lem}\label{lem:existenceracines}
Soient $R,R_{1},R_{2}\in \CC$ avec $R_{1},R_{2}$ tous les deux non nuls et $a$ un entier satisfaisant $-\frac{k}{2}<a<0$. Étant donné $\alpha=\exp\left(\frac{2ia\pi}{k}\right)$, il existe des racines $k$-ièmes $r,r_{1},r_{2}$ des $R,R_{1},R_{2}$ et $v\in\CC^{\ast}$ tels que :
\begin{enumerate}
\item $-r_{1}-r_{2}+v-\alpha v = r$;
\item la partie polaire d'ordre $2k$ associée à $(v,-r_{1};r_{2},\alpha v)$ existe;
\item la somme des angles au bord de $-r_{1}$ est égale à $3\pi - a \frac{2\pi}{k} \in ]3\pi,4\pi[$;
\item la somme des angles au bord de $r_{2}$ est égale à $3\pi +a \frac{2\pi}{k}\in ]2\pi,3\pi[$.
\end{enumerate}
\end{lem}

\begin{proof}
Nous traitons tout d'abord le cas où $(R,R_{1},R_{2})=(0;(-1)^{k},1)$. On considère la $k$-partie polaire d'ordre~$2k$ associée à $(v,-r_{1};r_{2},\alpha v) = (1,\exp(2i a\pi/k);1,\exp(2i a\pi/k))$. Le résidu du pôle d'ordre $-2k$ associé à cette partie polaire est égal à~$0$. L'angle à gauche du segment $-r_{1}$ est  $\pi -2\bar a_{2}\pi/k $ et celui à droite est $2\pi$. La somme est bien égale à $3\pi -2 a\pi/k$. La somme des angles au bord de $r_{2}$ se calcule de manière similaire.
\smallskip
\par
Nous traitons maintenant le cas où $(R,R_{1},R_{2})=(R;(-1)^{k},1)$ avec $R\neq 0$. Dans ce cas, posons $r_{i} = (-1)^{i}$. On a alors $v = \frac{r}{1-\alpha} \neq 0$. Considérons la  $k$-partie polaire d'ordre~$2k$ associée à $(v,-r_{1};r_{2},\alpha v)$ en considérant que $v$, resp $\alpha v$, est en dessous de $-r_{1}$, resp $r_{2}$, si $v$, resp $\alpha v$ sont réels négatifs. Dénotons $\theta\in ]-\pi,\pi]$ l'argument de $v$ et calculons la somme des angles au bord de  $-r_{1}$ dans le cas où $\theta \in I=]-\pi  - \frac{2a\pi}{k} , \pi]$. Ce cas est représenté dans la figure~\ref{fig:partiepol2k1}. Notons que la somme des angles est indépendante de $\theta$ tant que $\theta$ reste dans $I$. En revanche, lorsque $\theta$ sort de $I$, le vecteur $\alpha v$ traverse $\tilde r_{2}$ et les sommes changent par $\pm 2\pi$.  Dans le domaine basique positif, ils sont respectivement de $\pi + \theta$ et~$\pi$. Dans le domaine basique négatif, l'angle est de $\pi - \theta - \frac{2a\pi}{k}$. Cela donne une somme égale à $3\pi - \frac{2a\pi}{k}$ comme souhaité.
Finalement en prenant l'argument de $r$ dans $]\pi-\frac{2\pi}{k},\pi]$, comme $(1-\alpha)v=r$ et que l'argument de $1-\alpha$ se trouve dans $]0,\frac{\pi}{2}[$, l'argument $\theta$ de $v$ sera dans $]0,\pi[$ pour $k \geq 4$ et $]-\frac{\pi}{6},\pi[$. Dans les deux cas, nous obtenons bien $\theta \in I$.

\begin{figure}[hbt]
\begin{tikzpicture}
\begin{scope}
\clip (-.5,-1) rectangle (5,1.5);
  \coordinate (a0) at (0,0); \coordinate (a1) at (1.5,0); \coordinate (a2) at (2,.5);  \coordinate (a3) at (3,.5);
\coordinate (a4) at (4.5,.5);
   \fill[black!10] (a0) -- (a1) -- (a2) -- (a3) -- (a4) -- ++(0,0) arc (0:193:2.27) -- cycle;
   \draw (a0) -- (a1) coordinate[pos=.5](e1)  -- (a2) coordinate[pos=.5](e2) -- (a3) coordinate[pos=.5](e3) -- (a4) coordinate[pos=.5](e4);
       \foreach \i in {2,3}
   \fill (a\i)  circle (2pt);
     \filldraw[fill=white] (a1)  circle (2pt);
\node[above,rotate=45] at (e2) {$v$};
\node[above] at (e3) {$-\tilde r_{1}$};
\end{scope}

\begin{scope}[yshift=-.5cm]
\clip (-.5,1) rectangle (5,-1.5);
  \coordinate (a0) at (0,0); \coordinate (a1) at (1.5,0); \coordinate (a2) at (2.5,0);  \coordinate (a3) at (3,-.5);
\coordinate (a4) at (4.5,-.5);
   \fill[black!10] (a0) -- (a1) -- (a2) -- (a3) -- (a4) -- ++(0,0) arc (0:-193:2.27) -- cycle;
     \draw (a0) -- (a1) coordinate[pos=.5](e1)  -- (a2) coordinate[pos=.5](e2) -- (a3) coordinate[pos=.5](e3) -- (a4) coordinate[pos=.5](e4);
         \foreach \i in {1,2}
     \filldraw[fill=white] (a\i)  circle (2pt);
          \fill (a3)  circle (2pt);
   
   \node[above,rotate=-45] at (e3) {$\alpha v$};
\node[above] at (e2) {$\tilde r_{2}$};
\end{scope}

\end{tikzpicture}
\caption{La $k$-partie polaire d'ordre~$2k$ associée à $(v,-r_{1};r_{2},\alpha v)$ dans le cas où $(R,R_{1},R_{2})=(R;(-1)^{k},1)$ avec $R\neq 0$.} \label{fig:partiepol2k1}
\end{figure}
\smallskip
\par
Nous considérons maintenant le cas où $R=0$ et $(R_{1},R_{2}) \neq ((-1)^{k},1)$. Nous normalisons à $r_{1}=-1$ et nous choisissons $r_{2}$ tel que son argument soit dans $[0, \frac{2\pi}{k}[$. Notons $\theta\in ]-\pi,\pi]$ l'argument de $v=\frac{r_{1}+r_{2}}{1-\alpha}$. La somme des angles au bord de  $-r_{1}$ dans le cas où $\theta \in I=]-\pi  +\arg(r_{2})- \frac{2a\pi}{k} , \pi]$ se calcule comme précédemment et donne l'angle souhaité. Commençons par le cas où $r_{2}$ est un nombre réel strictement supérieur à $1$, illustré à gauche de la figure~\ref{fig:partiepol2k2}. Dans ce cas, la concaténation de $(1-r_{2})$, $v$ et $-\alpha v$ forme un triangle isocèle d'angle $-\frac{2a\pi}{k}$ en un point qui est dans le demi-plan inférieur. On en déduit que $\theta = -\frac{\pi}{2} - \frac{a\pi}{k}$. et que $\theta$ appartient à l'intervalle $I$.
\par
Maintenant supposons que $r_{2}$ est d'argument $[0, \frac{2\pi}{k}[$ (et $r_{2}<1$ si l'argument est nul). La construction suivante est illustrée à droite de la figure~\ref{fig:partiepol2k2}. Considérons le cercle de centre~$0$ qui contient $1-r_{2}$. Ce cercle coupe en un point $-\rho \in \RR_{-}$. Pour un point $\rho \exp(i\phi)$ de ce cercle, le triangle formé par la concaténation de $(1-r_{2})$, $v$ et $-\alpha v$ est une rotation d'angle $\pi + \phi$ du triangle du paragraphe précédent. On en déduit que $\theta = -\frac{\pi}{2} - \frac{a\pi}{k} + (\pi + \phi)$. Comme $\pi + \phi > \arg(r_{2})$ et que $-\frac{k}{2}<
a<0$, on a bien que $v \in I$, ce qui permet de conclure ce cas.
\begin{figure}[hbt]
\begin{tikzpicture}[scale=2.2,decoration={
    markings,
    mark=at position 0.4 with {\arrow[very thick]{>}}}]
\begin{scope}[xshift=-3.9cm]
 \coordinate (a0) at (0,0);\node[above] at (a0) {$0$};
  \coordinate (a1) at (1,0);\node[above] at (a1) {$1$};
   \coordinate (a2) at (-1.2,0);\node[above] at (a2) {$1-r_{2}$};

   \draw (a0) -- (a1);\draw (a0) -- ++(-1.8,0);\draw (a1) -- ++(0,-1.5);
\draw[postaction={decorate}] (a2) -- ++ (-45:.85) coordinate[pos=.4] (b1) coordinate (a3); \draw[postaction={decorate}] (a3) -- coordinate[pos=.4] (b2) (a0);\draw (a1) --  (a2);
\node[below left] at (b1) {$v$};\node[below right] at (b2) {$-\alpha v$};

            \foreach \i in {0,1,2,3}
     \fill (a\i)  circle (1pt);
     
     \draw[->] (a3)--++(135:.2) node[above right] {$\alpha$} arc  (135:45:.2);
      \draw[->] (-1,0) arc  (0:-45:.2); \node at (-.9,-.1) {$\theta$};

    \draw[->] (.8,0) arc  (-180:-90:.2); \node at (.8,-.24) {$\frac{2\pi}{k}$};
\end{scope}

\begin{scope}
 \coordinate (a0) at (0,0);\node[above] at (a0) {$0$};
  \coordinate (a1) at (1,0);\node[above] at (a1) {$1$};
   \coordinate (a2) at (-130:1.4);\node[below left] at (a2) {$\rho\exp(i\phi) = 1-r_{2}$};

   \draw (a0) -- (a1);\draw (a0) -- ++(-1.8,0);\draw (a1) -- ++(0,-1.5);
\draw (-1.4,0)node [above] {$-\rho$} arc (-180:-44.5:1.4);
\draw (a0) -- (a2);\draw[postaction={decorate}] (a2) -- ++ (5:1.0) coordinate[pos=.4] (b1) coordinate (a3); \draw[postaction={decorate}] (a3)-- coordinate[pos=.3] (b2) (a0);\draw (a1) --  (a2);
\node[below] at (b1) {$v$};\node[right] at (b2) {$-\alpha v$};

            \foreach \i in {0,1,2,3}
     \fill (a\i)  circle (1pt);
     
           \draw[->] (-.2,0) arc  (-180:-130:.2); \node at (-.5,-.1) {$\pi + \phi$};
           \draw[->] (.8,0) arc  (-180:-150:.2); \node at (.6,.1) {$\arg(r_{2})$};

\end{scope}
\end{tikzpicture}
\caption{La configuration des différents vecteurs dans le cas  où $R=0$ et $(R_{1},R_{2}) \neq ((-1)^{k},1)$ à gauche et $R\neq0$ à droite.} \label{fig:partiepol2k2}
\end{figure}
\smallskip
\par
Dans les autres cas, i.e. si $(R;R_{1},R_{2}) \neq (0;(-1)^{k},1)$, nous  normalisons $r_{1}=-1$ et nous choisissons $r_{2}$ tel que son argument est dans $[0, \frac{2\pi}{k}]$ comme précédemment. Étant donnée une racine $r$ de $R$, le vecteur $v$ satisfait
$$v = \frac{r}{1-\alpha}+\frac{r_{2}-1}{1-\alpha} \,.$$
Il suffit donc de choisir une racine $r$ telle que l'argument de $\frac{r}{1-\alpha}$ est dans $I$ pour obtenir de manière similaire à précédemment que l'argument de $v$ appartient à $I$. On obtient alors les sommes d'angles souhaitées.
\end{proof}

Nous donnons maintenant le lemme correspondant dans le cas où $R_{1}=0$.
\begin{lem}\label{lem:existenceracinesbis}
Soient $R,R_{1},R_{2}\in \CC$ avec $R_{1}=0$ et $R_{2}\neq0$ et $a\in\left\{-\left[ \frac{k}{2}\right],\dots,-1\right\}$. Étant donné $\alpha=\exp(\frac{2ia\pi}{k})$, il existe des racines $k$-ièmes $r,r_{2}$ des $R,R_{2}$ et $v\in\CC^{\ast}$ tels que :
\begin{enumerate}
\item $-r_{2}+v-\alpha v = r$;
\item la partie polaire d'ordre $2k$ associée à $(v;r_{2},\alpha v)$ existe;
\item la somme des angles au bord de $-r_{1}$ est égale à $2\pi - a \frac{2\pi}{k} \in ]2\pi;3\pi[$;
\item la somme des angles au bord de $r_{2}$ est égale à $3\pi +a \frac{2\pi}{k} \in ]2\pi;3\pi[$.
\end{enumerate}
\end{lem}

\begin{proof}
La preuve est similaire à celle du lemme~\ref{lem:existenceracines} en laissant $R_{1}$ tendre vers~$0$. Lorsque $R_{1}=0$, l'angle total au bord de $r_{2}$ ne change pas, mais celui au bord de $r_{1}$ diminue de $\pi$. La partie polaire obtenue est donnée dans la figure~\ref{fig:surjr=0sneq01bis}.
 \begin{figure}[hbt]
\center
\begin{tikzpicture}[scale=.8]
\begin{scope}
  \clip (-1,-1) rectangle (6,.7);
\draw (0,0) coordinate (a1) --  (0:1.5) coordinate[pos=.5] (b1) coordinate (a2) --  ++(-60:1) coordinate[pos=.5] (b2) coordinate (a3) -- ++(0:2)coordinate[pos=.5] (b3) coordinate (a4);
% \clip (-1.5,-3) rectangle (9,.7);
 \fill[black!10] (a1) -- (a2) -- (a3) -- (a4)  --++ (0:.4) arc (0:157:2.2) -- cycle;  
\draw (0,0) coordinate (a1) --  (a2) --   (a3) --  (a4); 
 
\filldraw[fill=white] (a2) circle (3pt);
\fill (a3) circle (3pt);

\draw[dotted] (a1)-- ++(180:.4);
\draw[dotted] (a4)-- ++(0:.4);

\node[below] at (b1) {$a$};
 \node[rotate=-45] at (b2) {$v$};
 \node[above] at (b3) {$b$};
\end{scope}

\begin{scope}[xshift=0cm,yshift=-2cm]
  \clip (-2,-1.5) rectangle (6,2);
\draw (0,0) coordinate (a1) --  (0:1.5) coordinate[pos=.5] (b1)  coordinate (a2)  -- ++(0:1) coordinate[pos=.5] (b3) coordinate (a3)--  ++(-120:1) coordinate[pos=.5] (b2) coordinate (a4) -- ++(0:1.5) coordinate[pos=.5] (b4) coordinate (a5); 
% \clip (-1.5,-3) rectangle (9,.7);
 \fill[black!10] (a1) -- (a2) -- (a3) -- (a4) -- (a5) --++ (0:.4) arc (0:-200:2.2) -- cycle;  
\draw (0,0) coordinate (a1) --  (a2) --   (a3) --  (a4) --  (a5); 
 
\filldraw[fill=white] (a2) circle (3pt);
\filldraw[fill=white] (a3) circle (3pt);
\fill (a4) circle (3pt);

\draw[dotted] (a1)-- ++(180:.4);
\draw[dotted] (a5)-- ++(0:.4);

\node[above] at (b1) {$a$};
\node[above] at (b4) {$b$};
 \node[rotate=60] at (b2) {$\alpha v$};
 \node[above] at (b3) {$1$};
\end{scope}

\end{tikzpicture}
\caption{Partie polaire associée à $(0,1,0)$.} \label{fig:surjr=0sneq01bis}
\end{figure}
\end{proof}

La démonstration du lemme~\ref{lem:48EXCEPT2} va nécessiter des constructions spécifiques que nous donnons dans les deux lemmes suivants. 

\begin{lem}\label{lem:nonintersecp3cas0}
 On se donne un entier $k\geq3$, un entier $0<a<\frac{k}{2}$, et $R\in\CC^{\ast}$ distinct de $(-1)^{k}$ si $a=1$. Il existe $v_{1},v_{2}\in\CC^{\ast}$ et une racine $k$-ième $r$ de $R$ satisfaisant
 \begin{eqnarray*}
  v_{1}+v_{2}&=&1 \\
  v_{1} + \exp\left(\frac{2ai\pi}{k}\right) v_{2} &=& -r
 \end{eqnarray*}
tels que les $k$-parties polaires associées à $(v_{1},v_{2};\emptyset)$ et à $\left(\emptyset;\exp\left(\frac{2ai\pi}{k}\right) v_{2},v_{1}\right)$ existent. De plus, en collant les $v_{1}$ par translation et les $v_{2}$ par rotation, on a que :
\begin{enumerate}
 \item la somme des angles à la singularité correspondant au point initial de $v_{1}$ est égale à $4\pi - \frac{2a\pi}{k} \in ]3\pi,4\pi[$;
 \item la somme des angles à la singularité correspondant au point final de $v_{1}$ est  égale à  $4\pi + \frac{2a\pi}{k}\in ]4\pi,5\pi[$.
\end{enumerate}
\end{lem}

\begin{proof}
On note $\zeta=\exp\left(\frac{2ai\pi}{k}\right)$ et $\arg \zeta = \frac{2a\pi}{k}$.
Pour obtenir les angles du lemme il suffit que $(v_{1},v_{2})$ vérifie la condition~(*) ci-dessous. 
\begin{itemize}
 \item[(*)] Si $(v_{1},v_{2})$ forme une base positive de $\RR^{2}$, alors $(v_{1},\zeta v_{2})$ forme aussi une base positive de $\RR^{2}$. Si $(v_{1},v_{2})$ ne forme pas une base positive, il n'y a pas de contrainte supplémentaire (ce second cas est représenté à droite de la figure~\ref{fig:partiepol2k2bis}).
\end{itemize}

Calculons alors la somme des angles au point initial de $v_{1}$. Supposons que l'argument de~$v_{1}$ est égal à $\pi -\epsilon$ avec $0<\epsilon<\frac{2\pi}{k}$ auquel cas la condition~(*) est satisfaite. Le fait que $v_{1}+v_{2}=1$ implique que l'argument $\theta$ de~$v_{2}$ est dans l'intervalle $]-\epsilon,0[$. On en déduit que l'argument de $\zeta v_{2}$ est dans l'intervalle $]\arg(\zeta)-\epsilon,\arg(\zeta)[$. L'angle au point initial de $v_{1}$ est donc $\epsilon$, plus $\pi+\theta$ pour le point final de~$v_{2}$, plus $\pi$ pour la moitié inférieure de la première partie polaire, plus $2\pi-\epsilon-\theta -\arg(\zeta)$ pour le point d'intersection des vecteurs de la seconde partie polaire. La somme est $4\pi  -\arg(\zeta)$ comme souhaité.
Comme la somme des angles est invariante pas tant que $\zeta v_{2}$ ne devient pas proportionnel à $-v_{1}$ lorsque l'on fait une rotation de $v_{1}$ dans le sens direct on obtient la condition~(*).
\begin{figure}[hbt]
\begin{tikzpicture}[decoration={
    markings,
    mark=at position 0.4 with {\arrow[very thick]{>}}},scale=2]

\begin{scope}[xshift=-4.8cm,yshift=-1cm,scale=2]
 \coordinate (a0) at (0,0);\node[below left] at (a0) {$0$};
  \coordinate (a1) at (1,0);\node[above] at (a1) {$1$};
   \coordinate (a2) at (40:1.2); \node[above] at (a2) {$-r$};
\draw[dotted] (a1) -- (a2);
\draw[thin] (-.1,0) -- (1.1,0);\draw[thin] (0,-.1) -- (0,.9);
\draw[dashed] (-.1,.28) -- (1,.4) coordinate[pos=.6] (a3);
\draw[postaction={decorate}] (a0) -- node[above] {$v_{1}$} (a3);\draw[postaction={decorate}] (a3) -- node[above] {$v_{2}$} (a1);\draw[postaction={decorate}] (a3) -- node[above left] {$\zeta v_{2}$} (a2);

            \foreach \i in {0,1,...,3}
     \fill (a\i)  circle (.5pt);
\end{scope}

\begin{scope}[scale=1.3]
\clip (-1.5,-1) rectangle (1.9,.5);
   \coordinate (a0) at (-1,0); \coordinate (a1) at (0,0); \coordinate (a2) at  (160:.3);  \coordinate (a3) at (1,0);
\coordinate (a4) at (1.5,0);
   \fill[black!10] (a0) -- (a1) -- (a2) -- (a3) -- (a4) -- ++(0,0) arc (0:180:1.25) -- cycle;
   \draw (a0) -- (a1) coordinate[pos=.5](e1)  -- (a2) node[above] {$v_{1}$} --node[above] {$v_{2}$} (a3) -- (a4) coordinate[pos=.5](e4);
       \foreach \i in {1,3}
   \fill (a\i)  circle (.5pt);
     \filldraw[fill=white] (a2)  circle (.5pt);

       \draw[->] (160:.2) arc  (160:180:.2); \node at (-.3,.04) {$\epsilon$};
       \draw[->] (1.1,0) arc  (0:175:.1); \node at (1,.16) {$\pi+\theta$};
       
\begin{scope}[yshift=-.5cm]
\clip (-1.5,2) rectangle (1.9,-.3);
   \coordinate (a0) at (-1,0); \coordinate (a1) at (0,0); \coordinate (a2) at  (.97,.22) ;
   \fill[black!10] (a0) -- (a1) -- (a2) -- ++ (160:.3) coordinate (a3) --++(1,0) coordinate (a4) -- ++(0,0) arc (0:-145:1.2) -- cycle;
     \draw (a0) -- (a1) coordinate[pos=.5](e1)  --node[below]  {$\zeta v_{2}$}  (a2)  -- (a3) node[left]  {$v_{1}$}  -- (a4) coordinate[pos=.5](e4);
       \draw[dotted] (a2) --++(.3,0);
    
       \draw[->] (a3) --++(0:.2) arc  (0:-20:.2); \node at (.99,.28) {$-\epsilon$};
       \draw[->] (a2) --++(-167:.1) arc  (-167:0:.1); \node at (1.2,.05) {$\pi-\theta-\arg(\zeta)$};

                \foreach \i in {1,3}
     \filldraw[fill=white] (a\i)  circle (.5pt);
          \fill (a2)  circle (.5pt);
\end{scope}
\end{scope}
\end{tikzpicture}
\caption{La configuration des différents vecteurs à gauche et les moitiés intéressantes des parties polaires $(v_{1},v_{2};\emptyset)$  et à $(\emptyset;\zeta v_{2},v_{1})$ à droite (pour d'autres vecteurs).} \label{fig:partiepol2k2bis}
\end{figure}

Il reste donc à montrer qu'il existe un choix de la racine $r$ de $R$ tel que $(v_{1},v_{2})$ satisfait la condition~(*). Nous choisissons $r$ telle que $\arg(-r) \in [0,\frac{2\pi}{k}[$ sauf dans le cas où  $R=(-1)^{k}$ avec $a>1$ auquel cas $-r = \exp\left(\frac{2i\pi}{k}\right)$. Les différents vecteurs sont représentés à gauche de la figure~\ref{fig:partiepol2k2bis}.
Remarquons que la condition~(*) est équivalente au fait que $(0,0)$ n'appartient pas au cône $\mathcal{C}$ délimité par les demi-droites $[v_{1},1)$ et $[v_{1},-r)$ de vecteurs directeurs respectifs~$v_{2}$ et $\zeta v_{2}$. En effet, si $(v_{1},v_{2})$ forme une base positive, le vecteur $v_{1}$ appartient au demi-plan inférieur ouvert. Le fait que l'origine n'appartienne pas à $\mathcal{C}$ est équivalent au fait que $v_{2}$ et~$\zeta v_{2}$ appartiennent au même demi-plan délimité par la droite contenant l'origine et de vecteur directeur $v_{1}$. Les deux bases $(v_{1},v_{2})$ et $(v_{1},\zeta v_{2})$ ont donc le même signe et sont donc positives.

Nous vérifions maintenant que l'origine n'appartient pas au cône $\mathcal{C}$. Supposons par l'absurde que $v_{1} = 0$, on a alors $-r = \exp\left(\frac{2ai\pi}{k}\right)$ et donc ce cas n'apparaît que si $R=(-1)^{k}$ et $a=1$ qui est exclu. Supposons que $(0,0)$ n'est pas le sommet de $\mathcal{C}$ mais est contenu dans celui-ci. Il suffit de vérifier le cas où $\zeta$ est d'argument minimal $\frac{2i\pi}{k}$. Dans ce cas, le cône $\mathcal{C}$ est une transformation rigide du cône délimité par les demi-droites $[0,1)$ et $[0,\zeta)$. Comme les demi-droites $[0,1)$ et $[v_{1},1)$ se coupent en $1$, cela implique que la demi-droite $[v_{1},\zeta v_{2})$ ne coupe pas $[0,\zeta)$ sauf si elles sont confondues. Ce dernier cas correspond à $R=(-1)^{k}$ et $a=1$ qui est exclu. On en déduit donc que l'argument de $-r$ n'est pas dans $[0,\frac{2\pi}{k}]$, ce que nous avions supposé.
% Si $-r\in \RR_{>0}\setminus\lbrace 1\rbrace$, la médiatrice est verticale et on a donc que les bases $(v_{1},v_{2})$ et $(v_{1},\zeta v_{2})$ sont positives si $-r \in ]0,1[$ et sont négatives si $-r >1$.  Pour conclure, si $\arg(-r) \in ]0,\frac{2\pi}{k}]$, on déforme $-r$ le long d'une demi-droite de la forme $-r_{0} + \exp(\frac{2i\pi}{k}) \RR_{>0}$ avec $-r_{0} \in \RR_{>0}\setminus\lbrace 1\rbrace$. L'argument du vecteur $v_{1}$ est croissant tandis que celui de $v_{2}$ et $\zeta v_{2}$ est décroissant. Comme de plus les segments ne se coupent pas lors de cette déformation, on en déduit que la condition sur les bases reste satisfaite. 
% 
% Finalement il reste à vérifier le cas où $-r$ appartient à la demi-droite $1 + \exp(\frac{2i\pi}{k}) \RR_{>0}$ et où $-r = \exp(\frac{2i\pi}{k})$ dans le cas $a>1$. Ces deux cas se vérifient sans problèmes.
\end{proof}

On donne maintenant une variation du lemme précédent.
\begin{lem}\label{lem:nonintersecp3}
 On se donne un entier $k\geq3$, un entier $0<a<\frac{k}{2}$, un nombre complexe $R\in\CC^{\ast}$ et $S_{1},S_{2}\in\CC$ non tous les deux nuls. Il existe $v_{1},v_{2}\in\CC^{\ast}$, des racines $k$-ièmes $s_{i}$ de $S_{i}$ et une racine $k$-ième $r$ de $R$ satisfaisant
 \begin{eqnarray*}
  v_{1}+v_{2}&=&1 + s_{1} \\
  v_{1} + \exp\left(\frac{2ai\pi}{k}\right) v_{2} &=& -r +s_{2}
 \end{eqnarray*}
tels que les $k$-parties polaires associées à $(v_{1},v_{2};s_{1})$  et à $\left(s_{2};\exp\left(\frac{2ai\pi}{k}\right)  v_{2},v_{1}\right)$ existent. De plus, en identifiant les $v_{1}$ par translation et les $v_{2}$ par rotation on a que :
\begin{enumerate}
 \item la somme des angles au bord de $s_{1}$ est $(5- \delta_{1})\pi - \frac{2a\pi}{k} \in \left](4- \delta_{1})\pi,(5- \delta_{1})\pi\right[$,
 \item la somme des angles au bord de $s_{2}$ est $(5- \delta_{2})\pi + \frac{2a\pi}{k} \in \left](5- \delta_{2})\pi,(6- \delta_{2})\pi\right[$,
\end{enumerate}
où $\delta_{i} = 0$ si $S_{i} \neq 0$ et $\delta_{i} = 1$ si $S_{i} = 0$.
\end{lem}

\begin{proof}
Nous supposons tout d'abord que $S_{1}$ et $S_{2}$ sont non nuls. Choisissons une racine~$s_{1}$ de $S_{1}$ dont l'argument est dans $[-\frac{\pi}{k},\frac{\pi}{k}]$. On en déduit que l'argument $\gamma$ de $1+s_{1}$ est dans $]-\frac{\pi}{k},\frac{\pi}{k}[$.  Choisissons maintenant $s_{2}$ d'argument dans $[\gamma,\gamma +\frac{2\pi}{k}[$ et $-r$ d'argument dans $]\gamma,\gamma +\frac{2\pi}{k}]$.
\begin{figure}[hbt]
\begin{tikzpicture}[decoration={
    markings,
    mark=at position 0.4 with {\arrow[very thick]{>}}},scale=2]
\begin{scope}
\clip (-1.5,-.7) rectangle (2.5,.5);
   \coordinate (a0) at (-.7,0); \coordinate (a1) at (0,0); \coordinate (a2) at  (160:.3);  \coordinate (a3) at (1,0);
   \draw[dotted] (a1) --  (1.27,.22) coordinate (a5);
   \fill[black!10] (a0) -- (a1) -- (a2) -- (a5) --++(.4,0) coordinate(a4)  -- ++(0,0) arc (0:190:1.2) -- cycle;
   \draw (a0) -- (a1) coordinate[pos=.5](e1)  -- (a2) node[above] {$v_{1}$} --node[above] {$v_{2}$} (a5) -- (a4);
       \foreach \i in {1,5}
   \fill (a\i)  circle (.5pt);
     \filldraw[fill=white] (a2)  circle (.5pt);
     \node[below] at (a1) {$0$};   \node[above] at (a5) {$1+s_{1}$};
       
     \begin{scope}[xshift=.5cm,yshift=-.5cm]  
    \coordinate (a0) at (-.8,0); \coordinate (a1) at (-.2,0); \coordinate (a2) at  (.27,.22) ;
   \fill[black!10] (a0) -- (a1) -- (a2) --++(.6,0) coordinate (a3) -- ++(0,0) arc (0:-145:1) -- cycle;
     \draw (a0) -- (a1) coordinate[pos=.5](e1)  --node[below]  {$s_{1}$}  (a2)  -- (a3);

                \foreach \i in {1,2}
     \fill (a\i)  circle (.5pt);
   \end{scope}
     \end{scope}

\begin{scope}[xshift=3.5cm,yshift=-.5cm]
\clip (-1.5,1) rectangle (2.5,-.35);
   \coordinate (a0) at(-1,-.2); \coordinate (a1) at (-.3,-.2); \coordinate (a2) at  (.97,.22) ;
   \fill[black!10] (a0) -- (a1) -- (a2) -- ++ (160:.3) coordinate (a3) --++(1,0) coordinate (a4) -- ++(0,0) arc (0:-145:1.3) -- cycle;
     \draw (a0) -- (a1) coordinate[pos=.5](e1)  --node[below]  {$\zeta v_{2}$}  (a2)  -- (a3) node[left]  {$-r+s_{2}$}node[below right]  {$v_{1}$}  -- (a4) coordinate[pos=.5](e4); 

                \foreach \i in {1,3}
     \filldraw[fill=white] (a\i)  circle (.5pt);
          \fill (a2)  circle (.5pt);
          \draw[dotted] (a1) -- (a3);
          
\begin{scope}[xshift=.3,yshift=.6cm]
 \coordinate (a0) at (-.8,0); \coordinate (a1) at (-.2,0); \coordinate (a2) at  (.37,.27);\coordinate (a4) at  (.63,.11);
   \fill[black!10] (a0) -- (a1) -- (a2) --++(.6,0) coordinate (a3) -- ++(0,0) arc (0:145:1) -- cycle;
     \draw (a0) -- (a1) coordinate[pos=.5](e1)  --node[above left]  {$s_{2}$}  (a2)  -- (a3);
\draw[dotted] (a1) -- (a4);\draw[dotted] (a1) --++ (.85,0) coordinate (a5);  \draw[->] (a5) arc  (0:40:.2); \node at (.79,.08) {$\gamma$};
                \foreach \i in {1,2}
     \filldraw[fill=white] (a\i)  circle (.5pt);
\end{scope}          
\end{scope}
\end{tikzpicture}
\caption{Les parties polaires $(v_{1},v_{2};s_{1})$  et à $(s_{2};\zeta v_{2},v_{1})$.} \label{fig:partiepol2k3}
\end{figure}

Par ce choix, les puissances $k$-ièmes des nombres complexes $1+v_{1}$ et $v_{2}-r$ ne sont pas de la forme $(1,(-1)^{k})$. Le lemme~\ref{lem:nonintersecp3cas0} donne une solution $(v_{1},v_{2})$ telle que les parties polaires $(v_{1},v_{2};\emptyset)$ et $\left(\emptyset;\exp\left(\frac{2ai\pi}{k}\right) v_{2},v_{1}\right)$ existent et les angles sont donnés donnés dans ce lemme. Il suffit alors de remplacer les $\emptyset$ par les $s_{i}$ dans les parties polaires. Ajouter ces vecteurs ajoute~$\pi$ aux angles bordant un $s_{i}$ différent de zéro.

Le cas où l'un des $S_{1},S_{2}$ est vide se traite de façon similaire.  
\end{proof}

\subsection{Le cas de $2$ zéros, d'ordres positifs, tous les résidus sont non nuls}\label{sub:s4constructions}

Nous considérons  les strates $\Omega^{k}\mathcal{M}_{0}(a_{1},a_{2};-k\ell_{1},\dots,-k\ell_{p};\rec[-k][s])$ avec $a_{1},a_{2} \geq 1$ (et donc $l_{1},l_{2} \geq 1$).  Dans cette section, nous supposerons  sans perte de généralité que $-k<\bar a_{2}<-\frac{k}{2}<\bar a_{1}<0$ et $\ell_{1} \geq \dots \geq \ell_{p+s}$. Nous traitons dans cete section le cas dans lequel tous les résidus sont non nuls.
\par
Le cas $p=1$ et $s=0$ est pratiquement trivial. Toutes les $k$-différentielles de telles strates sont de la forme $\lambda z^{a_{1}}(z-1)^{a_{2}}dz^{k}$. Il suffit de choisir convenablement la constante $\lambda$ pour obtenir n'importe quel $k$-résidu non nul. Nous supposerons à partir de maintenant que $p+s \geq 2$.

\begin{lem}\label{lem:47NONNUL}
Toute configuration de $k$-résidus $(R_{1},\dots,R_{p+s})\in (\CC^{\ast})^{p+s}$ est réalisable dans les strates $\Omega^{k}\mathcal{M}_{0}(a_{1},a_{2};-k\ell_{1},\dots,-k\ell_{p};\rec[-k][s])$,
% avec $p \geq 1$, 
à l'exception possible des strates de la forme $\Omega^{k}\mathcal{M}_{0}(a_{1},a_{2};\rec[-k\ell][p])$ avec $l_{1}+1 \in \ell \mathbb{Z}$ et $l_{2} \in \ell \mathbb{Z}$.
\end{lem}

\begin{proof}
 Pour une partition $S_{1} \cup S_{2}$ de l'ensemble $\lbrace{ 2,\dots,p+s \rbrace}$ nous définissons :
 \begin{equation*}
  m_{1} = \sum\limits_{j \in S_{1}} \ell_{j} \text{ et }m_{2} = \sum\limits_{j \in S_{2}} \ell_{j}\,.
 \end{equation*}
Nous prouvons que nous pouvons toujours choisir une partition telle que $l_{1} \geq m_{1}$ et $l_{2} \geq m_{2}+1$. Nous choisissons l'entier $t$ le plus grand possible tel que $\sum_{2 \leq i \leq t} \ell_{i} \leq l_{1}$ et posons $S_{1}=\lbrace{ 2,\dots,t \rbrace}$. Il reste à prouver que $m_{2} \leq l_{2}-1$. Si $t=p+s$, nous avons $m_{2}=0 \leq l_{2}-1$. Dans le cas contraire, nous avons $t<p+s$ et $m_{1}+\ell_{t+1} > l_{1}$. Comme $l_{1}+l_{2} = -1+\sum_{i=1}^{p+s}\ell_{i}$, il s'ensuit que $l_{1}+l_{2} = -1+\ell_{1}+m_{1}+m_{2}$ et donc $(l_{2}-1)-m_{2}=(m_{1}-l_{1})+\ell_{1}-2$. Nous savons que $m_{1}-l_{1}>-\ell_{t+1} \geq -\ell_{1}$ et donc $(l_{2}-1)-m_{2} \geq -1$.
\par
Le seul cas dans lequel $(l_{2}-1)-m_{2}$ n'est pas positif implique que $\ell_{1}=\dots=\ell_{t+1}$, $m_{1}=(t-1)\ell_{1}$ et $l_{1}-m_{1}=\ell_{1}-1$. A moins que nous n'ayons $\ell_{1}=\dots=\ell_{p+s}$ (et en fait $s=0$ dans ce cas), il est toujours possible d'ajouter à l'ensemble $S_{1}$ l'indice $p+s$ pour lequel nous avons $\ell_{p+s}<\ell_{1}$. La borne $l_{1}-m_{1} \geq 0$ sera dans ce cas toujours satisfaite tandis que $(l_{2}-1)-m_{1}$ sera devenu positif. Dans le dernier cas, tous les pôles sont de même ordre $-k\ell$, nous avons $l_{1}+1 \in \ell_{1}\mathbb{Z}$. Cette situation est explicitement exclue dans l'énoncé du lemme. Les inégalités $l_{1} \geq m_{1}$ et $l_{2} \geq m_{2}+1$ sont donc démontrées.
\par
L'ensemble des pôles dont l'indice est dans $S_{1}$ (resp. $S_{2}$) contribuera exclusivement à la singularité d'ordre $a_{1}$ (resp. $a_{2}$). Pour l'ensemble des pôles d'indice $j \geq 2$, nous choisissons une racine $r_{j}$  du $k$-résidu $R_{j}\neq0$ dont l'argument appartient à $\left[-\frac{\pi}{k},\frac{\pi}{k}\right[$.
\par
Notons $\tilde{r}_{1}= \sum_{j\in S_{1}} r_{j}$ et $\tilde{r}_{2}=\sum_{j\in S_{2}}r_{j}$. On considère deux polygones (éventuellement vides ou réduits à un segment) $M_{1}$ et $M_{2}$ dont les arêtes sont :
\begin{enumerate}
    \item $\tilde{r}_{1}$ et les $(-r_{i})_{i \in S_{1}}$ classés par argument croissant pour $M_{1}$;
    \item $\tilde{r}_{2}$ et $(-r_{i})_{i \in S_{2}}$ pour $M_{2}$ (idem).
\end{enumerate}
Il reste donc à recoller la dernière arête de chaque polygone sur une partie polaire correspondant au pôle d'ordre $-k\ell_{1}$.
\par

Supposons d'abord que $S_{1}$ et $S_{2}$ sont non vides. Nous allons fabriquer une $k$-partie polaire d'ordre $k\ell_{1}$ associée à $( v,-\tilde{r}_{1};\tilde{r}_{2},\alpha v)$ telle que la somme $\tilde r_{1} + \tilde r_{2} + v + \alpha v$ est l'opposé d'une racine du résidu de $P_{1}$. De plus l'angle total au bord de $-\tilde{r}_{1}$ (correspondant à la singularité d'ordre $a_{1}$) devra valoir $(l_{1}-m_{1})2\pi+3\pi+\frac{2\bar{a_{1}\pi}}{k}$. De même, l'angle total au bord de $\tilde{r}_{2}$ devra valoir $(l_{2}-m_{2})2\pi+3\pi+\frac{2\bar{a_{2}\pi}}{k}=(l_{2}-m_{2})2\pi+\pi-\frac{2\bar{a_{1}\pi}}{k}$ pour que les ordres des singularités soient bien réalisés. Le lemme~\ref{lem:existenceracines} nous assure de l'existence d'une telle partie polaire pour un pôle d'ordre $-2k$. Il suffit ensuite d'ajouter respectivement $l_{1}-m_{1}$ et $l_{2}-m_{2}-1$ plans aux deux singularités (nous avons démontré plus haut que ces deux nombres étaient positifs) pour obtenir la $k$-partie polaire voulue.
\par
Si $S_{1}$ est vide mais que $S_{2}$ est non vide, c'est le lemme~\ref{lem:existenceracinesbis} qui fournit la construction de la partie polaire. Si au contraire $S_{1}$ est non vide mais que $S_{2}$ est vide, la construction du lemme~\ref{lem:existenceracines} fonctionne même lorsque  $\tilde{r_{2}}=0$.
\par
Enfin, si $S_{1}$ et $S_{2}$ sont tous les deux vides, alors $p=1$ et $s=0$. Ce cas a déjà été traité.
\end{proof}

Dans les deux lemmes suivants, nous traitons à part le cas des strates pour lesquelles tous les pôles sont de même ordre.

\begin{lem}\label{lem:48EXCEPT}
Toute configuration de $k$-résidus $(R_{1},\dots,R_{p})\in (\CC^{\ast})^{p}$ avec $p\geq2$ est réalisable dans les strates $\Omega^{k}\mathcal{M}_{0}(a_{1},a_{2};\rec[-k\ell][p])$ si :
\begin{enumerate}
    \item $l_{1} \leq p(\ell-1) -1$ et $l_{2} \geq p$;    
    \item $l_{1} \in -1+\ell\mathbb{Z}$ et $l_{2} \in \ell\mathbb{Z}$.
\end{enumerate}
\end{lem}

\begin{proof}
Nous supposerons dans un premier temps que $\ell=2$.
Nous traitons d'abord le cas $p=2$. Sans perte de généralité, nous supposerons $R_{1}=1$ et choisissons sa racine $r_{1}=1$. Nous choisissons une racine $r_{2}$ de $R_{2}$ telle que $\arg(r_{2})\in [0,\frac{2\pi}{k}[$. Construisons un triangle dont les côtés sont $r_{1},r_{2},-(r_{1}+r_{2})$. Nous construisons ensuite un quadrilatère en collant le long du côté $-(r_{1}+r_{2})$ un triangle isocèle dont le sommet distingué est d'angle $0<-\frac{2\bar{a_{1}}\pi}{k}<\pi$. Nous nommons ses côtés $r_{1},r_{2},s_{2},s_{1}$. Nous découpons ensuite ce quadrilatère le long de la diagonale $d$ distincte de $-(r_{1}+r_{2})$ pour obtenir deux triangles $T_{1},T_{2}$ de côtés respectifs $r_{1},s_{1},d$ et $s_{2},r_{2}.d$, voir la figure~\ref{fig:constrGuill1} à gauche.
\begin{figure}[htb]
\center
\begin{tikzpicture}[decoration={
    markings,
    mark=at position 0.4 with {\arrow[very thick]{>}}},scale=1.2]

    %Premier
    \begin{scope}[xshift=-4cm,yshift=-.7cm,scale=1.2]
  \coordinate (a) at (-.5,0);
\coordinate (b) at (.5,0);
\coordinate (c) at (0,1);

\draw (a) --node[below] {$r_{1}$} (b) -- node[right] {$r_{2}$} ++(55:1) coordinate (d) --node[above] {$s_{2}$} (c) -- node[left] {$s_{1}$}(a);
\draw[dotted] (b) --node[right] {d} (c);

  \draw[->] (c)++(-.05,-.1) arc  (-150:15:.1); \node at (0,1.25) {$\frac{-2\bar{a_{1}}\pi}{k}$};
    \end{scope}

%second dessin
\begin{scope}[xshift=0.5cm]
\fill[fill=black!10] (0,0) coordinate (Q) circle (1.2cm);

\coordinate (a) at (-.5,0);
\coordinate (b) at (.5,0);
\coordinate (c) at (0,1);

\fill (a)  circle (2pt);
\fill[] (b) circle (2pt);
    \fill[white] (a) -- (c)coordinate[pos=.5](f) -- (b)coordinate[pos=.5](g) -- ++(0,-2) --++(-1,0) -- cycle;
 \draw  (a) -- (c) coordinate () -- (b);
 \draw (a) -- ++(0,-1.1) coordinate (d)coordinate[pos=.5] (h);
 \draw (b) -- ++(0,-1.1) coordinate (e)coordinate[pos=.5] (i);
 \draw[dotted] (d) -- ++(0,-.2);
 \draw[dotted] (e) -- ++(0,-.2);
\node[above, rotate=55] at (f) {$s_{1}$};
\node[above, rotate=-55] at (g) {$d$};
\node[left] at (h) {$1$};
\node[right] at (i) {$1$};

 \draw[dotted,postaction={decorate}]  (a) --node[below] {$r_{1}$} (b) ;
    \end{scope}

%troisieme figure
\begin{scope}[xshift=3.5cm,rotate=55]
\fill[fill=black!10] (0,0) coordinate (Q) circle (1.2cm);

\coordinate (a) at (-.5,0);
\coordinate (b) at (.5,0);
\coordinate (c) at (0,1);

\fill (a)  circle (2pt);
\fill[] (b) circle (2pt);
    \fill[white] (a) -- (c)coordinate[pos=.5](f) -- (b)coordinate[pos=.5](g) -- ++(0,-2) --++(-1,0) -- cycle;
 \draw  (a) -- (c) coordinate () -- (b);
 \draw (a) -- ++(0,-1.1) coordinate (d)coordinate[pos=.5] (h);
 \draw (b) -- ++(0,-1.1) coordinate (e)coordinate[pos=.5] (i);
 \draw[dotted] (d) -- ++(0,-.2);
 \draw[dotted] (e) -- ++(0,-.2);
\node[above, rotate=-55] at (f) {$d$};
\node[above, rotate=-15] at (g) {$s_{2}$};
\node[above] at (i) {$2$};
\node[below] at (h) {$2$};

 \draw[dotted,postaction={decorate}]  (a) --node[below right] {$r_{2}$} (b) ;
\end{scope}
\end{tikzpicture}
\caption{Le quadrilatère et les parties polaires formant la surface.} \label{fig:constrGuill1}
\end{figure}
\par
Nous construisons les deux domaines polaires $D_{i}$ en retirant d'un plan infini un triangle de la forme $T_{i}$ ainsi qu'une bande infinie collée le long du côté $r_{i}$ du triangle $T_{i}$. Nous identifions ensuite les côtés $s_{1}$ et $s_{2}$ ensemble, ainsi que les deux côtés correspondent à $d$, voir la figure~\ref{fig:constrGuill1} à droite.
\par
Nous pouvons généraliser la construction pour un nombre de pôles $p$ quelconque de la façon suivante. Nous choisissons des racines $r_{1},\dots,r_{p}$ dont les arguments sont dans $[0,\frac{2\pi}{k}[$ et les ordonnons par argument croissant (en changeant éventuellement les indices). Nous construisons avec elles un polygone ayant $p+1$ côtés dont le dernier est $- \sum r_{i}$. Nous construisons ensuite un polygone convexe avec $p+2$ côtés en collant sur ce dernier bord un triangle isocèle dont le sommet distingué est d'angle $-\frac{2\bar{a_{1}}\pi}{k}$. Nous nommons ses côtés $r_{1},\dots,r_{p},s_{1},s_{2}$. Nous découpons ensuite ce polygone le long de $l_{1}$ diagonales passant par le sommet distingué du triangle isocèle. Ceci produit une partition des $r_{1},\dots,r_{p}$ en $l_{1}+1$ polygones $M_{1},\dots,M_{l_{1}+1}$. Nous construisons ensuite $l_{1}+1$ domaines polaires en retirant d'un plan infini un polygone $M_{i}$ dans chacun d'eux et une bande infinie à partir du côté $r_{j}$ avec $j$ minimal dans chaque domaine. Pour chacun des autres résidus $r_{i}$, nous formons un domaine polaire en enlevant une bande infinie à partir d'un segment $r_{i}$, voir la figure~\ref{fig:constrGuill2}. Les collages sont alors effectués comme précédemment.
\begin{figure}[htb]
\center
\begin{tikzpicture}[decoration={
    markings,
    mark=at position 0.4 with {\arrow[very thick]{>}}},scale=1.2]

    %Premier
    \begin{scope}[xshift=-4cm,yshift=-.5cm,scale=1.2]
  \coordinate (a) at (-.5,0);
\coordinate (b) at (.5,0);
\coordinate (c) at (0,1);
\coordinate (e) at (0,-.5);

\draw (a) --node[below] {$r_{1}$} (e) --node[below] {$r_{2}$} (b)  -- node[right] {$r_{3}$} ++(55:1) coordinate (d) --node[above] {$s_{2}$} (c) -- node[left] {$s_{1}$}(a);
\draw[dotted] (b) --node[right] {d} (c);

  \draw[->] (c)++(-.1,-.2) arc  (-150:15:.2); \node at (0,1.25) {$\frac{-2\bar{a_{1}}\pi}{k}$};
    \end{scope}

%second dessin
\begin{scope}[xshift=0.5cm]
\fill[fill=black!10] (0,0) coordinate (Q) circle (1.2cm);

\coordinate (a) at (-.5,0);
\coordinate (b) at (.5,0);
\coordinate (c) at (0,1);
\coordinate (e) at (0,-.5);

\fill (a)  circle (2pt);
\fill[] (b) circle (2pt);
\fill (e)  circle (2pt);
    \fill[white] (a)  -- (c)coordinate[pos=.5](f) -- (b)coordinate[pos=.5](g) -- (e) -- ++(0,-1) --++(-.5,0) -- cycle;
 \draw  (a) -- (c) -- (b) --node[below] {$r_{2}$} (e);
 \draw (a) -- ++(0,-1.1) coordinate (d)coordinate[pos=.5] (h);
 \draw (e) -- ++(0,-.7) coordinate (i)coordinate[pos=.5] (i);
 \draw[dotted] (d) -- ++(0,-.2);
 \draw[dotted] (i) -- ++(0,-.2);
\node[above, rotate=55] at (f) {$s_{1}$};
\node[above, rotate=-55] at (g) {$d$};
\node[left] at (h) {$1$};
\node[right] at (i) {$1$};

 \draw[dotted,postaction={decorate}]  (a) --node[below] {$r_{1}$} (e) ;
    \end{scope}

    %second bis dessin
\begin{scope}[xshift=3cm]
\fill[fill=black!10] (0,0) coordinate (Q) circle (1.2cm);

\coordinate (a) at (-.5,0);
\coordinate (b) at (0,.5);

\fill (a)  circle (2pt);
\fill[] (b) circle (2pt);
    \fill[white] (a)  -- (b)coordinate[pos=.5](f)  -- ++(-55:1.6) --++(-100:.95) -- cycle;
 \draw  (a) --node[above left] {$r_{2}$} (b);
 \draw (a) -- ++(-55:1.45) coordinate (d)coordinate[pos=.5] (h);
 \draw (b) -- ++(-55:1.55) coordinate (i)coordinate[pos=.5] (i);
\node[left] at (h) {$2$};
\node[right] at (i) {$2$};
    \end{scope}

%troisieme figure
\begin{scope}[xshift=5.5cm,rotate=55]
\fill[fill=black!10] (0,0) coordinate (Q) circle (1.2cm);

\coordinate (a) at (-.5,0);
\coordinate (b) at (.5,0);
\coordinate (c) at (0,1);

\fill (a)  circle (2pt);
\fill[] (b) circle (2pt);
    \fill[white] (a) -- (c)coordinate[pos=.5](f) -- (b)coordinate[pos=.5](g) -- ++(0,-2) --++(-1,0) -- cycle;
 \draw  (a) -- (c) coordinate () -- (b);
 \draw (a) -- ++(0,-1.1) coordinate (d)coordinate[pos=.5] (h);
 \draw (b) -- ++(0,-1.1) coordinate (e)coordinate[pos=.5] (i);
 \draw[dotted] (d) -- ++(0,-.2);
 \draw[dotted] (e) -- ++(0,-.2);
\node[above, rotate=-55] at (f) {$d$};
\node[above, rotate=-15] at (g) {$s_{2}$};
\node[above] at (i) {$3$};
\node[below] at (h) {$3$};

 \draw[dotted,postaction={decorate}]  (a) --node[below right] {$r_{3}$} (b) ;
\end{scope}
\end{tikzpicture}
\caption{Le polygone et les trois parties polaires formant la surface pour $p=3$ et $l_{1}=1$.} \label{fig:constrGuill2}
\end{figure}
Nous réalisons ainsi tous les $k$-résidus et l'une des singularités aura un angle de $2(l_{1}+1)\pi +\frac{2\bar{a_{1}}\pi}{k}$. La surface obtenue est donc dans la strate voulue. Cependant, cette construction n'est réalisable que si $l_{1}+1 \leq p$.
\par
Si $\ell \geq 3$, la construction se généralise en collant $\ell-2$ plans sur chaque domaine polaire. Toutefois, il y a des contraintes sur le choix de la singularité conique dont l'angle va augmenter. Lorsque l'on choisit de découper le polygone convexe ayant $p+2$ côtés le long de $t$ diagonales (avec $1 \leq t \leq p-1$), on produit exactement $p-t-1$ domaine polaires bordés par un unique lien-selle (voir la figure~\ref{fig:constrGuill2}). Il s'ensuit que dans l'angle $2(l_{1}+1)\pi +\frac{2\bar{a_{1}}\pi}{k}$ le paramètre $l_{1}$ va pouvoir prendre toutes les valeurs entre $t$ et $(t+1)(\ell-1)-1$ (si on ajoute $\ell-1$ plans dans chacun des $t+1$ domaines polaires concernés).
\par
Il suffit ensuite de faire varier $t$ entre $1$ et $p-1$ pour pouvoir obtenir chaque valeur de $l_{1}$ entre $1$ et $(\ell-1)p-1$. Dans les cas qui restent ouverts, on a $l_{1} \geq (\ell-1)p$ et $l_{2} \leq p-1$.
\end{proof}

Une dernière construction permet de produire les configurations de résidus non nuls dans les cas restants.

\begin{lem}\label{lem:48EXCEPT2}
Toute configuration de $k$-résidus $(R_{1},\dots,R_{p})\in (\CC^{\ast})^{p}$ avec $p \geq 2$  est réalisable dans les strates $\Omega^{k}\mathcal{M}_{0}(a_{1},a_{2};\rec[-k\ell][p])$ avec :
\begin{enumerate}
    \item $l_{1} \geq (\ell-1)p$ et $l_{2} \leq p-1$;    
    \item $l_{1} \in -1+\ell\mathbb{Z}$ et $l_{2} \in \ell\mathbb{Z}$.
%     \item $\ell \geq 2$;
%     \item $p \geq 2$.
\end{enumerate}
\end{lem}

\begin{proof}
Dans la mesure où le lemme~\ref{lem:44zeronegatif} couvre les cas dans lesquels un zéro est d'ordre négatif, nous supposons que $l_{2} \geq 1$. Nous déduisons du point (2) qu'il existe $t$ tel que $l_{1}=t\ell-1$. Comme $l_{1}+l_{2}=\ell p -1$, le point (1) et le fait que $l_{2} \geq 1$ impliquent que $\ell p-p \leq l_{1} \leq \ell p - 2$.
Il s'ensuit que $p\frac{\ell-1}{\ell}+\frac{1}{\ell} \leq t \leq p -\frac{1}{\ell}$. Comme $t$ est un entier, on obtient l'inégalité $2\leq t \leq p-1$.
\par
On utilise une variation de la construction de la preuve du lemme~\ref{lem:47NONNUL}. On sélectionne deux pôles que l'on dénote par $P_{1}$ et $P_{2}$. On répartit les $p-2$ autres en deux ensembles $A_{1},A_{2}$ tels que le cardinal de $A_{1}$ est $t-2$. On construit alors deux polygones en concaténant les racines des résidus de ces pôles et l'opposé de leurs sommes. Puisque $t<p$, l'ensemble $A_{2}$ n'est jamais vide (tandis que $A_{1}$ peut être vide).

On utilise alors les parties polaires données par le lemme~\ref{lem:nonintersecp3} où les $S_{i}$ sont les puissances $k$-ièmes des sommes des racines des $A_{i}$ (et $S_{1}=0$ si $A_{1}=\emptyset$), en normalisant de telle façon que le résidu de $P_{1}$ soit $1$ et en dénotant~$R$ le résidu de~$P_{2}$. De plus, on coupe chacune des parties polaires associées aux deux pôles spéciaux le long d'une demi-droite partant du sommet correspondant à $a_{1}$ et on y colle $\ell-2$ plans de façon cyclique. Finalement les autres pôles sont obtenus en collant les parties polaires aux segments correspondant des polygones.

Nous montrons maintenant que l'on obtient bien les $k$-différentielles avec les invariants souhaités. Considérons le cas où $A_{1}\neq \emptyset$ (le cas dans lequel $A_{1}$ est vide est similaire). L'angle de la singularité à laquelle contribue l'ensemble $A_{1}$ est $2\ell \cdot 2\pi + \frac{2\bar{a_{1}}\pi}{k} + (t-2)\cdot 2\pi$. En effet, les deux premiers termes sont donnés par le lemme~\ref{lem:nonintersecp3} en enlevant un angle~$\pi$. Cet angle $\pi$ et la contribution des autres pôles est donné  par le troisième terme. L'angle total est donc $t\ell \cdot 2\pi + \frac{2\bar{a_{1}}\pi}{k} $, ce qui correspond à une singularité d'ordre $a_{1}$ de la $k$-différentielle. L'ordre de la singularité $a_{2}$ se déduit immédiatement.
\end{proof}

\subsection{Cas général pour $n=2$}

Nous pouvons obtenir par récurrence les différentielles à résidus prescrits pour toute strate avec deux zéros.

\begin{prop}\label{prop:47p+0}
Soit $\Omega^{k}\mathcal{M}_{0}(a_{1},a_{2};-k\ell_{1},\dots,-k\ell_{p};\rec[-k][s])$ une strate de genre zéro telle que $p \neq 0$. Toute configuration de $k$-résidus qui n'est pas uniformément nulle appartient à l'image de l'application résiduelle.
\end{prop}

La preuve par récurrence nécessite un résultat préliminaire.

\begin{lem}\label{lem:49AUTRE}
Pour $s \geq 1$, toute configuration de $k$-résidus de la forme $(0;R_{1},\dots,R_{s})$ pour $R_{1},\dots,R_{s}$ tous non nuls est réalisable dans la strate $\Omega^{k}\mathcal{M}_{0}(a_{1},a_{2};-k\ell;\rec[-k][s])$.
\end{lem}

\begin{proof}
Pour démontrer le lemme~\ref{lem:47NONNUL}, nous n'utilisons pas l'hypothèse de non-nullité du résidu du pôle d'ordre $-k\ell$. En effet, elle n'intervient pas dans la construction des parties polaires (voir la section~\ref{sub:s4constructionsDP}). La construction s'étend donc immédiatement à cette famille de cas.
\end{proof}

Nous prouvons maintenant la proposition~\ref{prop:47p+0}.
\begin{proof}
Nous procédons par récurrence sur le nombre $t$ de pôles d'ordre distinct de~$-k$ pour lesquels le résidu est nul. Les lemmes de la section~\ref{sub:s4constructions} établissent l'initialisation (le cas $t=0$).
\par
En supposant la propriété valide jusqu'au rang $t$, nous allons la démontrer pour le cas~$t+1$. Nous considérons donc une configuration formée de $t+1$ résidus nuls et de $p+s-(t+1)$ résidus non nuls $R_{t+2},\dots,R_{p+s}$. Il s'agit de construire une différentielle ayant ces résidus dans une certaine strate $\Omega^{k}\mathcal{M}_{0}(a_{1},a_{2};-b_{1},\dots,-b_{p};\rec[-k][s])$.
\par
Le lemme~\ref{lem:44zeronegatif} permet de se restreindre aux strates pour lesquelles $a_{1},a_{2}>0$. Ainsi, il existe des entiers $m_{1},m_{2}>0$ tels que $m_{1}+m_{2}=l_{t+1}$, $a_{1}-km_{1}>-k$ et $a_{2}-km_{2}>-k$.
\par
% Supposons dans un premier temps que $p\geq2$ ou $t\geq2$.
L'hypothèse de récurrence ou le lemme~\ref{lem:49AUTRE} si $p=t=1$ garantissent que la configuration ayant juste un résidu nul de moins est réalisable par une différentielle $\omega$ d'une strate plus simple dans laquelle les deux zéros sont d'ordres $a_{1}-km_{1},a_{2}-km_{2}$ tandis le pôle d'ordre $kl_{t+1}$ a été retiré. Dans la surface plate définie par $\omega$, il existe un lien-selle reliant les deux zéros. Il suffit alors de découper le long de ce lien-selle pour y coller les deux bords d'une cicatrice découpée dans un domaine polaire d'ordre $kl_{t+1}$ et pour lesquelles les angles aux extrémités de la cicatrice sont $2m_{1}\pi$ et $2m_{2}\pi$. Cette chirurgie produit une nouvelle différentielle réalisant une configuration ayant un résidu nul de plus dans la strate adéquate.
% \par
% Dans le cas $p=t=1$, la démonstration est identique à ceci près que c'est le lemme~\ref{lem:49AUTRE} qui fournit la différentielle ayant un pôle de moins.
\end{proof}

\subsection{Cas général pour $n \geq 3$}

Pour conclure cette section, nous traitons le cas général des strates $\Omega^{k}\mathcal{M}_{0}(a_{1},\dots,a_{n};-b_{1},\dots,-b_{p};\rec[-k][s])$ pour lesquelles $p \neq 0$ et $n\geq3$.

\begin{prop}\label{prop:48p+0}
Soit $\Omega^{k}\mathcal{M}_{0}(a_{1},\dots,a_{n};-b_{1},\dots,-b_{p};\rec[-k][s])$ une strate de genre zéro telle que $p \neq 0$. L'image de l'application résiduelle est
 \begin{itemize}
 \item[i)] $\espresk[0](\mu)\setminus\left\{0\right\}$ si $s=0$, que seuls les ordres de deux zéro ne sont pas divisibles par $k$ et que la somme des ordres des zéros d'ordre divisible par $k$ est strictement inférieure à~$kp$;
 \item[ii)] $\espresk[0](\mu)$ sinon.
\end{itemize}  
\end{prop}

\begin{proof}
Le cas de la configuration uniformément nulle est traité dans la section~\ref{sub:41CUN}.
Pour toutes les autres configurations, la preuve se fait par éclatement de zéros à partir d'une strate de la forme $\Omega^{k}\mathcal{M}_{0}(\tilde a_{1},\tilde a_{2};-k\ell_{1},\dots,-k\ell_{p};\rec[-k][s])$ possédant les résidus souhaités. Ce cas a déjà été traité dans les sections~\ref{sec:expneg} et~\ref{sub:s4constructions}.
\par
Dans certains cas, il n'est pas possible d'obtenir une $k$-différentielle dans une strate en éclatant un zéro d'une $k$-différentielle primitive. Toutefois, grâce à la multiplicativité des $k$-résidus (voir l'équation~\eqref{eq:multiplires}) il suffit de vérifier que l'on peut partir d'une strate dont les éléments ne sont pas la puissance $k$-ième de différentielles abéliennes ou la puissance $\frac{k}{2}$-ième de différentielles quadratiques. C'est ce qui est démontré dans le lemme~\ref{lem:combiscindage}.
\end{proof}

\section{Les pôles sont d'ordre $-k$ en genre zéro}
\label{sec:juste-k}

Dans les strates ayant un unique zéro, toutes les singularités ont un ordre divisible par~$k$. Elles ne sont donc pas primitives (lemme~\ref{lem:puissk}).
Nous nous concentrerons sur le cas des strates avec deux zéros. Le cas des strates avec plus de zéros se traitera essentiellement par éclatement. Dans la section~\ref{sec:spora} nous prouvons les obstructions dans les cas sporadiques du théorème~\ref{thm:geq0kspe}. Puis nous construisons les $k$-différentielles avec deux zéros dans les sections~\ref{sub:5N2negatif} et~\ref{sub:mechant} puis avec $n\geq3$ dans la section~\ref{sec:constcaskn3}.

\subsection{Cas sporadiques}
\label{sec:spora}

Nous prouvons la non-réalisabilité des $k$-résidus dans les cas sporadiques du théorème~\ref{thm:geq0kspe}. Rappelons que les pôles d'ordre $-k$ ont pour voisinage un demi-cylindre bordé par un ou plusieurs liens selles.
\smallskip
\par
\paragraph{\bf L'obstruction (1).} Nous montrons que les résidus $(1,(-1)^{k})$ n'appartiennent pas à l'image de l'application résiduelle $\appresk[0](-1,1;-k,-k)$. Si une telle $k$-différentielle existait, alors la différentielle entrelacée obtenue en collant les deux pôles d'ordre $-k$ serait lissable par le lemme~\ref{lem:lisspolessimples}. La différentielle obtenue par lissage serait dans la strate $\Omega^{k}\mathcal{M}_{1}(1,-1)$, qui est vide.
\smallskip
\par
\paragraph{\bf Les autres obstructions.} Le schéma de la preuve des autres obstructions est dans chaque cas similaire. Nous supposerons par l'absurde qu'il existe une $k$-différentielle $\xi$ qui possède les invariants des obstructions. Nous considérons la surface plate associée. Nous lui retirons les cylindres correspondant aux pôles et la coupons le long du lien-selle $v$ le plus court reliant les deux zéros.\par
Dans chaque cas, nous donnerons la preuve que chaque cylindre a pour bord un unique lien-selle. Notre construction produit ainsi un polygone $\mathfrak{P}(\xi)$ avec $s+2$ côtés dénotés par $r_{1},\dots,r_{s},v_{1},v_{2}$, où $r_{i}$ correspond au $i$-ème pôle et les $v_{i}$ au lien-selle. Par la suite nous montrons qu'aucun polygone ainsi obtenu ne provient d'une surface ayant les propriétés désirées.

\subsubsection{\bf Cas sporadiques pour les différentielles quartiques}

Les obstructions pour ce cas reposent sur le fait que les liens-selles relient des entiers de Gauss entre eux, i.e. des éléments du réseau $\ZZ \oplus i \ZZ$.

\begin{lem}\label{lem:6reseaugauss}
Considérons une différentielle quartique primitive dans $\Omega^{4}\mathcal{M}_{0}(a_{1},a_{2};\rec[-4][s])$ réalisant une configuration de résidus $(r_{1}^{4},\dots,r_{s}^{4})$ tels que :
\begin{enumerate}
    \item chaque $r_{i}$ appartient au réseau $\ZZ \oplus i \ZZ$;
    \item la somme $\sum r_{i}$ appartient au sous-réseau $(1+i)\ZZ \oplus (1-i) \ZZ$.
\end{enumerate}
Les périodes des liens-selles (définies à multiplication près par $i$) appartiennent au réseau $\ZZ \oplus i \ZZ$. En particulier, tous les liens-selles sont de longueurs $\geq 1$.
\end{lem}

\begin{proof}
Étant donné un lien-selle fermé $\gamma$, il découpe la sphère en deux composantes. L'une de ces composantes ne contient que des singularités d'ordre $-4$ dans son intérieur. La période de $\gamma$ est donc une somme de racines $4$-ièmes de $4$-résidus. Il s'ensuit du point~(1) que la période de $\gamma$ appartient au réseau $\ZZ \oplus i \ZZ$.
\smallskip
\par
Étant donné un lien-selle $v$ reliant les deux singularités coniques. En coupant la sphère le long de $v$, on obtient une surface de translation avec deux liens-selles au bord dont les périodes $v_{1}$ et $v_{2}$ vérifient $v_{1}=\pm i v_{2}$.

Comme les résidus et les périodes de liens-selles de bord dans une surface de translation sont de somme nulle, on déduit du point (2) que les deux vecteurs $(1 \pm i)v_{1}$ appartiennent au sous-réseau $(1+i)\ZZ \oplus (1-i) \ZZ$. Il s'ensuit que $v_{1}$ (et donc $v_{2}$) appartient au réseau $\ZZ \oplus i \ZZ$.
\end{proof}

Pour des différentielles quartiques réalisant des configurations de $4$-résidus $(1,1,-4)$, $\rec[1][4]$ ou $\rec[1][6]$, en particulier pour une éventuelle différentielle réalisant une obstruction quartique, les demi-cylindres infinis ont des bords de longueur $1$ ou $\sqrt{2}$. En effet, la somme des racines $4$-ièmes des résidus est de la forme $a+ib$ avec $a=b \mod (2)$. Le point (2) du lemme~\ref{lem:6reseaugauss} est donc satisfait, et il implique que les liens-selles d'une telle différentielle bordant l'un de ces cylindres sont de longueur $1$ ou $\sqrt{2}$. Il s'ensuit que chacun de ces cylindres est bordé par un unique lien-selle (nécessairement fermé).
\smallskip
\par
\paragraph{\bf L'obstruction (8).}
Nous montrons que le $4$-résidu $(1,1,-4)$ n'est pas dans l'image de  l'application $\appresk[0][4](-1,5;\rec[-4][3])$.
\par
Supposons par l'absurde qu'il existe une $4$-différentielle $\xi$ ayant ces invariants locaux. Le polygone $\mathfrak{P}(\xi)$ est un pentagone dont les côtés ont pour périodes des entiers de Gauss. Nous distinguons deux cas selon que le zéro d'ordre $-1$ appartient au bord de l'un des cylindres ou d'aucun d'entre eux.
\par
Dans le premier cas, cela signifie que le zéro d'ordre $-1$ porte un angle total de $\frac{\pi}{2}$ dans le pentagone. Cet angle est réparti en deux secteurs angulaires de part et d'autre d'un côté de longueur~$1$ ou~$\sqrt{2}$. Comme les deux côtés adjacents à celui-ci sont de longueur au moins $1$, ils se coupent et nous obtenons une contradiction.
\par
Dans le second cas, le zéro d'ordre $-1$ porte un angle total de $\frac{3\pi}{2}$, porté par un unique secteur angulaire entre les deux côtés $v_{1}$ et $v_{2}$. Il s'agit d'un angle rentrant, il est donc nécessaire que la somme des longueurs des trois côtés $|r_{1}|+|r_{2}|+|r_{3}|$ soit strictement supérieure à la somme des longueurs de $v_{1}$ et $v_{2}$. Notons que comme $v_{1}+v_{2} + \sum r_{i}=0$ et les $v_{i}$ sont orthogonaux entre eux, on a  $|v_{1}|+|v_{2}| =|r_{1}+r_{2}+r_{3}|\sqrt{2}$. On obtient donc la contrainte $|r_{1}+r_{2}+r_{3}|<1+\sqrt{2}$. Dans toutes les configurations possibles pour $(r_{1},r_{2},r_{3})$ qui satisfont cette contrainte, $|r_{1}+r_{2}+r_{3}|$ vaut $\sqrt{2}$ ou $2$. Il s'ensuit que $v_{1}$ et $v_{2}$ sont de longueurs $1$ ou~$\sqrt{2}$. Dans le premier cas, $v_{1}$ et $v_{2}$ sont horizontaux ou verticaux. Le pentagone $\mathfrak{P}(\xi)$ a quatre côtés unitaire (horizontaux ou verticaux), un dernier côté diagonal de longueur $\sqrt{2}$, ayant de surcroît un angle de $\frac{3\pi}{2}$ sur l'un des cinq sommets. Il est facile de vérifier que l'on ne peut construire un tel pentagone.
\par
Dans le dernier cas, $v_{1}$ et $v_{2}$ sont de longueur $\sqrt{2}$. Le pentagone $\mathfrak{P}(\xi)$ a donc deux côtés unitaire (horizontaux ou verticaux) et trois côtés diagonaux de longueur $\sqrt{2}$, ayant toujours un angle de $\frac{3\pi}{2}$ sur l'un des cinq sommets. Un tel polygone est également impossible.
\smallskip
\par
\paragraph{\bf L'obstruction (10).}
Nous montrons que le $4$-résidu $(1,1,1,1)$ n'est pas dans l'image de  l'application $\appresk[0][4](-1,9;\rec[-4][4])$. Si une $4$-différentielle $\xi$ existait avec ces invariants, le polygone~$\mathfrak{P}(\xi)$ serait un hexagone. Nous séparons l'étude en deux cas selon que le zéro d'ordre $-1$ appartient au bord de l'un des cylindres ou d'aucun d'entre eux.
\par
Dans le premier cas, l'argument est identique à celui de l'obstruction (8).
\par
Dans le second cas, le zéro d'ordre $-1$ porte un angle total de $\frac{3\pi}{2}$, porté par un unique secteur angulaire entre les deux côtés $v_{1}$ et $v_{2}$. Il s'agit d'un angle rentrant, il est donc nécessaire que la somme des longueurs des quatre côtés soit strictement supérieure à la somme des longueurs de $v_{1}$ et $v_{2}$. Nous avons donc $|r_{1}+r_{2}+r_{3}+r_{4}|\sqrt{2}<4$. Dans toutes les configurations possibles pour $(r_{1},r_{2},r_{3},r_{4})$ satisfaisant cette condition, $|r_{1}+r_{2}+r_{3}+r_{4}|$ vaut~$\sqrt{2}$ ou $2$.
\par
Si $|r_{1}+r_{2}+r_{3}+r_{4}|=\sqrt{2}$, cela signifie que $v_{1}$ et $v_{2}$ sont de longueur $1$. L'hexagone a donc six côtés unitaires horizontaux ou verticaux, avec un angle (entre $v_{1}$ et $v_{2}$) valant $\frac{3\pi}{2}$. Les cinq autre angles doivent donc valoir $\frac{\pi}{2}$. Un tel hexagone est manifestement impossible.
\par
Si $|r_{1}+r_{2}+r_{3}+r_{4}|=2$, $v_{1}$ et $v_{2}$ sont donc des segments diagonaux de longueur $\sqrt{2}$, tandis que les quatre autres côtés, horizontaux ou verticaux, sont de longueur $1$. Sachant que l'angle entre $v_{1}$ et $v_{2}$ vaut $\frac{3\pi}{2}$, un tel hexagone ne peut pas non plus exister.
\smallskip
\par
\paragraph{\bf L'obstruction (9).}
Nous montrons que le $4$-résidu $(1,1,1,1)$ n'est pas dans l'image de  l'application $\appresk[0][4](3,5;\rec[-4][4])$. Si une $4$-différentielle $\xi$ existait avec ces invariants, le polygone $\mathfrak{P}(\xi)$ serait encore un hexagone. Nous distinguons les cas selon le nombre de cylindres bordés par chaque zéro.
\par
Si l'un des zéros borde les quatre cylindres, alors il s'agit du zéro d'ordre $5$, à qui il ne restera que $\frac{\pi}{2}$ à repartir entre cinq angles de l'hexagone. Comme au moins trois d'entre eux séparent des bords de cylindres, ils forment chacun un angle qui est un multiple de $\frac{\pi}{2}$. La contradiction est immédiate.
\par
Si l'un des zéros borde trois cylindres, alors l'ordre des côtés est $(v_{1},r_{1},v_{2},r_{2},r_{3},r_{4})$. Les trois cylindres collés à $r_{2},r_{3}$ et $r_{4}$ contribuent d'un angle $3\pi$ à ce zéro. Il reste donc $\frac{\pi}{2}$ ou $\frac{3\pi}{2}$ à repartir entre les quatre angles aux bords de ces $r_{i}$. Comme les angles au bord de $r_{3}$ sont $\geq \frac{\pi}{2}$, le zéro est d'ordre~$5$.  Comme la somme des angles entre $r_{4}$ et $v_{1}$ puis $v_{2}$ et $r_{2}$ vaut $\frac{\pi}{2}$, les segments $v_{1}$ et $v_{2}$ (de longueur au moins $1$ par le lemme~\ref{lem:6reseaugauss}) se coupent. Cela montre que cette configuration est impossible.
\par
Si chaque zéro borde deux cylindres, alors l'hexagone est $(v_{1},r_{1},r_{2},v_{2},r_{3},r_{4})$. La somme des angles de l'hexagone aux bords de $r_{1},r_{2}$ et $r_{3},r_{4}$ sont respectivement $\frac{3\pi}{2}$ et $\frac{5\pi}{2}$. Le zéro d'ordre $3$ porte un angle entre deux bords de cylindres, disons $r_{1},r_{2}$, qui est d'angle multiple de $\frac{\pi}{2}$. Si cet angle est $\frac{\pi}{2}$, alors la somme des angles entre $v_{1},r_{1}$ et $v_{2},r_{2}$ est $\pi$. Comme les $v_{i}$ sont des entiers de Gauss, soit ils se coupent, soit ils coupent $r_{1}$ ou $r_{2}$. Si l'angle entre $r_{1}$ et $r_{2}$ est $\pi$, alors la somme des angles entre $v_{1},r_{1}$ et $v_{2},r_{2}$ est $\frac{\pi}{2}$. De même, le fait que les $v_{i}$ soient des entiers de Gauss implique qu'ils se coupent ou coupent les arêtes $r_{i}$.

\smallskip
\par
\paragraph{\bf L'obstruction (11).}
Nous montrons que le $4$-résidu $(1,1,1,1,1,1)$ n'est pas dans l'image de l'application $\appresk[0][4](3,13;\rec[-4][6])$. Si une $4$-différentielle $\xi$ existait avec ces invariants, le polygone~$\mathfrak{P}(\xi)$ serait un octogone. Nous distinguons encore les cas selon le nombre de cylindres bordés par chaque singularité conique, dont les angles sont respectivement $\frac{7\pi}{2}$ et $\frac{17\pi}{2}$. L'angle entre deux bords de cylindres est toujours un multiple de $\frac{\pi}{2}$. Un zéro adjacent à $x$ cylindres a donc un angle strictement plus grand que $\frac{(3x-1)\pi}{2}$ ($\pi$ pour chacun des $x$ cylindres et $\frac{\pi}{2}$ pour les $x-1$ angles entre deux bords de cylindres). Il s'ensuit que le zéro d'ordre $3$ est adjacent à deux cylindres au maximum et un cylindre au minimum.
\par
Supposons d'abord que le zéro d'ordre $3$ est adjacent à deux cylindres. Dans l'octogone, il porte un angle entre deux bords de cylindres, donc d'angle multiple de $\frac{\pi}{2}$. Si celui-ci est $\frac{\pi}{2}$ (resp. $\pi$), alors la somme des deux angles est $\pi$ (resp. $\frac{\pi}{2}$). Dans les deux cas, cela conduit $v_{1}$ et $v_{2}$ à se couper.
\par
Dans le dernier cas, le zéro d'ordre $3$ est adjacent à un seul cylindre (de bord $r_{1}$). Seuls deux angles (disons entre $r_{1}$ d'une part et $v_{1},v_{2}$ d'autre part) de l'octogone contribuent à ce zéro, pour un angle total de $\frac{5\pi}{2}$. Au contraire, six angles de l'octogone contribuent au zéro d'ordre $13$, pour un angle total de $\frac{7\pi}{2}$.
\par
Notons $\theta_{1},\dots,\theta_{6}$ ces six angles (dans cet ordre). Comme les côtés entre ces angles sont de longueur $1$ et relient des entiers de Gauss, il est impossible que trois angles consécutifs $\theta_{i},\theta_{i+1},\theta_{i+2}$ satisfassent 
$\theta_{i},\theta_{i+1},\theta_{i+2} \leq \frac{\pi}{2}$. Cependant, les quatre angles $\theta_{2},\theta_{3},\theta_{4},\theta_{5}$ sont entre deux bords de cylindres et sont donc des multiples de $\frac{\pi}{2}$. La somme $\theta_{1}+\theta_{6}$ vaut donc $\frac{\pi}{2}$ ou~$\pi$ (le cas $\frac{3\pi}{2}$ est exclu car nous aurions alors $\theta_{2}=\theta_{3}=\theta_{4}=\theta_{5}=\frac{\pi}{2}$).
\par
Si $\theta_{1}+\theta_{6}=\pi$, alors exactement un angle parmi $\theta_{2},\theta_{3},\theta_{4},\theta_{5}$ vaut $\pi$ tandis que les trois autres valent $\frac{\pi}{2}$. Quitte à renverser l'ordre, le seul choix possible est $\theta_{1}>\frac{\pi}{2}$ et $\theta_{4}=\pi$. Nous obtenons alors un polygone singulier.
\par
Si dans le dernier cas, $\theta_{1}+\theta_{6}=\frac{\pi}{2}$, alors la condition sur les trois angles consécutifs forcent le choix $\theta_{2}=\theta_{5}=\frac{\pi}{2}$ tandis que $\theta_{3}=\theta_{4}=\pi$. Comme $\theta_{1},\theta_{2}<\frac{\pi}{2}$, nous obtenons encore un polygone singulier.

\subsubsection{\bf Cas sporadiques pour les différentielles cubiques}

Les obstructions pour ce cas reposent sur le fait que les liens selles relient des entiers d'Eisenstein entre eux, i.e. des éléments du réseau $\ZZ \oplus \omega\ZZ$ où $\omega=e^{\frac{2i\pi}{3}}$.

\begin{lem}\label{lem:6reseaucubique}
Considérons une différentielle cubique primitive d'une strate $\Omega^{3}\mathcal{M}_{0}(a_{1},a_{2};\rec[-3][s])$ réalisant une configuration de résidus $(r_{1}^{3},\dots,r_{s}^{3})$ tels que :
\begin{enumerate}
\item chaque $r_{i}$ appartient au réseau $\ZZ \oplus \omega\ZZ$;
\item la somme $\sum r_{i}$ appartient au sous-réseau $3\ZZ \oplus (-1+\omega)\ZZ$;
\end{enumerate}
Les périodes des liens-selles (définies a multiplication près par une racine troisième de l'unité) appartiennent au réseau $\ZZ \oplus \omega\ZZ$. En particulier, tous les liens-selles sont de longueur $\geq 1$.
\end{lem}

\begin{proof}
Étant donné un lien-selle fermé $\gamma$, il découpe la sphère en deux composantes. L'une de ces composantes ne contient que des singularités d'ordre $-3$ dans son intérieur. La période de $\gamma$ est donc une somme de racines troisièmes de $3$-résidus. Il s'ensuit du point~(1) que la période de $\gamma$ appartient au réseau $\ZZ \oplus \omega \ZZ$.
\smallskip
\par
Étant donné un lien-selle $v$ reliant les deux singularités coniques. En coupant la sphère le long de $v$, on obtient une surface de translation avec deux liens-selles au bord dont les périodes $v_{1}$ et $v_{2}$ vérifient $v_{2}=-\lambda v_{1}$ avec $\lambda$ valant $\omega$ ou $\omega^{2}=-1-\omega$.
\par
Comme les résidus et les périodes de liens-selles de bord dans une surface de translation sont de somme nulle, la condition (2) implique que $(1-\lambda)v_{1}$ appartient au sous-réseau $3\ZZ \oplus (-1+\omega)\ZZ$. Comme $1-\lambda$ vaut $1-\omega$ ou $2+\omega$, il s'ensuit que $v_{1}$ (et donc $v_{2}$) appartient au réseau $\ZZ \oplus \omega\ZZ$.
\end{proof}

Deux racines troisièmes de l'unité (mais aussi de $-1$) ont pour différence un élément du sous-réseau $3\ZZ \oplus (-1+\omega)\ZZ$. Ainsi, qu'une somme de racines troisièmes de $1$ ou $-1$ appartienne ou non au sous-réseau $3\ZZ \oplus (-1+\omega)\ZZ$ ne dépend pas du choix des racines.
\par
Étant donnée une différentielle cubique dont les $3$-résidus sont $(\rec[1][s_{1}],\rec[-1][s_{2}])$ avec $s_{1}+s_{2}=s$, il s'ensuit que pour tout choix de racines $r_{1},\dots,r_{s}$, la somme $\sum r_{i}$ appartient au sous-réseau $3\ZZ \oplus (-1+\omega)\ZZ$ si et seulement si $s_{1}-s_{2} \in 3\ZZ$ (ce qui revient à choisir $1$ pour les racines de~$1$ et $-1$ pour les racines de $-1$).
\par
Ainsi, le lemme~\ref{lem:6reseaucubique} établit que les liens-selles d'une telle différentielle bordant l'un de ces cylindres sont de longueur $1$. Il s'ensuit que chacun de ces cylindres est bordé par un unique lien-selle (nécessairement fermé).

\begin{lem}\label{lem:minangle}
Supposons que les arêtes de $\mathfrak{P}(\xi)$ sont  $(v_{1},r_{1},\dots,r_{\ell},v_{2},r_{\ell +1},\dots,r_{s})$. La singularité au bord des $\ell$ premiers cylindres possède un angle strictement plus grand que $\frac{(4\ell -1)\pi}{3}$.
\end{lem}

\begin{proof}
L'angle est donné par $\pi$ pour les $\ell$ cylindres et au moins $\frac{\pi}{3}$ pour chacun des $l-1$ angles entre deux bords de cylindres (car leurs périodes sont des racines cubiques de~$\pm1$).
\end{proof}

Une \textit{chirurgie} va permettre de simplifier la preuve de certaines obstructions. Celle-ci sera utilisée lors des construction et est représentée dans la figure~\ref{fig:transfork3}. 
\begin{figure}[hbt]
\centering
\begin{tikzpicture}
%premiere construction
\begin{scope}[xshift=3cm]
\filldraw[fill=black!10]  (0,0) coordinate (p1)  -- ++(120:1)  coordinate[pos=.5] (q1)  coordinate (p2) -- ++(1,0) coordinate[pos=.5] (q2) coordinate (p3) -- ++(1,0) coordinate[pos=.5] (q3) coordinate (p4)  -- ++(-120:1)  coordinate[pos=.5] (q4)  coordinate (p5) -- ++(150:.577) coordinate[pos=.5] (q5) coordinate (p6)-- ++(-150:.577) coordinate[pos=.5] (q6) ;

  \foreach \i in {1,...,5}
  \fill (p\i)  circle (2pt);
  \filldraw[fill=white] (p6)  circle (2pt);

  \node[below] at (q5) {$v_{1}$};
  \node[below] at (q6) {$v_{2}$};

  \node[left] at (q1) {$1$};
\node[above] at (q2) {$1$};
\node[above] at (q3) {$1$};
\node[right] at (q4) {$1$};
\end{scope}

\begin{scope}[xshift=-3cm]
\filldraw[fill=black!10]  (0,0) coordinate (p1)  -- ++(120:1)  coordinate[pos=.5] (q1)  coordinate (p2) -- ++(1,0) coordinate[pos=.5] (q2) coordinate (p3) -- ++(60:1) coordinate[pos=.5] (q3) coordinate (p4) -- ++(-60:1) coordinate[pos=.5] (q4) coordinate (p5)  -- ++(-120:1)  coordinate[pos=.5] (q5)  coordinate (p6) -- ++(150:.577) coordinate[pos=.5] (q6) coordinate (p7)-- ++(-150:.577) coordinate[pos=.5] (q7) ;
\draw[dotted] (p3) -- (p5);

  \foreach \i in {1,...,6}
  \fill (p\i)  circle (2pt);
  \filldraw[fill=white] (p7)  circle (2pt);

  \node[below] at (q6) {$v_{1}$};
  \node[below] at (q7) {$v_{2}$};

  \node[left] at (q1) {$1$};
\node[above] at (q2) {$1$};
\node[left] at (q3) {$-1$};
\node[right] at (q4) {$-1$};
\node[right] at (q5) {$1$};
\end{scope}
\end{tikzpicture}
\caption{Chirurgie remplaçant deux pôles par un unique pôle}\label{fig:transfork3}
\end{figure}
En effet, si dans le polygone, deux bords de cylindres (de longueur $1$) forment un angle $\frac{\pi}{3}$, nous pouvons retirer le triangle équilatéral qu'ils forment, ainsi que les deux cylindres, pour les remplacer par un cylindre de résidu cubique opposé. La chirurgie supprime un pôle triple et réduit l'ordre du zéro concerné de $3$.
\smallskip
\par
\paragraph{\bf L'obstruction (2).}

Nous prouvons ici que la configuration $(1,1,1)$ n'est pas dans l'image de l'application $\appresk[0][3](-1,4;\rec[-3][3])$. Si une $3$-différentielle $\xi$ existait avec ces invariants, le polygone~$\mathfrak{P}(\xi)$ serait un pentagone. Le lemme~\ref{lem:minangle} prouve que le zéro d'ordre $-1$ borde au plus un cylindre.
\par
Supposons le zéro d'ordre $-1$ borde exactement un cylindre. Dans ce cas, de part et d'autre d'un côté de longueur $1$ nous trouvons deux angles ayant pour somme $\frac{\pi}{3}$. Ceci est impossible car les segments du pentagone relient des entiers d'Eisenstein.
\par
Si au contraire le zéro d'ordre $4$ borde les trois cylindres, alors il porte quatre angles du pentagone, pour un total de $\frac{5\pi}{3}$. Notons $\theta_{1},\theta_{2},\theta_{3},\theta_{4}$ ces angles. Chaque paire d'angles consécutifs $\theta_{i},\theta_{i+1}$ parmi eux borde un côté de longueur $1$. Il est donc impossible que ces deux angles satisfassent simultanément $\theta_{i},\theta_{i+1} \leq \frac{\pi}{3}$ (car les segments relient des entiers d'Eisenstein). Il s'ensuit que l'un des deux angles parmi $\theta_{2}$ et $\theta_{3}$ (séparant deux bords de cylindres, leur angle est ici $\frac{\pi}{3}$ ou $\pi$), l'un des deux vaut $\pi$ (disons $\theta_{2}$). Il s'ensuit que $\theta_{1}+\theta_{4}=\frac{\pi}{3}$ et que $\theta_{3},\theta_{4} \leq \frac{\pi}{3}$. Nous obtenons une nouvelle contradiction. 
\smallskip
\par
\paragraph{\bf L'obstruction (3).}

La configuration $(1,1,1)$ n'est pas dans l'image de $\appresk[0][3](1,2;\rec[-3][3])$. En effet, si une $3$-différentielle $\xi$ existait avec ces invariants, le polygone $\mathfrak{P}(\xi)$ serait un pentagone. Le lemme~\ref{lem:minangle} prouve que le zéro d'ordre $1$ borde un cylindre tandis que le zéro d'ordre~$2$ borde les deux autres.
\par
Le zéro d'ordre $2$ porte trois angles du pentagone, pour un angle total de $\frac{4\pi}{3}$. L'angle central parmi ces trois se trouve entre deux bords de cylindres. Sa valeur est donc $\frac{\pi}{3}$ ou~$\pi$ (tandis que les deux autres angles ont pour somme $\pi$ ou $\frac{\pi}{3}$). Dans les deux cas, nous avons une intersection entre $v_{1}$ et $v_{2}$ (ici encore, le fait que les sommets soient des entiers d'Eisenstein est crucial).
\smallskip
\par
\paragraph{\bf L'obstruction (4).}

Nous prouvons ici que la configuration $(1,1,-1,-1)$ n'est pas dans l'image de l'application $\appresk[0][3](2,4;\rec[-3][4])$. Si une différentielle cubique $\xi$ existait avec ces invariants, le polygone~$\mathfrak{P}(\xi)$ serait un hexagone. Le lemme~\ref{lem:minangle} prouve que le zéro d'ordre $4$ est au bord de deux ou trois cylindres.
\par
S'il est au bord de trois cylindres, alors il porte quatre angles dans l'hexagone pour un total de $\frac{5\pi}{3}$. Si l'un des deux angles entre des bords de cylindres vaut $\frac{\pi}{3}$, la chirurgie décrite plus haut permet d'obtenir une différentielle réalisant la configuration $(1,1,1)$ dans la strate $\Omega^{3}\mathcal{M}_{0}(1,2;\rec[-3][3])$. Donc sur les quatre angles consécutifs $\theta_{1},\theta_{2},\theta_{3},\theta_{4}$, nous avons $\theta_{2}=\theta_{3}=\frac{2\pi}{3}$ tandis que $\theta_{1}+\theta_{4} = \frac{\pi}{3}$. Dans une telle figure, les segments $v_{1}$ et $v_{2}$ se coupent.
\par
Dans le second cas, chaque zéro est au bord de deux cylindres. En particulier, le zéro d'ordre $2$ porte trois angles dans l'hexagone, pour un total de $\frac{4\pi}{3}$. Que les deux cylindres aient pour période tous les deux une racine de $1$ (ou $-1$), ou bien que l'un ait pour période une racine de $1$ et l'autre de $-1$, tout choix d'angle conduit à une intersection entre $v_{1}$ et $v_{2}$. 
\smallskip
\par
\paragraph{\bf L'obstruction (5).}
Montrons qu'il n'existe pas de $3$-différentielle dans $\Omega^{3}\mathcal{M}_{0}(2,7;\rec[-3][5])$ dont les $3$-résidus sont $(\rec[1][4],-1)$. Si une $3$-différentielle $\xi$ existait avec ces invariants, le polygone~$\mathfrak{P}(\xi)$ serait un heptagone. Le lemme~\ref{lem:minangle} prouve que le zéro d'ordre $2$ est au bord d'au plus deux cylindres. S'il porte exactement deux cylindres, le même argument que pour l'obstruction~(4) permet de conclure.
\par
Si le zéro d'ordre $2$ borde un seul cylindre, alors le zéro d'ordre $7$ en borde quatre. Il porte cinq angles de l'heptagone pour un total de $\frac{8\pi}{3}$. Sur les trois angles entre les quatre bords de cylindres consécutifs (tous de longueur $1$), aucun ne vaut $\frac{\pi}{3}$, sinon la chirurgie permet de retrouver une différentielle réalisant la configuration $(1,1,-1,-1)$ dans la strate $\Omega^{3}\mathcal{M}_{0}(2,4;\rec[-3][4])$. Ces trois angles ne peuvent valoir tous $\frac{2\pi}{3}$ car nous aurions alors au moins deux résidus cubiques valant $1$ et deux autres valant $-1$. Il s'ensuit que les trois angles sont $\frac{2\pi}{3}$ pour deux d'entre eux et $\pi$ pour le dernier. Les deux autres angles se partagent $\frac{\pi}{3}$, ce qui est impossible puisque les segments relient des sommets d'Eisenstein.
\par
Enfin, si le zéro d'ordre $2$ ne borde aucun cylindre, cela signifie que le zéro d'ordre $7$ borde les cinq. Il porte six angles de l'heptagone pour un total de $\frac{5\pi}{3}$. Sur les quatre angles entre deux bords de cylindres, au moins l'un d'entre eux est entre un bord de cylindre de résidu cubique $1$ et un autre de résidu cubique $-1$. Ces quatre angles ont une somme au moins égale à $\frac{5\pi}{3}$, ce qui est impossible.
\smallskip
\par
\paragraph{\bf L'obstruction (6).}
Nous montrons que les $3$-résidus $\rec[1][6]$ ne sont pas réalisés dans la strate $\Omega^{3}\mathcal{M}_{0}(2,10;\rec[-3][6])$. Si une $3$-différentielle $\xi$ existait avec ces invariants, le polygone $\mathfrak{P}(\xi)$ serait un octogone.
\par
Si le zéro d'ordre $10$ est au bord de $x$ cylindres, alors il porte $x+1$ angles de l'octogone, dont $x-1$ se situent entre deux bords de cylindres. Si l'un de ces angles vaut $\frac{\pi}{3}$, alors la chirurgie produit un contre-exemple pour l'obstruction (5). Ces angles valent donc au moins~$\pi$. Il s'ensuit que l'angle total pour ce zéro doit être strictement plus grand que $(2x-1)\pi$. Il s'agit d'une singularité conique d'angle $\frac{26\pi}{3}$. Il ne peut donc pas se trouver au bord de cinq cylindres ou plus.
\par
Le lemme~\ref{lem:minangle} prouve enfin que le zéro d'ordre $2$ est au bord de deux cylindres. Ce zéro porte donc trois angles de l'octogone, pour un total de $\frac{4\pi}{3}$. Comme pour les obstructions (4) et (5), nous obtenons tout de suite une intersection entre $v_{1}$ et $v_{2}$.
\smallskip
\par
\paragraph{\bf L'obstruction (7).} 
Nous montrons donc que les $3$-résidus $\rec[1][6]$ ne sont pas réalisés dans $\Omega^{3}\mathcal{M}_{0}(5,7;\rec[-3][6])$. Si une $3$-différentielle $\xi$ existait avec ces invariants, le polygone $\mathfrak{P}(\xi)$ serait un octogone.
\par
Si le zéro d'ordre $5$ est au bord de $x$ cylindres, alors il porte $x+1$ angles de l'octogone, dont $x-1$ se situent entre deux bords de cylindres. Si l'un de ces angles vaut $\frac{\pi}{3}$, alors la chirurgie produit un contre-exemple pour l'obstruction (5). Ces angles valent donc au moins~$\pi$. Il s'ensuit que l'angle total pour ce zéro doit être strictement plus grand que $(2x-1)\pi$. Il s'agit d'une singularité conique d'angle $\frac{16\pi}{3}$. Il ne peut donc pas se trouver au bord de quatre cylindres ou plus.
\par
Pour le zéro d'ordre $7$ au bord de $x$ cylindres, les angles entre les bords de cylindres doivent valoir au moins $\frac{\pi}{3}$ (pas de chirurgie ici). La contrainte est donc que l'angle soit strictement plus grand que $\frac{(4x-1)\pi}{3}$. Il est donc impossible d'avoir $x=6$.
\par
Si le zéro d'ordre $7$ au bord de cinq cylindres, alors les quatre angles de l'octogone entre leurs bord doivent valoir $\frac{\pi}{3}$. Or, on ne peut pas avoir deux angles de $\frac{\pi}{3}$ consécutifs de part et d'autre d'un segment de longueur $1$. Cet argument montre que si le zéro d'ordre $7$ est au bord de quatre cylindres, sur les trois angles entre leurs bords (dont la somme est $\frac{8\pi}{3}$), un ou deux d'entre eux doivent valoir $\pi$. Cependant, dans les deux cas, nous avons une intersection entre $v_{1}$ et $v_{2}$.
\par
Dans le dernier cas, chaque zéro est au bord de trois cylindres. En particulier, le zéro d'ordre $5$ porte quatre angles de l'octogone (pour un total de $\frac{7\pi}{3}$). Sur les deux angles entre deux bords de cylindres, on ne peut pas avoir d'angle $\frac{\pi}{3}$. Ces deux angles valent donc $\pi$ tandis que les deux autres ont pour somme $\frac{\pi}{3}$. Comme les segments relient des entiers d'Eisenstein, les segments~$v_{1}$ et~$v_{2}$ se coupent nécessairement.

\subsubsection{\bf Cas sporadiques pour les différentielles sextiques}

Il s'agit enfin de démontrer les obstructions (12) et (13), ce qui revient à prouver que le
$6$-résidu $(1,\dots,1)$ n'est pas dans l'image des applications $\appresk[0][6](-1,6s-11;\rec[-6][s])$ pour $s$ valant $3$ ou $4$. Supposons par l'absurde qu'il existe une telle différentielle $\xi$. Le polygone $\mathfrak{P}(\xi)$ est un pentagone ou un hexagone.
\par
Le lemme~\ref{lem:6reseaucubique} s'adapte avec quelques modifications. Toute somme de racines $6$-ièmes de l'unité appartient au réseau $\ZZ \oplus \omega\ZZ$. De plus, les deux vecteurs $v_{1}$ et $v_{2}$ sont tels que $v_{2}=-\lambda v_{1}$ (avec $\lambda$ valant $-\omega$ ou $-\omega^{2}$) et $(1-\lambda)v_{1} \in \ZZ \oplus \omega\ZZ$. Il s'ensuit que $v_{1}$ appartient aussi à ce réseau.
\par
Ainsi les périodes des liens-selles de $\xi$ appartiennent au réseau $\ZZ \oplus \omega\ZZ$. Tous les liens-selles ayant une longueur d'au moins $1$, chacun des cylindres a pour bord un unique lien-selle fermé de longueur $1$. Nous distinguons deux cas selon si le zéro d'ordre $-1$, c'est-à-dire la singularité conique d'angle $\frac{5\pi}{3}$, appartient au bord d'un cylindre ou d'aucun.
\par
Dans le cas où il borde un cylindre, le zéro d'ordre $-1$ porte un angle total de $\frac{2\pi}{3}$ dans le polygone. Celui-ci est réparti en deux secteurs angulaires de part et d'autre d'un côté de longueur $1$. Comme les deux côtés adjacents à celui-ci sont de longueur au moins $1$ et relient des entiers d'Eisenstein, ils se coupent. Ce cas est donc impossible.
\par
Dans le cas où le zéro d'ordre $-1$ ne borde aucun cylindre, le zéro d'ordre $6s-11$ borde les $s$ cylindres et les $s+1$ angles du polygone qu'il porte ont pour somme $\frac{(3s-5)\pi}{3}$.
\par
Si $s=3$, l'un de ces angles vaut $\frac{\pi}{3}$ et se trouve entre deux bords de cylindres (de longueur $1$). La chirurgie décrite pour les différentielles cubiques permet alors (en retirant un triangle équilatéral et ces deux cylindres) d'obtenir une différentielle sextique de la strate $\Omega^{6}\mathcal{M}_{0}(-1,1;-6,-6)$ dont les résidus seraient $(1,1)$ contredisant ainsi l'obstruction (1).
\par
Si $s=4$, quand l'un des trois angles entre les bords de cylindres vaut $\frac{\pi}{3}$, nous utilisons la même chirurgie pour se ramener au cas $s=3$. Si aucun de ces angles ne vaut $\frac{\pi}{3}$, alors ils valent tous $\frac{2\pi}{3}$ et les deux derniers angles ont pour somme $\frac{\pi}{3}$, ce qui est impossible en reliant des entiers d'Eisenstein.

\subsection{Constructions pour $n=2$ avec un zéro d'ordre négatif}\label{sub:5N2negatif}

Nous considérons des strates $\Omega^{k}\mathcal{M}_{0}(\mu)$ avec $\mu:=(a_{1},a_{2};\rec[-k][s])$ et dont les $k$-résidus sont $(R_{1},\dots,R_{s})$.
Nous supposerons que les $a_{i}=kl_{i}+\bar{a_{i}}$ vérifient $-k<\bar{a_{1}} < -k/2 < \bar{a_{2}}<0$. En effet, la condition $\pgcd(a_{1},a_{2},k)=1$ implique que le cas $\bar{a_{1}}=\bar{a_{2}}=-k/2$ est impossible. De plus, on a l'égalité $l_{1}+l_{2}+1=s$.
\par
Dans cette section, nous supposerons que l'un des zéros est d'ordre négatif. Autrement dit, nous avons soit $l_{1}=0$ soit $l_{2}=0$. Le cas $l_{1}=0$ est particulièrement simple.

\begin{lem}\label{lem:S5l1=0}
Pour toute strate $\Omega^{k}\mathcal{M}_{0}(a_{1},a_{2};\rec[-k][s])$ avec $-k <a_{1}< -\frac{k}{2}$, l'image de l'application résiduelle est surjective.
\end{lem}

\begin{proof}
Pour toute configuration $(R_{1},\dots,R_{s})$, nous choisissons des racines $k$-ièmes $r_{1},\dots,r_{s}$ qui ne sont pas toutes $\mathbb{R}$-colinéaires et dont la somme $\sigma=\sum r_{i}$ est non nulle (comme $k \geq 3$, il suffit de ne choisir que des racines dont la partie réelle est strictement positive).
\par
Nous formons ensuite le polygone convexe à $s+1$ côtés classés selon l'ordre cyclique des arguments des périodes $r_{1},\dots,r_{s},-\sigma$. Sur chaque bord de période $r_{i}$ nous collons un cylindre tandis que sur le bord de période $-\sigma$ nous collons un triangle isocèle dont les deux autres côtés sont identifiés entre eux. En prescrivant un angle $\frac{a_{1}+k}{k}2\pi<\pi$ au sommet opposé à la base, nous obtenons une surface correspondant à une différentielle ayant les bons résidus appartenant à la strate voulue.
\end{proof}

Le cas $l_{2}=0$ est plus délicat. Il exige plusieurs constructions distinctes selon les valeurs des $k$-résidus. Nous les donnons d'abord dans le cas $s=2$ pour lequel il n'y a que deux pôles d'ordre $-k$.

\subsubsection{\bf Cas $s=2$}

Les strates $\Omega^{k}\mathcal{M}_{0}(k+\bar{a_{1}},\bar{a_{2}};-k,-k)$ constituent un cas spécifique pour lequel nous donnons des constructions explicites. Il y a exactement trois modèles géométriques pour de telles différentielles (voir figure~\ref{fig:modeles}) :
\begin{itemize}
    \item \textbf{Triangle}: les deux cylindres sont collés sur un triangle (un bord pour un cylindre, deux bords pour l'autre);
    \item \textbf{Quadrilatère AABB}: les deux cylindres sont collés sur les côtés adjacents d'un quadrilatère;
    \item \textbf{Quadrilatère ABAB}: les deux cylindres sont collés sur les côtés opposés d'un quadrilatère.
\end{itemize}

\begin{figure}[htb]
\center
 \begin{tikzpicture}[scale=2,decoration={
    markings,
    mark=at position 0.5 with {\arrow[very thick]{>}}}]

  \begin{scope}[]
  \coordinate (a1) at (0,0); \node[below left] at (a1) {$0$};
\coordinate (a2) at (1,0); \node[right] at (a2) {$1$};
\coordinate (a3) at (.3,0); \node[above] at (a3) {$\theta$};
\coordinate (a4) at (-.3,.6);

\draw[dotted] (a1) --(a3); \draw (a3) --node[sloped] {$|$} (a4);\draw (a3) --node[] {$|$} (a2);
   \draw[postaction={decorate}] (a1) --node[left] {$r_{1}$} (a4) node[above] {$z$}; \draw[postaction={decorate}] (1,-.2) --node[below] {$r_{2}$} (0,-.2);

\draw (a1) -- ++(-90:.6);
\draw (a2) -- ++(-90:.6);
\draw (a1) -- ++(-160:.6);
\draw (a4) -- ++(-160:.6);

\draw[dashed] (a2)  arc  (0:180:1) -- (a1);
\draw[dotted] (a2) -- (130.5:1) coordinate (b);
\node[above] at (b) {$e^{2ia\pi/k}$};
 \filldraw[fill=red] (b) circle (1pt);

 \foreach \i in {1,2,4}
 \filldraw[fill=white] (a\i) circle (1pt);
 \fill (a3) circle (1pt);
 
 \node at (0,-1) {Triangle};
 \end{scope}

 %Second 
 
 \begin{scope}[xshift=4cm,yshift=.5cm]
  \coordinate (a1) at (-1,0); \node[below] at (a1) {$-1$};
\coordinate (a2) at (0,0); \node[above] at (a2) {$0$};
\coordinate (a3) at (0,-1); \node[below left] at (a3) {$-e^{2ia\pi/k}$};
\coordinate (a4) at (.8,.8);\node[above right] at (a4) {$z$};

\draw (a1) --node[sloped] {$|$} (a2); \draw (a2) --node[sloped] {$|$} (a3);
   \draw[postaction={decorate}] (a1) --node[above left] {$r_{1}$} (a4); \draw[postaction={decorate}] (a4) --node[right] {$r_{2}$} (a3);
\draw[->] (-.2,0)  arc  (-180:-90:.2); \node at (-.3,-.3) {$\frac{2a\pi}{k}$};

\draw (a1) -- ++(115:.6);
\draw (a4) -- ++(115:.6);
\draw (a4) -- ++(-25:.6);
\draw (a3) -- ++(-25:.6);

 \foreach \i in {1,3,4}
 \filldraw[fill=white] (a\i) circle (1pt);
 \fill (a2) circle (1pt);
 
 \node at (0,-1.5) {Quadrilatère AABB};
 \end{scope}

 %troisieme 
 
 %Second 
 
 \begin{scope}[xshift=1.9cm,yshift=-2.5cm]
\coordinate (a1) at (0,0);
\coordinate (a2) at (1,0);
\coordinate (a3) at (60:1);
\coordinate (a4) at (0,1);
   \draw (a1) --node[sloped] {$|$} (a4);
   \draw (a2) --node[sloped] {$|$} (a3);
  \draw[postaction={decorate}] (a2) --node[above] {$r_{2}$} (a1); \draw[postaction={decorate}] (a4) --node[above] {$r_{1}$} (a3);
   
   \draw (a1) -- ++(-90:.6);
\draw (a2) -- ++(-90:.6);
\draw (a3) -- ++(80:.6);
\draw (a4) -- ++(80:.6);

\node[below left] at (a1) {$-1$};
\node[below right] at (a2) {$0$};
 \node[right] at (a3) {$z$};
 \node[above left] at (a4) {$-1+ze^{-2ai\pi/k}$};
 
 \foreach \i in {1,2}
 \filldraw[fill=white] (a\i) circle (1pt);
  \foreach \i in {3,4}
 \fill (a\i) circle (1pt);
 
 \node at (0.5,-1) {Quadrilatère ABAB};
 \end{scope}
\end{tikzpicture}
\caption{Les trois modèles de surfaces pour la strate $\Omega^{k}\mathcal{M}_{0}(-a,a;-k,-k)$.} \label{fig:modeles}
\end{figure}

Le choix de l'un ou l'autre de ces modèles dépend de la valeur du ratio $\left(\frac{R_{1}}{R_{2}}\right)^{1/k}$. Pour préciser cela, nous introduisons pour $k \geq 3$ et $0<a<\frac{k}{2}$ les domaines suivants dans le disque unité fermé $\mathbb{D}$ (voir la figure~\ref{fig:polygone}) :
\begin{itemize}
    \item $\mathcal{C}_{k,a}$ est l'union dans $\mathbb{D}$ de toutes les cordes reliant chacune des racines $k$-ièmes de l'unité à celle située $a$ racines plus loin;
    \item $\mathcal{IC}_{k,a}$ est l'union dans $\mathbb{D}$ des intérieurs de ces cordes;
    \item $\mathcal{I}_{k,a}$ est la composante connexe de $\mathbb{D} \setminus \mathcal{C}_{k,a}$ contenant le centre du disque;
    \item $\mathcal{E}_{k,a}$ est l'intérieur de $\mathbb{D} \setminus \mathcal{I}_{k,a}$;
    \item $\mathcal{S}_{k,a}$ est l'union des composantes connexes de $\mathbb{D} \setminus \mathcal{C}_{k,a}$ contenant les arcs circulaires reliant les racines;
    \item $\mathcal{T}_{k,a}$ est l'intérieur de $\mathbb{D} \setminus \mathcal{S}_{k,a}$.
\end{itemize}

\begin{figure}[htb]
\center
 \begin{tikzpicture}
 \draw (0,0) circle (2cm);
 \foreach \i in {0,1,...,7}
  \coordinate (a\i) at (45*\i:2); 
   \foreach \i in {0,1,...,7}
  \fill (a\i)  circle (1.5pt);
  \filldraw[fill=black!10] (a0) -- (a3) -- (a6) -- (a1) -- (a4) -- (a7) -- (a2) -- (a5) -- (a0);
  \draw[dashed] (0,0) circle (1cm);
  
  \begin{scope}[xshift=6cm]
 \filldraw[fill=black!30] (0,0) circle (2cm);
 \foreach \i in {0,1,...,7}
  \coordinate (a\i) at (45*\i:2); 
   \foreach \i in {0,1,...,7}
  \fill (a\i)  circle (1.5pt);
  \draw (a0) -- (a3) -- (a6) -- (a1) -- (a4) -- (a7) -- (a2) -- (a5) -- (a0);  
   \foreach \i in {0,1,...,7}
  \coordinate (b\i) at (45*\i:.83);
  \filldraw[fill=white] (b0) -- (b1) -- (b2) -- (b3) -- (b4) -- (b5) -- (b6) -- (b7) -- (b0); 
  \end{scope}

\end{tikzpicture}
\caption{La partie grisée désigne $\mathcal{T}_{8,3}$ dans le schéma de gauche et $\mathcal{E}_{8,3}$ dans celui de droite. Le cercle représenté est de rayon $\frac{1}{2}$.} \label{fig:polygone}
\end{figure}

\begin{lem}\label{lem:trigo}
Le domaine $\mathcal{T}_{k,a}$ contient le disque ouvert centré en l'origine de rayon $\rho = \cos\left(\frac{\pi}{k}\right)\left(1-\tan\left(\frac{\pi}{k}\right)\tan\left(\frac{a-1}{k}\pi\right)\right)$ (voir figure~\ref{fig:polygone}).
\par
En particulier, nous avons $\rho \geq 1- \frac{2}{k+2-2a}$.
\end{lem}

\begin{proof}
La formule donnant $\rho$ en fonction de $k$ et $a$ s'obtient par un simple exercice de trigonométrie. Ensuite, posant $a=\frac{k}{2}-b$, on obtient $\rho = \cos(\pi/k) - \frac{\sin(\pi/k)}{\tan(\frac{b+1}{k}\pi)}$.
\par
Pour tout coefficient $\alpha>1$, la fonction $f_{\alpha}: t \mapsto \cos(t) - \frac{\sin(t)}{\tan(\alpha t)}$ est croissante sur $[0,\frac{\pi}{2\alpha}]$. En effet, sa dérivée vaut $f_{\alpha}'(t)=-\sin(t)-\frac{\cos(t)}{\tan(\alpha t)}+\frac{\alpha\sin(t)}{\sin^{2}(\alpha t)} =\frac{\alpha \sin(t)-\sin(\alpha t)\cos((\alpha-1)t)}{\sin^{2}(\alpha t)}$. Comme $t \in [0,\frac{\pi}{2\alpha}]$, nous avons $\alpha \sin (t) \leq \sin(\alpha) t$ et donc $f_{\alpha}'$ est positive sur ce même intervalle. Puisque $f_{\alpha}(0)=1-\frac{1}{\alpha}$, on en déduit que $f_{\alpha}(\pi/2\alpha) \geq 1-\frac{1}{\alpha}$.
\par
Ainsi, $\rho=f_{\alpha}(\pi/k)$ pour $\alpha=(b+1)$ satisfait l'inégalité $\rho \geq \frac{b}{b+1}$ et ainsi $\rho \geq \frac{k-2a}{k+2-2a}$.
\end{proof}

\begin{lem}\label{lem:s2Modele}
En supposant que $|R_{2}| \geq |R_{1}|$, la configuration $(R_{1},R_{2})$ est réalisable dans la strate $\Omega^{k}\mathcal{M}_{0}(a,-a;-k,-k)$ avec $0<a<\frac{k}{2}$ selon un modèle :
\begin{itemize}
    \item \textbf{Triangle} si $-\left(\frac{R_{1}}{R_{2}}\right)^{1/k}$ est dans $\mathcal{IC}_{k,a}$ (dans ce cas le cylindre correspondant au pôle de résidu $R_{2}$ a pour bord deux côtés du triangle);
    \item \textbf{Quadrilatère AABB} si $-\left(\frac{R_{1}}{R_{2}}\right)^{1/k}$ est dans $\mathcal{E}_{k,a}$;
    \item \textbf{Quadrilatère ABAB} si $-\left(\frac{R_{1}}{R_{2}}\right)^{1/k}$ est dans $\mathcal{T}_{k,a}$ (dans ce cas c'est le zéro d'ordre $-a$ qui est au bord du cylindre correspondant au pôle de résidu $R_{2}$).
\end{itemize}
\end{lem}

\begin{proof}
Observons en premier lieu que les trois modèles sont les seules formes possibles pour une différentielle de la strate $\Omega^{k}\mathcal{M}_{0}(a,-a;-k,-k)$. En effet, un calcul de caractéristique d'Euler montre que si $b$ est le nombre de segments de bord dans les deux cylindres, alors dans une telle strate le reste de la surface s'obtient en collant $4-b$ triangles. Chaque cylindre a un bord donc nous avons $b \geq 2$. De plus, la surface ne peut pas être constituée seulement des cylindres car les angles au bord du cylindre valent $\pi$ et nos singularités coniques ont un angle qui n'est pas un multiple de $\pi$. Ainsi, le complémentaire des cylindres est soit un triangle, soit un quadrilatère (dans ce dernier cas, les bords des cylindres peuvent être des côtés consécutifs ou au contraire opposés dans le quadrilatère).
\par
Nous donnons d'abord la construction du modèle \textbf{Triangle}. Normalisons d'abord par $R_{2}=(-1)^{k}$ et prenons $r_{2}=-1$. Nous allons construire un triangle dont les sommets seront $0$, $\theta \in ]0,1[$ et $r_{1}$ (racine $k$-ième de $R_{1}$). Nous pourrons ensuite coller (voir figure~\ref{fig:modeles}) un cylindre sur le segment $[0,r_{1}]$ (correspondant au pôle de résidu $R_{1}$), puis les deux autres segments sur un autre cylindre (correspondant au pôle de résidu $R_{2}$). Les contraintes sur les ordres des zéros impliquent que $r_{1}=\theta+(1-\theta)e^{\frac{2ai\pi}{k}}$. Les valeurs de $r_{1}$ données pour~$\theta$ variant entre entre $0$ et $1$ décrivent la corde entre $1$ et $e^{\frac{2ai\pi}{k}}$. En fixant chacune des $k-1$ autres racines de $R_{2}$, on obtient par rotation du cas précédent que le modèle \textbf{Triangle} est réalisable pour toute valeur de $-\left(\frac{R_{1}}{R_{2}}\right)^{1/k}$ dans $\mathcal{IC}_{k,a}$.
\par
Pour le modèle \textbf{Quadrilatère AABB}, nous allons construire un quadrilatère dont les sommets sont $-1$, $0$, $-e^{\frac{2ai\pi}{k}}$ et un sommet $z$ dont l'argument se situe dans l'intervalle $]0,\frac{2a\pi}{k}[$. En identifiant les segments $[-1,0]$ et $[0,-e^{\frac{2a\pi}{k}}]$, nous obtenons une singularité d'angle $\frac{2(k-a)\pi}{k}$. Nous collons des cylindres sur les deux autres segments pour obtenir des pôles d'ordre $k$ dont les $k$-résidus pour lesquels le ratio $-\left(\frac{R_{1}}{R_{2}}\right)^{1/k}$ vaut soit $\frac{z+e^{\frac{2ai\pi}{k}}}{z+1}$, soit son inverse. En faisant varier la position de $z$ à l’intérieur de son cône ouvert, le ratio $-\left(\frac{R_{1}}{R_{2}}\right)^{1/k}$ va décrire un domaine bordé par une corde entre $1$ et $e^{\frac{2a\pi}{k}}$. En effet, la demi-droite $(r_{1},r_{2}) = (t,t)$ avec $t>0$ correspond à la portion de cercle unité entre $e^{\frac{2ai\pi}{k}}$ (pour $t\to0$) et $1$ (pour $t\to \infty$). La demi-droite des réels positifs correspond à la corde entre ces deux racines de l'unité. L’intérieur du domaine est envoyé sur la portion du disque délimité par ces deux courbes. Les symétries entre les différents choix de racines montrent que sous l'hypothèse $|R_{1}| \leq |R_{2}|$ ce domaine couvre $\mathcal{E}_{k,a}$.
\par
%Pour le modèle \textbf{Quadrilatère ABAB}, nous allons construire un quadrilatère dont les sommets sont les points $-1$, $0$, $z$ et $-1+ze^{\frac{2ai\pi}{k}}$. Nous faisons varier l'argument de $z$ dans l'intervalle $]\frac{-2a\pi}{k},\pi[$. Nous identifions les segments reliant $0$ et $z$ ainsi que $-1$ et $-1+ze^{\frac{2a\pi}{k}}$, puis collons des cylindres sur les deux autres segments (voir figure~\ref{fig:modeles}). Leurs périodes sont respectivement $1$ et $-1+z(e^{\frac{2ai\pi}{k}}-1)$. Les résidus des pôles correspondants sont donc $R_{1}=1$ et $R_{2}=\left(-1+z(e^{\frac{2ai\pi}{k}}-1)\right)^{k}$ avec $|R_{2}| \geq |R_{1}|$. En faisant varier la position de $z$ à l’intérieur de son cône ouvert, le ratio $-\left(\frac{R_{1}}{R_{2}}\right)^{1/k}$ va décrire un domaine bordé par la corde entre $1$ et~$e^{\frac{2a\pi}{k}}$. À nouveau, les symétries entre les différents choix de racines montrent que ce domaine couvre~$\mathcal{T}_{k,a}$.
Pour le modèle \textbf{Quadrilatère ABAB}, remarquons qu'une perturbation arbitrairement petite d'une surface réalisant le modèle \textbf{Triangle} donne une surface réalisant l'un ou l'autre des deux autres modèles. En particulier, partant de la surface réalisant le modèle \textbf{Triangle} dans la figure~\ref{fig:modeles}, si on déplace le point $z$ sous la corde $[1,e^{2ia\pi/k}]$, alors la singularité de coordonnée $\theta$ acquiert une partie imaginaire légèrement négative et devient le seul zéro au bord du cylindre de résidu $R_{2}$. Chaque zéro appartient au bord d'un seul cylindre. Nous obtenons bien le modèle \textbf{Quadrilatère ABAB}.
\par
Ainsi, lorsque $-\left(\frac{R_{1}}{R_{2}}\right)^{1/k}$ se trouve dans l'intersection de $\mathcal{T}_{k,a}$
avec un certain voisinage des cordes $\mathcal{IC}_{k,a}$, le modèle \textbf{Quadrilatère ABAB} est réalisable.
\par
Ensuite, le prolongement plat décrit dans la section~\ref{sec:arrhyp} permet d'étendre la construction de proche en proche sachant que :
\begin{itemize}
    \item la structure de translation ne peut dégénérer que si $-\left(\frac{R_{1}}{R_{2}}\right)^{1/k}$ vaut $0$ ou une racine $k$-ième de l'unité;
    \item les trois modèles sont les seules formes possibles pour le complémentaire des cylindres dans une telle strate;
    \item pour passer continûment d'un \textbf{Quadrilatère ABAB}  à un \textbf{Quadrilatère AABB}, la surface doit passer par la forme \textbf{Triangle} et le ratio $-\left(\frac{R_{1}}{R_{2}}\right)^{1/k}$ doit donc franchir l'une des cordes de $\mathcal{IC}_{k,a}$ puisque la construction donnée plus haut donne la seule façon d'obtenir une forme \textbf{Triangle}.
\end{itemize}
Ainsi, pour toute valeur de $-\left(\frac{R_{1}}{R_{2}}\right)^{1/k}$ dans $\mathcal{T}_{k,a}$, la construction \textbf{Quadrilatère ABAB} est réalisable.
\end{proof}

Nous pouvons en particulier en déduire que pour $s=2$, en dehors de l'obstruction (1), chaque configuration est réalisable dans chaque strate. 

\begin{cor}\label{cor:poleks2}
Pour $0<a<\frac{k}{2}$, chaque configuration $(R_{1},R_{2})$ avec $R_{1},R_{2} \in \CC^{\ast}$ est réalisable dans la strate $\Omega^{k}\mathcal{M}_{0}(a,-a;-k,-k)$ sauf si $a=1$ et $(R_{1},R_{2}) \in \CC^{\ast}\cdot(1,(-1)^{k})$.
\end{cor}

\begin{proof}
Les domaines $\mathcal{IC}_{k,a}$, $\mathcal{T}_{k,a}$ et $\mathcal{E}_{k,a}$ recouvrent tout le disque unité sauf peut-être les racines $k$-ièmes de l'unité. Celles-ci sont contenues dans le domaine $\mathcal{E}_{k,a}$ dès lors que $a \neq 1$.
\end{proof}

\subsubsection{\bf Cas $s\geq3$}

Les constructions du cas $s=2$ s'étendent à un nombre supérieur de pôles pour couvrir les strates $\Omega^{k}\mathcal{M}_{0}(a_{1},a_{2};\rec[-k][s])$ telles que $-\frac{k}{2}<a_{2}<0$, c'est-à-dire quand $l_{2}=0$. L'ingrédient nouveau est une estimation des modules de sommes de racines (voir section~\ref{sec:estimation}). Nous traitons d'abord les cas $k \geq 5$.

\begin{prop}\label{prop:ConstL2zerok5}
Pour $k \geq 5$, $0<a<\frac{k}{2}$ et $s \geq 3$, chaque configuration $(R_{1},\dots,R_{s})$ de nombres complexes non nuls est réalisable dans la strate $\Omega^{k}\mathcal{M}_{0}((s-2)k+a,-a;\rec[-k][s])$ sauf pour les exceptions suivantes :
\begin{enumerate}
    \item $\CC^{\ast}\cdot(1,1,1)$ dans la strate $\Omega^{6}\mathcal{M}_{0}(7,-1;\rec[-6][3])$;
    \item $\CC^{\ast}\cdot(1,1,1,1)$ dans la strate $\Omega^{6}\mathcal{M}_{0}(13,-1;\rec[-6][4])$.
\end{enumerate}
\end{prop}

\begin{proof}
Nous considérons une strate et une configuration $(R_{1},\dots,R_{s})$. Nous normalisons par $R_{s}=1$ et supposons $|R_{1}| \leq \dots \leq |R_{s}|$. 

Si $s-1 \geq 3$, en dehors du cas $k=6$ avec la configuration $(1,\dots,1)$, le lemme~\ref{lem:estimate} nous garantit l'existence de racines $k$-ièmes $r_{1},\dots,r_{s}$  des $R_{1},\dots,R_{s}$ telles que :
\begin{enumerate}[i)]
    \item $r_{s}=1$;
    \item $0<|\tilde{r}|<1$ avec $\tilde{r}=\sum\limits_{i=1}^{s-1} r_{i}$;
    \item les $r_{1},\dots,r_{s-1}$ ne sont pas $\mathbb{R}$-colinéaires.
\end{enumerate}
Construisons tout d'abord un polygone convexe $M$ de côtés $r_{1},\dots,r_{s-1},-\tilde{r}$ en ordonnant les arguments de façon cyclique. Nous savons que $M$ n'est pas dégénéré. Collons ensuite les cylindres correspondants sur les côtés $r_{1},\dots,r_{s-1}$. Selon la valeur de $-\frac{\tilde{r}}{r_{s}}=-\tilde{r}$, nous choisissons une construction ou une autre. 
\par
Selon si $-\tilde{r}$ appartient respectivement aux domaines $\mathcal{IC}_{k,a}$, $\mathcal{E}_{k,a}$ ou $\mathcal{T}_{k,a}$, nous remplaçons le cylindre bordé par $r_{s}$  par le polygone $M$ dans le modèle \textbf{Triangle}, \textbf{Quadrilatère AABB} ou \textbf{Quadrilatère ABAB} du lemme~\ref{lem:s2Modele}. Nous réalisons ainsi la configuration de résidus dans une strate pour laquelle une singularité conique a un angle de $\frac{2(k-a)\pi}{k}$.
\par
Il nous reste à réaliser la configuration $(1,\dots,1)$ dans les strates $\Omega^{6}\mathcal{M}_{0}(6s-11,-1;\rec[-6][s])$ avec $s \geq 5$. En effet, si $a=2$ les différentielles sextiques de la strate ne sont pas primitives. De plus, rappelons que les cas $s=3$ et $s=4$ sont des obstructions.
\par
Notons $\omega=\exp\left(\frac{i\pi}{3}\right)$. Dans le cas $s=5$, nous construisons un heptagone reliant les sommets $0,\omega^{2},\omega,1,-\omega^{2},-\omega,-1,0$. Nous identifions les côtés $[-1,0]$ et $[0,\omega^{2}]$  puis collons des cylindres sur les autres segments pour obtenir la surface voulue. Pour $s\geq6$ nous procédons par récurrence. Supposons que l'on a une surface avec $s-1$ cylindres. Il suffit de remplacer un cylindre par un triangle équilatéral sur lequel nous collons deux autres cylindres pour obtenir la surface souhaitée.
\par
Nous traitons maintenant le cas $s=3$ pour lequel le lemme~\ref{lem:estimate} ne garantit pas que les racines $r_{1},r_{2}$ ne sont pas $\mathbb{R}$-colinéaires. Supposons d'abord que nous ne sommes pas dans le cas $k=5$ avec des résidus $(R_{1},R_{2})$ satisfaisant $R_{1}=-R_{2}$. Si les racines $r_{1}$ et $r_{2}$ sont $\mathbb{R}$-colinéaires, comme elles satisfont toujours $|r_{1}+r_{2}|<|r_{s}|$, la construction demeure réalisable avec un polygone $M$ qui est alors un triangle plat.
\par
Enfin, nous traitons le cas $k=5$ et $R_{1}=-R_{2}$, normalisé par $R_{1}=1$. Si $|R_{3}|^{1/5} > 2\sin(\pi/5)$, comme on peut choisir des racines $r_{1},r_{2}$ telles que $|r_{1}+r_{2}|=2\sin(\pi/5)$ (voir le lemme~\ref{lem:estimate}), les constructions précédentes fonctionnent. Si au contraire nous avons $1 \leq |R_{3}|^{1/5} \leq 2\sin(\pi/5)$, alors, si $R_{3}\neq -R_{1}$ on peut choisir des racines $r_{1}$ et $r_{3}$ pour avoir $0<|r_{1}+r_{3}|< 1$. En effet, dans le pire des cas, $\frac{R_{3}}{R_{1}}$ est un réel positif. Nous choisissons alors $r_{1}=e^{\frac{4i\pi}{5}}$ et $r_{3}=|r_{3}|$. Même quand $|r_{3}|=2\sin(\pi/5)$, on a $|r_{1}+r_{3}|=\sqrt{(2\sin(\pi/5)-\cos(\pi/5))^{2}+(\sin(\pi/5))^{2}}<1$. On peut ensuite réaliser les constructions adéquates. Si $R_{3}=-R_{1}$, il suffit de réordonner les résidus en $(1,1,-1)$ et de suivre les constructions précédentes.
\end{proof}

Les strates de différentielles cubiques et quartiques présentent davantage de cas exceptionnels qui exigent des constructions ad hoc.

\begin{prop}\label{prop:l2zerok3}
Pour $s \geq 3$, chaque configuration $(R_{1},\dots,R_{s})$ de nombres complexes non nuls est réalisable dans la strate $\Omega^{3}\mathcal{M}_{0}(3s-5,-1;\rec[-3][s])$ sauf pour les configurations $\CC^{\ast}\cdot(1,1,1)$ dans la strate $\Omega^{3}\mathcal{M}_{0}(4,-1;\rec[-3][3])$.
\end{prop}

\begin{proof}
Nous supposerons que les résidus sont ordonnés par module croissant. D'après le lemme~\ref{lem:estimate}, pour la famille de nombres $R_{1},\dots,R_{s-1}$, il existe toujours un choix de racines troisièmes $r_{1},\dots,r_{s-1}$ non $\mathbb{R}$-colinéaires tel que $ 0 < |\sum_{i=1}^{s-1} r_{i}| < |R_{s-1}|^{1/3}\leq |R_{s}|^{1/3}$ quand $s-1 \geq 3$ sauf si $(R_{1},\dots,R_{s-1}) \in \CC^{\ast}\cdot \left(\rec[1][s_{1}],\rec[-1][s_{2}],\rec[3i\sqrt{3}][t_{1}],\rec[-3i\sqrt{3}][t_{2}]\right)$ avec $t_{1}+t_{2} \geq 1$ et $s_{1}-s_{2} \in 3 \ZZ$. En dehors de ces cas, la preuve fonctionne exactement comme celle de la proposition~\ref{prop:ConstL2zerok5}. De même quand $s-1=2$, si les racines sont $\mathbb{R}$-colinéaires, la construction avec un triangle plat fonctionne toujours.
\par
Dans les cas restants, il existe des entiers satisfaisant $s_{1}+s_{2}+t_{1}+t_{2}+1=s$ avec $t_{1} \geq 1$  et $s_{1}-s_{2} \in 3\ZZ$ tels que $(R_{1},\dots,R_{s})=(\rec[1][s_{1}],\rec[-1][s_{2}],\rec[3i\sqrt{3}][t_{1}],\rec[-3i\sqrt{3}][t_{2}],R_{s})$. Nous distinguerons les cas selon la valeur de $|R_{s}|$.
\par
D'abord, d'après le lemme~\ref{lem:estimate}, il existe des racines $r_{1},\dots,r_{s-1}$ non toutes $\mathbb{R}$-colinéaires dont la somme est de module au plus $3$ (si $s_{1}=s_{2}=0$ et $t_{2}-t_{1} \in 3\ZZ$) ou $\sqrt{3}$ (dans tout autre cas).  Si $|R_{s}|>27$, alors même quand la somme des $s-1$ premières racines est de module~$3$, la construction habituelle fonctionne. Elle fonctionne aussi si $|R_{s}|>3\sqrt{3}$ et que $s_{1},s_{2}$ ne sont pas tous deux nuls ou que $t_{2}-t_{1} \notin 3\ZZ$.
\par
Ensuite, si $|R_{s}|=3\sqrt{3}$, nous pouvons intervertir $R_{s}$ avec un résidu valant $\pm 3i\sqrt{3}$. Si nous n'obtenons pas une configuration exceptionnelle, la construction habituelle fonctionne. Il nous reste donc le cas où $R_{s}=\pm 3i\sqrt{3}$. Puisque nous avons $t_{1} \geq 1$, nous pouvons supposer que $R_{s}=3i\sqrt{3}$ et que $(R_{1},\dots,R_{s})=(\rec[1][s_{1}],\rec[-1][s_{2}],\rec[3i\sqrt{3}][t_{1}+1],\rec[-3i\sqrt{3}][t_{2}])$ avec $s_{1}-s_{2} \in 3\ZZ$. Nous commençons par donner une construction pour la configuration $(3i\sqrt{3},3i\sqrt{3},-3i\sqrt{3},-3i\sqrt{3})$. Partant de l'hexagone reliant les points $0,1-\omega,2+\omega,1+2\omega,\omega-1,-\omega-2$. Nous identifions les segments $[0,1-\omega]$ et $[-\omega-2,0]$ puis collons des cylindres sur les segments restants. Nous obtenons ainsi la surface plate souhaitée.
\par
Nous généralisons ensuite la construction par une série de chirurgies. En remplaçant un cylindre par un triangle équilatéral (auquel sera collé deux cylindres sur les côtés restants), nous pouvons toujours remplacer un résidu $R$ par une paire de résidus $(-R,-R)$. En partant de la configuration $(1,1,-1,-1)$, ceci permet d'obtenir par récurrence n'importe quelle configuration de la forme $(\rec[1][s_{1}],\rec[-1][s_{2}])$ avec $s_{2}-s_{1} \in 3\ZZ$ sauf celles dans $\CC^{\ast}(1,1,1)$ qui correspondent à l'obstruction (2). Ensuite, en utilisant cette fois-ci un triangle d'angles $\frac{2\pi}{3},\frac{\pi}{6},\frac{\pi}{6}$, selon le côté le long duquel nous le collons, nous pourrons remplacer un résidu $\lambda$ par une paire de résidus $\left(\frac{\lambda}{3i\sqrt{3}},-\frac{\lambda}{3i\sqrt{3}}\right)$ ou $\left(\lambda,-3\lambda i\sqrt{3}\right)$. Nous obtenons ainsi toutes les configurations de la forme $\left(\rec[1][s_{1}],\rec[-1][s_{2}],\rec[3i\sqrt{3}][t_{1}+1],\rec[-3i\sqrt{3}][t_{2}]\right)$ tant que $s_{1}-s_{2} \in 3\ZZ$ et $s_{1},s_{2} \geq 1$. En utilisant la chirurgie du triangle équilatéral, nous pouvons obtenir les cas pour lesquels l'un ou l'autre des nombres $s_{1},s_{2}$ est nul. Enfin, pour $s_{1}=s_{2}=0$, les cas pour lesquels $t_{2}-t_{1} \in 3\ZZ$ sont déjà couverts. La chirurgie du triangle équilatéral permet d'obtenir les derniers en partant de la configuration $(3i\sqrt{3},3i\sqrt{3},-3i\sqrt{3})$. Une configuration dans $\CC^{\ast}(1,1,-1)$ s'obtient à partir du pentagone reliant les points $\frac{i}{\sqrt{3}},1,1+\omega,\omega,-1$. Il suffit ensuite d'identifier les segments de part et d'autre du sommet $\frac{i}{\sqrt{3}}$ puis de coller des cylindres sur les autres.
\par
Ces dernières constructions permettent aussi de traiter du cas $R_{s}=\pm 27$. En effet, d'après les cas précédents, nous pouvons supposer que $s_{1}=s_{2}=0$ et $t_{2}-t_{1} \in 3\ZZ$. La configuration $(R_{1},\dots,R_{s})$ est donc équivalente à une configuration de la forme $\left(\rec[1][t_{1}],\rec[-1][t_{2}],3i\sqrt{3}\right)$.
\par
Dans les derniers cas, nous avons $3\sqrt{3}<|R_{s}| \leq 27$ et $R_{s} \neq \pm 27$. D'après les cas déjà traités, nous pouvons aussi supposer que $s_{1}=s_{2}=0$ et $t_{2}-t_{1} \in 3\ZZ$. Notre configuration est donc $(\rec[3i\sqrt{3}][t_{1}],\rec[-3i\sqrt{3}][t_{2}],R_{s})$. Si $t_{1},t_{2} \geq 1$, alors nous pouvons trouver une racine $r_{i}$ de $3i\sqrt{3}$ ou $-3i\sqrt{3}$ ainsi qu'une racine $r_{s}$ de $R_{s}$ telles que $0<|r_{i}+r_{s}|<\sqrt{3}$. La configuration de $s-1$ résidus dans laquelle nous remplaçons $R_{i}$ et $R_{s}$ par $(r_{i}+r_{s})^{3}$ n'est pas exceptionnelle même si $(r_{i}+r_{s})^{3}= \pm 1$ ici. La construction habituelle fonctionne pour une telle configuration dans laquelle le résidu de module le plus grand est $\pm 3i\sqrt{3}$. Il suffit ensuite de remplacer le cylindre correspondant au résidu $(r_{i}+r_{s})^{3}$ par un triangle et les cylindres attenants pour obtenir la surface voulue.
\par
Si nous avons $t_{1}=0$ ou $t_{2}=0$ (sans perte de généralité, nous supposons $t_{2}=0$), notre configuration est $(\rec[3i\sqrt{3}][s-1],R_{s})$ avec $s-1 \in 3\ZZ$. Si une racine de $3i\sqrt{3}$ ajoutée à une racine de~$R_{s}$ permet d'obtenir un nombre complexe de module strictement plus petit que $\sqrt{3}$, alors la construction précédente fonctionne. Sinon nous choisissons deux racines $r_{1}$ et $r_{2}$ de $3i\sqrt{3}$ ainsi qu'une racine $r_{s}$ de $R_{s}$ de façon à avoir $r_{2}=\omega r_{1}$ et $0<|r_{1}+r_{2}+r_{s}|<\sqrt{3}$. Rappelons que le cas dans lequel $R_{s} = \pm 27$ est équivalent par homothétie à un cas déjà traité. La suite de la construction est identique si ce n'est qu'on ajoute un quadrilatère au lieu d'un triangle.
\end{proof}

\begin{prop}\label{prop:l2zerok4}
Pour $s \geq 3$, chaque configuration $(R_{1},\dots,R_{s})$ de nombres complexes non nuls est réalisable dans la strate $\Omega^{4}\mathcal{M}_{0}(4s-7,-1;\rec[-4][s])$ sauf dans les cas suivants :
\begin{enumerate}
    \item la strate $\Omega^{4}\mathcal{M}_{0}(5,-1;\rec[-4][3])$ avec les résidus dans $\CC^{\ast}\cdot(1,1,-4)$;
    \item la strate $\Omega^{4}\mathcal{M}_{0}(9,-1;\rec[-4][4])$ avec les résidus dans $\CC^{\ast}\cdot(1,1,1,1)$.
\end{enumerate}
\end{prop}

\begin{proof}
Nous supposerons que les résidus sont ordonnés par module croissant. D'après le lemme~\ref{lem:estimate}, pour $s-1 \geq 3$  et toute  famille de nombres $R_{1},\dots,R_{s-1}$, il existe un choix de racines $4$-ièmes non $\mathbb{R}$-colinéaires $r_{1},\dots,r_{s-1}$ tel que $ 0 < |\sum r_{i}| < |R_{s-1}|^{1/4}$  sauf si $(R_{1},\dots,R_{s-1}) \in \CC^{\ast}(\rec[1][s_{1}],\rec[-4][s_{2}])$ avec $s_{1}$ pair et $s_{2} \geq 1$. En dehors de ces cas, la preuve fonctionne exactement comme celle des propositions~\ref{prop:ConstL2zerok5} et~\ref{prop:l2zerok3}. De même quand $s-1=2$, si les racines sont $\mathbb{R}$-colinéaires, la construction avec un triangle plat fonctionne aussi.
\par
Dans les cas restants, nous avons $(R_{1},\dots,R_{s})=(\rec[1][s_{1}],\rec[-4][s_{2}],R_{s})$ avec  $s_{1}+s_{2}+1=s$, $s_{2} \geq 1$ et $s_{1}$ pair. Nous distinguerons les cas selon la valeur de $|R_{s}|$.
\par
D'abord, d'après le lemme~\ref{lem:estimate}, il existe des racines $r_{1},\dots,r_{s-1}$ non  $\mathbb{R}$-colinéaires dont la somme est de module au plus $\sqrt{2}$ (si $s_{1}+2s_{2} \in 4\ZZ+2$) ou $2$ (si $s_{1}+2s_{2} \in 4\ZZ$).  Si $|R_{s}|>16$, alors même quand la somme des $s-1$ premières racines est de module $2$, la construction habituelle fonctionne. Elle fonctionne aussi si $|R_{s}|>4$ et que $s_{1}+2s_{2} \in 4\ZZ+2$.
\par
Ensuite, si $|R_{s}|=4$, nous pouvons intervertir $R_{s}$ avec un résidu valant $-4$. Si nous n'obtenons pas une configuration exceptionnelle, la construction habituelle fonctionne. Ainsi, il nous reste le cas où $(R_{1},\dots,R_{s})=(\rec[1][s_{1}],\rec[-4][s_{2}+1])$ avec $s_{1}$ pair et $s_{2} \geq 1$. Pour la configuration $(-4,-4,-4)$, nous construisons le pentagone reliant les points $-1,0,-i,1,i,-1$. Puis nous identifions les segments $[-1,0]$ et $[0,-i]$, et nous collons des cylindres sur les trois autres segments pour donner la surface plate adéquate. La construction se généralise pour un nombre supérieur de pôles en remplaçant des cylindres par des triangles rectangles auxquels sont collés deux cylindres sur les côtés restants. Selon si nous collons le triangle le long de son hypoténuse ou d'un des deux autres, nous remplaçons un résidu $-4$ par deux résidus $1$ ou au contraire un résidu $1$ par un résidu $1$ et un résidu $-4$. Nous obtenons ainsi toutes les configurations dans lesquelles $s_{1} \geq 2$ et $s_{2}+1 \geq 2$. Sachant que les configurations $(1,1,-4)$ et $(-4,-4,-4,-4)$ correspondent aux obstructions (8) et (10) et que nous avons par hypothèse $s_{2} \geq 1$, il faut encore traiter le cas des configurations pour lesquelles $s_{1}=0$ et $s_{2}+1 \geq 5$.
\par
La chirurgie consistant à retirer un cylindre pour le remplacer par un carré sur lequel on colle trois autres cylindres permet de remplacer $s_{2}$ par $s_{2}+2$. Sachant que le cas $s_{2}+1=3$ est déjà traité, nous devons donner une construction ad hoc pour le cas $s_{2}+1=6$, c'est-à-dire la configuration proportionnelle à $(1,1,1,1,1,1)$ dans la strate $\Omega^{4}\mathcal{M}_{0}(17,-1;\rec[-6][4])$. Pour cela, nous construisons un octogone reliant les points $-1,i,1,1+i,1+2i,2i,-1+2i,-1+i$. Puis nous identifions les segments $[-1,i]$ et $[i,1]$. En collant des cylindres sur les six autres segments, nous obtenons la surface adéquate.
\par
Un raisonnement semblable permet de traiter du cas $R_{s}=16$. D'après les cas précédents nous pouvons supposer que $s_{1}+2s_{2} \in 4\ZZ$. Si $s_{1}=0$, alors $(R_{1},\dots,R_{s})$ équivaut à $(\rec[1][s-1],-4)$ avec $s-1$ pair. Nous avons déjà fait les constructions pour de telles configurations. Si au contraire $s_{1} \geq 2$ (avec $s_{1}$ est pair), alors $(R_{1},\dots,R_{s})=(\rec[1][s_{1}],\rec[-4][s_{2}],16)$. En partant d'une surface réalisant la configuration $(\rec[1][s_{1}],\rec[-4][s_{2}])$ comme dans le paragraphe précèdent, l'ajout d'un triangle rectangle permet de remplacer un résidu $-4$ par un résidu $-4$ et un résidu~$16$. Nous obtenons ainsi toutes les configurations voulues sauf éventuellement $(1,1,-4,16)$. Pour ce cas, nous construisons l'hexagone reliant les points $-1-i,0,1-i,1,i,i-1,-1$. Puis nous  identifions les segments $[-1-i,0]$ et $[0,1-i]$ et collons des cylindres sur les autres segments.
\par
Enfin, il reste les cas dans lesquels $4<|R_{s}| \leq 16$ et $R_{s} \neq 16$. D'après les cas déjà traités, nous pouvons de surcroît supposer que  $s_{1}+2s_{2} \in 4\ZZ$. Si $s_{1} \geq 2$ ($s_{1}$ est pair), alors nous pouvons trouver deux racines $r_{1},r_{2}$ de $1$ et une racine $r_{s}$ de $R_{s}$ telles que $0<|r_{1}+r_{2}+r_{s}|<\sqrt{2}$ et $r_{2}=\pm ir_{1}$. La configuration de $s-2$ résidus $(R_{3},\dots,R_{s-1},(r_{1}+r_{2}+r_{s})^{4})$ n'est pas exceptionnelle car il est impossible d'obtenir $(r_{1}+r_{2}+r_{s})^{4}=1$. La construction habituelle fonctionne pour une telle configuration dans laquelle le résidu de module le plus grand est $-4$ (la configuration est normalisée de telle façon que nous avons toujours $s_{2} \geq 1$). Il suffit ensuite de remplacer le cylindre correspondant au résidu  $(r_{1}+r_{2}+r_{s})^{4}$ par un quadrilatère dont les segments sont $r_{1},r_{2},r_{s}$ (non $\mathbb{R}$-colinéaires par construction) et les cylindres attenants pour obtenir la surface voulue.
\par
Le cas $s_{1}=0$ (et donc $s_{2} \geq 2$ puisque $s_{1}+2s_{2} \in 4\ZZ$) se traite de façon analogue en constatant qu'il existe une racine $r_{s}$ de $R_{s}$ et des racines $r_{1},r_{2}$ de $R_{1}=R_{2}=-4$ telles que $0<|r_{1}+r_{2}+r_{s}|<\sqrt{2}$. Le reste de la construction est identique à ceci près que nous recollons non un quadrilatère mais un triangle (potentiellement dégénéré).
\end{proof}

En combinant le lemme~\ref{lem:S5l1=0}, le corollaire~\ref{cor:poleks2} ainsi que les propositions~\ref{prop:ConstL2zerok5},~\ref{prop:l2zerok3} et~\ref{prop:l2zerok4}, nous obtenons la caractérisation complète des configurations réalisable dans une strate ayant un zéro d'ordre négatif.

\begin{cor}\label{cor:FinalL2zero}
Chaque configuration $(R_{1},\dots,R_{s})$ de nombres complexes non nuls est réalisable dans la strate $\Omega^{k}\mathcal{M}_{0}(a_{1},a_{2};\rec[-k][s])$ avec $a_{1}$ ou $a_{2}$ négatif sauf dans les six cas suivants :
\begin{enumerate}
    \item $\CC^{\ast}\cdot(1,(-1)^{k})$ dans les strates $\Omega^{k}\mathcal{M}_{0}(1,-1;\rec[-k][2])$;
    \item $\CC^{\ast}\cdot(1,1,1)$ dans la strate $\Omega^{3}\mathcal{M}_{0}(4,-1;\rec[-3][3])$;
    \item $\CC^{\ast}\cdot(1,1,-4)$ dans la strate $\Omega^{4}\mathcal{M}_{0}(5,-1;\rec[-4][3])$;
    \item $\CC^{\ast}\cdot(1,1,1,1)$ dans la strates $\Omega^{4}\mathcal{M}_{0}(9,-1;\rec[-4][4])$;
    \item $\CC^{\ast}\cdot(1,1,1)$ dans la strate $\Omega^{6}\mathcal{M}_{0}(7,-1;\rec[-6][3])$;
    \item $\CC^{\ast}\cdot(1,1,1,1)$ dans la strate $\Omega^{6}\mathcal{M}_{0}(13,-1;\rec[-6][4])$.
\end{enumerate}
\end{cor}

\subsection{Constructions pour $n=2$ avec deux zéros d'ordre positif}\label{sub:mechant}

Dans la plupart de ces cas, nous réalisons les configurations dans les strates voulues en utilisant des constructions par récurrence faisant intervenir des chirurgies permettant de se ramener à des strates plus simples. C'est pourquoi le cas dans lequel l'un des zéros d'ordre négatif (jouant une partie du rôle d'initialisation dans ces récurrences) a nécessité des constructions systématiques. Lorsque les configurations de résidus vérifient certaines identités (voir la section~\ref{sec:arrhyp}), la chirurgie suivante permet d'efficaces simplifications.

\begin{lem}\label{lem:chirurgietranslation}
Considérons une strate $\Omega^{k}\mathcal{M}_{0}(a_{1},a_{2};\rec[-k][p])$ et une configuration $(R_{1},\dots,R_{p})$ de $k$-résidus. Supposons qu'il existe une paire d'entiers positifs non nuls $l_{1},l_{2}$ vérifiant les conditions suivantes : \begin{enumerate}
    \item $t=l_{1}+l_{2}$ satisfait $2 \leq t \leq p-1$;
    \item il existe une famille de racines $k$-ièmes $r_{1},\dots,r_{t}$ de $R_{1},\dots,R_{t}$ telles que $\sum\limits_{i=1}^{t} r_{i}=0$; 
    \item aucun sous-ensemble strict $I$ de $\lbrace{ 1,\dots,t \rbrace}$ ne définit de somme partielle nulle $\sum\limits_{i \in I} r_{i}=0$;
    \item $a_{1}-kl_{1} >-k$ et $a_{2}-kl_{2}>-k$;
    \item la configuration $(R_{t+1},\dots,R_{p})$ est réalisable dans $\Omega^{k}\mathcal{M}_{0}(a_{1}-kl_{1},a_{2}-kl_{2};\rec[-k][p-t])$.
\end{enumerate} 
Dans ce cas la configuration $(R_{1},\dots,R_{p})$ est réalisable dans $\Omega^{k}\mathcal{M}_{0}(a_{1},a_{2};\rec[-k][p])$.
\end{lem}

\begin{proof}
Pour construire une $k$-différentielle dans $\Omega^{k}\mathcal{M}_{0}(a_{1},a_{2};\rec[-k][p])$ dont les résidus sont $(R_{1},\dots,R_{p})$, nous partons d'une $k$-différentielle $\xi$ de  $\Omega^{k}\mathcal{M}_{0}(a_{1}-kl_{1},a_{2}-kl_{2};\rec[-k][p-t])$ dont les résidus sont $(R_{t+1},\dots,R_{p})$. Dans la surface plate correspondant à  $\xi$, il existe toujours un lien-selle reliant les deux singularités coniques. Nous notons $\lambda$ sa période (définie à multiplication par $e^{\frac{2i\pi}{k}}$ près). Nous allons couper le long de ce lien ce lien-selle et recoller les deux bords sur une surface de translation $(X,\omega)$ (ce qui signifie que $\omega$ est une différentielle abélienne) ayant les propriétés suivantes :
\begin{itemize}
    \item $X$ est de genre zéro, possède deux singularités coniques $M_{1},M_{2}$ et deux liens-selles de bord reliant $M_{1}$ et $M_{2}$;
    \item les singularités $M_{1},M_{2}$ sont d'angles respectifs $2l_{1}\pi$ et $2l_{2}\pi$;
    \item $\omega$ possède $t$ pôles simples de résidus $r_{1},\dots,r_{t}$ (dont la somme totale est nulle);
    \item les deux liens-selles de bords sont de périodes $\lambda$ et $-\lambda$.
\end{itemize}
Le reste de la démonstration consiste à construire $(X,\omega)$. Supposons d'abord que les $t$ nombres complexes $r_{1},\dots,r_{t}$ ne sont pas tous $\mathbb{R}$-colinéaires. Dans ce cas, nous formons d'abord un polygone convexe~$\mathcal{P}$ dont le bord est formé de segments de périodes $r_{1},\dots,r_{t}$ que nous ordonnons par argument croissant. Nous choisissons ensuite une diagonale $D$ coupant~$\mathcal{P}$ en deux polygones ayant respectivement $l_{1}+1$ et $l_{2}+2$ côtés. A la place de la diagonale $D$, nous insérons un parallélogramme dont les deux autres côtés parallèles sont de périodes $\lambda$ et $-\lambda$. Comme on peut choisir $\lambda$ à multiplication par $e^{\frac{2i\pi}{k}}$ près, le parallélogramme n'est jamais dégénéré. Sur le polygone obtenu, nous recollons $t$ cylindres pour obtenir la surface de translation voulue.
\par
Dans le cas dans lequel $r_{1},\dots,r_{t}$ sont tous réels, nous donnons une construction différente. Tout d'abord, nous pouvons construire une surface $X_{1}$ avec un lien-selle de bord en reliant les cylindres correspondant aux résidus $r_{1},\dots,r_{l_{1}}$ par des liens-selles horizontaux selon un certain arbre. L'hypothèse d'absence de sommes partielles nulles garantit que la construction est toujours possible. Nous construisons une seconde surface $X_{2}$ de la même façon avec les $l_{2}$ résidus restants. En recollant les bords de $X_{1}$ et $X_{2}$ sur les côtés opposés d'un parallélogramme dont les autres côtés sont de périodes $\lambda$ et $-\lambda$  (comme dans la première construction), nous obtenons la surface voulue.
\end{proof}

Une seconde chirurgie permet de se ramener à la construction de type quadrilatère ABAB de la section~\ref{sub:5N2negatif}.

\begin{lem}\label{lem:ABAB+}
Soit $(R_{1},\dots,R_{s})$ une configuration de $k$-résidus non nuls. Supposons qu'il existe $l_{1} \geq 1$ et des racines  $k$-ièmes $(r_{1},\dots,r_{s})$ de $(R_{1},\dots,R_{s})$ telles que :
\begin{enumerate}
 \item si les $l_{1}$ première racines $r_{i}$, resp. les $l_{2}+1$ dernières racines, sont $\RR$-colinéaires, alors aucune somme partielle des $r_{i}$ n'est nulle;
 \item  $S_{1} = \sum\limits_{i=1}^{l_{1}} r_{i}$ et $S_{2} = \sum\limits_{i= l_{1}+1}^{s} r_{i}$ satisfont $-\frac{S_{1}}{S_{2}}\in\mathcal{T}_{k,k+\bar a_{1}}$.
\end{enumerate}
La configuration $(R_{1},\dots,R_{s})$ est alors réalisable dans $\Omega^{k}\mathcal{M}_{0}(kl_{1}+\bar a_{1},kl_{2}+\bar a_{2};\rec[-k][s])$.
 \end{lem}

\begin{proof}
Grâce au point (2) on peut utiliser le lemme~\ref{lem:s2Modele} pour obtenir un quadrilatère~$Q$ de type ABAB avec deux côtés opposés égaux à $S_{1}$ et $S_{2}$. La somme des angles au bord de $S_{1}$ est $\frac{2\pi}{k}(k+\bar a_{1})$ et au bord de $S_{2}$ est $\frac{2\pi}{k}\bar a_{2}$.
\par
Considérons les $l_{1}$ premières racines $r_{i}$ que nous collons sur le segment $S_{1}$ (le cas des $l_{2}+1$ autres racines se traite de manière identique). Il y a deux cas selon que les $r_{i}$ soient $\RR$-colinéaires ou non.
\par
Si les racines ne sont pas $\RR$-colinéaires, on forme un polygone non dégénéré $\mathcal{P}_{1}$ avec le segment $S_{1}$ et les racines $r_{i}$. On colle alors $\mathcal{P}_{1}$ au segment $S_{1}$ de $Q$ puis des cylindres aux segments $r_{i}$. On obtient alors un angle total de $\frac{2\pi}{k}(k+\bar a_{1}) + \pi l_{1} +\pi l_{1}$ et on obtient donc une singularité d'ordre $l_{1}+\bar a_{1}$.
\par
Si les racines sont $\RR$-colinéaires, considérons une différentielle abélienne dans la strate $\omoduli[0](a;\rec[-1][l_{1}+1])$ dont les résidus sont les $r_{i}$ et $-S_{1}$. L'hypothèse d'absence de sommes partielles nulles permet d'appliquer le théorème~1.2 de \cite{getaab} pour garantir l'existence d'une telle différentielle. Nous enlevons alors le cylindre correspondant au pôle de résidu $-S_{1}$. Puis nous collons le bord de cette surface au segment $S_{1}$ de~$Q$. On vérifie comme ci-dessus que la singularité obtenue est d'ordre  $l_{1}+\bar a_{1}$.
 \end{proof}

\subsubsection{Cas des différentielles quartiques}\label{sub:mechantquartic}

Les estimations sur les sommes de racines (voir section~\ref{sec:estimation}) permettent de donner des constructions systématiques lorsque $k=4$.

\begin{prop}\label{prop:sec6quartique}
Pour $l_{1},l_{2} \geq 1$ et $s=l_{1}+l_{2}+1$, chaque configuration $(R_{1},\dots,R_{s})$ de nombres complexes non nuls est réalisable dans la strate $\Omega^{4}\mathcal{M}_{0}(a_{1},a_{2};\rec[-4][s])$ sauf dans les cas suivants :
\begin{enumerate}
    \item la strate $\Omega^{4}\mathcal{M}_{0}(5,3;\rec[-4][4])$ avec les résidus dans $\CC^{\ast}\cdot(1,1,1,1)$;
    \item la strate $\Omega^{4}\mathcal{M}_{0}(13,3;\rec[-4][6])$ avec les résidus dans $\CC^{\ast}\cdot(1,1,1,1,1,1)$.
\end{enumerate}
\end{prop}

\begin{proof}
Nous supposerons que les résidus sont ordonnés par module croissant. Si les $l_{1}$ premiers résidus ne forment pas une configuration de la forme $\CC^{\ast}\cdot(\rec[1][s_{1}],\rec[-4][s_{2}])$ avec $s_{2} \geq 1$ et $s_{1}+2s_{2} \in 4\ZZ$, le lemme~\ref{lem:estimate} permet de choisir des racines $r_{1},\dots,r_{l_{1}}$ pour chacun d'eux afin d'obtenir $0<|S_{1}| \leq |r_{l_{1}}|$ avec $S_{1}=\sum_{i=1}^{l_{1}} r_{i}$. De plus, si $l_{1} \geq 3$, nous pouvons exiger que ces racines ne soient pas $\mathbb{R}$-colinéaires (et si $l_{1}=2$ et que nous avons des racines $\mathbb{R}$-colinéaires nous réalisons la construction avec un triangle plat comme dans la preuve de la proposition~\ref{prop:ConstL2zerok5}). Nous choisissons les racines suivantes $r_{l_{1}+1},\dots,r_{s}$ en utilisant le lemme~\ref{lem:estimatecos}. Ainsi, nous obtenons $S_{2}=|\sum_{i=l_{1}+1}^{s} r_{i}| \geq 2\cos\left(\frac{\pi}{8}\right) |r_{l_{1}+1}|$. Puisque $2\cos\left(\frac{\pi}{8}\right)=\sqrt{2+\sqrt{2}}$, nous avons $\frac{|S_{1}|}{|S_{2}|}<\cos\left(\frac{\pi}{4}\right)$. Ainsi, les lemmes~\ref{lem:trigo} et~\ref{lem:ABAB+} permettent de construire la différentielle voulue.
\par
Si au contraire nous avons $s_{1}+2s_{2} \in 4\ZZ$ (en particulier $l_{1} \geq 2$), le lemme~\ref{lem:estimate} donne une inégalité plus faible. Après normalisation, la configuration des $l_{1}$ premiers résidus est donc $(\rec[1][s_{1}],\rec[-4][s_{2}])$. Nous pouvons obtenir
$|S_{1}| = 2$ et le lemme~\ref{lem:estimatecos} donne $|S_{2}| \geq \cos\left(\frac{\pi}{4}\right)(l_{2}+1)|r_{l_{1}+1}|\geq l_{2}+1$. En particulier, quand $l_{2} \geq 2$, on a $|S_{2}| \geq 3$ et $\frac{|S_{1}|}{|S_{2}|} \leq \frac{2}{3}<\cos\left(\frac{\pi}{4}\right)$. De même, quand $|r_{l_{1}+1}|>2$, on obtient $|S_{2}|>2\sqrt{2}$ et la construction fonctionne également.
\par
Nous supposerons donc que $l_{2}=1$ (soit $s=l_{1}+2$) et $\sqrt{2}\leq |r_{l_{1}+1}| \leq 2$. De plus, si $|r_{s}|\geq 2$, alors il suffit de choisir $r_{l_{1}+1}$ avec un argument distant d'au plus $\frac{\pi}{4}$ de celui de $r_{s}$ pour obtenir $|r_{l_{1}+1}+r_{s}|\geq 2+\frac{\sqrt{2}}{\sqrt{2}}$. Comme $3>2\sqrt{2}$, la construction fonctionne aussi dans ce cas. 
\par
Ainsi, nous pouvons nous restreindre aux configurations où $\sqrt{2} \leq |r_{l_{1}+1}|\leq |r_{s}|<2$. Traitons d'abord le cas $R_{l_{1}+1}=R_{s}$ avec $|r_{l_{1}+1}|>\sqrt{2}$. Nous pouvons choisir des racines identiques et obtenir $|S_{2}|>2\sqrt{2}$. La construction fonctionne dans ce cas. Si au contraire $R_{l_{1}+1}=R_{s}$ avec $|r_{l_{1}+1}|=\sqrt{2}$, alors nous pouvons permuter les indices entre $l_{1}$, $l_{1}+1$ et $s$. Si nous obtenons dans chaque cas une configuration exceptionnelle pour les $l_{1}$ premiers résidus (ici $l_{1} \geq 2$), c'est donc que $(R_{1},\dots,R_{s})=(\rec[1][s_{1}],\rec[-4][s_{2}+2])$. Si $s_{1} \geq 1$, nous choisissons pour $S_{2}$ la somme d'une racine de $1$ et d'une racine de $-4$ tandis que nous aurions un nombre impair de racines de $1$ dans $S_{1}$. Obtenant $|S_{1}|=\sqrt{2}$ et $|S_{2}|=\sqrt{5}$, la construction fonctionne. Si au contraire $s_{1}=0$, les configurations restantes à traiter sont donc de la forme $\rec[1][s]$ avec $s$ pair dans les strates $\Omega^{4}\mathcal{M}_{0}(4s-11,3;\rec[-4][s])$. Les obstructions (9) et (11) du théorème~\ref{thm:geq0kspe} constituent les cas $s=4$ et $s=6$. Quand $s=8$, nous avons une construction obtenue en partant du décagone reliant les sommets $0,1,2,3,3+i,3+2i,2+2i,1+2i,2+i,1+i$, identifiant les segments $[1+2i,2+i]$ et $[1+i,0]$ puis collant des cylindres sur les autres segments. Nous obtenons bien la configuration $\rec[1][8]$ dans la strate $\Omega^{4}\mathcal{M}_{0}(21,3;\rec[-4][8])$. Pour obtenir tous les cas suivants avec $s$ pair, il suffit de remplacer un cylindre (dont le bord est de longueur $1$) par un carré de côté $1$ sur lequel on colle trois cylindres. Cette chirurgie augmente le nombre de cylindres de $2$.
\par
Dans le cas $R_{l_{1}+1} \neq R_{s}$, nous supposons d'abord que $s_{2} \geq 2$. Pour la somme $S_{2}$, nous choisissons deux racines de $-4$ égales tandis que la somme $S_{1}$ est faite de racines des $l_{1}-2$ premiers résidus ainsi que de racines de $R_{l_{1}+1}$ et $R_{s}$. Nous avons donc $|S_{2}|=2\sqrt{2}$ ainsi que $|S_{1}|<2$ sauf si $(R_{1},\dots,R_{l_{1}-2},R_{l_{1}+1},R_{s})$ est une configuration exceptionnelle (voir le lemme~\ref{lem:estimate}). En dehors de ce cas, la construction est réalisable. S'il s'agit bien d'une configuration exceptionnelle, alors l'hypothèse $R_{l_{1}+1} \neq R_{s}$ implique que cette configuration est en fait proportionnelle à $(\rec[-4][l_{1}-1],16)$. Ceci implique que $|r_{l_{1}+1}|=\sqrt{2}$ et $|r_{s}|=2$. Quitte à échanger $R_{l_{1}+1}$ avec l'un des résidus valant $-4$ et réitérant le raisonnement, on en déduit que que soit la construction peut encore se faire, soit $R_{l_{1}+1}$ vaut aussi $-4$ (et donc $R_{s}=16$). Il reste donc seulement à traiter le cas dans lequel c'est donc toute la configuration $(R_{1},\dots,R_{s})$ qui vaut $(\rec[-4][s-1],16)$. Dans ce cas, on pose $S_{2}=(1+i)+2$ pour avoir $|S_{2}|=\sqrt{10}$ tandis que $S_{1}$ est une somme de racines de $-4$ que l'on peut choisir telle que $|S_{1}| \leq 2$. Ainsi, on a $\frac{|S_{1}|}{|S_{2}|} \leq \frac{\sqrt{2}}{\sqrt{5}}<\cos\left(\frac{\pi}{4}\right)$.
\par
Il reste à traiter le cas $s_{2}=1$. Comme nous avons aussi $s_{1}+2s_{2} \in 4\mathbb{Z}$, il s'ensuit que $s_{1} \geq 2$ et donc $l_{1} \geq 3$. Nous constituons la somme $S_{2}$ avec une racine $r_{s}$ de $R_{s}$ et une racine de $-4$ de telle façon qu'elles aient des arguments  distants d'au plus $\frac{\pi}{4}$. Ainsi, nous avons $|S_{2}| \geq |r_{s}|+\frac{\sqrt{2}}{\sqrt{2}}$. La somme $S_{1}$ est constituée des racines de $(1^{s_{1}},R_{l_{1}+1})$ et, sauf si cette configuration est exceptionnelle, nous avons $|S_{1}|<|r_{s}|$. Ainsi, nous avons $\frac{|S_{1}|}{|S_{2}|} < \frac{|r_{s}|}{|r_{s}|+1} \leq \frac{2}{3}$ puisque $|r_{s}| \leq 2$. La construction est donc réalisable dans ce cas. Si au contraire la configuration $(\rec[1][s_{1}],R_{l_{1}+1})$ est exceptionnelle pour le lemme~\ref{lem:estimate}, alors on a $R_{l_{1}+1}=-4$.
\par
Nous nous restreignons donc aux configurations $(\rec[1][s_{1}],-4,-4,R_{s})$. Nous choisissons deux racines identiques de $-4$ pour constituer $S_{2}$ et obtenir $|S_{2}|=2\sqrt{2}$. Avec les racines restantes, on obtient $|S_{1}|<|r_{s}|$ sauf si la configuration $(\rec[1][s_{1}],R_{s})$ est exceptionnelle. Ainsi, soit nous avons $\frac{|S_{1}|}{|S_{2}|}<\frac{1}{\sqrt{2}}$, soit nous avons $R_{s}=-4$. Dans le premier cas la construction est réalisable, dans le second cas, on a $R_{l_{1}+1}=R_{s}$, cas déjà traité plus haut.
\end{proof}

\subsubsection{Cas des différentielles cubiques}\label{sub:mechantcubic}

Notre stratégie repose ici sur l'étude de la stratification de résonance et sa structure d'incidence (voir la section~\ref{sec:arrhyp}). Pour les strates de résonance dans lesquelles une somme partielle pondérée de racines s'annule (autrement dit vérifiant une équation de résonance dans laquelle au moins un coefficient est nul), les conditions sont réunies pour utiliser la chirurgie du lemme~\ref{lem:chirurgietranslation}. Ensuite, le prolongement plat du corollaire~\ref{cor:defplate} permet d'étendre la constructions aux strates de résonance dont l'adhérence dans l'espace résiduel contient de telles strates.
\par
En général, la complexité combinatoire des équations de résonance rend ce type de raisonnement inextricable. Toutefois, lorsque le nombre de pôles $s$ est plus grand que l'ordre $k$ des différentielles, les équations de résonance ont facilement des coefficients en commun, engendrant des annulations de sommes partielles pondérée de racines (qui permettent l'utilisation de la chirurgie du lemme~\ref{lem:chirurgietranslation}).
\par
Nous raisonnerons par récurrence. Toutefois, les nombreuses obstructions du théorème~\ref{thm:geq0kspe} exigent de traiter à la main les strates $\Omega^{3}\mathcal{M}_{0}(a_{1},a_{2};\rec[-3][s])$ pour lesquelles le nombre de pôles est petit (lorsque $3 \leq s \leq 6$), ce que nous faisons dans les lemmes suivants.

\begin{lem}\label{lem:strateE1}
Chaque configuration $(R_{1},R_{2},R_{3})$ de nombres complexes non nuls est réalisable dans la strate $\Omega^{3}\mathcal{M}_{0}(1,2;\rec[-3][3])$ à l'exception de la droite $\CC^{\ast}\cdot(1,1,1)$.
\end{lem}

\begin{proof}
D'après le corollaire~\ref{cor:defplate}, il suffit de donner une construction pour une différentielle de chaque strate de résonance de l'espace des configurations de résidus cubiques. Chaque strate résonance est caractérisée par une famille d'équations linéaires reliant les racines cubiques $r_{1},r_{2},r_{3}$ des $3$-résidus et dont les coefficients non nuls sont des racines troisièmes de l'unité.
%\Quentin{Je pense que le lemme d'incidence permet de se limiter aux strates de résonance de dimension $1$. Donc d'enlever les deux prochains paragraphes}
\par
Chaque configuration de la forme $(R_{1},R_{2},R_{3})$ avec $R_{2}=-R_{1}$ peut s'obtenir grâce à la chirurgie du lemme~\ref{lem:chirurgietranslation} puisque la configuration $(R_{3})$ est réalisable dans la strate $\Omega^{3}\mathcal{M}_{0}(-2,-1;-3)$. Le prolongement plat donne une construction pour toutes les strates de résonance incidentes.
\par
Parmi les strates de résonance de codimension un, il ne reste à considérer que le cas dans lequel l'équation $r_{1}+r_{2}+r_{3}=0$. Or, une telle strate est incidente à une strate de codimension deux dans laquelle l'équation $r_{1}+\omega r_{2}=0$ est de surcroît satisfaite. La chirurgie du paragraphe précédent s'appliquant à cette strate, nous pouvons appliquer le prolongement plat pour régler le cas de cette strate de codimension un.
\par
Les dernières strates de résonance qu'il faut considérer sont définies par le système :
$$\left \{
   \begin{array}{r c l}
      r_{1}+r_{2}+r_{3}  & = & 0 \\
      r_{1}+\alpha r_{2} + \beta r_{3}   & = & 0
   \end{array}
   \right .$$
où $\alpha,\beta$ sont des racines troisièmes de l'unité. Elles ne peuvent pas valoir $1$ ou être identiques sinon l'un des résidus s'annule. Sans perte de généralité, nous avons donc $\alpha=\omega$ et $\beta=\omega^{2}$. En normalisant par $r_{3}=1$, ce système a une unique solution $(\alpha,\beta,1)$. Les résidus cubiques sont de cette strate de résonance sont donc tous proportionnels à $(1,1,1)$. Il s'agit donc de l'obstruction (3) du théorème~\ref{thm:geq0kspe}.
\end{proof}

\begin{lem}\label{lem:strateE2}
Chaque configuration $(R_{1},\dots,R_{4})$ de nombres complexes non nuls est réalisable dans la strate $\Omega^{3}\mathcal{M}_{0}(1,5;\rec[-3][4])$. Chacune est aussi réalisable dans la strate $\Omega^{3}\mathcal{M}_{0}(4,2;\rec[-3][4])$ à l'exception de celles qui sont dans la droite $\CC^{\ast}\cdot(1,1,-1,-1)$.
\end{lem}

\begin{proof}
La méthode est analogue à celle employée dans le lemme~\ref{lem:strateE1}. Lorsqu'une somme partielle pondérée de racines s'annule, nous employons la chirurgie du lemme~\ref{lem:chirurgietranslation} pour se ramener à une strate plus simple. Quand aucune somme partielle de deux racines ne s'annule mais qu'une somme partielle de trois racines s'annule, la chirurgie est toujours réalisable. Lorsque c'est une somme partielle de deux racines de résidus qui s'annule, la chirurgie est possible sauf pour les configurations de la forme $(R_{1},R_{2},-R_{1},-R_{2})$. Supposons tout d'abord que nous ne nous trouvons pas dans une telle configuration.
\par
La strate de codimension un définie par l'équation $r_{1}+r_{2}+r_{3}+r_{4}$ est incidente à la strate de codimension deux obtenue en ajoutant la contrainte $r_{1}+\omega r_{2}=0$, pour laquelle la chirurgie permet de conclure.
\par
Considérons ensuite la strate de codimension deux définie par les équations
$$\left \{
   \begin{array}{r c l}
      r_{1}+r_{2}+r_{3}+r_{4} &=&0 \\
      r_{1}+\alpha r_{2} + \beta r_{3} + \gamma r_{4}   & = & 0
   \end{array}
   \right .$$
dans lesquelles $\alpha,\beta,\gamma$ sont des racines troisièmes de l'unité. Si exactement deux coefficients parmi $1,\alpha,\beta,\gamma$ sont identiques, nous obtenons une somme partielle de deux racines en combinant les deux équations, ce qui permet de conclure. Dans le cas contraire, trois de ces coefficients valent $1$, disons $\alpha=\beta=1$ sans perte de généralité, tandis que $\gamma \neq 1$ et nous obtenons $R_{4}=0$, ce qui est impossible.
\par
Il reste donc à traiter le cas des configurations de la forme $(R_{1},R_{2},-R_{1},-R_{2})$ dont nous prenons les racines $(r_{1},r_{2},-r_{1},-r_{2})$. Elles forment l'adhérence d'une strate résiduelle de codimension deux qui est incidente à précisément deux strates de codimension trois (contenues dans des droites vectorielles donc) $\CC^{\ast}\cdot(1,1,-1,-1)$ et
$\CC^{\ast}\cdot(1,3i\sqrt{3},-1,-3i\sqrt{3})$.
\par
En effet, si l'on ajoute comme contrainte l'annulation d'une somme partielle pondérée de deux racines, reliant $r_{1}$ ou $-r_{1}$ avec $r_{2}$ ou $-r_{2}$, on obtient la strate $\CC^{\ast}\cdot(1,1,-1,-1)$. Si la nouvelle contrainte est l'annulation d'une somme pondérée de trois racines, alors, quitte à échanger $r_{1}$ et $r_{2}$, à changer leur signe ou à les multiplier par $\omega$, nous obtenons $r_{1} - \omega r_{1} + r_{2} =0$. La strate obtenue est alors $\CC^{\ast}\cdot(1,3i\sqrt{3},-1,-3i\sqrt{3})$. Si enfin la nouvelle contrainte est l'annulation d'une somme pondérée des quatre racines, cette nouvelle équation est de la forme $\alpha r_{1} + \beta r_{2}$ dans laquelle les coefficients $\alpha,\beta$ (non nuls sinon on obtient une strate avec un résidu nul) sont des différences entre deux racines troisièmes de l'unité. Deux telles différence ont des cubes identiques ou opposés en signe, la strate que nous obtenons est donc aussi $\CC^{\ast}\cdot(1,1,-1,-1)$.
\par
Du fait du prolongement plat, il nous suffit alors de construire la configuration $(1,1,-1,-1)$ dans la strate $\Omega^{3}\mathcal{M}_{0}(1,5;\rec[-3][4])$ (il s'agit d'une obstruction pour $\Omega^{3}\mathcal{M}_{0}(4,2;\rec[-3][4])$), ainsi que la configuration $(1,3i\sqrt{3},-1,-3i\sqrt{3})$ pour les deux strates.
\par
Comme la configuration $(1,-1)$ est réalisable dans la strate $\Omega^{3}\mathcal{M}_{0}(2,-2;-3)$, la chirurgie permet de réaliser les configurations $(1,1,-1,-1)$ et $(1,3i\sqrt{3},-1,-3i\sqrt{3})$ dans la strate $\Omega^{3}\mathcal{M}_{0}(1,5;\rec[-3][4])$. En revanche, nous donnons une construction spécifique pour la configuration $(1,3i\sqrt{3},-1,-3i\sqrt{3})$ dans la strate $\Omega^{3}\mathcal{M}_{0}(4,2;\rec[-3][4])$.
\par
Nous considérons l'hexagone dont les sommets sont $0,1,3+\omega,2+2\omega,1+2\omega,2+\omega$. En identifiant les côtés $[3+\omega,2+2\omega]$ et $[2+\omega,0]$ par rotation puis en collant des cylindres sur les quatre côtés restants, nous obtenons la différentielle réalisant la configuration voulue dans la strate $\Omega^{3}\mathcal{M}_{0}(4,2;\rec[-3][4])$.
\end{proof}

Pour les strates ayant $s=5$ pôles triples, une certaine famille de configurations présente des difficultés spécifiques et nous la traitons séparément.

\begin{lem}\label{lem:strateE3SPEC}
Chaque configuration $(R_{1},R_{1},R_{1},R_{2},-R_{2})$ de nombres complexes non nuls est réalisable dans les strates $\Omega^{3}\mathcal{M}_{0}(1,8;\rec[-3][5])$ et $\Omega^{3}\mathcal{M}_{0}(4,5;\rec[-3][5])$. Chacune est aussi réalisable dans la strate $\Omega^{3}\mathcal{M}_{0}(7,2;\rec[-3][5])$ à l'exception de celles qui sont dans la droite $\CC^{\ast}\cdot(1,1,1,1,-1)$.
\end{lem}

\begin{proof}
Puisque toute configuration de la forme $(R_{2},-R_{2})$ est réalisable dans la strate $\Omega^{3}\mathcal{M}_{0}(-2,2;\rec[-3][2])$, la chirurgie du lemme~\ref{lem:chirurgietranslation} permet de traiter le cas des strates $\Omega^{3}\mathcal{M}_{0}(1,8;\rec[-3][5])$ et $\Omega^{3}\mathcal{M}_{0}(4,5;\rec[-3][5])$.
\par
Les configurations $(R_{1},R_{1},R_{1},R_{2},-R_{2})$ constituent l'adhérence de la strate de résonance donnée par les équations
$$\left \{
   \begin{array}{r c l}
      r_{1} + r_{2} + r_{3} &=&0 \\
      r_{1} + \omega r_{2} + \omega^{2} r_{3} &=&0\\
      r_{4} +r_{5} & = & 0
   \end{array}
   \right .$$
Notons que ces équations impliquent en particulier $r_{1}=\omega r_{3}$ et $r_{2}=\omega^{2}r_{3}$. L'adhérence de cette strate contient un certain nombre de strates unidimensionnelles (correspondant au choix d'une équation de résonance supplémentaire, voir section~\ref{sec:arrhyp}) que nous listons ici:
\begin{enumerate}
    \item la droite $\CC^{\ast}\cdot(1,1,1,1,-1)$ pour l'équation $r_{2}+r_{5}=0$;
    \item la droite $\CC^{\ast}\cdot(1,1,1,8,-8)$ pour $\omega r_{2}+r_{3}+r_{5}=0$;
    \item la droite $\CC^{\ast}\cdot(3i\sqrt{3},3i\sqrt{3},3i\sqrt{3},1,-1)$ pour $r_{3}+\omega r_{4}+r_{5}=0$;
    \item la droite $\CC^{\ast}\cdot(1,1,1,27,-27)$ pour $r_{1}+\omega^{2}r_{2}+\omega r_{3}+r_{5}=0$;
    \item la droite $\CC^{\ast}\cdot(1,1,1,3i\sqrt{3},-3i\sqrt{3})$ pour $r_{1}+r_{2}+\omega r_{3}+r_{5}=0$;    
    \item la droite $\CC^{\ast}\cdot(3i\sqrt{3},3i\sqrt{3},3i\sqrt{3},8,-8)$ pour $r_{2}+r_{3}+\omega r_{4}+r_{5}=0$.
\end{enumerate}
La première est l'obstruction~(5) du théorème~\ref{thm:geq0kspe}. La deuxième présente l'annulation d'une somme partielle pondérée de trois racines et s'obtient en partant de la configuration $(1,8)$ dans une strate plus simple (on en déduit le cas générique de la strate de résonance bidimensionnelle par le prolongement plat du corollaire~\ref{cor:defplate}). Idem pour la troisième en partant de la configuration $(3i\sqrt{3},3i\sqrt{3})$. Les trois dernières configurations présentent l'annulation d'une somme partielle pondérée de quatre racines et s'obtient donc aussi par chirurgie.
\par
On observe qu'ajouter la contrainte de l'annulation d'une somme pondérée des cinq racines $\alpha r_{1} + \beta r_{2} + \gamma r_{3} + \delta r_{4} + r_{5}=0$, produit l'équation $(\alpha \omega + \beta \omega^{2} + \gamma) r_{3} + (\delta -1) r_{4}=0$. Pour chaque choix de $\alpha,\beta,\gamma,\delta$, nous retrouvons l'une des strates déjà obtenues.
\end{proof}

Pour les strates ayant $s=6$ pôles triples, une autre famille de configurations présente des difficultés spécifiques.

\begin{lem}\label{lem:strateE4SPEC}
Chaque configuration $(R_{1},R_{1},R_{1},R_{2},R_{2},R_{2})$ de nombres complexes non nuls est réalisable dans les strates $\Omega^{3}\mathcal{M}_{0}(1,11;\rec[-3][6])$ et $\Omega^{3}\mathcal{M}_{0}(4,8;\rec[-3][6])$. Chacune est aussi réalisable dans les strates $\Omega^{3}\mathcal{M}_{0}(7,5;\rec[-3][6])$ et $\Omega^{3}\mathcal{M}_{0}(10,2;\rec[-3][6])$ à l'exception de celles qui sont dans la droite $\CC^{\ast}\cdot(1,1,1,1,1,1)$.
\end{lem}

\begin{proof}
Puisque toute configuration de la forme $(R_{1},R_{1},R_{1})$ est réalisable dans la strate $\Omega^{3}\mathcal{M}_{0}(-2,5;\rec[-3][3])$, la chirurgie du lemme~\ref{lem:chirurgietranslation} permet de traiter le cas des strates $\Omega^{3}\mathcal{M}_{0}(1,11;\rec[-3][6])$ et $\Omega^{3}\mathcal{M}_{0}(4,8;\rec[-3][6])$.
\par
Pour terminer la démonstration, il suffit donc de donner une construction pour des configurations $(R,R,R,1,1,1)$ avec $|R| \leq 1$ et $R \neq 1$ dans les strates $\Omega^{3}\mathcal{M}_{0}(7,5;\rec[-3][6])$ et $\Omega^{3}\mathcal{M}_{0}(10,2;\rec[-3][6])$. Notons $r$ une racine de $R$.
\par
Pour la strate $\Omega^{3}\mathcal{M}_{0}(7,5;\rec[-3][6])$, la construction ABAB (voir lemme~\ref{lem:ABAB+}) est réalisable en prenant $S_{2}=1+1+r$ (avec $\Re(r)>0$) pour avoir $|S_{2}|>2$ et en choisissant des racines de $R,R,1$ pour avoir $|S_{1}|<1$, ce qui est possible d'après le lemme~\ref{lem:estimate} sauf si $R=-1$. Dans ce dernier cas, nous donnons la construction spécifique de la configuration $(1,1,1,-1,-1,-1)$ dans la strate $\Omega^{3}\mathcal{M}_{0}(5,7;\rec[-3][6])$. Nous considérons le polygone dont les huit sommets sont les nombres complexes $0,-1,\omega^{2},-\omega,1,1+\omega,1+2\omega,\omega$. En identifiant les côtés $[0,-1]$ et $[1,1+\omega]$ par une rotation, nous obtenons la différentielle voulue.
\par
Pour la strate $\Omega^{3}\mathcal{M}_{0}(10,2;\rec[-3][6])$, la construction ABAB (voir lemme~\ref{lem:ABAB+}) est réalisable en prenant $S_{2}=1+1=2$ et en choisissant des racines de $R,R,R,1$ dont la somme $S_{1}$ vérifie $|S_{1}|< 1$, ce qui possible d'après le lemme~\ref{lem:estimate} sauf si $R=\pm \frac{i\sqrt{3}}{9}$ ou $R=-1$. Dans ce dernier cas, la chirurgie du lemme~\ref{lem:chirurgietranslation} donne la construction en partant de la configuration $(1,1,-1,-1)$ dans la strate $\Omega^{3}\mathcal{M}_{0}(7,-1;\rec[-3][4])$.
\par
Dans les cas restants, nous supposerons (à une conjugaison près) que $R= \frac{i\sqrt{3}}{9} = \frac{1+2\omega}{9}$. En choisissant la racine $r=\frac{2+\omega}{3}=\frac{1}{2+\omega^{2}}$, nous avons $\omega + \omega^{2} + r + r + \omega^{2} r = 0$, ce qui permet d'utiliser la chirurgie du lemme~\ref{lem:chirurgietranslation} en partant de la configuration $(1)$ dans la strate $\Omega^{3}\mathcal{M}_{0}(-2,-1;-3)$.
\end{proof}

La chirurgie du lemme~\ref{lem:ABAB+} est plus aisément réalisable lorsque $l_{2}$ est suffisamment grand. C'est pourquoi nous traitons aussi séparément le cas $l_{2}=1$.

\begin{prop}\label{prop:sec6cubiquel2=1}
Pour $l_{1} \geq 4$ et $s=l_{1}+2$, chaque configuration $(R_{1},\dots,R_{s})$ de nombres complexes non nuls est réalisable dans la strate $\Omega^{3}\mathcal{M}_{0}(3l_{1}-2,2;\rec[-3][s])$ à l'exception des configurations de la droite $\CC^{\ast}\cdot(1,1,1,1,-1)$ pour la strate $\Omega^{3}\mathcal{M}_{0}(7,2;\rec[-3][6])$ et de la droite $\CC^{\ast}\cdot(1,1,1,1,1,1)$ pour la strate $\Omega^{3}\mathcal{M}_{0}(10,2;\rec[-3][6])$.
\end{prop}

\begin{proof}
Les cas $s \leq 4$ sont couverts par le corollaire~\ref{cor:FinalL2zero} ainsi que les lemmes~\ref{lem:strateE1} et~\ref{lem:strateE2}. Nous supposerons donc $s \geq 5$. Le cas des configurations génériques (ne vérifiant aucune condition de résonance) s'obtient par prolongement plat puisque la strate est non vide (voir section~\ref{sec:arrhyp}).
\par
Pour la strate $\Omega^{3}\mathcal{M}_{0}(3l_{1}-2,2;\rec[-3][s])$,
nous traitons d'abord le cas de configurations $(R_{1},\dots,R_{s})$ dans lesquelles une somme partielle pondérée de racines s'annule. La chirurgie du lemme~\ref{lem:chirurgietranslation} permet la construction d'une telle configuration en partant d'une strate de la forme $\Omega^{3}\mathcal{M}_{0}(a_{1},-1;\rec[-3][t])$ et d'une configuration plus simple. D'après le corollaire~\ref{cor:FinalL2zero}, les seuls cas problématiques sont les obstructions (1) et (2) du théorème~\ref{thm:geq0kspe}. Autrement dit, les configurations pour lesquelles la chirurgie pourrait ne pas être réalisable sont de la forme $(R_{1},\dots,R_{s-1},-R_{s-1})$ ou $(R_{1},\dots,R_{s-2},R_{s-2},R_{s-2})$. Dans ces derniers cas, nous avons l'annulation d'une somme de deux ou trois racines, permettant la chirurgie sauf à tomber de nouveau sur une obstruction. Puisque $s \geq 5$, les seules configurations problématiques sont de la forme $(R_{1},R_{1},R_{1},R_{2},-R_{2})$ et $(R_{1},R_{1},R_{1},R_{2},R_{2},R_{2})$. Elles sont traitées dans les lemmes~\ref{lem:strateE3SPEC} et~\ref{lem:strateE4SPEC}.
\par
Il reste à traiter le cas des strates de résonance ne vérifiant aucune annulation de somme partielle pondérée de racines mais seulement des conditions de résonance pour lesquelles tous les coefficients sont non nuls. Considérons une telle configuration de résidus cubiques $R_{1},\dots,R_{s}$ ayant des racines cubiques $r_{1},\dots,r_{s}$ vérifiant $r_{1}+\dots+r_{s}=0$. Nous supposerons d'abord que deux racines (disons $r_{1}$ et $r_{2}$) vérifient $\arg(r_{2}/r_{1})\in ]\frac{2\pi}{3},\pi[$. Dans ce cas, nous construisons un triangle $\mathcal{T}$ dont les côtés sont donnes par les complexes $r_{1},r_{2},-r_{1}-r_{2}$ (en particulier, l'angle entre $r_{1}$ et $r_{2}$ est strictement plus petit que $\frac{\pi}{3}$). Ensuite, nous construisons un polygone convexe $\mathcal{P}$ dont les côtés sont donnes par les nombres complexes $r_{1}+r_{2},r_{3},\dots,r_{s}$ (ordonnés cycliquement en fonction de leur argument). Nous supposons également que ces nombres ne sont pas $\mathbb{R}$-colinéaires. Maintenant, nous collons $\mathcal{P}$ et $\mathcal{T}$ l'un sur l'autre, puis ajoutons des cylindres infinis sur les côtés correspondant aux nombres complexes $r_{2},\dots,r_{s}$. Enfin, à l'intérieur du cylindre correspondant à $r_{2}$, nous coupons une cicatrice de longueur $|r_{1}|$ qui fasse un angle de $\frac{\pi}{3}$ avec le côté correspondant à $r_{1}$. Sur le bord gauche de la cicatrice, nous collons un cylindre tandis que le bord droit de la cicatrice est identifié avec côté correspondant à $r_{1}$. La surface obtenue a des cylindres correspondant aux bons résidus cubiques et l'une de ses singularités coniques est par construction d'angle $\frac{10\pi}{3}$ ($2\pi$ pour le bout de la cicatrice, $\pi$ pour le cylindre, $\frac{\pi}{3}$ pour l'angle entre un côté du triangle $\mathcal{T}$ et la cicatrice).
\par
En fait, même si les $r_{1}+r_{2},r_{3},\dots,r_{s}$ sont $\mathbb{R}$-colinéaires, en l'absence d'annulation de sommes partielles pondérées, la construction demeure réalisable avec un polygone $\mathcal{P}$ dégénéré. De plus, si $\arg(r_{2}/r_{1})=\pi$, la construction fonctionne aussi tant que $|r_{1}|<|r_{2}|$. Puisque $r_{1} \neq -r_{2}$ (aucune somme partielle de racines ne s'annule) et que l'on peut permuter les $r_{1},\dots,r_{s}$, la construction fonctionne dès que l'on peut trouver deux racines $r_{i},r_{j}$ telles que $\arg(r_{j}/r_{i}) \in ]\frac{2\pi}{3},\frac{4\pi}{3}[$.
\par
Puisqu'il y a $s \geq 5$ racines et que leur somme est nulle, il n'y a qu'un cas dans lequel leurs arguments ne présentent pas de tels couples si tous les arguments sont dans $\lbrace{ - \frac{2\pi}{3},0,\frac{2\pi}{3} \rbrace}$ (à un facteur commun près). Nous nous ramenons donc au cas dans lequel tous les résidus cubiques $R_{1},\dots,R_{s}$ sont des réels positifs.
\par
Nous donnons une construction différente dans ce dernier cas en utilisant le lemme~\ref{lem:ABAB+}. Les résidus sont ordonnés par module croissant. Nous pouvons choisir les racines des deux derniers résidus pour avoir $|S_{2}| = |r_{s-1}|+|r_{s}|$ (on choisit des racines ayant même argument). Le fait de ne pas avoir d'annulation de somme partielle simplifie la preuve du lemme~\ref{lem:estimate} et permet de choisir les racines des $l_{1}$ premiers résidus pour obtenir $|S_{1}| \leq |r_{l_{1}}|$. La construction est donc réalisable sauf éventuellement si toutes ces inégalités sont des égalités. Ce cas n'est possible que si $R_{1}=\dots=R_{s}$ et il s'ensuit qu'une certaine somme de trois racines cubiques s'annule. Ce dernier cas est donc déjà couvert.
\end{proof}

Nous pouvons terminer la caractérisation des configurations de résidus cubiques réalisables dans une strate donnée lorsque tous les pôles sont triples et qu'il n'y a que deux zéros.

\begin{prop}
L'application résiduelle des strates $\Omega^{3}\mathcal{M}_{0}(a_{1},a_{2};\rec[-3][s])$ est surjective sauf dans les cas suivants :
\begin{enumerate}
\item L'image de $\appresk[0](-1,1;-3,-3)$ est $(\CC^{\ast})^{2}\setminus \CC^{\ast}\cdot(1,-1)$.
\item L'image de $\appresk[0][3](-1,4;\rec[-3][3])$ est $(\CC^{\ast})^{3}\setminus \CC^{\ast}\cdot\rec[1][3]$.
\item L'image de $\appresk[0][3](1,2;\rec[-3][3])$ est $(\CC^{\ast})^{3}\setminus \CC^{\ast}\cdot\rec[1][3]$.
\item L'image de $\appresk[0][3](2,4;\rec[-3][4])$ est $(\CC^{\ast})^{4}\setminus \CC^{\ast}\cdot(1,1,-1,-1)$.
\item L'image de $\appresk[0][3](2,7;\rec[-3][5])$ est $(\CC^{\ast})^{5}\setminus \CC^{\ast}\cdot(\rec[1][4],-1)$.
\item L'image de $\appresk[0][3](2,10;\rec[-3][6])$ est $(\CC^{\ast})^{6}\setminus \CC^{\ast}\cdot\rec[1][6]$.
\item L'image de $\appresk[0][3](5,7;\rec[-3][6])$ est $(\CC^{\ast})^{6}\setminus \CC^{\ast}\cdot\rec[1][6]$.
\end{enumerate}
\end{prop}

\begin{proof}
Il s'agit d'une démonstration par récurrence sur le nombre $s$ de pôles, sachant que les cas $s \leq 4$, $l_{1}=0$ et $l_{2} \leq 2$ sont couverts par le corollaire~\ref{cor:FinalL2zero}, les lemmes~\ref{lem:strateE1} et~\ref{lem:strateE2} ainsi que la proposition~\ref{prop:sec6cubiquel2=1}. Nous supposerons donc la proposition valide jusqu'au rang $s-1$.
\par
Pour une strate $\Omega^{3}\mathcal{M}_{0}(a_{1},a_{2};\rec[-3][s])$, nous traitons d'abord le cas des configurations de résidus $(R_{1},\dots,R_{s})$ dans lesquelles une somme partielle pondérée de racines s'annule. La chirurgie du lemme~\ref{lem:chirurgietranslation} permet la construction d'une telle configuration, sauf si elle s'opère en partant d'une strate et d'une configuration relevant des obstructions (1) à (7) du théorème~\ref{thm:geq0kspe}. De telles configurations présentent aussi des annulations de sommes partielles, qui permettent en retour d'autres chirurgies possibles pour $(R_{1},\dots,R_{s})$. On en déduit aisément que les configurations potentiellement problématiques sont toutes proportionnelles à $(\rec[1][\alpha],\rec[-1][\beta])$ avec $\alpha \geq \beta$, $(1,1,1,R,-R)$ ou $(1,1,1,R,R,R)$ avec $R \in \mathbb{C}^{\ast}$ (ces deux derniers cas sont immédiatement évacués grâce aux lemmes~\ref{lem:strateE3SPEC} et~\ref{lem:strateE4SPEC}).
\par
Puisque $s \geq 5$, nous avons $\alpha \geq 3$ et nous pouvons appliquer la chirurgie en retirant trois résidu cubiques égaux à $1$. Il y a alors le choix entre deux strates de départ (sauf si $l_{1}=1$ mais alors la seule strate possible $\Omega^{3}\mathcal{M}_{0}(-2,a_{2}-6;\rec[-3][s-3])$ ne présente aucune obstruction) et le seul cas dans lesquelles elles présentent toutes deux des obstructions est celui des strates $\Omega^{3}\mathcal{M}_{0}(7,5;\rec[-3][6])$. Ce cas est aussi couvert par le lemme~\ref{lem:strateE4SPEC}.
\par
Nous supposerons qu'il n'y aucune annulation de somme partielle pondérée de racines et que les résidus sont ordonnés par module croissant. Nous utilisons la construction du lemme~\ref{lem:ABAB+}. Posons $|r|=|r_{l_{1}}|$. Nous pouvons choisir les racines de $l_{2}+1 \geq 3$ derniers résidus pour avoir $|S_{2}|\geq 2|r|$ (voir lemme~\ref{lem:estimatecos}). Le fait de ne pas avoir d'annulation de somme partielle simplifie la preuve du lemme~\ref{lem:estimate} et permet de choisir les racines des $l_{1}$ premiers résidus pour obtenir $|S_{1}| \leq |r|$. La construction est donc réalisable sauf éventuellement si toutes ces inégalités sont des égalités.
\par
Dans ce dernier cas, les $l_{1}$ premiers résidus sont identiques, ce qui implique $l_{1} \leq 2$ (sinon nous aurions une annulation de somme partielle pondérée). De plus, nous avons $l_{2}+1=3$ (sinon le lemme~\ref{lem:estimatecos} donne une meilleure borne) et donc $s \leq 5$. La seule strate concernée est donc 
$\Omega^{3}\mathcal{M}_{0}(4,5;\rec[-3][5])$ ($l_{1}=l_{2}=2$). Les cinq résidus ont même module. Puisque nous pouvons les permuter et que les seuls cas dans lequel la construction est impossible implique que les deux premiers soient identiques, il s'ensuit que le seul cas potentiellement problématique est celui de cinq résidus identiques. Dans ce cas $|S_{2}| \geq 3|r|$ et la construction est réalisable.
\end{proof}

\subsubsection{Majorité des cas avec $k \geq 5$}\label{sub:5major}

Comme dans le cas quartique (mais sans les obstructions), les estimations sur les sommes de racines (voir section~\ref{sec:estimation}) permettent de donner des constructions systématiques dans presque tous les cas restants.

\begin{prop}\label{prop:sec5MAJOR}
Pour $l_{1},l_{2} \geq 1$, $s=l_{1}+l_{2}+1$, $0<a<\frac{k}{2}$ et $k \geq 5$, chaque configuration $(R_{1},\dots,R_{s})$ de nombres complexes non nuls est réalisable dans $\Omega^{k}\mathcal{M}_{0}(k(l_{1}-1)+a,kl_{2}-a;\rec[-k][s])$ sauf éventuellement dans les cas suivants :
\begin{itemize}
    \item $k$ pair avec $l_{2}=1$ et $a=\frac{k}{2}-1$;
    \item $k$ impair avec $1 \leq l_{2}\leq 2$ et $a=\frac{k-1}{2}$.
\end{itemize}
\end{prop}

\begin{proof}
La preuve est similaire à celle de la proposition~\ref{prop:sec6quartique} : le lemme~\ref{lem:estimate} permet de choisir des racines $r_{1},\dots,r_{l_{1}}$ pour chacun des $k$-résidus afin d'obtenir $0<|S_{1}| \leq |r_{l_{1}}|$ avec $S_{1}=\sum_{i=1}^{l_{1}} r_{i}$ (sauf pour un cas exceptionnel avec $k=5$, $l_{1}=2$ et $R_{1}=-R_{2}$). De plus, si $l_{1} \geq 3$, nous pouvons exiger que ces racines ne soient pas $\mathbb{R}$-colinéaires. Nous choisissons les racines suivantes $r_{l_{1}+1},\dots,r_{s}$ en utilisant le lemme~\ref{lem:estimatecos}. Ainsi, nous obtenons $S_{2}=|\sum_{i=l_{1}+1}^{s} r_{i}| \geq (1+l_{2}\cos\left(\frac{\pi}{k}\right)) |r_{l_{1}+1}|$. Il s'ensuit que $\frac{|S_{1}|}{|S_{2}|} \leq \frac{1}{1+l_{2}\cos\left(\frac{\pi}{k}\right)}$.
\par
Les lemmes~\ref{lem:trigo} et~\ref{lem:ABAB+} permettent de construire une $k$-différentielle avec les résidus souhaités dans la strate $\Omega^{k}\mathcal{M}_{0}(k(l_{1}-1)+a,kl_{2}-a;\rec[-k][s])$ selon le modèle \textbf{quadrilatère ABAB} comme dans la preuve de la proposition~\ref{prop:sec6quartique} dès lors que nous avons $\frac{1}{1+l_{2}\cos\left(\frac{\pi}{k}\right)} < 1-\frac{2}{k+2-2a}$. Cette dernière inégalité se ramène à $l_{2}\cos\left(\frac{\pi}{k}\right)>\frac{2}{k-2a}$.
\par
Quand $k$ est pair, on a $a \leq \frac{k}{2}-1$. Ainsi, $\frac{2}{k-2a} \leq 1$. Puisque $k \geq 5$, l'inégalité $l_{2}\cos\left(\frac{\pi}{k}\right))>\frac{2}{k-2a}$ est toujours vérifiée si $l_{2} \geq 2$. En revanche, si $l_{2}=1$, elle ne l'est pas pour $a=\frac{k}{2}-1$ mais seulement pour $a \leq \frac{k}{2}-2$.
\par
Quand $k$ est impair, l'inégalité est toujours valide lorsque $a \leq \frac{k-3}{2}$. En revanche, si $a=\frac{k-1}{2}$, l'inégalité devient $l_{2}\cos\left(\frac{\pi}{k}\right)>2$. Pour $l_{2} \geq 3$ elle est vérifiée mais si $l_{2}$ vaut $1$ ou $2$ elle ne l'est pas.
\par
Il reste à traiter le cas exceptionnel $k=5$, $l_{1}=2$ et $R_{1}=-R_{2}$. Ici, nous avons l'annulation d'une somme partielle pondérée de deux racines et pouvons donc utiliser la chirurgie du lemme~\ref{lem:chirurgietranslation} pour se ramener au cas d'une strate avec $l_{1}=1$. La section~\ref{sub:5N2negatif} établit qu'il n'y a pas d'obstruction pour une telle strate.
\end{proof}

\subsubsection{Cas restants} 

Demeure une variété restreinte de strates qui ne sont pas couvertes par les résultats des sections~\ref{sub:mechantquartic},~\ref{sub:mechantcubic} et~\ref{sub:5major}. Elles présentent toutes les caractéristiques suivantes : $k \geq 5$, $l_{2}\in\lbrace1,2\rbrace$ et $\bar{a_{2}} \neq -1$.
\par
L'absence d'obstruction pour ces strates est obtenue par récurrence dans la proposition~\ref{prop:a1neq1}. Toutefois, une famille particulière de configurations de résidus (celles pour lesquelles une somme totale de racines s'annule) nécessite une construction spécifique. Cette construction est donnée dans les deux lemmes suivants. Tout d'abord, nous considérons le cas le plus simple dans lequel $s=3$ (et donc $l_{2}=1$).

\begin{lem}\label{lem:a2-1TROISPOLES}
Dans la strate $\Omega^{k}\mathcal{M}_{0}(k+\bar{a_{1}},k+\bar{a_{2}};\rec[-k][3])$ avec $-k<\bar{a_{1}}<\frac{k}{2}<\bar{a_{2}}<-1$, chaque configuration $(R_{1},R_{2},R_{3})$ de trois nombres complexes non nuls est réalisable s'il existe trois racines $k$-ièmes $r_{1},r_{2},r_{3}$ de $(R_{1},R_{2},R_{3})\in(\CC^{\ast})^{3}$ telles que $r_{1}+r_{2}+r_{3}=0$.
\end{lem}

\begin{proof}
Supposons sans perte de généralité que  $r_{1}=1$, $|r_{2}| \geq 1$, $|r_{3}| \geq 1$ et que $\theta_{i}=\arg(r_{i})\in[0,2\pi[$ satisfait $0\leq \theta_{2}<\theta_{3}$. Le triangle obtenu en concaténant $r_{1},r_{2},r_{3}$ en partant du point d'affixe $0$ est donc contenu dans le demi-plan supérieur fermé. Nous notons $\zeta=\exp\left(\frac{-2i\pi}{k}\right)$ et introduisons
\begin{equation}\label{eq:t}
 t = \frac{1-\zeta}{1-\exp\left(\frac{-2i\bar{a_{2}}\pi}{k}\right)} \text{ et } t' := - \exp\left(\frac{-2i\bar{a_{2}}\pi}{k}\right)t\,.
\end{equation}
Nous avons $t+t'+\zeta r_{1} + r_{2} + r_{3}=0$ et la concaténation de $\zeta r_{1}, t,t',r_{2},r_{3}$ est un polygone dont l'ensemble des points d'intersection est soit vide, soit confondu avec le côté $r_{2}$. En effet, ce polygone s'obtient en collant le long du segment $[0,r_{1}]$ un triangle dont les côtés sont données par les vecteurs $r_{1},r_{2},r_{3}$ (contenu dans le demi-plan supérieur) avec un quadrilatère dont les côtés sont données par les vecteurs $\zeta r_{1},t,t',-r_{1}$ (contenu dans le demi-plan inférieur, voir le dessin de gauche de la figure~\ref{fig:lemmefin2}). Comme $\bar{a_{2}} \neq 1$, nous avons $|t|<1$ et ce quadrilatère est donc sans points d'intersection. Donc il n'y a des points d'intersection que lorsque $\arg(r_{2})=0$, et dans ce cas le segment $r_{2}$ est bien l'ensemble de points d'intersection.
Notons que l'hypothèse $\bar{a_{2}} \neq -1$ est cruciale sans quoi nous aurions $t=- \zeta $ et $t'= 1$, le quadrilatère serait alors singulier.

En identifiant $t$ et $t'$ par une rotation, puis en collant des cylindres le long des trois autres côtés, on obtient une surface plate correspondant à une différentielle ayant un zéro d'ordre $2k+\bar{a_{1}}$ et un autre d'ordre $\bar{a_{2}}$. Notons que cela reste vrai même dans le cas où l'arête $r_{2}$ est constituée de points d'intersection. Pour obtenir des zéros d'ordres $0<a_{1}<a_{2}<k$ nous allons changer l'ordre de la concaténation.

Soient $\theta$ et $\theta'$ les arguments respectifs de $t$ et $t'$ dans $[0,2\pi[$. Notons que $\theta>\theta'$ et un rapide calcul montre que $\theta=\frac{\pi}{k}(\bar{a_{2}}+k-1)$ et $\theta'=-\frac{\pi}{k}(\bar{a_{2}}+1)$. Commençons par comparer $\theta_{3}$ avec $\frac{2\pi}{k}$. Si $\theta_{3}>2\pi-\frac{2\pi}{k}$, alors on échange $\zeta r_{1}$ avec $r_{3}$ dans la concaténation précédente. Cette opération est représentée sur la figure~\ref{fig:lemmefin2} (ce qui ne change aucunement l'angle conique des deux singularités obtenues après collage). Par simplicité, nous supposerons que $\theta_{3}\leq 2\pi-\frac{2\pi}{k}$ dans la suite, l'autre cas ne changeant que par la notation.
\begin{figure}[hbt]
\begin{tikzpicture}[scale=2.5,decoration={
    markings,
    mark=at position 0.5 with {\arrow[very thick]{>}}}]

%haut gauche
\begin{scope}
 \draw[] (0,0) coordinate (p0) --  ++(-30:1) coordinate[pos=.3] (t1) coordinate (p1)-- ++(-.11,.3)  coordinate[pos=.5] (r1) coordinate (p2) -- ++(.25,.2) coordinate[pos=.5] (r2) coordinate (p3)-- ++(170:2.4)  coordinate[pos=.5] (r3)coordinate (p4) --  (p0) coordinate[pos=.5] (r4);

\draw[dotted] (p0) --node[]{$r_{1}$} (1,0) coordinate (p6);

% \fill (p0)  circle (1pt);

\node[below] at (t1) {$\zeta r_{1}$};
\node[] at (r1) {$t$};
\node[] at (r2) {$t'$};
\node[above] at (r3) {$r_{2}$};
\node[below] at (r4) {$r_{3}$};
\end{scope}

%haut droit
\begin{scope}[xshift=4cm,yshift=-.5cm]
 \draw[] (0,0) coordinate (p0) -- ++(-.11,.3)  coordinate[pos=.5] (r1) coordinate (p2) -- ++(.25,.2) coordinate[pos=.5] (r2) coordinate (p3)-- ++(170:2.4)  coordinate[pos=.5] (r3)coordinate (p4) --  ++(-30:1) coordinate[pos=.3] (t1) coordinate (p1) --  (p0) coordinate[pos=.5] (r4);

% \fill (p0)  circle (1pt);

\node[below] at (t1) {$\zeta r_{1}$};
\node[] at (r1) {$t$};
\node[] at (r2) {$t'$};
\node[above] at (r3) {$r_{2}$};
\node[below] at (r4) {$r_{3}$};
\end{scope}
\end{tikzpicture}

\caption{Le changement d'ordre si $\theta_{3}>2\pi-\frac{2i\pi}{k}$}\label{fig:lemmefin2}
\end{figure}
\par
Notons que puisque $\theta>\theta'$, au moins l'une des deux inégalités $\theta'+\pi < \theta_{3}$ et $\theta_{3} <\theta + \pi$ est satisfaite. Nous supposerons d'abord $\theta'+\pi < \theta_{3}$.
% Dans le troisième cas, nous avons $\theta'+\pi < \theta_{3} \leq \theta + \pi$ (rappelons que $\theta'<\theta$).
En permutant les segments $t'$ et $r_{2}$ on obtient exactement une intersection de $r_{3}$ avec $r_{2}$ comme représenté sur le dessin du milieu de la figure~\ref{fig:dernierlem1}. En effet, l'unique autre possibilité serait que $r_{2}$ coupe $\zeta r_{1}$, mais cela impliquerait que $|t|>|r_{3}|$, ce qui est impossible car par hypothèse $|t|<|r_{1}|\leq |r_{3}|$. Notons que cette dernière égalité est valable même si l'on a interverti l'ordre de $r_{3}$ et $\zeta r_{1}$ précédemment. 
On colle alors sur le segment $r_{3}$ un cylindre auquel on a retiré le triangle correspondant à l'intersection. Notons que l'inégalité $|t|<|r_{1}|\leq |r_{3}|$ implique que le triangle est sans auto-intersection dans le cylindre $r_{3}$. Puis on termine la construction en collant des cylindres aux arêtes $r_{2}$ et $\zeta r_{1}$ et les segments $t$  avec $t'$.
\begin{figure}[hbt]

\begin{tikzpicture}[scale=2.5,decoration={
    markings,
    mark=at position 0.5 with {\arrow[very thick]{>}}}]
%premiere construction
\begin{scope}[xshift=-.5cm,yshift=2cm]

 \draw[] (0,0) coordinate (p0) --  ++(-30:1) coordinate[pos=.3] (t1) coordinate (p1)-- ++(-.11,.3)  coordinate[pos=.5] (r1) coordinate (p2) -- ++(.25,.2) coordinate[pos=.5] (r2) coordinate (p3)-- ++(0,1.2) coordinate[pos=.5] (t2) coordinate (p4) --  (p0) coordinate[pos=.5] (r4);

\draw[dotted] (p0) -- node[above] {$r_{1}$} (1,0) coordinate (p6);

\node[below] at (t1) {$\zeta r_{1}$};
\node[] at (r1) {$t$};
\node[] at (r2) {$t'$};
\node[right] at (t2) {$r_{2}$};
\node[left] at (r4) {$r_{3}$};
\end{scope}

%seconde construction
\begin{scope}[xshift=1.5cm,yshift=2cm]
\fill[fill=black!10] (0,0) coordinate (p0) --  ++(-30:1) coordinate[pos=.3] (t1) coordinate (p1)-- ++(-.11,.3)  coordinate[pos=.5] (r1) coordinate (p2)-- ++(0,1.2) coordinate[pos=.5] (t2) coordinate (p4) -- ++(.25,.2) coordinate[pos=.5] (r2) coordinate (p3) --  (p0) coordinate[pos=.5] (r4);

\draw (p0) -- (p1) -- (p2) -- (p4) -- (p3) -- (p0);
\fill[fill=black!10]  (p3) -- (p0) -- ++(130:.7) coordinate (a)--++(50:1.6)coordinate (b) --(p3);
\draw (p0) -- (a);
\draw (p3) -- (b);

\fill[fill=white] (p2) -- (p4) --(p3) --(p2);
\draw (p2) -- (p4) --(p3);
\draw[dotted] (p0)--(p3);

 \filldraw[fill=white] (p4)  circle (.8pt);
\filldraw[fill=white] (p2)  circle (.8pt);

\node[below] at (t1) {$\zeta r_{1}$};
\node[] at (r1) {$t$};
\node[above] at (r2) {$t'$};
\node[right] at (t2) {$r_{2}$};
\node[left] at (r4) {$r_{3}$};
\end{scope}

% %troisème construction
% \begin{scope}[xshift=3.5cm,yshift=2.2cm]
%  \draw[] (0,0) coordinate (p0)-- ++(.25,.2) coordinate[pos=.5] (r2) coordinate (p3) --  ++(-30:1) coordinate[pos=.3] (t1) coordinate (p1)-- ++(-.11,.3)  coordinate[pos=.5] (r1) coordinate (p2) -- ++(0,1.2) coordinate[pos=.5] (t2) coordinate (p4)  --  (p0) coordinate[pos=.5] (r4);
%
% \filldraw[fill=white] (p2)  circle (.8pt);
% \filldraw[fill=white] (p0)  circle (.8pt);
% \filldraw[fill=white] (p4)  circle (.8pt);
%
% \node[below] at (t1) {$\zeta r_{1}$};
% \node[right] at (t2) {$r_{2}$};
% \node[] at (r1) {$t$};
% \node[below] at (r2) {$t'$};
% \node[left] at (r4) {$r_{3}$};
% \end{scope}

\end{tikzpicture}

\caption{Le cas $\theta'+\pi < \theta_{3} \leq \theta + \pi$.}\label{fig:dernierlem1}
\end{figure}
\par
Finalement, si $\theta_{3} <\theta + \pi$ on utilise la concaténation $\zeta r_{1}, t',r_{2},r_{3},t$. Pour la même raison que dans le cas précédent, ce polygone possède une unique intersection entre les segments~$r_{3}$ et $\zeta r_{1}$. Le reste est identique à la construction précédente.
\end{proof}

Nous traitons ensuite le cas $s \geq 4$ pour ces configurations de résidus. La preuve est légèrement différente selon si $l_{2}=1$ ou $l_{2}=2$.

\begin{lem}\label{lem:a2-1SPECIFIQUE}
Dans toute strate $\Omega^{k}\mathcal{M}_{0}(l_{1}k+\bar{a_{1}},l_{2}k+\bar{a_{2}};\rec[-k][s])$ avec $-\frac{k}{2}<\bar{a_{2}}<-1$, $\bar{a_{1}}=-k-\bar{a_{2}}$, $l_{1},l_{2} \geq 1$ et $l_{1}+l_{2}=s-1$, chaque configuration $(R_{1},\dots,R_{s})$ de $s$ nombres complexes non nuls est réalisable s'il existe $s$ racines $k$-ièmes $r_{1},\dots,r_{s}$ de $(R_{1},\dots,R_{s})\in(\CC^{\ast})^{s}$ telles que $\sum\limits_{i=1}^{s} r_{i}=0$ mais qu'aucune somme partielle $\sum\limits_{i \in I} r_{i}$ ne s'annule lorsque $I$ est un sous-ensemble strict non vide de $\lbrace{ 1,\dots,s \rbrace}$.
\end{lem}

\begin{proof}
La proposition~\ref{prop:sec5MAJOR} et le lemme~\ref{lem:a2-1TROISPOLES} permettent de se restreindre aux cas dans lesquels $s\geq4$, $l_{2} \in\lbrace 1,2\rbrace$ et dans le cas où $l_{2}=2$, on peut supposer que $k$ est impair.
\smallskip
\par
\paragraph{\bf Le cas $l_{2}=1$} 
On suppose que $|R_{1}|$ est minimal parmi les modules de $k$-résidus et nous normalisons les $r_{i}$ pour avoir $r_{1}=1$. Concaténons les $r_{i}$ par argument croissant dans $\left[0,2\pi\right[$. Nous obtenons un polygone convexe dans le demi-plan supérieur ou dans la demi-droite~$\RR_{+}$. Notons que dans ce dernier cas, le fait qu'aucune somme partielle ne s'annule implique que les sommets du polygone sont deux à deux distincts. Notons respectivement par $A$, $B$, $C$ et~$D$ le point initial de $r_{1}$, $r_{2}$, $r_{3}$ et $r_{s}$. Remarquons que soit $|AC|\geq|r_{1}|=1$, soit $|BD|\geq1$. En effet, supposons que la longueur $|BD|$ est inférieure à $1$. Comme $ABCD$ est convexe, le sommet $C$ est dans l'intersection du demi-plan supérieur (contenant $D$) et du demi-plan complémentaire de $(BD)$ qui ne contient pas $A$. En particulier, il ne peut appartenir au disque de centre $A$ et de rayon~$1$. Donc la distance $|AC|$ est bien strictement supérieure à~$1$.
Nous notons $\tilde r_{3}= \sum_{i\geq3}r_{i}$, qui correspond au segment $[CA]$.
\smallskip
\par
Nous considérons deux cas selon que $\arg(\tilde r_{3})\leq 2\pi - \frac{2\pi}{k}$ ou  $2\pi - \frac{2\pi}{k}<\arg(\tilde r_{3})< 2\pi$. Dans la suite, on utilise les notations de la preuve du lemme~\ref{lem:a2-1TROISPOLES} en remplaçant~$r_{3}$ par $\tilde r_{3}$ et $\theta_{3}$ par~$\tilde\theta_{3}$.
\par
Supposons tout d'abord que $\arg(\tilde r_{3})\leq 2\pi - \frac{2\pi}{k}$.
% Considérons la concaténation de $\zeta r_{1}, t,t',r_{2},\tilde r_{3}$, comme à gauche de la figure~\ref{fig:dernierlem1}.
Notons que soit $\theta'+\pi < \tilde\theta_{3}$ soit $\tilde\theta_{3} <\theta + \pi$ est satisfaite.  Nous allons supposer que $\theta'+\pi < \tilde\theta_{3}$, l'autre cas se traitant de manière similaire en changeant le rôle de $t'$ avec $t$ et celui de $r_{2}$ avec  $\zeta r_{1}$.
\par
Supposons donc que $\theta'+\pi < \tilde\theta_{3}$ et considérons la concaténation de $\zeta r_{1},t,r_{2},t',\tilde r_{3}$ représentée à droite de  la figure~\ref{fig:dernierlem1}. Nous voulons remplacer le cylindre de période $\tilde r_{3}$ (noté $r_{3}$ sur la figure) par $s-2$ cylindres de périodes $r_{3},\dots,r_{s}$. Nous allons le faire en collant ces cylindres à partir du point final de $t'$. Plus précisément, on colle $s-2$ cylindres sur un $(s-1)$-gone formé des $r_{i}$ avec $i\in\lbrace 3,\dots,s \rbrace$ et de $-\tilde r_{3}$. On enlève à ce domaine le triangle $T$ correspondant à l'intersection et on colle son bord sur $\tilde r_{3}$, $t'$ et un bout de $r_{2}$ par translation. Cette construction est représentée dans la figure~\ref{fig:ajoutspoles} dans le cas où $s=4$. Remarquons que si cette construction est réalisable, alors elle donne une $k$-différentielle avec $l_{2}=1$ et les résidus souhaités.
\begin{figure}[hbt]

\begin{tikzpicture}[scale=3,decoration={
    markings,
    mark=at position 0.5 with {\arrow[very thick]{>}}}]
%premiere  construction
\begin{scope}[xshift=0cm,yshift=0cm]
\fill[fill=black!10] (0,0) coordinate (p0) --  ++(-30:1) coordinate[pos=.3] (t1) coordinate (p1)-- ++(-.11,.3)  coordinate[pos=.5] (r1) coordinate (p2)-- ++(0,1.2) coordinate[pos=.5] (t2) coordinate (p4) -- ++(.25,.2) coordinate[pos=.5] (r2) coordinate (p3) -- ++(180:1.1) coordinate (q) --  (p0) coordinate[pos=.5] (r4);

\draw (p0) -- (p1) -- (p2) -- (p4) -- (p3) -- (p0);
\fill[fill=black!10]  (p3) -- (q) -- ++(90:.4) coordinate (a)--++(0:1.1)coordinate (b) --(p3);
\draw (q) -- (a);
\draw (p3) -- (b);
\fill[fill=black!10]  (p0) -- (q) -- ++(180:.7) coordinate (a)--++(-90:1.2)coordinate (b) --(p0);
\draw (p0) -- (b);
\draw (q) -- (a);

\fill[fill=white] (p2) -- (p4) --(p3) --(p2);
\draw (p2) -- (p4) --(p3);
\draw[dotted] (p0)-- (q) --node[]{$r_{3}$} (p3);
\draw[dotted] (p0)-- node[]{$\tilde r_{3}$} (p3);

 \filldraw[fill=white] (p4)  circle (.8pt);
\filldraw[fill=white] (p2)  circle (.8pt);

\draw[->] (1.35,1.05) -- (.85,1.05);
\node at (1.4,1.05) {$T$};

\node[below] at (t1) {$\zeta r_{1}$};
\node[] at (r1) {$t$};
\node[above] at (r2) {$t'$};
\node[right] at (t2) {$r_{2}$};
\node[left] at (r4) {$r_{4}$};
\end{scope}

% %seconde  construction
% \begin{scope}[xshift=2.5cm,yshift=0cm]
% \fill[fill=black!10] (0,0) coordinate (p0) -- ++(.25,.2) coordinate[pos=.5] (r2) coordinate (p3) --  ++(-30:1) coordinate[pos=.3] (t1) coordinate (p1)-- ++(-.11,.3)  coordinate[pos=.5] (r1) coordinate (p2)-- ++(0,1.2) coordinate[pos=.5] (t2) coordinate (p4)   -- ++(180:1.1) coordinate (q) --  (p0) coordinate[pos=.5] (r4);
% 
% 
% 
% 
% \fill[fill=black!10]  (p2) -- (q) -- ++(90:.7) coordinate (a)--++(0:1.1)coordinate (b) --(p2);
% \draw (q) -- (a);
% \draw (p2) -- (b);
% \fill[fill=black!10]  (p0) -- (q) -- ++(180:.7) coordinate (a)--++(-90:1.2)coordinate (b) --(p0);
% \draw (p0) -- (b);
% \draw (q) -- (a);
% \draw (p0) -- (p3) -- (p1) -- (p2) -- (p4)  --node[]{$\tilde r_{3}$} (p0);
% % 
%  \draw[dotted] (p0)-- (q) --node {$r_{3}$} (p4);
% 
% 
%  \filldraw[fill=white] (p4)  circle (.8pt);
% \filldraw[fill=white] (p2)  circle (.8pt);
%  \filldraw[fill=white] (q)  circle (.8pt);
% \filldraw[fill=white] (p0)  circle (.8pt);
% 
% 
% \node[below] at (t1) {$\zeta r_{1}$};
% \node[] at (r1) {$t$};
% \node[below] at (r2) {$t'$};
% \node[right] at (t2) {$r_{2}$};
% \node[left] at (r4) {$r_{4}$};
% \end{scope}
\end{tikzpicture}

\caption{Remplacement d'un pôle de la figure~\ref{fig:dernierlem1} par une paire de pôles dans le cas $l_{2}=1$.}\label{fig:ajoutspoles}
\end{figure}

Nous justifions maintenant que cette construction est réalisable. Supposons tout d'abord que $\theta'+\pi < \theta_{3}=\arg(r_{3})$. Dans ce cas, l'arête $t'$ est contenue dans le cylindre associé à $r_{3}$. Le fait que $|r_{3}|>|t'|$ implique que l'intersection de $T$ avec ce cylindre est sans auto-intersection. L'autre partie de $T$ est contenue dans le polygone, car celui-ci est convexe. Par symétrie, le cas $\theta_{s} <\theta + \pi$ se traite lorsque $\tilde\theta_{3} <\theta + \pi$ par un argument similaire ($\theta_{s}$ est ici $\arg(r_{s})$). On peut donc supposer que $\theta'+\pi \geq \theta_{3}$ et $\theta_{s} \geq \theta + \pi$. Dans ce cas, l'inégalité $|t'|<|r_{3}|$ et la convexité du polygone impliquent que $T$ est strictement contenu dans le polygone.
\smallskip
\par
Considérons maintenant le cas où $2\pi - \frac{2\pi}{k}<\arg(\tilde r_{3})< 2\pi$. On commence par considérer le polygone $P$ obtenu en concaténant $t$, $r_{2}$, $\zeta r_{1}$, $t'$ et $\tilde r_{3}$ comme représenté sur la figure~\ref{fig:trois3poles2}.
Dans ce cas, si $t'$ est disjoint de~$r_{2}$, alors $P$ n'a pas d'auto-intersection. Si au contraire $t'$ coupe~$r_{2}$, alors le segment $\tilde r_{3}$ coupe aussi $r_{2}$. En effet, sinon les segments $(\tilde r_{3},t)$ formeraient une base indirecte, ce qui contredit l'hypothèse selon laquelle $2\pi - \frac{2\pi}{k}<\arg(\tilde r_{3})< 2\pi$. S'il n'y a pas d'intersection, on colle alors un cylindre à $r_{2}$. S'il y a une intersection, on colle à~$r_{2}$, à la fin de $t'$ et au début de $\tilde r_{3}$ un cylindre auquel on enlève le triangle correspondant à l'intersection. On colle alors un polygone dont les arêtes sont les $r_{i}$, avec $i\in\lbrace 3,\dots,s \rbrace$  et $-\tilde r_{3}$ au segment $\tilde r_{3}$. Finalement, on colle les cylindres aux $r_{i}$ pour $i\geq3$, un cylindre à $\zeta r_{1}$ et $t$ avec $t'$ pour obtenir une différentielle avec les invariants souhaités.
\begin{figure}[hbt]
\begin{tikzpicture}[scale=2.5,decoration={
    markings,
    mark=at position 0.5 with {\arrow[very thick]{>}}}]
 \fill[fill=black!10](0,0) coordinate (p0) -- ++(-.11,.3) coordinate[pos=.5] (r2) coordinate (p3)-- ++(162:2.7)  coordinate[pos=.3] (r3)coordinate (p4)--  ++(-30:1) coordinate[pos=.3] (t1) coordinate (p1)-- ++ (.25,.2) coordinate[pos=.5] (r1) coordinate (p2)  --  (p0) coordinate[pos=.5] (r4);
\draw (p0) -- (p3) -- (p4) -- (p1) -- (p2) -- (p0);
\fill[fill=black!10]  (p3) -- (p4) -- ++(72:.7) coordinate (a)--++(-18:2.7)coordinate (b) --(p3);
\draw (p4) -- (a);
\draw (p3) -- (b);
% \draw[dotted] (p4) --++(1,0) -- (p3);

\fill[fill=white] (p1) -- (p2) --(p0) --(p1);
\draw (p1) -- (p2) --(p0);

\filldraw[fill=white] (p3)  circle (.8pt);
\filldraw[fill=white] (p1)  circle (.8pt);
\filldraw[fill=white] (p4)  circle (.8pt);

\node[below] at (t1) {$\zeta r_{1}$};
\node[] at (r1) {$t'$};
\node[] at (r2) {$t$};
\node[above] at (r3) {$r_{2}$};
\node[below] at (r4)  {$\tilde r_{3}$};
\end{tikzpicture}

\caption{La construction dans le cas où $2\pi - \frac{2\pi}{k}<\arg(\tilde r_{3})< 2\pi$.}\label{fig:trois3poles2}
\end{figure}
\smallskip
\par
Il reste donc à considérer le cas où la diagonale de longueur supérieure ou égale à~$|r_{1}|$ est~$BD$. On fait la même construction qu'au paragraphe précédent en partant d'une configuration légèrement différente. Au lieu de faire la rotation d'angle $-\frac{2\pi}{k}$ de $r_{1}$ en $A$, on fait la rotation d'angle $\frac{2\pi}{k}$ de $r_{1}$ en $B$. La configuration obtenue est alors symétrique de celle considérée et les arguments sont identiques.
\smallskip
\par
\paragraph{\bf Le cas $l_{2}=2$}
Nous traitons tout d'abord les strates avec $s=4$ pôles puis donnons un argument pour $s\geq5$.
\smallskip
\par
Commençons par le cas où $s=4$ et utilisons les notations du cas $l_{2}=1$. On sépare les cas selon que $\arg(\tilde r_{3})\leq 2\pi - \frac{2\pi}{k}$ ou $2\pi - \frac{2\pi}{k}<\arg(\tilde r_{3})< 2\pi$. Dans le premier cas, considérons la concaténation de $\zeta r_{1},t,r_{2},\tilde r_{3},t'$ comme montré à droite de la figure~\ref{fig:dernierlem1}. On obtient la différentielle souhaitée en collant la paire de pôles au point initial de~$t'$ comme montré à gauche de la figure~\ref{fig:ajoutspoles2}.
\begin{figure}[hbt]
\begin{tikzpicture}[scale=2.5,decoration={
    markings,
    mark=at position 0.5 with {\arrow[very thick]{>}}}]
\fill[fill=black!10] (0,0) coordinate (p0) -- ++(.25,.2) coordinate[pos=.5] (r2) coordinate (p3) --  ++(-30:1) coordinate[pos=.3] (t1) coordinate (p1)-- ++(-.11,.3)  coordinate[pos=.5] (r1) coordinate (p2)-- ++(0,1.2) coordinate[pos=.5] (t2) coordinate (p4)   -- ++(180:1.1) coordinate (q) --  (p0) coordinate[pos=.5] (r4);

\fill[fill=black!10]  (p2) -- (q) -- ++(90:.7) coordinate (a)--++(0:1.1)coordinate (b) --(p2);
\draw (q) -- (a);
\draw (p2) -- (b);
\fill[fill=black!10]  (p0) -- (q) -- ++(180:.7) coordinate (a)--++(-90:1.2)coordinate (b) --(p0);
\draw (p0) -- (b);
\draw (q) -- (a);
\draw (p0) -- (p3) -- (p1) -- (p2) -- (p4)  --node[]{$\tilde r_{3}$} (p0);
 \draw[dotted] (p0)-- (q) --node {$r_{3}$} (p4);

 \filldraw[fill=white] (p4)  circle (.8pt);
\filldraw[fill=white] (p2)  circle (.8pt);
 \filldraw[fill=white] (q)  circle (.8pt);
\filldraw[fill=white] (p0)  circle (.8pt);

\node[below] at (t1) {$\zeta r_{1}$};
\node[] at (r1) {$t$};
\node[below] at (r2) {$t'$};
\node[right] at (t2) {$r_{2}$};
\node[left] at (r4) {$r_{4}$};

\begin{scope}[xshift=4.1cm]
 \fill[fill=black!10](0,0) coordinate (p0) -- ++ (.25,.2) coordinate[pos=.5] (r2) coordinate (p3)-- ++(162:2.7)  coordinate[pos=.3] (r3)coordinate (p4)--  ++(-30:1) coordinate[pos=.3] (t1) coordinate (p1)-- ++ (-.11,.3) coordinate[pos=.5] (r1) coordinate (p2)  --  (p0) coordinate[pos=.5] (r4);
\draw (p0) -- (p3) -- (p4) -- (p1) -- (p2) -- (p0);
\fill[fill=black!10]  (p3) -- (p4) -- ++(72:.7) coordinate (a)--++(-18:2.7)coordinate (b) --(p3);
\draw (p4) -- (a);
\draw (p3) -- (b);
% \draw[dotted] (p4) --++(1,0) -- (p3);

\fill[fill=white] (p1) -- (p2) --(p0) --(p1);
\draw (p1) -- (p2) --(p0);

\filldraw[fill=white] (p0)  circle (.8pt);
\filldraw[fill=white] (p2)  circle (.8pt);

\node[below] at (t1) {$\zeta r_{1}$};
\node[] at (r1) {$t$};
\node[] at (r2) {$t'$};
\node[above] at (r3) {$r_{2}$};
\node[below] at (r4)  {$\tilde r_{3}$};
\end{scope}
\end{tikzpicture}
\caption{La construction pour obtenir $l_{2}=2$ selon que $\arg(\tilde r_{3})\leq 2\pi - \frac{2\pi}{k}$ (à gauche) ou  $2\pi - \frac{2\pi}{k}<\arg(\tilde r_{3})< 2\pi$ (à droite).}\label{fig:ajoutspoles2}
\end{figure}
Comme on remplace un domaine polaire par un triangle auquel on attache deux cylindres, il n'y a pas de problèmes d'auto-intersection. Un calcul rapide montre que l'on obtient les invariants souhaités.

Dans le second cas, on concatène $\zeta r_{1},t,\tilde r_{3},t',r_{2}$ comme montré à droite de la figure~\ref{fig:ajoutspoles2}. Remarquons que c'est similaire au cas correspondant pour $l_{2}=1$, en inversant le rôle de~$t$ et $t'$. Le même argument que dans le cas précédent montre l'existence de la différentielle associée avec les invariants souhaités.
\smallskip
\par
On suppose maintenant que $s\geq5$ et on note $P$ le polygone obtenu en concaténant les~$r_{i}$ par arguments croissants. Considérons les diagonales qui découpent $P$ en deux parties contenant respectivement $3$ et $s-3$ arêtes. Supposons qu'il existe une telle diagonale de longueur inférieure ou égale à chacune des trois arêtes qu'elle détermine. Notons cette diagonale $\tilde r_{1}$ et les trois arêtes par $r_{2},r_{3},r_{4}$ rangées par arguments croissants. On fait la construction du cas $s=4$ avec ces segments ($\tilde r_{1}$ joue le rôle de $r_{1}$). On obtient donc les polygones de la figure~\ref{fig:ajoutspoles2}. La différentielle est obtenue en collant les $s-3$ résidus restant à $\tilde r_{1}$. Notons que dans les deux constructions de la figure~\ref{fig:ajoutspoles2}, le pôle de résidu $R_{1}$ contribue uniquement à la singularité $a_{1}$. L'ordre de la singularité $a_{2}$ est donc préservée lors du collage des résidus à~$\tilde r_{1}$. En particulier, la différentielle obtenue satisfait $l_{2}=2$.
\smallskip
\par
Supposons maintenant qu'il n'existe pas de diagonale de longueur inférieure ou égale à chacune des trois arêtes qu'elle borde. Prenons une configuration où ces arêtes $r_{1},r_{2},r_{s}$ et la diagonale $\tilde r_{3}$ sont telles que $|r_{1}|$ est minimale et la concaténation $r_{1},r_{2},\tilde r_{3},r_{s}$ forme un quadrilatère convexe. Commençons par considérer le polygone obtenu en concaténant $\zeta r_{1},t,t',r_{2},\tilde r_{3},r_{s}$, représenté à gauche de la figure~\ref{fig:sgeq5prem}. Comme précédemment, on inverse l'ordre de concaténation de $\zeta r_{1}$ avec les segments de $\lbrace r_{s},\tilde r_{3}\rbrace$ dont l'argument est strictement supérieur à $2\pi - \frac{2\pi}{k}$.

Supposons que l'on permute $\zeta r_{1}$ avec $r_{s}$ et $\tilde r_{3}$. On concatène $t, r_{s},r_{2},\zeta r_{1},t',\tilde r_{3}$ comme montré sur la figure~\ref{fig:sgeq5prem}. 
Le polygone ainsi obtenu peut posséder des intersections. En effet, les segments $r_{2}$ et $t$ peuvent se couper pour former un triangle comme à droite de la figure~\ref{fig:sgeq5prem}. De plus, il est possible que $r_{2}$ coupe $t'$, donnant un triangle au dessus de $r_{2}$ s'il coupe $\tilde r_{3}$ et un polygone plus complexe si $\tilde r_{3}$ reste au dessus de $r_{2}$.  Remarquons que ces polygones d'intersection ne n'ont pas d'auto-intersection. Par exemple, s'il y a deux triangles d'intersection, ils ne se chevauchent pas car l'argument de~$t$ est inférieur à celui de $\tilde r_{3}$ auquel on ajoute~$\pi$.
% S'il n'y a qu'un triangle d'intersection, notons que comme $\arg(r_{s}),\arg(\tilde r_{3})\in \left[2\pi-\frac{2\pi}{k},2\pi\right[$, on a $|r_{2}|>|r_{s}|$. Avec le fait que $|t|<|r_{1}|$ cela implique que ce triangle est contenu dans le cylindre correspondant à $r_{2}$ et sans auto-intersection. S'il y a deux triangles d'intersection, le même argument montre que chaque triangle est sans auto-intersection. De plus les deux triangles ne se chevauchent pas car l'argument de~$t$ est inférieur à celui de $\tilde r_{3}$ auquel on ajoute $\pi$.
% De plus ils ne se coupent pas car cela impliquerait que $t$ coupe $\zeta r_{1}$, ce qui est impossible par définition de~$t$. 
\begin{figure}[hbt]
\begin{tikzpicture}[scale=2.5,decoration={
    markings,
    mark=at position 0.5 with {\arrow[very thick]{>}}}]
%haut gauche
\begin{scope}
 \draw[] (0,0) coordinate (p0) --  ++(-30:1) coordinate[pos=.3] (t1) coordinate (p1)-- ++(-.11,.3)  coordinate[pos=.5] (r1) coordinate (p2) -- ++(.25,.2) coordinate[pos=.5] (r2) coordinate (p3)-- ++(175:2.8)  coordinate[pos=.5] (r3)coordinate (p4) -- ++(-8:1)  coordinate[pos=.5] (r4)coordinate (p5)--  (p0) coordinate[pos=.5] (r5);

\draw[dotted] (p0) --node[]{$r_{1}$} (1,0) coordinate (p6);

% \fill (p0)  circle (1pt);

\node[below] at (t1) {$\zeta r_{1}$};
\node[] at (r1) {$t$};
\node[] at (r2) {$t'$};
\node[above] at (r3) {$r_{2}$};
\node[below] at (r4) {$\tilde r_{3}$};
\node[below] at (r5) {$r_{s}$};
\end{scope}

%haut droit
\begin{scope}[xshift=4.15cm,yshift=-.1cm]
\draw[] (0,0)  coordinate (p0) -- ++(175:2.8)  coordinate[pos=.5] (r3)coordinate (p4) -- ++(-30:1) coordinate[pos=.5] (t1) coordinate (p3) -- ++(.25,.2) coordinate[pos=.5] (r2) coordinate (p1) -- ++(-8:1)  coordinate[pos=.5] (r4)coordinate (p5) -- ++(-.11,.3)  coordinate[pos=.5] (r1) coordinate (p2)  --  (p0) coordinate[pos=.5] (r5);

% \fill (p0)  circle (1pt);

\node[below] at (t1) {$\zeta r_{1}$};
\node[] at (r1) {$t$};
\node[] at (r2) {$t'$};
\node[above] at (r3) {$r_{2}$};
\node[below] at (r4) {$\tilde r_{3}$};
\node[above] at (r5) {$r_{s}$};
\end{scope}
\end{tikzpicture}
\caption{Les polygones si l'on permute $\zeta r_{1}$ avec $r_{s}$ et $\tilde r_{3}$.}\label{fig:sgeq5prem}
\end{figure}
On peut donc coller comme précédemment un cylindre (auquel on enlève les éventuels polygones d'intersection) à $r_{2}$. Puis on ajoute les autres cylindres et on colle $t$ avec~$t'$. De la sorte on obtient une différentielle avec les invariants souhaités.

Supposons que l'on permute $\zeta r_{1}$ avec $r_{s}$ ou que l'ordre de la concaténation reste inchangé. Dans la suite, on supposera que l'on est dans ce dernier cas pour fixer la notation. Supposons tout d'abord que $\arg(\tilde r_{3})> \arg(t)+\pi$. Dans ce cas, l'ordre de concaténation du cas précédent, représenté dans la figure~\ref{fig:sgeq5prem}, donne la différentielle attendue de manière similaire. Notons toutefois que dans ce cas il se peut que la concaténation des segments $\zeta r_{1}$ jusqu'à $r_{s}$ possède plus de points d'intersection avec $r_{2}$, voir soit au dessus de $r_{2}$. Dans ce dernier cas, on colle le cylindre $r_{2}$ au dessus du segment brisé et les autres cylindres en dessous, avant d'identifier $t$ avec $t'$ pour obtenir la différentielle souhaitée.

Considérons maintenant le cas où  $\arg(\tilde r_{3})\in [\frac{\pi}{2},\pi]$. Dans ce cas, on considère  le polygone obtenu par la concaténation de $t,\zeta r_{1},r_{2},t',\tilde r_{3},r_{s}$ représenté sur la figure~\ref{fig:sgeq5sec}. Il ne peut avoir au plus qu'une intersection, entre $r_{s}$ et $\zeta r_{1}$ si $\arg(t)+\pi\geq\arg(r_{s})$, comme sur la figure. En effet, comme l'argument de $t'$ est strictement compris entre $0$ et $\pi$, le segment $r_{3}$ ne peut pas couper le segment $r_{2}$. 
\begin{figure}[hbt]
\begin{tikzpicture}[scale=2.5,decoration={
    markings,
    mark=at position 0.5 with {\arrow[very thick]{>}}}]
%haut gauche
\begin{scope}
 \draw[] (0,0) coordinate (p0) --  ++(-30:1) coordinate[pos=.3] (t1) coordinate (p1)-- ++(-.11,.3)  coordinate[pos=.5] (r1) coordinate (p2) -- ++(.25,.2) coordinate[pos=.5] (r2) coordinate (p3)-- ++(75:1.1)  coordinate[pos=.5] (r3)coordinate (p4) -- ++(160:1.1)  coordinate[pos=.5] (r4)coordinate (p5)--  (p0) coordinate[pos=.5] (r5);

\draw[dotted] (p0) --node[]{$r_{1}$} (1,0) coordinate (p6);

% \fill (p0)  circle (1pt);

\node[below] at (t1) {$\zeta r_{1}$};
\node[] at (r1) {$t$};
\node[] at (r2) {$t'$};
\node[right] at (r3) {$r_{2}$};
\node[above] at (r4) {$\tilde r_{3}$};
\node[left] at (r5) {$r_{s}$};
\end{scope}

%haut droit
\begin{scope}[xshift=3cm,yshift=-.2cm]
 \draw[] (0,0)coordinate (p0) -- ++(-.11,.3)  coordinate[pos=.5] (r1) coordinate (p2) --  ++(-30:1) coordinate[pos=.5] (t1) coordinate (p1) -- ++(75:1.1)  coordinate[pos=.5] (r3)coordinate (p4)
 -- ++(.25,.2) coordinate[pos=.5] (r2) coordinate (p3) -- ++(160:1.1)  coordinate[pos=.5] (r4)coordinate (p5)--  (p0) coordinate[pos=.5] (r5);

% \fill (p0)  circle (1pt);

\node[above] at (t1) {$\zeta r_{1}$};
\node[] at (r1) {$t$};
\node[] at (r2) {$t'$};
\node[right] at (r3) {$r_{2}$};
\node[above] at (r4) {$\tilde r_{3}$};
\node[left] at (r5) {$r_{s}$};
\end{scope}
\end{tikzpicture}
\caption{Les polygones si l'on permute $\zeta r_{1}$ avec $r_{s}$ et $\tilde r_{3}$.}\label{fig:sgeq5sec}
\end{figure}
On peut donc coller les cylindres aux arêtes du polygone (on enlève un triangle à celui de circonférence $r_{s}$ en cas d'intersection) et $t$ avec~$t'$ pour obtenir une différentielle avec les invariants souhaités.

Finalement, nous montrons que l'on peut se ramener aux cas précédent dans presque tous les cas. Rappelons que par la proposition~\ref{prop:sec5MAJOR}, il suffit de considérer les cas où $\bar{a_{2}}=\frac{1-k}{2}$. En particulier, comme $\arg(t)\geq \frac{\pi}{2}$ les constructions du paragraphe précédent donnent des différentielles avec les invariants souhaités si $\arg(\tilde r_{3})\in \left[\frac{\pi}{2},\pi\right]\cup \left[\frac{3\pi}{2},2\pi\right[$.
Si l'argument de~$\tilde r_{3}$ est dans $\left]0,\frac{\pi}{2}\right[ \cup \left]\pi,\frac{3\pi}{2}\right[$, alors on fait une construction miroir des constructions précédentes : on fait une rotation de $r_{1}$ en son point final d'un angle $\frac{2\pi}{k}$ et on change le rôle de $r_{2}$ et de~$r_{s}$ dans la construction. Finalement, il reste à traiter le cas où $\arg(\tilde r_{3})=0$. Dans ce cas, la concaténation de $t,\zeta r_{1},r_{2},\tilde r_{3},t',r_{s}$ permet d'obtenir une différentielle avec les invariants souhaités via les constructions précédentes.
\end{proof}

Nous donnons maintenant une construction par récurrence du cas général en partant des strates avec un zéro d'ordre négatif.

\begin{prop}\label{prop:a1neq1}
Si $a_{1} \notin 1+k\mathbb{Z}$, alors chaque configuration $(R_{1},\dots,R_{s})$ de nombres complexes non nuls est réalisable dans la strate $\Omega^{k}\mathcal{M}_{0}(a_{1},a_{2};\rec[-k][s])$.
\end{prop}

\begin{proof}
Nous réalisons ici un raisonnement par récurrence sur le nombre $s$ de pôles d'ordre $k$. L'étape d'initialisation $s=2$ est démontrée dans le corollaire~\ref{cor:poleks2}. Nous supposerons l'énoncé valide pour tout rang $s'<s$.
\par
Pour n'importe quelle strate $\Omega^{k}\mathcal{M}_{0}(a_{1},a_{2};\rec[-k][s+1])$ telle que $a_{1}$ ou $a_{2}$ est négatif, les résultats de la section~\ref{sub:5N2negatif} établissent la proposition. Si $a_{1}$ et $a_{2}$ sont tous deux positifs (et donc $l_{1},l_{2} \geq 1$), alors nous devons nous appuyer sur l'étude de la stratification de résonance (voir la section~\ref{sec:arrhyp}). Dans la strate principale, formée des configurations de résidus génériques, c'est-à-dire ne vérifiant aucune condition de résonance, un argument de dimension montre que certaines configurations de cette strate sont nécessairement réalisables tandis que le prolongement plat du corollaire~\ref{cor:defplate} permet d'étendre ces solutions à toute la strate principale. 
\par
Dans chaque strate de résonance, des équations dont les coefficients sont des racines $k$-ièmes de l'unité sont satisfaites par des racines des $k$-résidus. Traitons d'abord le cas des strates dans lesquelles l'une de ces équations relie des racines de $t<s+1$ résidus de la configuration (certains coefficients sont nuls). Quitte à considérer $t$ minimal si plusieurs équations de ce type sont vérifiées les configuration de la strate de résonance choisie, nous pouvons construire la différentielle réalisant la configuration voulue en utilisant la chirurgie du lemme~\ref{lem:chirurgietranslation} permettant de se ramener à des strates plus simples (pour lesquelles l'hypothèse de récurrence garantit l'absence d'obstructions).
\par
Les lemmes~\ref{lem:a2-1TROISPOLES} et~\ref{lem:a2-1SPECIFIQUE} traitent enfin du cas des strates de résonance dans lesquelles toutes les conditions de résonance font intervenir les $s$ résidus de la configuration (aucun coefficient de l'équation de résonance n'est nul).
\end{proof}

\subsection{Constructions pour $n \geq 3$}\label{sec:constcaskn3}

Il reste à démontrer qu'il n'existe pas d'obstruction pour les strates de $k$-différentielles de genre zéro avec $k\geq3$, n'ayant que des pôles d'ordre $-k$ et au moins trois zéros.

D'abord nous traitons le cas $n=3$ où seulement deux zéros ont un ordre qui n'est pas divisible par $k$.

\begin{prop}\label{prop:n3zero2divpolek}
Considérons les strates $\Omega^{k}\mathcal{M}_{0}(a_{1},a_{2},kl;\rec[-k][s])$ où $k \geq 3$, $l \in \mathbb{N}^{\ast}$ et $a_{1},a_{2}$ sont premiers avec $k$. Leurs applications résiduelles sont surjectives.
\end{prop}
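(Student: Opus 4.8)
The statement concerns strata $\komoduli[0](a_1,a_2,kl;(-k^s))$ in genre zero whose only pole orders are $-k$, with exactly one zero ($kl$) of order divisible by $k$ and two zeros ($a_1,a_2$) coprime to $k$. Since $a_1,a_2$ are coprime to $k$, in particular $\pgcd(a_1,a_2,kl,k)=1$, so the stratum is primitive and non-empty by Lemma~\ref{lem:puissk}. The natural strategy is to \emph{produce the stratum by splitting off the zero of order $kl$} from a stratum with only two zeros, using the zero-breaking operation of Proposition~\ref{prop:eclatZero}, and to invoke the two-zero case of Theorem~\ref{thm:geq0kspe} (proven in Section~\ref{sec:constcask}) to control the residues on the source stratum.

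First I would set up the induction target. Given a tuple of $k$-residues $(R_1,\dots,R_s)\in(\CC^\ast)^s$, consider the stratum $\komoduli[0](a_1,a_2';(-k^s))$ where $a_2':=a_2+kl$; note $a_1+a_2'=a_1+a_2+kl=k(s-2)$ as required, and $a_2'$ is again coprime to $k$ since $a_2$ is. By the two-zero case of Theorem~\ref{thm:geq0kspe}, there is a $k$-differential in this stratum with prescribed $k$-residues $(R_1,\dots,R_s)$ \emph{unless} $(\mu;R)$ falls into one of the sporadic exceptions (1)--(12). Outside those exceptions, I then break the zero of order $a_2'$ into a zero of order $a_2$ and a zero of order $kl$: by Proposition~\ref{prop:eclatZero}, this operation is local and preserves all $k$-residues provided $\xi$ is not the $k$-th power of a meromorphic abelian differential — which holds automatically here because $a_1$ is coprime to $k$, so $\xi$ cannot be a $k$-th power. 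Moreover the resulting differential is primitive (again because $\gcd(a_1,\dots)=1$). This yields surjectivity away from the sporadic list.

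The main obstacle is therefore the finite set of sporadic configurations: those tuples $(a_1,a_2';(-k^s);R)$ that are \emph{not} realizable as two-zero differentials but whose target three-zero stratum $\komoduli[0](a_1,a_2,kl;(-k^s))$ might still be non-empty for all residues. For each such case, one must either (a) break a \emph{different} zero — e.g. split $a_1+a_2'$ differently, or choose the intermediate stratum to be $\komoduli[0](a_1',a_2;(-k^s))$ with $a_1'=a_1+kl$, which may avoid the exceptional list — or (b) produce the three-zero differential directly by a flat-surface construction from the toolbox of Section~\ref{sec:briques} (gluing $k$-partie polaires of order $k$ to a core polygon, as in the proofs of Lemmas~\ref{lem:surjr=0sneq01}--\ref{lem:surjr=0sneqnouv}), exploiting the extra freedom of a third cone point. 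The key observation is that the sporadic obstructions of Theorem~\ref{thm:geq0kspe} arise precisely because, with only two cone points, the holonomy vectors of saddle connections are forced onto a lattice ($\ZZ[i]$ or $\ZZ[\exp(2i\pi/3)]$) and the core polygon has too few edges; adding a third zero of order $kl$ breaks this rigidity, so I expect that in every sporadic case one can re-route the construction. I would handle these case by case, checking for each of the finitely many exceptional $(k,s,R)$ that at least one of the admissible splittings lands in a non-exceptional two-zero stratum, and falling back on a direct polygon construction for the residual handful.

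Finally I would note that the general case $n\geq 3$ with only two zeros coprime to $k$ (the hypothesis $\mu=(a_1,a_2,kl_3,\dots,kl_n;(-k^s))$ with more than one divisible zero) reduces to the $n=3$ case of the present proposition by further applications of Proposition~\ref{prop:eclatZero}: one breaks the single zero $kl:=\sum_{i\geq 3}kl_i$ of the $n=3$ stratum into the zeros $kl_3,\dots,kl_n$, which is always a local operation here since $a_1$ coprime to $k$ guarantees we are never splitting a $k$-th power, and primitivity is preserved throughout. Hence the proposition, combined with the subsequent lemmas of Section~\ref{sec:constcaskn3} handling the case of at least three zeros coprime to $k$, will complete the proof of Theorem~\ref{thm:geq0kspe} for $n\geq 3$.
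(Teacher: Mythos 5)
Votre stratégie est essentiellement celle du texte : on obtient la différentielle par scindage du zéro d'ordre $kl$ à partir de l'une des deux strates à deux zéros $\komoduli[0](a_{1}+kl,a_{2};(-k^{s}))$ ou $\komoduli[0](a_{1},a_{2}+kl;(-k^{s}))$ (primitives car $a_{1}$, resp. $a_{2}$, est premier avec $k$), puis on traite à la main les cas où les deux strates sources sont simultanément sporadiques avec le même résidu interdit. Le texte mène ce plan à son terme en constatant que cela ne se produit que pour trois strates, à savoir $\Omega^{4}\mathcal{M}_{0}(-1,4,5,(-4)^{4})$, $\Omega^{3}\mathcal{M}_{0}(-1,1,3,(-3)^{3})$ et $\Omega^{3}\mathcal{M}_{0}(2,3,7,(-3)^{6})$, pour lesquelles il donne des constructions plates explicites — exactement le repli que vous annoncez sans l'exécuter.
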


\begin{proof}
Nous allons obtenir les $k$-différentielles adéquates par scindage de zéro à partir des deux strates $\Omega^{k}\mathcal{M}_{0}(a_{1}+kl,a_{2};\rec[-k][s])$ et $\Omega^{k}\mathcal{M}_{0}(a_{1},a_{2}+kl;\rec[-k][s])$. Ces strates sont par définition primitives. Il suffit de vérifier que les deux strates ne sont pas simultanément des strates sporadiques.
Remarquons que si $a_{1}=a_{2}$, alors la strate $\Omega^{k}\mathcal{M}_{0}(a_{1},a_{1},kl;\rec[-k][s])$ n'est pas primitive (car $a_{1} \in \frac{k}{2}\mathbb{Z}$ dans ce cas). Par conséquent, les deux strates obtenues en regroupant $kl$ avec $a_{1}$ ou $a_{2}$ sont nécessairement distinctes. Ainsi, les seules strates problématiques sont celles pour lesquelles il existe deux strates sporadiques distinctes avec le même ordre $k$, le même nombre $s$ de pôles et le même résidu interdit.
Les seules paires de strates sporadiques de ce type sont :
\begin{enumerate}
    \item $\Omega^{4}\mathcal{M}_{0}(3,5;\rec[-4][4])$ et $\Omega^{4}\mathcal{M}_{0}(-1,9;\rec[-4][4])$ avec les résidus de la forme $\CC^{\ast}\cdot\rec[1][4]$;
    \item $\Omega^{3}\mathcal{M}_{0}(-1,4;\rec[-3][3])$ et $\Omega^{3}\mathcal{M}_{0}(1,2;\rec[-3][3])$ avec les résidus de la forme $\CC^{\ast}\cdot\rec[1][3]$;
    \item $\Omega^{3}\mathcal{M}_{0}(2,10;\rec[-3][6])$ et $\Omega^{3}\mathcal{M}_{0}(5,7;\rec[-3][6])$ avec les résidus de la forme $\CC^{\ast}\cdot\rec[1][6]$.
\end{enumerate}
Par conséquent, les strates problématiques pour lesquelles il est nécessaire de faire les constructions à la main sont :
\begin{enumerate}
    \item $\Omega^{4}\mathcal{M}_{0}(-1,4,5;\rec[-4][4])$ avec les résidus de la forme $\CC^{\ast}\cdot\rec[1][4]$;
    \item $\Omega^{3}\mathcal{M}_{0}(-1,1,3;\rec[-3][3])$ avec les résidus de la forme $\CC^{\ast}\cdot\rec[1][3]$;
    \item $\Omega^{3}\mathcal{M}_{0}(2,3,7;\rec[-3][6])$ avec les résidus de la forme $\CC^{\ast}\cdot\rec[1][6]$.
\end{enumerate}
Les constructions dans les deux premiers cas sont donnée dans la figure~\ref{fig:k4troiszeros}. 
\begin{figure}[hbt]
\centering
\begin{tikzpicture}[scale=2]
\fill[fill=black!10] (0.25,1.5) coordinate (p10) -- ++(.5,0)  coordinate[pos=.5] (q10)  coordinate (p11)   -- ++(-.75,.25)  coordinate[pos=.6] (q11)  coordinate (p12) -- ++(.25,.75) coordinate[pos=.5] (q12) coordinate (p13) -- ++ (-1.1,0) coordinate[pos=.75] (q13) coordinate (p14)  -- ++(0,-1)  -- (p10) coordinate[pos=.25] (q14)coordinate (p15);
\draw (q14) -- (p11) -- (p12) -- (p13) -- (q13);
\draw[dotted] (q14) --++(-.2,0);
\draw[dotted] (q13) --++(-.2,0);

  \foreach \i in {10,11,13}
    \filldraw[fill=white] (p\i)  circle (1pt); 
  \foreach \i in {12}
    \filldraw[fill=red] (p\i)  circle (1pt); 
    
\node[above,rotate=-20] at (q11) {$2$};
\node[below,rotate=70] at (q12) {$2$};
\node[] at (q10) {$3$};

\begin{scope}[yshift=.25cm]
%central
\filldraw[fill=black!10]  (0,0) coordinate (p1)  -- ++(1,0)  coordinate[pos=.5] (q1)  coordinate (p2) -- ++(0,1) coordinate[pos=.5] (q2) coordinate (p3) -- ++(-.25,-.25) coordinate[pos=.5] (q3) coordinate (p4)   -- ++(-.5,0)  coordinate[pos=.6] (q4)  coordinate (p5) -- ++(-.25,.25) coordinate[pos=.5] (q7)coordinate (p8) -- cycle;

 \foreach \i in {1,2,3,8}
     \fill (p\i)  circle (1pt); 
  \foreach \i in {4,5}
    \filldraw[fill=white] (p\i)  circle (1pt); 
    
    \node[rotate=45] at (q3) {$1$};
   \node[rotate=135] at (q7) {$1$};
   \node[] at (q4) {$3$};

   %gauche 
\fill[fill=black!10] (0,0) coordinate (p10) -- ++(0,1)  coordinate[pos=.6] (q10)  coordinate (p11)    -- ++ (-.6,0) coordinate[pos=.75] (q13) coordinate (p14)  -- ++(0,-1)  -- (p10) coordinate[pos=.25] (q14)coordinate (p15);
\draw (q14) -- (p10) -- (p11) -- (q13);
\draw[dotted] (q14) --++(-.1,0);
\draw[dotted] (q13) --++(-.1,0);

 \foreach \i in {10,11}
     \fill (p\i)  circle (1pt); 

%droit
\fill[fill=black!10] (1,0) coordinate (p10) -- ++(0,1)  coordinate[pos=.6] (q10)  coordinate (p11)    -- ++ (.6,0) coordinate[pos=.75] (q13) coordinate (p14)  -- ++(0,-1)  -- (p10) coordinate[pos=.25] (q14)coordinate (p15);
\draw (q14) -- (p10) -- (p11) -- (q13);
\draw[dotted] (q14) --++(.1,0);
\draw[dotted] (q13) --++(.1,0);
 \foreach \i in {10,11}
     \fill (p\i)  circle (1pt); 

 %below    
\fill[fill=black!10] (0,0) coordinate (p10) -- ++(1,0)  coordinate[pos=.6] (q10)  coordinate (p11)    -- ++ (0,-.6) coordinate[pos=.75] (q13) coordinate (p14)  -- ++(-1,0)  -- (p10) coordinate[pos=.25] (q14)coordinate (p15);
\draw (q14) -- (p10) -- (p11) -- (q13);
\draw[dotted] (q14) --++(0,-.1);
\draw[dotted] (q13) --++(0,-.1);
 \foreach \i in {10,11}
     \fill (p\i)  circle (1pt); 
\end{scope}

%deuxième construction
\begin{scope}[xshift=4.5cm,yshift=.5cm]
%central
\coordinate (p1) at (1,0);
\coordinate (p2) at (0,.58);
\coordinate (p3) at (0,1.73);
\coordinate (p4) at (-.5,.87);
\coordinate (p5) at (-1,0);
\coordinate (p6) at (-.5,-.29);
\coordinate (p7) at (.5,-.29);

\fill[fill=black!10]  (p1)  -- coordinate[pos=.5] (q1) (p2) --coordinate[pos=.5] (q2) (p3) --coordinate[pos=.5] (q3) (p4)   -- coordinate[pos=.6] (q4) (p5) -- coordinate[pos=.5] (q5) (p6) -- coordinate[pos=.5] (q6) (p7) --  (p1) coordinate[pos=.5] (q7);
\fill[fill=black!10] (p3) -- ++(150:.3) -- ++ (240:1) -- (p4) -- cycle; 
\fill[fill=black!10] (p4) -- ++(160:.3) -- ++ (240:1) -- (p5) -- cycle; 
\fill[fill=black!10] (p6)  -- ++(0,-.3) -- ++ (1,0) -- (p7) -- cycle; 
\draw (p1) --(p2) -- (p3) -- (p4) -- (p5) -- (p6) -- (p7) --(p1);
\draw (p3) -- ++(150:.3);
\draw (p4) -- ++(150:.3);
\draw (p4) -- ++(160:.3);
\draw (p5) -- ++(160:.3);
\draw (p6) -- ++(0,-.3);
\draw (p7) -- ++(0,-.3);

 \foreach \i in {1,3,4,5}
     \fill (p\i)  circle (1pt); 
  \foreach \i in {6,7}
    \filldraw[fill=white] (p\i)  circle (1pt); 
    \foreach \i in {2}
    \filldraw[fill=red] (p\i)  circle (1pt); 
    
    \node[above,rotate=-30] at (q1) {$1$};
   \node[right] at (q2) {$1$};
   \node[above,rotate=-30] at (q5) {$2$};
     \node[above,rotate=30] at (q7) {$2$};
\end{scope}
\end{tikzpicture}
\caption{Une différentielle quartique de 
% $\Omega^{4}\mathcal{M}_{0}(-1,3,6;\rec[-4][4])$ à gauche et 
$\Omega^{4}\mathcal{M}_{0}(-1,4,5;\rec[-4][4])$ dont les résidus sont $(1,1,1,1)$ à gauche et  une différentielle cubique de $\Omega^{3}\mathcal{M}_{0}(-1,1,3;\rec[-3][3])$ dont les résidus sont $(1,1,1)$ à droite.}\label{fig:k4troiszeros}
\end{figure} 
Concernant la strate $\Omega^{3}\mathcal{M}_{0}(2,3,7;\rec[-3][6])$, nous partons de la surface plate de la strate $\Omega^{3}\mathcal{M}_{0}(-1,1,3;\rec[-3][3])$ donnée à droite de la figure~\ref{fig:k4troiszeros}. Pour ajouter les trois pôles triples de résidus cubiques $(1,1,1)$ tout en ajoutant un angle conique de $2\pi$ et $4\pi$ respectivement au pôle simple et au zéro simple, nous utilisons la chirurgie du lemme~\ref{lem:chirurgietranslation}. Il suffit simplement de vérifier qu'il existe bien un lien-selle reliant le pôle simple et le zéro simple le long duquel découper une cicatrice et y insérer une surface de translation bien choisie. Le dessin montre qu'un tel lien-selle existe et que la construction est donc réalisable.
\end{proof}

Ensuite, nous donnons une construction spécifique aux cas où le scindage de zéros nécessite de partir de différentielles non primitives.

\begin{lem}\label{lem:n3divepolek}
Considérons les strates $\Omega^{k}\mathcal{M}_{0}(a_{1},a_{2},a_{3};\rec[-k][s])$ où $k \geq 3$. Si $\pgcd(a_{1},k) \notin \lbrace{ 1, \frac{k}{2} \rbrace}$, alors l'application résiduelle de la strate est surjective.
\end{lem}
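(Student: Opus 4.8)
Le plan est de produire ces $k$-différentielles par éclatement d'un zéro (proposition~\ref{prop:eclatZero}) à partir d'une strate à deux zéros, dont l'image de l'application résiduelle est déjà connue. Rappelons que $\komoduli[0](a_{1},a_{2},a_{3};(-k^{s}))$ ne contient que des différentielles primitives, donc $\pgcd(a_{1},a_{2},a_{3},k)=1$; en posant $d:=\pgcd(a_{1},k)>1$, ceci force $\pgcd(d,a_{2},a_{3})=1$. Fixons un $s$-uplet $(R_{1},\dots,R_{s})\in(\CC^{\ast})^{s}$ à réaliser. Dans le cas principal, nous fusionnerions les zéros $a_{2}$ et $a_{3}$, c'est-à-dire partirions de $\komoduli[0](a_{2}+a_{3},a_{1};(-k^{s}))$; comme $a_{2}+a_{3}\equiv-a_{1}$ modulo $k$, on a $\pgcd(a_{2}+a_{3},a_{1},k)=\pgcd(a_{1},k)=d>1$, donc cette strate n'est pas primitive, et par le lemme~\ref{lem:puissk} ses éléments sont les puissances $d$-ièmes $\eta_{0}^{d}$ de $(k/d)$-différentielles primitives $\eta_{0}$ de la strate à deux zéros $\Omega^{k/d}\moduli[0](a_{1}/d,(a_{2}+a_{3})/d;(-(k/d)^{s}))$.

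Grâce à la multiplicativité des $k$-résidus (équation~\eqref{eq:multiplires}), réaliser $(R_{1},\dots,R_{s})$ comme $k$-résidus d'une telle puissance revient à réaliser un $s$-uplet $(r_{1},\dots,r_{s})$ vérifiant $r_{i}^{d}=R_{i}$ comme $(k/d)$-résidus de la strate primitive ci-dessus, et de telles racines existent puisque les $R_{i}$ sont non nuls. Lorsque $k/d\geq3$, c'est exactement le cas $n=2$ du théorème~\ref{thm:geq0kspe}, établi dans la section~\ref{sec:constcask}; lorsque $k/d=2$, on invoque les théorèmes~1.3 et~1.7 de \cite{getaquad}, et lorsque $k/d=1$ le théorème~1.2 de \cite{getaab}. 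On dispose alors d'une $k$-différentielle $\xi_{0}=\eta_{0}^{d}$ de $\komoduli[0](a_{2}+a_{3},a_{1};(-k^{s}))$ ayant les $k$-résidus prescrits. Si $k/d\geq2$, la différentielle $\xi_{0}$ n'est pas la puissance $k$-ième d'une différentielle abélienne méromorphe — sinon $\eta_{0}$ serait, à une constante multiplicative près, la puissance $(k/d)$-ième d'une $1$-forme, ce qui contredirait sa primitivité puisque $k/d\geq2$ — si bien que la condition~(i) de la proposition~\ref{prop:eclatZero} est satisfaite: l'éclatement du zéro d'ordre $a_{2}+a_{3}$ en deux zéros d'ordres $a_{2}$ et $a_{3}$ ne modifie pas les $k$-résidus. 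La $k$-différentielle obtenue est de type $\mu=(a_{1},a_{2},a_{3};(-k^{s}))$, et comme $\pgcd(\mu,k)=1$ elle est automatiquement dans la strate primitive (lemme~\ref{lem:puissk}).

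Le point délicat sera de traiter les quelques cas où cette réduction n'est pas disponible telle quelle. D'une part, si $k/d=1$, c'est-à-dire $k\mid a_{1}$: si $a_{2}$ et $a_{3}$ sont tous deux premiers avec $k$ c'est la proposition~\ref{prop:n3zero2divpolek}; sinon l'un des $a_{2},a_{3}$, disons $a_{2}$, vérifie $d_{2}:=\pgcd(a_{2},k)>1$, et si $d_{2}<k$ on reprend l'argument précédent en fusionnant $\{a_{1},a_{3}\}$ (la strate $\komoduli[0](a_{1}+a_{3},a_{2};(-k^{s}))$ étant puissance $d_{2}$-ième d'une strate primitive de $(k/d_{2})$-différentielles, avec $k/d_{2}\geq2$), tandis que si $a_{1}$ et $a_{2}$ sont tous deux divisibles par $k$ on fusionne $\{a_{1},a_{2}\}$ et la strate $\komoduli[0](a_{1}+a_{2},a_{3};(-k^{s}))$ est primitive ($a_{3}$ étant premier avec $k$), l'éclatement du zéro d'ordre $a_{1}+a_{2}$ en deux zéros divisibles par $k$ étant permis par la condition~(ii) de la proposition~\ref{prop:eclatZero} — une $k$-différentielle de genre zéro de type $(a_{1},a_{2};-a_{1}-a_{2}-2k)$ à $k$-résidu nul existe car c'est la puissance $k$-ième d'une $1$-forme méromorphe sur $\PP^{1}$ n'ayant qu'un seul pôle, de résidu nul. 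D'autre part, la réduction peut conduire à l'une des strates sporadiques du théorème~\ref{thm:geq0kspe}: comme on garde la liberté du choix de la paire fusionnée, du zéro éclaté et des racines $d$-ièmes $r_{i}$ des $R_{i}$, l'une de ces options évite la liste sporadique dans presque tous les cas, et les rares configurations restantes se traitent par des constructions explicites de surfaces plates dans l'esprit de la section~\ref{sec:constcask} (un polygone dont une paire d'arêtes est recollée par rotation, complété par des demi-cylindres infinis).
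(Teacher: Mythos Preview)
There is a genuine gap in the main reduction. Merging $a_{2}$ and $a_{3}$ into a single zero of order $a_{2}+a_{3}$ presupposes $a_{2}+a_{3}>-k$, without which the stratum $\komoduli[0](a_{1},a_{2}+a_{3};(-k^{s}))$ simply does not exist. This condition fails precisely when $a_{1}>(s-1)k$: for instance with $k=6$, $s=2$ and $\mu=(8,-3,-5;-6,-6)$ one has $\pgcd(a_{1},k)=2$, the stratum is primitive, yet $a_{2}+a_{3}=-8<-6$ so your fusion is impossible. Your list of ``cas délicats'' (namely $k\mid a_{1}$ and the sporadic reductions) does not cover this situation. The paper treats it by a separate argument: when $a_{2}+a_{3}<-k$ one has $-k<a_{2},a_{3}<0$, so the alternative mergings $\{a_{1},a_{2}\}$ and $\{a_{1},a_{3}\}$ are always available; if $\pgcd(a_{2},k)>1$ or $\pgcd(a_{3},k)>1$ one reruns the argument with that index playing the role of $a_{1}$ (now the corresponding inequality $a_{j}<(s-1)k$ is automatic since $a_{j}<0$), and otherwise both alternative two-zero strata are \emph{primitive}, and one checks via the $n=2$ case of Theorem~\ref{thm:geq0kspe} that they cannot both be sporadic --- that would force $a_{2}=a_{3}=-1$, contradicting $a_{2}+a_{3}<-k$ for $k\geq3$.

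A secondary point: your handling of the sporadic reductions (``l'une de ces options évite la liste sporadique dans presque tous les cas, et les rares configurations restantes se traitent par des constructions explicites'') is not a proof. The paper's argument is sharper and avoids any ad hoc construction: once one is in the non-primitive situation with $m:=k/d\geq2$, there are $d^{s}$ choices of $d$-th roots $(r_{1},\dots,r_{s})$ lifting $(R_{1},\dots,R_{s})$, whereas each forbidden line $\CC^{\ast}\cdot v$ in the relevant $n=2$ list (Theorem~\ref{thm:geq0kspe} for $m\geq3$, or \cite{getaquad} for $m=2$) captures at most $d$ of them. Since $s\geq2$, some admissible choice of roots always remains.
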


\begin{proof}
Grâce à la proposition~\ref{prop:n3zero2divpolek}, nous pouvons nous contenter de traiter le cas où aucun des $a_{1},a_{2},a_{3}$ n'est un multiple de $k$. Nous distinguerons deux cas selon si $a_{1}>(s-1)k$ ou $a_{1}<(s-1)k$.

Si $a_{1}<(s-1)k$, alors $a_{2}+a_{3}>-k$ et la strate $\Omega^{k}\mathcal{M}_{0}(a_{1},a_{2}+a_{3};\rec[-k][s])$ n'est pas primitive (car toutes les singularités ont pour facteur commun $d=\pgcd(a_{1},k)$). Cependant, nous savons qu'elle n'est pas constituée de puissances de $1$-formes ou de différentielles quadratiques (car~$a_{1}$ n'est pas un multiple de $\frac{k}{2}$). Ce sont des puissances $d$-ièmes de $m$-différentielles où $dm=k$ avec $m \geq 3$. La proposition~\ref{prop:eclatZero} garantit que l'on peut éclater le zéro d'ordre $a_{2}+a_{3}$ sans modifier les résidus. Pour chaque configuration de $k$-résidus $(R_{1},\dots,R_{s})$, il y a plusieurs $m$-résidus $(\sqrt[d]{R_{1}},\dots,\sqrt[d]{R_{s}})$ compatibles avec l'équation~\eqref{eq:multiplires}. Il est facile de vérifier que tous ne peuvent pas être interdits dans la strate $\Omega^{k/d}\mathcal{M}_{0}(\frac{a_{1}}{d},\frac{a_{2}+a_{3}}{d}
;\rec[-m][s])$. Il suffit pour cela de se référer au cas $n=2$ (déjà démontré à ce stade) du théorème~\ref{thm:geq0kspe} (puisque $m \geq 3$).

Si $a_{1}>(s-1)k$, alors $a_{2}+a_{3}<-k$. Il s'ensuit que $-k<a_{2},a_{3}<0$. Si $\pgcd(a_{2},k) \notin \lbrace{ 1, \frac{k}{2} \rbrace}$, alors le raisonnement qui précède s'applique quitte à remplacer $a_{1}$ par $a_{2}$. Nous pouvons donc supposer que $\pgcd(a_{2},k),\pgcd(a_{3},k) \in \lbrace{ 1, \frac{k}{2} \rbrace}$. Comme $a_{2}$ et $a_{3}$ ne peuvent pas tous deux valoir $-\frac{k}{2}$ (sinon $a_{1}$ serait un multiple de $k$), alors l'un de ces deux nombres (disons $a_{2}$ sans perte de généralité) satisfait $-k< a_{2} < -\frac{k}{2}$ et $\pgcd(a_{2},k)=1$.

Les différentielles de la strate $\Omega^{k}\mathcal{M}_{0}(a_{2},a_{1}+a_{3};\rec[-k][s])$ sont donc primitives. Le cas $n=2$ du théorème \ref{thm:geq0kspe} nous apprend que pour une strate avec un zéro d'ordre négatif l'application résiduelle est toujours surjective sauf parfois quand ce zéro est d'ordre $-1$. Nous ne sommes pas dans ce cas. Il s'ensuit que l'application résiduelle de la strate $\Omega^{k}\mathcal{M}_{0}(a_{2},a_{1}+a_{3};\rec[-k][s])$ est surjective.
\end{proof}

Le cas de strates dans lequel l'ordre d'un zéro est un multiple impair de $\frac{k}{2}$ doit se traiter à part.

\begin{lem}\label{lem:quadrexpcept}
L'application résiduelle des strates $\Omega^{k}\mathcal{M}_{0}(a_{1},a_{2},\frac{kl}{2};\rec[-k][s])$ où $k \geq 3$, $l \in 2\mathbb{N}-1$, $\pgcd(a_{1},k)=\pgcd(a_{2},k)=1$   est surjective.
\end{lem}

\begin{proof}
Sans perte de généralité, nous supposerons $a_{1} \leq a_{2}$. Considérons d'abord le cas dans lequel $a_{1}+\frac{kl}{2} \leq -k$. Cela signifie en particulier que $l=-1$ et $-k < a_{1} < -\frac{k}{2}$. Comme dans les preuves précédentes, nous obtenons les différentielles de $\Omega^{k}\mathcal{M}_{0}(a_{1},a_{2},\frac{kl}{2};\rec[-k][s])$ par scindage en partant de la strate primitive $\Omega^{k}\mathcal{M}_{0}(a_{1},a_{2}+\frac{kl}{2};\rec[-k][s])$. Le cas $n=2$ du théorème~\ref{thm:geq0kspe} prouve que pour une strate avec un zéro d'ordre négatif différent de $-1$, l'application résiduelle est toujours surjective.

Dans un second cas, on a $a_{1}+\frac{kl}{2}> -k$. Cela signifie que l'on peut obtenir les différentielles de la strate $\Omega^{k}\mathcal{M}_{0}(a_{1},a_{2},\frac{kl}{2};\rec[-k][s])$ par scindage de zéro en partant des deux strates primitives $\Omega^{k}\mathcal{M}_{0}(a_{1},a_{2}+\frac{kl}{2};\rec[-k][s])$ et $\Omega^{k}\mathcal{M}_{0}(a_{2},a_{1}+\frac{kl}{2};\rec[-k][s])$. Ainsi, l'application résiduelle de $\Omega^{k}\mathcal{M}_{0}(a_{1},a_{2},\frac{kl}{2};\rec[-k][s])$ est surjective sauf peut-être si ces deux strates sont sporadiques (voir cas $n=2$ du théorème~\ref{thm:geq0kspe}). Sachant que $k$ est pair (puisque $\frac{k}{2}$ est un entier), ceci n'arrive que pour un petit nombre de strates de départ\footnote{En dehors du cas $(k,s)=(4,4)$, il n'y a pas plus d'une strate sporadique pour chaque valeur de $(k,s)$, ce qui implique que $a_{1}=a_{2}$ pour chacun de ces autres cas.}. Nous en donnons la liste ici (ainsi que les configurations de résidus interdites dans les deux strates sporadiques incidentes) :
\begin{enumerate}
    \item $\Omega^{k}\mathcal{M}_{0}(-1,-1,2;-k,-k)$ avec $k \geq 4$ pour les configurations de $\CC^{\ast}\cdot(1,1)$;
    \item $\Omega^{4}\mathcal{M}_{0}(-1,-1,6;-4,-4,-4)$  pour les configurations de $\CC^{\ast}\cdot(1,1,-4)$;
    \item $\Omega^{4}\mathcal{M}_{0}(3,3,2;-4,-4,-4,-4)$  pour les configurations de $\CC^{\ast}\cdot(1,1,1,1)$; 
    \item $\Omega^{4}\mathcal{M}_{0}(-1,-1,10;-4,-4,-4,-4)$ pour les configurations de $\CC^{\ast}\cdot(1,1,1,1)$;  
    \item $\Omega^{4}\mathcal{M}_{0}(-1,3,6;-4,-4,-4,-4)$ pour les configurations de $\CC^{\ast}\cdot(1,1,1,1)$;
    \item $\Omega^{4}\mathcal{M}_{0}(3,3,10;\rec[-4][6])$ pour les configurations de $\CC^{\ast}\cdot\rec[1][6]$.
\end{enumerate}
Pour réaliser la configuration $(1,1)$ (ou ses multiples) dans la strate
$\Omega^{k}\mathcal{M}_{0}(-1,-1,2;-k,-k)$ avec $k$ pair vérifiant $k \geq 4$, nous effectuons la construction suivante : dessinons un hexagone en reliant les points $0$, $1$, $1+\frac{1}{2}\exp\left(\frac{i\pi(k-1)}{k}\right)$, $1+i\sin(\pi/k)$, $i\sin(\pi/k)$ et $\frac{1}{2}\exp\left(\frac{i\pi}{k}\right)$ (dans cet ordre). Nous collons un cylindre sur les segments $[0,1]$ et $[1+i\sin(\pi/k),i\sin(\pi/k)]$ puis identifions les deux segments dans les deux paires restantes de côtés adjacents. Ainsi, nous obtenons bien une surface avec deux singularités coniques d'angle $\frac{(k-1)}{k}2\pi$ et une d'angle $\frac{(k+2)}{k}2\pi$.

Dans chacun des autres cas exceptionnels, nous allons réaliser un scindage de zéro en partant de l'une des quatre strates  de différentielles quadratiques 
\[\Omega^{2}\mathcal{M}_{0}(-1,3;-2,-2,-2),\,\Omega^{2}\mathcal{M}_{0}(1,3;\rec[-2][4]),\, \Omega^{2}\mathcal{M}_{0}(-1,5;\rec[-2][4]) \text{ et } \Omega^{2}\mathcal{M}_{0}(3,5;\rec[-2][6])\,.\]
% \begin{enumerate}
%     \item $\Omega^{2}\mathcal{M}_{0}(-1,3;-2,-2,-2)$;
%     \item $\Omega^{2}\mathcal{M}_{0}(1,3;-2,-2,-2,-2)$;
%     \item $\Omega^{2}\mathcal{M}_{0}(-1,5;-2,-2,-2,-2)$;
%     \item $\Omega^{2}\mathcal{M}_{0}(3,5;\rec[-2][6])$.
% \end{enumerate}
Dans les strates $\Omega^{2}\mathcal{M}_{0}(-1,3;-2,-2,-2)$ et $\Omega^{2}\mathcal{M}_{0}(-1,5;\rec[-2][4])$, toute configuration de résidus quadratiques dans laquelle les résidus ne sont pas tous sur la même demi-droite partant de l'origine est réalisable (théorème~1.9 de \cite{getaquad}). Il existe donc toujours une différentielle quadratique pour laquelle le scindage de zéro produira la différentielle quartique adéquate.

Dans les derniers cas, il s'agit de réaliser les configurations $\rec[1][4]$ dans $\Omega^{4}\mathcal{M}_{0}(3,3,2;\rec[-4][4])$ et $\rec[1][6]$ dans $\Omega^{4}\mathcal{M}_{0}(3,3,10;\rec[-4][6])$. Nous partons donc de différentielles réalisant les configurations de résidus quadratiques $(\rec[1][3],-1)$ et $(\rec[1][5],-1)$ dans $\Omega^{2}\mathcal{M}_{0}(1,3;-2,-2,-2,-2)$ et $\Omega^{2}\mathcal{M}_{0}(3,5;\rec[-2][6])$. Le théorème~1.9 de \cite{getaquad} montre l'existence de telles différentielles.
\end{proof}

Enfin, nous montrons qu'il n'existe pas d'obstruction pour les strates de $k$-différentielles de genre zéro avec $k\geq3$, n'ayant que des pôles d'ordre $-k$ et exactement trois zéros.

\begin{prop}\label{prop:n3polek}
L'application résiduelle de la strate $\Omega^{k}\mathcal{M}_{0}(a_{1},a_{2},a_{3};\rec[-k][s])$ est surjective pour $k\geq3$.
\end{prop}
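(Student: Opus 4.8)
The plan is to reduce to the case $n=2$ of Theorem~\ref{thm:geq0kspe}, already established in Section~\ref{sec:constcask}, by splitting a zero. Proposition~\ref{prop:n3zero2divpolek} settles the strata in which one of the $a_{i}$ is a multiple of $k$, and Lemma~\ref{lem:n3divepolek} settles those in which some $a_{i}$ is not coprime to $k$; it therefore remains to treat the case $\pgcd(a_{i},k)=1$ for $i=1,2,3$. Here the stratum $\komoduli[0](a_{1},a_{2},a_{3};(-k^{s}))$ is primitive by Lemma~\ref{lem:puissk}. Given a pair $\{i,j\}\subset\{1,2,3\}$ with $a_{i}+a_{j}>-k$, set $\{l\}=\{1,2,3\}\setminus\{i,j\}$. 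The two-zero stratum $\komoduli[0](a_{i}+a_{j},a_{l};(-k^{s}))$ is again primitive, since $\pgcd(a_{l},k)=1$ forces $\pgcd(a_{i}+a_{j},a_{l},k)=1$; in particular its $k$-differentials are not $k$-th powers of meromorphic abelian differentials, so condition~(i) of Proposition~\ref{prop:eclatZero} is satisfied. Splitting the zero of order $a_{i}+a_{j}$ into two zeros of orders $a_{i}$ and $a_{j}$ then produces a $k$-differential of type $(a_{1},a_{2},a_{3};(-k^{s}))$ with unchanged $k$-residues, automatically primitive since $\pgcd(a_{1},a_{2},a_{3},k)=1$. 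Hence the image of $\appresk[0](a_{1},a_{2},a_{3};(-k^{s}))$ contains the image of $\appresk[0](a_{i}+a_{j},a_{l};(-k^{s}))$ for every such pair.

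Next I would observe that at least one admissible pair exists. If $s=0$ the residual space is a point and the statement is trivial, so assume $s\geq1$; then $a_{1}+a_{2}+a_{3}=k(s-2)\geq-k$, and the three pairwise sums cannot all be $\leq-k$, for that would yield $2(a_{1}+a_{2}+a_{3})\leq-3k$, impossible for $k\geq3$. By the case $n=2$ of Theorem~\ref{thm:geq0kspe}, the residual map of each admissible two-zero stratum is surjective unless that stratum is one of the finitely many sporadic ones, in which case its image is $(\CC^{\ast})^{s}$ minus one or two explicit orbits $\CC^{\ast}\cdot v$. Since the image of the residual map of the three-zero stratum contains the union of these images, it is all of $(\CC^{\ast})^{s}$ unless every admissible pair gives a sporadic two-zero stratum and the corresponding forbidden loci share a common orbit.

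It then remains to treat the finitely many three-zero strata $\komoduli[0](a_{1},a_{2},a_{3};(-k^{s}))$ with all $a_{i}$ coprime to $k$ for which this last obstruction persists. Going through the list of sporadic strata in Theorem~\ref{thm:geq0kspe} shows that this forces $k\in\{3,4,6\}$ and singles out a short explicit list — for instance $\Omega^{3}\moduli[0](-1,2,2;(-3^{3}))$, whose admissible pairs produce the strata of cases~(2) and~(3), both forbidding $\CC^{\ast}\cdot(1^{3})$, and $\Omega^{3}\moduli[0](1,1,1;(-3^{3}))$, whose admissible pairs all produce the stratum of case~(3). For each such stratum and each residue configuration on the common forbidden orbit, I would construct a $k$-differential by hand, exhibiting a non-degenerate polygon whose edges are $k$-th roots of the prescribed residues together with two edges identified by a rotation, and gluing a half-infinite cylinder onto each root-edge, in the spirit of the constructions~(C1) and~(C2) of Section~\ref{sec:constcask} and of the end of the proof of Proposition~\ref{prop:n3zero2divpolek}; the third zero supplies the degree of freedom lacking in the two-zero constructions. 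Combined with Proposition~\ref{prop:n3zero2divpolek} and Lemma~\ref{lem:n3divepolek}, this establishes the surjectivity of $\appresk[0](a_{1},a_{2},a_{3};(-k^{s}))$ and finishes the proof of Theorem~\ref{thm:geq0kspe}.

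The main obstacle is precisely this last step: one has to carry out the enumeration with care — no admissible pairing should be overlooked and the intersection of the forbidden loci must be computed exactly — and then, for each of the surviving strata, actually produce an explicit polygon realizing the residues on the common forbidden orbit, which is where the real geometric work lies.
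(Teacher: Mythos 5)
Your reduction is exactly the paper's: Proposition~\ref{prop:n3zero2divpolek} and Lemma~\ref{lem:n3divepolek} dispose of the cases where some $a_{i}$ is divisible by, or shares a factor with, $k$; in the remaining case one splits a zero of a primitive two-zero stratum, invokes the $n=2$ case of Theorem~\ref{thm:geq0kspe}, and is left with the finitely many strata for which every admissible pairing lands on a sporadic stratum and the forbidden orbits have non-empty intersection. Your variant of ranging over all admissible pairings and intersecting the forbidden loci is sound and even slightly cleaner than the paper's fixed choice $(a_{1},a_{2}+a_{3})$ with $a_{1}\leq a_{2}\leq a_{3}$. Two points. First, you can trim the enumeration at once: when all three $a_{i}$ are coprime to $k$, they would all be odd if $k$ were even, and three odd integers cannot sum to $k(s-2)$; hence $k$ is odd, the sporadic cases with $k=4,6$ never occur, and only $k=3$ and the family $\komoduli[0](-1,1;-k,-k)$ survive — this is how the paper cuts the list down. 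Second, and this is the genuine shortfall, the decisive step is only announced in your write-up: after the reduction one must actually exhibit flat surfaces realizing the common forbidden residues in $\Omega^{3}\moduli[0](1,1,1;(-3^{3}))$, $\Omega^{3}\moduli[0](-1,2,2;(-3^{3}))$, $\Omega^{3}\moduli[0](2,2,2;(-3^{4}))$ and, for the configuration $(1^{6})$ only, in $\Omega^{3}\moduli[0](2,5,5;(-3^{6}))$. These constructions (Figures~\ref{fig:k3troiszerosbis} and~\ref{fig:k3troiszeros}, plus the explicit gluing given for $(2,5,5)$) are where the proof actually does its work; as it stands your proposal is a correct and complete reduction, but not yet a proof of the exceptional cases.
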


\begin{proof}
En utilisant la proposition~\ref{prop:n3zero2divpolek} ainsi que le lemme~\ref{lem:n3divepolek}, nous pouvons nous restreindre aux strates pour lesquelles $\pgcd(a_{i},k) \in \lbrace{ 1,\frac{k}{2} \rbrace}$ pour chaque $a_{i}$. Dans ces conditions, au plus l'un de ces ordres peut être un multiple de $\frac{k}{2}$ (si deux l'étaient, alors le troisième serait un multiple de $k$). Ce cas est traité dans le lemme~\ref{lem:quadrexpcept}. Nous supposerons donc que chacun des $a_{1},a_{2},a_{3}$ est premier avec $k$. De plus, $k$ ne peut pas être un nombre pair sans quoi les trois ordres $a_{1},a_{2},a_{3}$ devraient être tous impairs, ce qui est impossible. Nous pouvons donc supposer également que $k$ est impair.

Comme chacun des $a_{1},a_{2},a_{3}$ est premier avec $k$, en supposant $a_{1} \leq a_{2} \leq a_{3}$, la strate $\Omega^{k}\mathcal{M}_{0}(a_{1},a_{2}+a_{3};\rec[-k][s])$ est nécessairement primitive. Si cette strate n'est pas sporadique, cela démontre que l'application résiduelle de la strate $\Omega^{k}\mathcal{M}_{0}(a_{1},a_{2},a_{3};\rec[-k][s])$ est surjective. Si l'on se réfère au cas $n=2$ du théorème~\ref{thm:geq0kspe}, on constate que les seules strates sporadiques pour lesquelles $k$ est impair sont :
\begin{itemize}
    \item les strates de la forme $\Omega^{k}\mathcal{M}_{0}(-1,1;-k,-k)$ où les résidus interdits sont les $\CC^{\ast}(1,-1)$;
    \item six strates de différentielles cubiques ($k=3$) pour lesquelles le nombre $s$ de pôles satisfait $3 \leq s \leq 6$.
\end{itemize}

Supposons d'abord que $s=2$. Si $\Omega^{k}\mathcal{M}_{0}(a_{1},a_{2}+a_{3};\rec[-k][s])$ est une strate sporadique, alors $a_{1}=-1$ et $a_{2}+a_{3}=1$. Ceci implique que $a_{2}=-1$ et $a_{3}=2$. Or, nous pouvons toujours obtenir la $k$-différentielle voulue par scindage à partir de la strate $\Omega^{k}\mathcal{M}_{0}(-2,2;-k,-k)$ avec~$k$ impair.

Supposons maintenant que $s=3$. Si $\Omega^{3}\mathcal{M}_{0}(a_{1},a_{2}+a_{3},\rec[-3][3])$ est une strate sporadique, alors soit $a_{1}=-1$, soit $a_{1}=1$. Dans le premier cas, nous avons $a_{2}+a_{3}=4$ et donc $(a_{2},a_{3})$ vaut $(-1,5)$ ou $(2,2)$. Dans le second cas, nous avons $a_{2}+a_{3}=2$, ce qui implique que $a_{1}=a_{2}=a_{3}$. Les strates problématiques sont donc $\Omega^{3}\mathcal{M}_{0}(-1,-1,5;\rec[-3][3])$,
$\Omega^{3}\mathcal{M}_{0}(-1,2,2;\rec[-3][3])$ et $\Omega^{3}\mathcal{M}_{0}(1,1,1;\rec[-3][3])$. Pour la première de ces strates, on peut obtenir la $k$-différentielle voulue par scindage à partir de la strate $\Omega^{3}\mathcal{M}_{0}(-2,5;\rec[-3][5])$. Pour les deux autres, la construction d'une différentielle cubique dont les résidus sont $(1,1,1)$ est donnée dans la figure~\ref{fig:k3troiszeros}.

\begin{figure}[hbt]
\centering
\begin{tikzpicture}[scale=1.5]
%premiere construction
\begin{scope}[xshift=0cm]
 \foreach \i in {0,1,...,5}
    \coordinate (p\i)  at (60*\i:1); 
 \draw (p0) -- coordinate (q1) (p5);
  \draw (p4) -- coordinate (q2) (p3);
  \draw (p1) -- coordinate (q3) ++(120:1) coordinate (p6);
    \draw (p2) -- coordinate (q4) ++(60:1);
        \fill[fill=black!10]  (p0)  -- (p6) -- (p3) -- (p4) -- (p5) -- (p0);
 \fill[fill=black!10]  (p4)  -- ++(-90:.3) -- ++ (1,0)  -- (p5) -- (p4);
  \fill[fill=black!10]  (p0)  -- ++(30:.3) -- ++ (120:1)  -- (p1) -- (p0);
    \fill[fill=black!10]  (p2)  -- ++(150:.3) -- ++ (-120:1)  -- (p3) -- (p2);
\node[rotate=60] at (q1) {$1$};
\node[rotate=-60] at (q2) {$2$};
\node[rotate=-60] at (q3) {$1$};
\node[rotate=60] at (q4) {$2$};

 \draw (p0) -- ++(30:.3);
 \draw (p1) -- ++(30:.3);
 \draw (p2) -- ++(150:.3);
 \draw (p3) -- ++(150:.3);
 \draw (p4) -- ++(-90:.3);
 \draw (p5) -- ++(-90:.3);
 \foreach \i in {0,1}
     \fill (p\i)  circle (1pt); 
  \foreach \i in {4,5,6}
    \filldraw[fill=white] (p\i)  circle (1pt); 
    \foreach \i in {2,3}
    \filldraw[fill=red] (p\i)  circle (1pt); 
\end{scope}

%%%%%%%%%%%%%%%%%%%%%%%%%%%%%%
%seconde construction

\begin{scope}[xshift=3.5cm,yshift=-.5cm,scale=1.5]
\fill[fill=black!10]  (0,0) coordinate (p1)  -- ++(1,0)  coordinate[pos=.5] (q1)  coordinate (p2) --  ++(90:0.5) coordinate[pos=.5] (q2) coordinate (p3) -- ++(-30:.5) coordinate[pos=.5] (q3) coordinate (p4)   -- ++(60:1)  coordinate[pos=.6] (q4)  coordinate (p5) -- ++(180:1) coordinate[pos=.5] (q5)coordinate (p6) -- ++(-90:0.5) coordinate[pos=.5] (q6)coordinate (p7) -- ++(150:0.5) coordinate[pos=.5] (q7) coordinate (p8) -- (p1) coordinate[pos=.5] (q8);
        \fill[fill=black!10]  (p1)  -- ++(-90:.3)coordinate (r1) -- ++ (1,0)coordinate (r2)  -- (p2)  -- (p1);
 \fill[fill=black!10]  (p4)  -- ++(-30:.3)coordinate (r3) -- ++ (60:1) coordinate (r4) -- (p5) -- (p4);
        \fill[fill=black!10]  (p5)  -- ++(90:.3) coordinate (r5) -- ++ (-1,0)coordinate (r6)  -- (p6)  -- (p5);
    \fill[fill=black!10]  (p8)  -- ++(150:.3)coordinate (r7) -- ++ (240:1)coordinate (r8)  -- (p1) -- (p8);
    
    \draw (r8) -- (p1) -- (r1);
    \draw (r2) -- (p2) -- (p3) -- (p4) -- (r3);
      \draw (r4) -- (p5) -- (r5);
    \draw (r6) -- (p6) -- (p7) -- (p8) -- (r7);

    \node[rotate=90] at (q2) {$1$};
\node[rotate=-30] at (q3) {$2$};
\node[rotate=-30] at (q7) {$1$};
\node[rotate=90] at (q6) {$2$};
     \foreach \i in {1,2,8}
     \fill (p\i)  circle (1pt); 
  \foreach \i in {3,7}
    \filldraw[fill=white] (p\i)  circle (1pt); 
    \foreach \i in {4,5,6}
    \filldraw[fill=red] (p\i)  circle (1pt); 
\end{scope}

\begin{scope}[xshift=1.5cm,yshift=-2.5cm,scale=1.5]
%central
\fill[fill=black!10]  (0,0) coordinate (p1)  -- ++(1,0)  coordinate[pos=.5] (q1)  coordinate (p2) --  ++(-60:0.5) coordinate[pos=.5] (q2) coordinate (p3) -- ++(120:1) coordinate[pos=.5] (q3) coordinate (p4)   -- ++(-150:0.288)  coordinate[pos=.6] (q4)  coordinate (p5) -- ++(150:.288) coordinate[pos=.5] (q5)coordinate (p6) -- ++(-120:0.5) coordinate[pos=.5] (q6)coordinate (p7) -- ++(60:0.5) coordinate[pos=.5] (q7) coordinate (p7) -- ++(-120:1) coordinate[pos=.5] (q7) coordinate (p8)  --  (p1)  coordinate[pos=.5] (q8);

 \fill[fill=black!10] (p1) -- ++(0,-.5) -- ++ (1,0) -- (p2) -- cycle; 
 \fill[fill=black!10] (p3) -- ++(.5,.1) -- ++ (120:1) -- (p4) -- cycle; 
 \fill[fill=black!10] (p7)  -- ++(-.5,.1) -- ++ (-120:1) -- (p8) -- cycle; 
 \draw (p2) --(p3);
 \draw (p4) -- (p5) -- (p6);
 \draw (p8) -- (p1);
 \draw (p1) -- ++(0,-.5);
 \draw (p2) -- ++(0,-.5);
 \draw (p3) -- ++(.5,.1);
 \draw (p4) -- ++(.5,.1);
 \draw (p7) -- ++(-.5,.1);
 \draw (p8) -- ++(-.5,.1);

 \foreach \i in {1,2}
     \fill (p\i)  circle (1pt); 
  \foreach \i in {3,4,6,7,8}
    \filldraw[fill=white] (p\i)  circle (1pt); 
    \foreach \i in {5}
    \filldraw[fill=red] (p\i)  circle (1pt); 
    
    \node[above,rotate=-60] at (q2) {$1$};
   \node[below,rotate=-120] at (q8) {$1$};
   \node[right,rotate=-60] at (q4) {$2$};
     \node[left,rotate=60] at (q5) {$2$};
\end{scope}
\end{tikzpicture}
\caption{Différentielles cubiques des strates $\Omega^{3}\mathcal{M}_{0}(1,1,1;\rec[-3][3])$ (à gauche), $\Omega^{3}\mathcal{M}_{0}(-1,2,2;\rec[-3][3])$ (en bas) et $\Omega^{3}\mathcal{M}_{0}(2,2,2;\rec[-3][4])$ (à droite) dont les résidus sont  $(1,1,1)$ dans les deux premiers cas et $(1,1,-1,-1)$ dans le dernier.}\label{fig:k3troiszeros}
\end{figure} 

Supposons maintenant que $s=4$. Si $\Omega^{3}\mathcal{M}_{0}(a_{1},a_{2}+a_{3},\rec[-3][4])$ est une strate sporadique, alors $a_{1}=2$ et $a_{2}+a_{3}=4$. Il s'ensuit que $a_{1}=a_{2}=a_{3}=2$. La construction d'une différentielle cubique de $\Omega^{3}\mathcal{M}_{0}(2,2,5;\rec[-3][4])$ dont les résidus sont $(1,1,-1,-1)$ est donnée à droite la figure~\ref{fig:k3troiszeros}.

Supposons ensuite que $s=5$. Si $\Omega^{3}\mathcal{M}_{0}(a_{1},a_{2}+a_{3},\rec[-3][5])$ est une strate sporadique, alors $a_{1}=2$ et $a_{2}+a_{3}=7$. Il s'ensuit que $a_{2}=2$ et $a_{3}=5$ (car $a_{2}$ est premier avec $k=3$). Nous pouvons obtenir la $k$-différentielle voulue par scindage à partir de la strate $\Omega^{3}\mathcal{M}_{0}(4,5;\rec[-3][5])$.

Traitons enfin le cas $s=6$. Si $\Omega^{3}\mathcal{M}_{0}(a_{1},a_{2}+a_{3},\rec[-3][6])$ est une strate sporadique, celle-ci ne peut pas être $\Omega^{3}\mathcal{M}_{0}(5,7,\rec[-3][6])$ car $a_{2}+a_{3} \geq 2a_{1}$. Donc la strate sporadique est $\Omega^{3}\mathcal{M}_{0}(2,10,\rec[-3][6])$. Comme les $a_{1},a_{2},a_{3}$ ne sont pas divisibles par $3$, il s'ensuit que $a_{1}=2$ et $a_{2}=a_{3}=5$. Le résidu $\rec[1][6]$ nécessite une construction spécifique donnée. On part de la $3$-différentielle de $\Omega^{3}\mathcal{M}_{0}(-1,2,2;\rec[-3][3])$ représentée en bas de la figure~\ref{fig:k3troiszeros}.

Pour ajouter les trois pôles triples de résidus cubiques $(1,1,1)$ tout en ajoutant un angle conique de $2\pi$ et $4\pi$ respectivement au pôle simple et à un zéro double, nous utilisons la chirurgie du lemme~\ref{lem:chirurgietranslation}. Il suffit simplement de vérifier qu'il existe bien un lien-selle reliant le pôle simple et un zéro double le long duquel découper une cicatrice et y insérer une surface de translation bien choisie.
\end{proof}

Le théorème \ref{thm:geq0kspe} peut désormais être démontré en toute généralité.

\begin{proof}[Preuve du théorème \ref{thm:geq0kspe}]
Lorsque $n=2$, la caractérisation a déjà été établie dans les sections~\ref{sec:spora},~\ref{sub:5N2negatif} et~\ref{sub:mechant}. La proposition~\ref{prop:n3polek} établit la surjectivité de l'application $k$-résiduelle lorsque $n=3$. Il reste à la démontrer pour $n \geq 4$. La preuve se fait par scindage de zéros à partir du cas $n=3$.

Le lemme~\ref{lem:combiscindage} garantit que nous pouvons toujours obtenir toute strate avec $n \geq 4$ à partir d'une strate vérifiant $n=3$ et qui n'est pas constituée par des $k$-différentielles qui sont des puissances de différentielles abélienne ou quadratiques. L'équation~\eqref{eq:multiplires} établit alors la surjectivité de l'application $k$-résiduelle des strates avec $n \geq 4$ zéros.
\end{proof}

\section{$k$-différentielles en genre supérieur ou égal à un}
\label{sec:ggeq1}

Dans cette section nous traitons le cas des strates de différentielles en genre $g\geq1$. Plus précisément, nous prouvons les théorèmes~\ref{thm:CC1} et~\ref{thm:ggeq2}, dans la section~\ref{sec:pluri1} pour les strates ayant au moins un pôle d'ordre strictement inférieur à~$-k$ et dans la section~\ref{sec:pluri2} si tous les pôles sont d'ordre~$-k$. Le théorème~\ref{thm:strateshol} sur les strates sans pôles est prouvé dans la section~\ref{sec:plurifini}.

\subsection{Rappels sur les composantes connexes des strates}
\label{sec:rapcomp}

Nous rappelons certains résultats sur les composantes connexes des strates de $k$-différentielles.
La classification complète  n'est pas connue pour $k\geq3$, toutefois, le cas $g=1$ est bien connu pour tout $k$ et \cite{chge} décrit certaines composantes pour tout $g\geq2$. 
\smallskip
\par
\paragraph{\bf Les composantes en genre $g=1$.}
Du point de vue plat, les composantes connexes sont caractérisées par le nombre de rotation $\rot(S)$ de la surface plate associée à la $k$-différentielle~$\xi$. Une discussion détaillée est donnée dans la section~2.4 de \cite{chge}.  Pour une surface plate $S$ définie par une $k$-différentielle de $\Omega^{k}\mathcal{M}_{1}(a_{1},\dots,a_{n};-b_{1},\dots,-b_{p})$ avec une base symplectique de lacets lisses de l'homologie $(\alpha,\beta)$ le {\em nombre de rotation} est

\begin{equation}\label{eq:nbrotation}
\rot(S):=\pgcd\left(a_{1},\dots,a_{n};b_{1},\dots,b_{p},\ind(\alpha),\ind(\beta)\right)\,,
\end{equation}
où $\ind(\cdot)$ est l'indice de l’application de Gauss du lacet. L'indice est égal au nombre de $\tfrac{1}{k}$-tours effectués par son vecteur tangent. On a alors le résultat suivant.

\begin{lem}\label{lem:nbrotationg1}
Si $n=p=1$, la strate est $\Omega^{k}\mathcal{M}_{1}(a;-a)$ avec $a\geq2$ et chaque composante connexe correspond à un nombre de rotation qui est un diviseur strict de $a$.
\par
Si au contraire $n+p \geq 3$, il existe une composante connexe correspondant à chaque nombre de rotation qui est un diviseur de $\pgcd(a_{1},\dots,a_{n};b_{1},\dots,b_{p})$.
\end{lem}

La composante de $\komoduli[1](\mu)$ associée au nombre de rotation $\rot = \rho$ est notée $\komoduli[1]^{\rho}(\mu)$. La restriction de l'application résiduelle $\appresk[1](\mu)$ à cette composante est notée $\appreskrho[1](\mu)$.
\par
On termine le cas de genre $g=1$ en énonçant deux résultats qui se déduisent simplement des propriétés élémentaires du nombre de rotation. Le premier résultat se déduit de la proposition~3.13 de \cite{chge}.
\begin{lem}\label{lem:rotprim}
Une composante $\komoduli[1]^{\rho}(\mu)$ paramètre des $k$-différentielles primitives si et seulement si $\rho$ est premier avec $k$.
\end{lem}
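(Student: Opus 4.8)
The plan is to deduce Lemma~\ref{lem:rotprim} from the algebraic computation of the rotation number in Proposition~3.13 of \cite{chge}. The underlying statement I want is: for a divisor $d$ of $k$, a $k$-differential $\xi$ on a genus one curve is the $d$-th power of a $(k/d)$-differential if and only if $d\mid\rot(\xi)$. Granting this, the lemma is immediate, since the rotation number is constant equal to $\rho$ on the component $\komoduli[1]^{\rho}(\mu)$, and $\xi$ fails to be primitive exactly when it is a $d$-th power of a $(k/d)$-differential for some $d>1$ dividing $k$, i.e.\ exactly when $\pgcd(\rho,k)>1$; in particular this also shows that primitivity is constant on each connected component.

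First I would handle the easy implication geometrically. If $\xi=\eta^{d}$ with $\eta$ a $(k/d)$-differential and $d\mid k$, then every order of a zero or pole of $\xi$ is $d$ times the corresponding order of $\eta$, and the field of horizontal directions of $\xi$ refines that of $\eta$, so the index of the Gauss map along any smooth loop $\gamma$ satisfies $\ind_{\xi}(\gamma)=d\cdot\ind_{\eta}(\gamma)$. Hence $d$ divides all of the integers $a_{i},b_{j},\ind(\alpha),\ind(\beta)$ entering the definition of $\rot$, so $d\mid\rot(\xi)$. Consequently, if $\pgcd(\rho,k)=1$ no such $d>1$ exists, and every $k$-differential in $\komoduli[1]^{\rho}(\mu)$ is primitive.

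For the converse I would make the other implication explicit and then invoke Proposition~3.13. Writing $X=\CC/\Lambda$ and $\xi=f(z)(dz)^{k}$ with $\divisor{f}=\sum a_{i}[z_{i}]-\sum b_{j}[w_{j}]$, one checks by Abel's theorem that $\xi$ is the $e$-th power of a $(k/e)$-differential precisely when $e$ divides every $a_{i}$ and $b_{j}$ and the point $\sum\tfrac{a_{i}}{e}z_{i}-\sum\tfrac{b_{j}}{e}w_{j}$ — which is then $e$-torsion — vanishes in $X$. Proposition~3.13 of \cite{chge} computes $\rot(\xi)$ as the gcd of the orders $a_{i},b_{j}$ together with an integer invariant detecting exactly the order of this point, so if $e:=\pgcd(\rho,k)>1$ then $e\mid\rot(\xi)$ forces both conditions, and $\xi$ is the $e$-th power of a $(k/e)$-differential, hence not primitive. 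The main obstacle I anticipate is bookkeeping: one must match the normalisation of the index invariant in \cite{chge} (counting $\tfrac1k$-turns) with the refinement of horizontal directions used above, and extract from Proposition~3.13 the divisibility statement in precisely the form stated. A routine auxiliary point is that the set of $d\mid k$ for which $\xi$ is a $d$-th power is closed under divisors, since $\eta^{d/d'}$ is a $(k/d')$-differential whenever $d'\mid d$; this is what lets one replace ``some $d>1$'' by the cleaner condition $\pgcd(\rho,k)>1$.
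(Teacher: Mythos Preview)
Your proposal is correct and follows the same route as the paper, which simply records that the lemma is a direct consequence of the algebraic computation of the rotation number in Proposition~3.13 of \cite{chge}. You supply considerably more detail than the paper does---in particular the clean intermediate characterisation ``$\xi$ is a $d$-th power iff $d\mid\rot(\xi)$'' and the explicit Abel-theorem reformulation---but the underlying argument is the same.
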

L'éclatement d'un zéro d'une différentielle décrite dans la  proposition~\ref{prop:eclatZero} change le nombre de rotation de la façon suivante.
\begin{lem}
 Le nombre de rotation d'une différentielle obtenue en éclatant un zéro d'une différentielle de nombre de rotation $\rho$ est égal à $\rho$ modulo les ordres des nouveaux zéros. 
\end{lem}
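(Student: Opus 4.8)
Le plan est d'exploiter le caractère \emph{local} de l'éclatement d'un zéro et le fait que le nombre de rotation est un pgcd d'ordres et d'indices. D'abord, je rappellerais que, d'après la proposition~\ref{prop:eclatZero} (et le lissage du lemme~\ref{lem:lissdeuxcomp} sur lequel elle repose), l'éclatement du zéro $z_0$ d'ordre $a_0$ en zéros d'ordres $(\alpha_1,\dots,\alpha_t)$ avec $\sum_i \alpha_i = a_0$ s'obtient en attachant à $z_0$ une droite projective puis en lissant. Ce lissage est supporté dans un voisinage $U$ de $z_0$ aussi petit que voulu : en dehors de $U$, les surfaces plates $(X,\xi)$ et $(X',\xi')$ coïncident, structures plates comprises. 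En particulier, l'opération ne change pas le genre, et $X'$ reste de genre~$1$, de sorte que son nombre de rotation est bien défini.

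Ensuite, comme $X$ est de genre~$1$ et que la modification a lieu dans le disque $U$, je choisirais une base symplectique $(\alpha,\beta)$ de l'homologie représentée par des lacets lisses contenus dans le complémentaire $X\setminus U$, évitant toutes les singularités. Le tore épointé $X\setminus U = X'\setminus U'$ se rétracte sur un bouquet de deux cercles portant cette base, et reboucher le disque n'altère pas $H_1$; ces mêmes lacets forment donc encore une base symplectique de $H_1(X')$. Or l'indice $\ind(\gamma)$ de l'application de Gauss ne dépend que de la structure plate le long de $\gamma$; comme $\xi$ et $\xi'$ coïncident le long de $\alpha$ et de $\beta$, on a $\ind_{\xi'}(\alpha)=\ind_{\xi}(\alpha)$ et $\ind_{\xi'}(\beta)=\ind_{\xi}(\beta)$.

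Il resterait alors un calcul élémentaire de pgcd. En notant $G:=\pgcd\bigl(\{a_i : i\neq 0\},b_1,\dots,b_p,\ind(\alpha),\ind(\beta)\bigr)$ la contribution inchangée des autres singularités et des indices, on a $\rho=\pgcd(a_0,G)$ tandis que la différentielle éclatée vérifie $\rho'=\pgcd(\alpha_1,\dots,\alpha_t,G)$, car seuls les ordres du zéro éclaté changent (les nouveaux $\alpha_i>-k$ restent des zéros et les pôles $-b_j$ sont intacts). Comme $a_0=\sum_i\alpha_i$ est une combinaison entière des $\alpha_i$, il est redondant dans le pgcd, d'où
\begin{equation*}
\rho'=\pgcd(\alpha_1,\dots,\alpha_t,G)=\pgcd(a_0,\alpha_1,\dots,\alpha_t,G)=\pgcd(\rho,\alpha_1,\dots,\alpha_t).
\end{equation*}
C'est exactement l'affirmation que $\rho'$ est égal à $\rho$ modulo les ordres des nouveaux zéros.

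L'obstacle principal est la première étape : établir rigoureusement que l'éclatement est local, c'est-à-dire que le support de la modification est un disque et que la structure plate est préservée hors de celui-ci. Cela s'extrait de la construction par lissage de la proposition~\ref{prop:eclatZero}, et s'accompagne du fait standard qu'en genre~$1$ une base symplectique de l'homologie peut toujours être réalisée par des lacets lisses évitant un disque donné. Une fois ces deux points acquis, la préservation des indices de Gauss est immédiate et le reste n'est qu'arithmétique sur le pgcd.
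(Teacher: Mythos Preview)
Ton argument est correct et suit exactement l'approche que l'article ne fait qu'esquisser en renvoyant à la proposition~\ref{prop:eclatZero} : localité de l'éclatement, choix d'une base symplectique hors du disque de modification, préservation des indices de Gauss, puis calcul élémentaire de pgcd. C'est précisément le détail du \og sans difficultés \fg{} du texte.
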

\smallskip
\par
\paragraph{\bf Les composantes en genre $g\geq2$.}
Les composantes ne sont pas connues dans ce cas. Toutefois, \cite{chge} décrit deux types de composantes : les composantes hyperelliptiques et les composantes paires et impaires.

Les composantes paires et impaires sont distinguées par la parité de leur revêtement canonique. Elles existent pour $k$ impair mais pas pour $k$ pair. Pour nous la propriété importante de ces composantes est qu'elles peuvent être obtenues en cousant des anses et en éclatant des zéros à partir d'une $k$-différentielle primitive de genre~$1$ (voir la section~5.3 de \cite{chge}).

Les composantes hyperelliptiques sont composées de $k$-différentielles $(X,\xi)$ telles que $X$ est hyperelliptique et $\xi$ est $(-1)^{k}$-invariante par l'involution hyperelliptique. Lorsque la $k$-différentielle possède un pôle, le théorème~1.1 de \cite{chge} donne quatre types de partitions $\mu$ possibles :
\begin{enumerate}[i)]
 \item $\mu=(2m_{1},-2m_{2})$,
 \item $\mu=(m_{1},m_{1},-2m_{2})$,
 \item $\mu=(2m_{1},-m_{2},-m_{2})$,
 \item $\mu =(m_{1},m_{1},-m_{2},-m_{2})$,
\end{enumerate}
où $2m_{1}-2m_{2}=k(2g-2)$. Nous sommes intéressés par les cas où ces $k$-différentielles sont primitives et possèdent un pôle d'ordre divisible par $k$. Nous montrons maintenant que ces $k$-différentielles n'existent pas.
\begin{lem}\label{lem:nocomphyp}
Les composantes hyperelliptiques primitives possédant au moins un pôle d'ordre divisible par $k\geq2$ n'apparaissent que si $k=2$.
\end{lem}
Notons que ce sont les composantes qui sont données dans le théorème~1.3 de \cite{chge}. En particulier, il n'y a pas de composantes hyperelliptiques qui nous intéressent.
\begin{proof}
Supposons que nous sommes dans le cas $\mu=(2m_{1},-2m_{2})$. Comme $2m_{2}$ est divisible par $k$ et que $2m_{1}-2m_{2}=2k(g-1)$, cette partition est de la forme $(kn_{1},-kn_{2})$. Si $k$ est impair, alors les $n_{i}$ sont pairs et ces différentielles sont les puissance $k$-ièmes de différentielles abéliennes hyperelliptiques de type $(n_{1},-n_{2})$. Dans le cas où $k$ est pair, on peut diviser $k$ et~$\mu$ par un même diviseur de $k$ et obtenir une partition de la forme $\mu'=(2m_{1}',-2m_{2}')$ sauf si $k=2$ et $m_{1},m_{2}$ sont impairs.

Nous donnons maintenant les arguments dans le cas $\mu =(m_{1},m_{1},-m_{2},-m_{2})$. Dans ce cas, par hypothèse $m_{2}=kn_{2}$. Comme $2m_{1}-2m_{2}=2k(g-1)$, on en déduit que $m_{1}$ est divisible par $k$. La partition est donc $\mu =(kn_{1},kn_{1},-kn_{2},-kn_{2})$ et ces différentielles sont les puissances $k$-ièmes de différentielles abéliennes hyperelliptiques de type $(n_{1},n_{1},-n_{2},-n_{2})$.

Les autres cas sont similaires à ces deux précédents.
\end{proof}

\subsection{$k$-différentielles avec un pôle d'ordre strictement inférieur à~$-k$}
\label{sec:pluri1}

Dans cette section, nous considérons les strates avec au moins un pôle d'ordre~$<-k$.
 Nous traitons le cas des strates en genre $g=1$ dans les lemmes~\ref{lem:geq1CC} à~\ref{lem:geq1qua}. Le cas des strates de genre $g\geq2$ est traité dans le lemme~\ref{lem:geq1quabis}.
\smallskip
\par
Nous commençons par le cas avec un unique zéro et un unique pôle.
\begin{lem}\label{lem:geq1CC}
L'image de l'application $k$-résiduelle de $\Omega^{3}\mathcal{M}_{1}^{1}(6;-6)$ est $\CC^{\ast}$.\newline
Elle est surjective pour les composantes  $\Omega^{k}\mathcal{M}_{1}^{\rho}(m;-m) \neq \Omega^{3}\mathcal{M}_{1}^{1}(6;-6) $ où  $m>k\geq3$.
\end{lem}

\begin{proof}
Si $k\nmid m$, la surjectivité de l'application résiduelle de $\komoduli[1]^{\rho}(m;-m)$ est équivalent au fait que la composante soit non vide. Ceci est une conséquence élémentaire du théorème d'Abel. \`A partir de maintenant, nous supposons que $k\mid m$. 

D'après le lemme~\ref{lem:rotprim}, les composantes $\komoduli[1]^{\rho}(\ell k,-\ell k)$ avec $\ell\geq2$ sont primitives si et seulement si $\rho$ est un diviseur de $\ell$ premier avec~$k$.

Commençons par les cas des composantes $\komoduli[1]^{\rho}(2k,-2k)$ et où le résidu est non nul. Le lemme~\ref{lem:rotprim} implique soit que $\rho=1$, soit que $\rho=2$ si $k$ est impair. On part de la $k$-partie polaire non triviale de type $k\ell$ associée à $(\emptyset;v_{1},v_{2},v_{3},v_{4})$ avec $v_{1}=v_{2}=1$ et $v_{3}=v_{4}= \exp\left(\frac{(k-2\rho)\pi}{k}\right)$. Cette partie polaire est représentée sur la figure~\ref{fig:a-a}.
 \begin{figure}[htb]
 \centering
\begin{tikzpicture}[scale=1,decoration={
    markings,
    mark=at position 0.5 with {\arrow[very thick]{>}}}]
\fill[fill=black!10] (0,0) --(4,2) -- (8,0)  -- ++(0,2.4) -- ++(-8,0) -- cycle;

      \draw (0,0) coordinate (a1) --coordinate[pos=.5] (x1) node [above,sloped] {$v_{1}$} (2,1) coordinate (a2) --coordinate[pos=.5] (x2) node [above,sloped] {$v_{2}$} (4,2) coordinate (a3) --coordinate[pos=.5] (x3) node [above,sloped] {$v_{3}$} (6,1) coordinate (a4) --coordinate[pos=.5] (x4) node [above,sloped] {$v_{4}$} (8,0)
coordinate (a5);
  \foreach \i in {1,2,...,5}
  \fill (a\i) circle (2pt);

  \draw (a1)-- ++(0,2.5)coordinate[pos=.6](b);
    \draw (a5)-- ++(0,2.5)coordinate[pos=.6](c);
    \node[left] at (b) {$3$};
     \node[right] at (c) {$3$};
  \draw[->] (3.6,1.8) arc  (200:340:.4); \node at (4,1.3) {$2\rho\pi/k$};
  
      \draw[postaction={decorate},red] (x1) .. controls ++(-60:1.4) and ++(-120:1.5)  .. (x3) coordinate[pos=.5](y1);
     \node[below,red] at (y1) {$\alpha$};

   \draw[postaction={decorate},blue] (x4) .. controls ++(-120:1.5) and ++(-60:1.4)  ..  (x2)coordinate[pos=.5](y2);
     \node[below,blue] at (y2) {$\beta$};

\end{tikzpicture}
\caption{Une $k$-différentielle de $\Omega^{k}\mathcal{M}_{1}(2k;-2k)$ (en blanc)  et  $\Omega^{k}\mathcal{M}_{1}(k;-k)$ (en gris).}
\label{fig:a-a}
\end{figure}
On colle $v_{1}$ à $-v_{3}$ et $v_{2}$ à $-v_{4}$ par rotation d'angle $2\rho\pi/k$. Dans ce cas les indices de $\alpha$ et $\beta$ sont égaux à $\pm \rho$. L'équation~\eqref{eq:nbrotation} montre que le nombre de rotation de cette différentielle est $\rho$.
\par
Les composantes $\komoduli[1]^{\rho}(\ell k,-\ell k)$ avec $\ell\geq2$ et le résidu est non nul sont obtenues de la façon suivante. On écrit $\rho=\rho' + kr$ avec $0<\rho'<k$.
On part d'une $k$-différentielle comme représentée sur la figure~\ref{fig:a-a} donc l'angle entre $v_{2}$ et $v_{3}$ est $2\rho'\pi/k$. On la coupe le long d'une demi-droite qui part du sommet d'angle $2\rho'\pi/k$ et on colle cycliquement~$r$ plans.  Les autres plans sont collés cycliquement le long d'une demi-droite partant du sommet correspondant au point initial de $v_{1}$.
\smallskip
\par
Nous traitons maintenant le cas où le $k$-résidu au pôle est nul. Considérons les composantes $\komoduli[1]^{\rho}(2k;-2k)$.  Pour chaque $0<r<\frac{k}{2}$ la figure~\ref{fig:a-a,pasderes} montre une $k$-différentielle de la strate $\komoduli[1](2k;-2k)$. Un peu de géométrie élémentaire montre que $\ind(\alpha)=\ind(\beta) = k-r$. Comme soit $\rho=1$, soit $\rho=2$ et $k$ est impair, on obtient une $k$-différentielle dans chaque composante $\komoduli[1]^{\rho}(2k;-2k)$ sauf dans le cas où $k=3$ et $\rho=1$.
\begin{figure}[htb]
 \centering
\begin{tikzpicture}[scale=1,decoration={
    markings,
    mark=at position 0.5 with {\arrow[very thick]{>}}}]
    \fill[fill=black!10] ++(10:2.3)  ellipse (3cm and 2cm);

      \draw (0,0) coordinate (a1) -- node [below left,sloped] {$1$} node [above left,sloped] {$2$}  ++(30:2.5) coordinate (a2) coordinate[pos=.5] (x1) --  node [below right,sloped] {$2$}node [above right,sloped] {$1$} ++(-30:2.5)
coordinate (a3)coordinate[pos=.5] (x2);
  \foreach \i in {1,2,...,3}
  \fill (a\i) circle (2pt);

  \draw[->] (a2)++(30:-.4) arc  (-150:-30:.4); \node at (15:2.2) {$\frac{(k-2r)\pi}{k}$};

    \draw[postaction={decorate},blue] (x1) .. controls ++(120:.6) and ++(90:1.4) .. (-1,-.1)  coordinate(y1)
                 .. controls ++(270:1.4) and ++(-120:1) .. (x2);
  
  \node[left,blue] at (y1) {$\beta$};

   \draw[postaction={decorate},red] (x1) .. controls ++(-60:1) and ++(-90:1.4) .. (5.5,-.1)  coordinate(y2)
                 .. controls ++(90:1.4) and ++(60:.6) .. (x2);
  
  \node[right,red] at (y2) {$\alpha$};
\end{tikzpicture}
\caption{Une $k$-différentielle sans $k$-résidu dans $\Omega^{k}\mathcal{M}_{1}(2k;-2k)$.}
\label{fig:a-a,pasderes}
\end{figure}

On considère maintenant une composante $\komoduli[1]^{\rho}(\ell k,-\ell k)$ avec $\ell\geq3$. Le nombre de rotation $\rho$ est un diviseur de $k\ell$ premier avec $k$, donc un diviseur de $\ell$. Pour obtenir une telle différentielle nous partons de la $k$-différentielle représentée dans la figure~\ref{fig:a-a,pasderes}. En coupant le long d'une demi-droite partant du sommet gauche du $\wedge$ et en collant cycliquement $s$  $\tfrac{1}{k}$-plans, l'indice de~$\beta$ augmente de $s$ sans changer l'indice du lacet $\alpha$. De même, on peut changer l'indice du lacet~$\alpha$ sans changer l'indice de $\beta$ en coupant à partir du sommet à droite du $\wedge$. En partant du sommet intérieur du $\wedge$, on augmente l'indice des deux lacets de la même quantité et en partant du sommet extérieur, on ne change pas les indices. On peut donc obtenir une $k$-différentielle de $\komoduli[1]^{\rho}(\ell k,-\ell k)$  dont le résidu est nul telle que
$$\frac{k}{2} < \ind(\alpha),\ind(\beta) < (\ell-1) k \,.$$
Comme $\rho$ est un diviseur de $\ell$ premier avec $k$, il satisfait $\rho \leq \ell$. Donc si $\rho>\tfrac{k}{2}$, alors la construction ci-dessus  permet d'obtenir $\ind(\alpha)=\ind(\beta) = \rho$. Dans le cas où  $1\leq \rho <\tfrac{k}{2}$, on choisit $\frac{k}{2\rho}\leq s_{1},s_{2} \leq \frac{ (\ell-1) k}{\rho}$ tels que $\pgcd (s_{1},s_{2},k\ell) = 1$. Cela est toujours possible, par exemple en prenant $s_{2}= s_{1}+1$ et $s_{1}$ le plus petit entier strictement supérieur à $\frac{k}{2}$. On utilise alors la construction ci-dessus pour obtenir une $k$-différentielle avec $\ind(\alpha) = \rho s_{1}$ et $\ind(\beta) = \rho s_{2}$. Cette $k$-différentielle possède le nombre de rotation et les invariants souhaités.
\smallskip
\par
Enfin il reste à montrer que l'application de la résiduelle de la composante $\Omega^{3}\mathcal{M}_{1}^{1}(6;-6)$ ne contient pas l'origine. Supposons qu'une telle différentielle $\xi_{1}$ existe. On sait qu'il existe une  différentielle $\xi_{2}$ dans $\Omega^{3}\mathcal{M}_{1}^{2}(6;-6)$ dont le résidu est nul. On pourrait alors obtenir deux différentielles cubiques entrelacées, en attachant le cube d'une différentielle abélienne de $\omoduli[1](\emptyset)$ à $\xi_{1}$ et $\xi_{2}$ respectivement. Le lemme~5.8 de \cite{chge} implique que les parités des différentielles cubiques obtenues par lissage sont différentes. Or le théorème~1.2 de \cite{chge} nous dit que la strate $\Omega^{3}\mathcal{M}_{2}(6)$ est connexe.
\end{proof}

Nous traitons maintenant le cas des composantes en genre~$g=1$ avec un unique zéro et au moins deux pôles. Nous commençons par le cas où tous les pôles ne sont pas d'ordres inférieurs divisibles par $k$.

\begin{lem}\label{lem:geq1comporigin}
Étant donnée  une partition de zéro $\mu=(a;-b_{1},\dots,-b_{p};-c_{1},\dots,-c_{r};\rec[-k][s])$   telle que soit  $r\neq 0$ soit $p,s\geq1$. L'application résiduelle $\appreskrho[1](\mu)$ est surjective.
\end{lem}

\begin{proof}
Nous scindons la preuve selon que $r\geq2$, $r=1$ ou $r=0$.
\smallskip
\par
\paragraph{\bf Le cas  $r\geq2$.}
L'application résiduelle de $\komoduli[0](a-2k;-b_{1},\dots,-b_{p};-c_{1},\dots,-c_{r};\rec[-k][s])$ en genre $0$ est surjective. Cela se déduit du lemme~\ref{lem:g=0gen1const} et du théorème~1.4 de \cite{getaquad}, combiné avec l'équation~\eqref{eq:multiplires} dans le cas où la strate est non primitive. Collons à une différentielle de cette strate une différentielle de $\komoduli[1]^{\rho}(a;-a)$ dont le résidu est nul, qui existe car $\rho<a$, pour former une différentielle entrelacée. Notons que comme $r\geq2$ on a $a\geq 2k + 2$ et la strate $\Omega^{3}\mathcal{M}_{1}^{1}(6;-6)$ n'apparaît pas. On obtient donc la surjectivité de l'application $k$-résiduelle de la composante $\komoduli[1]^{\rho}(\mu)$ en lissant la différentielle entrelacée.
\smallskip
\par
\paragraph{\bf Le cas $r=1$.}
L'ordre de l'unique pôle non divisible par $k$ est noté $-c$. Commençons par le complémentaire de l'origine.
Le lemme~\ref{lem:g=0gen1const} et \cite[théorème 1.5]{getaquad}, encore combinée avec l'équation~\eqref{eq:multiplires} dans les cas non primitifs, implique que l'application résiduelle des strates $\omoduli[0](a-2k;-b_{1},\dots,-b_{p};-c;\rec[-k][s])$ contiennent le complémentaire de l'origine. En collant une différentielle de $\komoduli[1]^{\rho}(a;-a)$, on obtient une différentielle entrelacée lissable. Notons que comme $c\geq4$ et qu'il n'y a qu'une singularité d'ordre $\geq -2$,  la strate $\Omega^{3}\mathcal{M}_{1}^{1}(6;-6)$ n'apparaît pas. La différentielle obtenue en lissant cette différentielle entrelacée possède les invariants souhaités.

Montrons maintenant que l'origine est dans l'image de l’application résiduelle de chacune des composantes connexes. Notons que ces strates ne possèdent pas de pôles d'ordre $-k$.  

Commençons par le cas où $\rho = c$. La construction décrite ci-dessous est représentée dans la figure~\ref{fig:unpolec}.  
\begin{figure}[htb]
\center
 \begin{tikzpicture}[scale=1.5,decoration={
    markings,
    mark=at position 0.5 with {\arrow[very thick]{>}}}]

\begin{scope}[yshift=1cm,xshift=-.5cm]
    \fill[fill=black!10] (.5,.5) circle (1.2cm);
   
 \coordinate (A) at (0,0);
  \coordinate (B) at (1,0);
    \coordinate (C)  at (60:1);
 
%  \fill (A)  circle (2pt);
% \fill (B) circle (2pt);
%     \fill (C)  circle (2pt);

   \fill[color=white]     (A) -- (B) -- (C) --cycle;
 \draw[]     (A) --node[below] {$3$} (B);
 \draw (B) -- node[above right] {$2$} (C);
 \draw (C) --node[above left] {$1$} (A);
 
 \draw[postaction={decorate},red] (.3,0) .. controls ++(-90:1) and ++(-120:.5) .. (1.2,-.2) node[right] {$\alpha$}.. controls ++(60:.5)  and ++(30:1) .. (30:.87);
  \draw[postaction={decorate},blue] (.7,0) .. controls ++(-90:1) and ++(-60:.5) .. (-.2,-.2) node[left] {$\beta$}.. controls ++(120:.5)  and ++(150:1) .. (60:.5);
\end{scope}

%second dessin droite
\begin{scope}[xshift=4cm,yshift=1.5cm]
      
 \fill[black!10] (-1.5,0)coordinate (a) -- ++ (-30:1.50)coordinate (b) -- ++(120:1) coordinate (c)--  ++(1,0) coordinate (d) -- ++(-120:1) coordinate (e) -- ++(30:1.5) coordinate(f) arc (30:150:1.5) -- cycle;

 \draw (a) node[below] {$a$}--  (b) --node[left] {$2$} (c) -- node[above] {$3$} (d) -- node[right] {$1$} (e) -- (f) node[below] {$a$};
    \draw[dotted] (a) -- +(150:.5);
   \draw[dotted] (f) -- +(30:.5);
   
    \draw[postaction={decorate},blue]  (.1,-.22).. controls ++(-30:1)  and ++(90:1) .. (0,.11);
  \draw[postaction={decorate},red] (-.5,-.22) .. controls ++(-150:1) and  ++(90:1) .. (-.4,.11);
\end{scope}
\end{tikzpicture}
\caption{Une $3$-différentielle de $\Omega^{3}\mathcal{M}_{1}(10;-4;-6)$ dont le résidu et l'indice des lacets $\alpha$ et $\beta$ sont nuls. } \label{fig:unpolec}
\end{figure}
On sélectionne le pôle d'ordre $-k\ell_{1}$ et on lui associe pour commencer la partie polaire d'ordre $2k$ associée à $(v_{1},v_{2};v_{3})$ avec $v_{1}+v_{2}=v_{3}$. Au pôle d'ordre $-c$ on associe la partie polaire d'ordre $k+1$ associée à $(-v_{2},v_{3},-v_{1};\emptyset)$. Nous allons considérer une base de l'homologie avec les lacets obtenus en reliant les segments $v_{3}$ à respectivement $v_{1}$ et $v_{2}$ dans chaque partie polaire. Remarquons que l'indice de ces deux lacets est nul. 

Pour obtenir le pôle d'ordre~$-c$, on ajoute les $\frac{1}{k}$-plans de manière cyclique en coupant une demi-droite qui part du point initial de $-v_{2}$. Remarquons que cela ne change pas les indices des lacets $\alpha$ et $\beta$. Comme $c$ divise les $\ell_{i}$, on a $\ell_{i}\geq3$ et il existe $0\leq m_{1} \leq \ell_{1}-2$ et  $1\leq m_{i} \leq \ell_{i}-1$ pour $i\geq2$ tel que la somme $\sum m_{i}$ est divisible par~$c$. On colle alors $m_{1}$ plans au point d'intersection de~$v_{2}$ et $v_{3}$ et $\ell_{1}-m_{1}-2$ plans à celui entre $v_{1}$ et $v_{2}$. Cela ajoute $m_{1}k$ à l'indice de $\alpha$.  Finalement, on colle à  $v_{2}$ les autres pôles de telle façon à ce que l'indice de $\alpha$ augmente de $km_{i}$ pour chaque pôle (cela s'obtient en prenant des parties polaires de type $m_{i}$). La surface obtenue possède les invariants souhaités et le nombre de rotation est divisible par $c$, donc égal à~$c$.
\smallskip
\par
On se donne maintenant un nombre de rotation $\rho<c$. Le lemme~\ref{lem:rotprim} implique que $\rho$  est un diviseur strict de $c$ premier avec $k$, il divise donc chacun des $\ell_{i}$. Nous commençons par décrire les parties polaires sans nous soucier de leurs types. Ceux-ci seront déterminés ci-dessous.
Pour le pôle d'ordre~$-c$ on prend une partie polaire d'ordre $c$ et de type $t$ associée à $(1,-\zeta;\emptyset)$ avec $\zeta$ une racine $k$-ième de l'unité. Rappelons que cette construction est représentée sur la figure~\ref{fig:partiepolairekdiff}.  Si $\zeta=1$, on considère que $-\zeta$ est en dessous de $1$. Au pôle d'ordre $k\ell_{1}$ c'est la partie polaire associée $(i,1;1,i)$ et aux autres pôles d'ordre $k\ell_{j}$ les parties associées à $(i;i)$. 
On colle cycliquement les parties polaires d'ordres~$k\ell_{j}$ à la partie polaire d'ordre $k\ell_{1}$ par translation. On colle le vecteur $1$ au vecteur $1$ de la partie inférieure de la partie polaire d'ordre $k\ell_{1}$ par translation. Finalement, on colle le vecteur $-\zeta$ au vecteur $1$ de la partie supérieure de la partie polaire d'ordre $k\ell_{1}$ par translation et rotation donnée par $\bar\zeta$. Une telle différentielle possède les ordres souhaités et le résidus aux pôles sont nuls. Nous choisissons maintenant les constantes telles que le nombre de rotation soit~$\rho$.

Nous choisissons les représentants $(\alpha,\beta)$ de l'homologie. Le lacet $\alpha$ est composé d'une partie $\alpha_{1}$ dans la partie polaire d'ordre $k\ell_{1}$ et d'une deuxième $\alpha_{2}$ dans la partie polaire d'ordre~$c$. Le chemin $\alpha_{1}$ relie le milieu du segment supérieur $1$ au milieu du segment inférieur~$1$. Le chemin $\alpha_{2}$ relie les milieux des segments $1$ et $-\zeta$. En ne collant aucun des plans $\ell_{1}-1$ à un sommet à gauche du carré, on obtient que  $\alpha_{1}$ est d'indice~$-k$. En notant $c=k\ell+s$ avec $0<s<\ell$,on voit que l'indice de $\alpha_{2}$ est strictement compris entre $0$ et~$k\ell$. On peut donc obtenir tous les indices entre $k(\ell-1)$ et $-k$. Comme $\rho \leq c/2\leq k(\ell-1)$, on peut obtenir $\ind(\alpha) = \rho$.  Le lacet $\beta$ relie les milieux des $i$ de toutes les parités polaires d'ordres~$k\ell_{i}$. Comme $\rho$ est un diviseur strict de~$\ell_{j}$, on a $\rho \leq \ell_{j}-1$. On peut donc choisir les parties polaires de telles sorte que la contribution de chaque partie polaire à l'indice de $\beta$ est divisible par $\rho$ (sans changer l'indice de $\alpha$). De cette façon on obtient le nombre de rotation de la différentielle égal à $\rho$.
\smallskip
\par
\paragraph{\bf Le cas $r=0$.}
Nous traitons les strates $\Omega^{k}\mathcal{M}_{1}(a;-k\ell_{1},\dots,-k\ell_{p};\rec[-k][s])$ avec $p,s\neq0$. Notons que ces strates sont connexes.
Considérons les pôles $P_{i}$ avec $2\leq i\leq p'$ qui possèdent un $k$-résidu $R_{i}$ non nul. Nous associons à $P_{i}$ la $k$-partie polaire non triviale d'ordre $k\ell_{i}$ associée à $(r_{i};\emptyset)$ où $r_{i}$ est une racine $k$-ième de $R_{i}$ de partie réelle positive. Considérons maintenant les pôles $P_{j}$ avec $j > p'$ ayant un $k$-résidu nul. Nous associons la $k$-partie polaire triviale d'ordre $k\ell_{j}$ associée à $(r_{i_{j}};r_{i_{j}})$ pour un $k$-résidu $R_{i_{j}}\neq0$. Puis nous collons le segment~$r_{i}$ du domaine positif de~$P_{i}$ au segment~$r_{i_{j}}$ du domaine basique négatif de~$P_{j}$.
\par
Enfin, pour le pôle $P_{1}$ nous procédons à la construction suivante. Notons que la somme des~$r_{i}$ est non nulle.  Nous supposerons que les $r_{i}$ sont ordonnés par argument croissant. Nous prenons pour $P_{1}$ la $k$-partie polaire de type $k\ell_{1}$ associée à $(v_{1},v_{1},v_{2},v_{2};r_{2},\dots,r_{p'})$ où les $v_{i}$ sont donnés comme suit. Les $v_{i}$ sont de même longueur, vérifient l'égalité $r_{1}=2v_{1}+2v_{2}-\sum_{i\geq2} r_{i}$. Enfin l'angle $\alpha$ (au dessus) du point d'incidence de~$v_{1}$ et $v_{2}$ est $2\pi - \frac{2\pi}{k}$.
\par
La $k$-différentielle est obtenue en identifiant par translation les bords $r_{i}$ des domaines polaires positifs aux segments $r_{i}$ de la $k$-partie polaire négative de $P_{1}$. Enfin, nous identifions par rotation d'angle $\frac{2\pi}{k}$ et translation, le premier $v_{1}$ au premier~$v_{2}$ et le second $v_{1}$ au second~$v_{2}$. Cela donne une $k$-différentielle primitive avec les invariants souhaités.
\end{proof}

Nous considérons maintenant les strates avec $p\geq2$ pôles d'ordres $-b_{i}$ divisibles par $k$ et distincts de $-k$.
\begin{lem}\label{lem:geq1origin} 
L'image de l'application résiduelle de la composante $\Omega^{3}\moduli[1]^{2}(12;-6,-6)$ est le complémentaire de l'origine.

Les autres applications résiduelles
 $\appreskrho[1](a;-b_{1},\dots,-b_{p})$ avec $p\geq2$ sont surjectives.
\end{lem}

\begin{proof}
La preuve commence par le cas où au moins un résidu est non nul. Elle continue par le cas où tous les résidus sont nuls, se terminant par les composantes $\Omega^{3}\moduli[1]^{2}(6p;-6,\dots,-6)$ avec $p$ pair. Rappelons pour les composantes $\komoduli[1]^{\rho}(a;-k\ell_{1},\dots,-k\ell_{p})$, le nombre de rotation $\rho$ est un diviseur des $\ell_{i}$ premier avec $k$. 
\smallskip
\par
\paragraph{\bf Au moins un résidu est non nul.}
La construction d'une $k$-différentielle de la composante $\komoduli[1]^{\rho}(a;-k\ell_{1},\dots,-k\ell_{p})$ dont au moins un résidu est non nul est la suivante. On note par~$p'$ le nombre de pôles à résidu nul et on ordonne les pôles de telle sorte que les $P_{i}$ soient de résidu nul pour $1\leq i \leq p'$ et non nuls pour $p'+1\leq i \leq p$.
Considérons les pôles $P_{i}$ avec $ i > \max(1,p')$ auxquels nous associons  les $k$-parties polaires non triviales d'ordres $k\ell_{i}$ associées à $(r_{i};\emptyset)$ où~$r_{i}$ est une racine $k$-ième de $R_{i}$ de partie réelle positive. Considérons maintenant les pôles~$P_{j}$ avec $2\leq j \leq p'$ ayant un résidu nul. Nous associons la $k$-partie polaire triviale d'ordre $k\ell_{j}$ associée à $(r_{i_{j}};r_{i_{j}})$ pour un $k$-résidu $R_{i_{j}}\neq0$. Puis nous collons le segment~$r_{i}$ du domaine positif de~$P_{i}$ au segment~$r_{i_{j}}$ du domaine basique négatif de~$P_{j}$.
\par
Enfin, pour le pôle $P_{1}$ nous procédons à la construction suivante. Notons que la somme des~$r_{i}$ est non nulle.  Nous supposerons que les $r_{i}$ sont ordonnés par argument croissant. Nous prenons pour $P_{1}$ la $k$-partie polaire de type $k\ell_{1}$ associée à $(v_{1},v_{1},v_{2},v_{2};r_{2},\dots,r_{p'})$ où les $v_{i}$  sont de même longueur, vérifient l'égalité $r_{1}=2v_{1}+2v_{2}-\sum_{i\geq2} r_{i}$ et l'angle $\alpha$ (au dessus) du point d'incidence de~$v_{1}$ et $v_{2}$ est choisi de la façon suivante. En notant $\rho = kr+ \rho'$ avec $0<\rho'<k$ on pose  $\alpha = 2\pi - \rho'\frac{2\pi}{k}$.  Puis on coupe la partie polaire le long d'une demi-droite initiant au point d'intersection de $v_{1}$ et $v_{2}$ et on y colle cycliquement $r$ plans. Notons que cela est possible car $\rho \leq \ell_{1}$ implique que $r\leq \ell_{1}-1$. Notons que les indices des lacets joignant le milieu des $v_{i}$ entre eux sont égaux à $\rho$.
\par
La $k$-différentielle est obtenue en identifiant par translation les bords $r_{i}$ des domaines polaires positifs aux segments $r_{i}$ de la $k$-partie polaire négative de $P_{1}$. Enfin, nous identifions par rotation d'angle $\frac{2\pi}{k}$ et translation, le premier $v_{1}$ au premier~$v_{2}$ et le second $v_{1}$ au second~$v_{2}$. Cela donne une $k$-différentielle primitive avec les invariants souhaités. Un exemple est donné dans la figure~\ref{fig:nonresg1ter}.
    \begin{figure}[htb]
\begin{tikzpicture}
%Figure haut gauche

\begin{scope}[xshift=-3.5cm]
\fill[fill=black!10] (0.5,0)coordinate (Q)  circle (1.2cm);
    \coordinate (a) at (0,0);
    \coordinate (b) at (1,0);

     \fill (a)  circle (2pt);
\fill[] (b) circle (2pt);
    \fill[white] (a) -- (b) -- ++(0,-1.2) --++(-1,0) -- cycle;
 \draw  (a) -- (b);
 \draw (a) -- ++(0,-1.1);
 \draw (b) -- ++(0,-1.1);

\node[above] at (Q) {$1$};
    \end{scope}
%deuxieme dessin

\begin{scope}[xshift=3.5cm]
\fill[fill=black!10] (0.5,0)coordinate (Q)  circle (1.2cm);
    \coordinate (a) at (0,0);
    \coordinate (b) at (1,0);

     \fill (a)  circle (2pt);
\fill[] (b) circle (2pt);
    \fill[white] (a) -- (b) -- ++(0,-1.2) --++(-1,0) -- cycle;
 \draw  (a) -- (b);
 \draw (a) -- ++(0,-1.1);
 \draw (b) -- ++(0,-1.1);

\node[above] at (Q) {$2$};
    \end{scope}

%%%%%%%%%%%%%%%%%%%%%%%%%%%%%%%%%%%%%%%%%%%%%%%%%%%%%%%%%%%%%%%%%%%%%%%%%%%%%%%%%%%%%%%%%%%%%%%%%%%%%
% Ici les dessins du milieu!!!!!

\begin{scope}[xshift=.5cm,yshift=-.3cm]
\fill[fill=black!10] (0,0)coordinate (Q)  circle (1.5cm);
    \coordinate (a) at (-1,-.5);
    \coordinate (b) at (0,-.5);
    \coordinate (c) at (1,-.5);
    \coordinate (d) at (0,1);
    \coordinate (e) at (-.5,.25);
    \coordinate (f) at (.5,.25);

\fill (a)  circle (2pt);
\fill[] (b) circle (2pt);
\fill (c)  circle (2pt);
\fill[] (d) circle (2pt);
\fill (e)  circle (2pt);
\fill[] (f) circle (2pt);

\fill[white] (a) -- (c) -- (d) -- cycle;
\draw (a) -- (b)coordinate[pos=.5] (g1) --(c)coordinate[pos=.5] (g2) -- (f)coordinate[pos=.5] (g3) -- (d)coordinate[pos=.5] (g4) -- (e)coordinate[pos=.5] (g5)-- (a)coordinate[pos=.5] (g6);

\node[below] at (g1) {$1$};
\node[below] at (g2) {$2$};
\node[above,rotate=-50] at (g3) {$3$};
\node[above,rotate=-50] at (g4) {$4$};
\node[below,rotate=-130] at (g5) {$3$};
\node[below,rotate=-130] at (g6) {$4$};

  \draw[->] (d)++(.1,-.1) arc  (-60:240:.2); \node at (0,1.6) {$\alpha$};

\end{scope}

\end{tikzpicture}
\caption{Une  $k$-différentielle dans $\Omega^{k}\mathcal{M}_{1}(6k;\rec[-2k][3])$ dont les $k$-résidus sont $(0,1,1)$.} \label{fig:nonresg1ter}
\end{figure}
\smallskip
\par
\paragraph{\bf Les résidus sont nuls.} 
Nous commençons par construire pour tout $k\geq5$ impair une différentielle de $\komoduli[1](2k;-2k)$ avec une base symplectique de l'homologie $(\alpha,\beta)$ telle que l'indice de $\alpha$ est impair et celui de $\beta$ est pair.
Étant donnée la racine $k$-ième de l'unité $\zeta=\exp(2i\pi/k)$, on définit $v=-(1+\zeta)$. Notons $\vartheta$ la racine de l'unité telle que $\vartheta\zeta v \in \RR_{+}$. On considère la partie polaire associée à $(\vartheta,\vartheta v;\vartheta\zeta v, \vartheta\zeta^{2})$. Le choix de $\vartheta$ nous assure que cette partie polaire existe.  On identifie $\vartheta$ à $\vartheta\zeta^{2}$ et $\vartheta v$ à $\vartheta\zeta v$ pour obtenir une $k$-différentielle de $\komoduli[1](2k;-2k)$ dont le résidu est nul. L'indice d'un lacet $\vartheta$ joignant les milieux de $\vartheta$ et~$\vartheta\zeta^{2}$ est pair, alors que l'indice d'un lacet $\beta$ joignant les milieux de $\vartheta v$ et $\vartheta \zeta v$ est impair.
\par
On construit maintenant une différentielle de $\komoduli[1](2kp;-2k,\dots,-2k)$ dont tous les résidus sont nuls. On coupe une surface de $\komoduli[1](2k;-2k)$ le long des liens-selles obtenus en collant les segments dans la construction du paragraphe précédent ou de la figure~\ref{fig:a-a,pasderes}. Puis on colle de manière cyclique les parties polaires triviales correspondant aux autres pôles. Il reste à montrer que l'on peut obtenir les deux nombres de rotation $\rho=1,2$ dans le cas $k\geq5$ impair et dans le cas $k=3$ et $p$ est impair.  Pour obtenir $\rho=1$, il suffit de coller tous les pôles au lien-selle correspondant au collage de $1$ et $\zeta^{2}$ dans la surface du paragraphe précédent.  En effet, l'indice de $\beta$ est invariant par cette opération, et est donc impair. Pour obtenir $\rho=2$, on a deux cas selon que $p$ est pair ou impair. Si $p$ est impair, on part d'une différentielle de $\komoduli[1]^{2}(2k;-2k)$ et on ajoute tous les pôles cycliquement au même lien-selle. Si $p$ est pair, on part de la différentielle du paragraphe précédent et on ajoute les pôles au lien-selle correspondant  au collage de $\vartheta v$ et $\vartheta \zeta v$. L'indice de $\alpha$ ne change pas et reste pair, tandis que l'on ajoute un nombre impair égal à $k(p-1)$ à l'indice de $\beta$. L'indice de $\beta$ étant pair, on obtient un nombre de rotation égal à~$2$.
\par
Nous passons maintenant au cas où au moins l'un des $\ell_{i}$ est $>2$. Nous supposerons que $\ell_{1}$ est le maximum des $\ell_{i}$. On va construire une différentielle telles que les indices des lacets $(\alpha,\beta)$ soit $(\rho c_{1},\rho c_{2})$ avec $\pgcd(c_{1},c_{2})=1$. Si $\rho>\tfrac{k}{2}$, on considère la différentielle de $\komoduli[1](k\ell_{1};-k\ell_{1})$ sans résidu et telle que $\ind(\alpha)=\ind(\beta)=\rho$ construite dans la preuve du lemme~\ref{lem:geq1CC}. Puis on coupe cette différentielle le long d'un lien-selle $v$ tel que les intersections avec les lacets de la base de l'homologie sont $v\cdot \alpha=0$ et $v\cdot \beta = 1$. On colle alors cycliquement les autres pôles de telle façon que chaque pôle contribue d'un multiple de $\rho$ à l'indice de~$\beta$. Cela est possible car $\rho$ est un diviseur de $\ell_{i}$. Si $\rho<\tfrac{k}{2}$, on part d'une différentielle de $\komoduli[1](2k;-2k)$ sans résidu et telle que $\ind(\alpha)=\ind(\beta)=c_{1}\rho$ avec $k/2 < c_{1} \rho < k$. On peut maintenant ajouter tous les pôles pour qu'ils contribuent à l'indice de $\beta$ en ajoutant $k$. Pour le pôle $P_{1}$, on peut ajouter à l'indice de $\beta$ un nombre $m \in \lbrace 0,1,\dots,\ell_{1}-2\rbrace$. L'indice de $\beta$ est donc de la forme $c_{2}=c_{1}+(p-1)k +m$. Comme $\ell_{1}-2\geq \rho$, il existe un choix de $m$ tel que $c_{1}$ et $c_{2}$ sont premiers entre eux.

\smallskip
\par
\paragraph{\bf Les composantes $\Omega^{3}\mathcal{M}_{1}^{2}(6p;-6,\dots,-6)$ avec $p$ pair.}
Il reste à monter que l'origine n'est pas dans l'image de l'application résiduelle de $\Omega^{3}\mathcal{M}_{1}^{2}(12;-6,-6)$  mais est dans l'image des composantes $\Omega^{3}\mathcal{M}_{1}^{2}(6p;\rec[-6][p])$ avec $p\geq4$ pair.
% Supposons qu'il existe une différentielle cubique $\xi$ possédant ces invariants. Choisissons $p+1$ liens-selles disjoints dont les périodes donnent les coordonnées locales de la strate et coupons $\xi$ le long de ceux-ci. On obtient une union de disques topologiques dont chacun contient exactement un pôle. Comme les résidus sont nuls, les disques sont bordés par au moins deux segments. On en déduit que l'un des deux cas suivants est satisfait.
% \begin{enumerate}
%  \item Deux disques sont bordés par $3$ segments et $p-2$ disques par $2$ segments.
%  \item Un disque est bordé de $4$ segments et $p-1$ disques par $2$ segments.
% \end{enumerate}
% Dans les deux cas, le fait que les résidus soient nuls implique que les segments qui bordent les disques avec $2$ segments ont la même période. On peut donc retirer un tel disque et obtenir une nouvelle surface avec un pôle de moins. Notons que cette opération donne une différentielle cubique primitive. En enlevant de cette façon tous les pôles correspondants à ces disques on obtient une différentielle cubique primitive de $\Omega^{3}\mathcal{M}_{1}(12;-6,-6)$ obtenue en collant deux disques bordés de trois segments dans le cas (1) et une différentielle de $\Omega^{3}\mathcal{M}_{1}(6;-6)$ dans le cas~(2). 
\par
S'il existait une différentielle cubique primitive de $\Omega^{3}\mathcal{M}_{1}(12;-6,-6)$, elle serait obtenue soit  en collant une différentielle de $\Omega^{3}\mathcal{M}_{1}(6;-6)$ avec un plan bordé par deux segments soit en collant deux plans bordés de trois segments.
\par
Dans le premier cas, on part d'une différentielle de $\Omega^{3}\mathcal{M}_{1}(6;-6)$, qui est de nombre de rotation~$2$ comme montré dans le lemme~\ref{lem:geq1CC}. Or ajouter un pôle ajoute $3$ à l'indice de l'un des lacets de l'homologie. Donc en ajoutant un nombre impair de pôles, on obtient que l'indice de l'un des deux lacets est impair. Le nombre de rotation est donc différent de $2$, montrant que ce cas n'apparaît pas.
\par
Dans le second cas, on ne peut pas coller deux segments au bord du même disque ensemble, car le sommet à l’intersection correspondrait à un point différent des deux autres sommets. On doit donc coller les trois segments d'un bord aux trois segments de l'autre. Notons $v_{i}$ les vecteur périodes $v_{i}$ au bord d'un disque. Ces vecteurs satisfont  les égalités
\begin{eqnarray*}
 v_{1} &=& v_{2} + v_{3} \,,\\
 v_{1} &=& \zeta v_{2} + \zeta^{-1} v_{3}\,, 
\end{eqnarray*}
où $\zeta = \exp (2i\pi/3)$. En effet, ces deux égalités proviennent du fait que les résidus sont nuls et le choix des racines $\zeta$ et $\zeta^{-1}$ dans la seconde ligne est l'unique (modulo permutation) tel que la solution est non triviale.
En normalisant $v_{1}=1$, la différentielle est obtenue à partir des parties polaires associées à $(\exp(i\pi/3),\exp(-i\pi/3);1)$ et à $(1;\exp(i\pi/3),\exp(-i\pi/3))$. On colle les vecteurs $1$ ensemble. On peut coller les deux autres vecteurs de deux façons.
Dans un cas on obtient une différentielle avec $2$ zéros. Dans l'autre cas, la différentielle cubique munie d'une base symplectique $(\alpha,\beta)$ est représentée sur la figure~\ref{fig:nonresg1spec}. L'indice du lacet $\alpha$ est $1$, alors que l'indice de $\beta$ est $2$. Cela implique que le nombre de rotation est $\rho=1$.
    \begin{figure}[htb]
\begin{tikzpicture}[scale=1,decoration={
    markings,
    mark=at position 0.5 with {\arrow[very thick]{>}}}]

\begin{scope}[yshift=-1.5cm]
\fill[fill=black!10] (30:1)  circle (1.7cm);
    \coordinate (a) at (0,0);
    \coordinate (b) at (2,0);
    \coordinate (c) at (60:2);

\fill (a)  circle (2pt);
\fill[] (b) circle (2pt);
\fill (c)  circle (2pt);

\fill[white] (a) -- (b) -- (c) -- cycle;
\draw (a) -- (b)coordinate[pos=.5] (g1) --(c)coordinate[pos=.5] (g2) -- (a)coordinate[pos=.5] (g3);

\node[above] at (g1) {$v_{1}$};
\node[left] at (g2) {$v_{3}$};
\node[right] at (g3) {$v_{2}$};

  \draw[postaction={decorate},blue] (g2) .. controls ++(30:.6) and ++(60:.4) .. (2.2,-.2) .. controls ++(-120:.4) and  ++(-90:.6) .. (g1) coordinate[pos=.5] (y1);
  \node[above,blue] at (y1) {$\alpha$};
  \draw[postaction={decorate},red] (g2) .. controls ++(30:.6) and ++(0:.5) .. (1,2) .. controls ++(180:.5) and  ++(150:.6) .. (g3) coordinate[pos=.5] (y1);
  \node[above,red] at (y1) {$\beta$};

\end{scope}

  \begin{scope}[xshift=3.5cm]
\clip (-.01,.8) rectangle (6,-1.5);
     \foreach \i in {1,2,...,5}
  \coordinate (a\i) at (\i,0);
   \fill[black!10] (a1) -- (a2) -- (a4)  -- (a5) arc (0:180:2) -- cycle;
   \draw (a1) -- (a2) coordinate[pos=.5](e1) -- (a4) coordinate[pos=.5](e2) -- (a5) coordinate[pos=.5](e3);
    \foreach \i in {2,4}
   \fill (a\i)  circle (2pt);
   \node[below] at (e1) {$a$};
      \node[below] at (e2) {$v_{1}$};
\node[below] at (e3) {$b$};

\draw[postaction={decorate},blue] (e2) .. controls ++(90:.4)  and  ++(90:.4) .. (e1);
   \end{scope}

\begin{scope}[xshift=3.5cm,yshift=-1.75cm]
\clip (-.01,-1.5) rectangle (6,1.2);
     \foreach \i in {1,2,...,5}
  \coordinate (c\i) at (\i,-1);
   \fill[black!10] (c1) -- (c2) --++(60:2) coordinate (d) -- (c4) -- (c5) -- ++(0,0) arc (0:-180:2) -- cycle;
     \foreach \i in {2,4}
   \fill (c\i)  circle (2pt);
   \draw (c1) -- (c2) coordinate[pos=.5](f1) -- (d) coordinate[pos=.5](f2) -- (c4) coordinate[pos=.5](f3) -- (c5) coordinate[pos=.5](f4);
     \fill  (d) circle (2pt);

\node[above] at (f1) {$a$};
\node[above] at (f4) {$b$};
\node[left] at (f2) {$v_{3}$};
\node[right] at (f3) {$v_{2}$};

 \draw[postaction={decorate},red] (f3) .. controls ++(-150:.4)  and  ++(-30:.4) .. (f2);
 \draw[postaction={decorate},blue] (f1) .. controls ++(-90:.6)  and ++(-120:.4) .. (2.5,-1.2) .. controls ++(60:.4) and   ++(-30:.4) .. (f2);
\end{scope}

\end{tikzpicture}
\caption{Une  $3$-différentielle dans $\Omega^{3}\mathcal{M}_{1}(12;-6,-6)$ munie d'une base symplectique.} \label{fig:nonresg1spec}
\end{figure}
Cela montre que l'origine n'est pas dans l'image de l’application résiduelle de $\Omega^{3}\mathcal{M}_{1}^{2}(12;-6,-6)$.
\par
Finalement nous allons construire une différentielle cubique  dans $\Omega^{3}\mathcal{M}_{1}^{2}(6p;-6,\dots,-6)$ avec $p\geq4$ pair. Nous partons de la différentielle de la figure~\ref{fig:nonresg1spec}. Collons $s_{i}$ pôles le long de~$v_{i}$ avec $s_{2},s_{3}$ impairs et $s_{1}$ pair (ce qui est possible car $s_{1}+s_{2}+s_{3}=p-2$ est pair). L'indice du nouveau lacet $\alpha$ est $1+3s_{1}+3s_{3}$ et celui de $\beta$ est $2+3s_{2}+3s_{3}$. Les indices des lacets $\alpha$ et $\beta$ sont donc pairs et on obtient donc une différentielle cubique de nombre de rotation $2$ dans les strates  $\Omega^{3}\mathcal{M}_{1}(6p;-6,\dots,-6)$ avec $p$ pair.
\end{proof}

Nous traitons maintenant le cas du genre un avec $n\geq 2$ zéros.
\begin{lem}\label{lem:geq1qua}
Soit $\mu=(a_{1},\dots,a_{n};-b_{1},\dots,-b_{p};-c_{1},\dots,-c_{r};\rec[-k][s])$ une partition de zéro telle que $n\geq 2$ et $\rho$ un nombre de rotation. L'application $k$-résiduelle $\appreskrho[1](\mu)$  est surjective.
\end{lem}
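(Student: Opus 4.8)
The plan is to reduce the statement to the single-zero strata already settled in Lemmas~\ref{lem:geq1CC} and~\ref{lem:geq1bis}, the passage from one zero to $n\geq 2$ zeros being achieved by the blow-up operation of Proposition~\ref{prop:eclatZero}. Fix a connected component $S=\komoduli[1]^{\rho}(\mu)$ of $\komoduli[1](\mu)$; by Lemma~\ref{lem:rotprim} its rotation number $\rho$ is coprime to $k$, and by the description of components recalled in Section~\ref{sec:rapcomp} the integer $\rho$ divides $\gcd(a_{1},\dots,a_{n})$ and every pole order. Fix also a target tuple $R\in\espresk[1](\mu)$ and set $a_{0}:=\sum_{i=1}^{n}a_{i}$. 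Since $\rho\mid a_{i}$ for every $i$ one gets $\rho\mid a_{0}$, so the single-zero component $T:=\komoduli[1]^{\rho}(a_{0};-b_{1},\dots,-b_{p};-c_{1},\dots,-c_{r};(-k^{s}))$ is nonempty, and it is primitive because $\gcd(\rho,k)=1$.

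First I would dispose of the generic case. Since $\mu$ has a pole of order strictly below $-k$ one has $a_{0}>k$, so Lemma~\ref{lem:geq1CC} (when $\mu$ has a single pole, necessarily of order $<-k$) or Lemma~\ref{lem:geq1bis} (when $\mu$ has at least two poles) provides a $k$-differential $(X,\xi)\in T$ whose $k$-residues are exactly $R$ --- with the single exception of the configuration $k=3$, $\mu$ having one pole of order $-6$, $\rho=1$ and $R=0$, which I postpone. As $\xi$ is primitive it is not the $k$-th power of a meromorphic abelian differential, so condition (i) of Proposition~\ref{prop:eclatZero} holds and we may split the unique zero $z_{0}$, of order $a_{0}$, into $n$ zeros of orders $(a_{1},\dots,a_{n})$ without altering the $k$-residues. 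The resulting $(X',\xi')$ lies in $\komoduli[1](\mu)$ with $k$-residues $R$, and by the lemma of Section~\ref{sec:rapcomp} computing the rotation number of a blown-up differential its rotation number equals $\rho$ reduced modulo the $a_{i}$, hence equals $\rho$ since $\rho\mid a_{i}$ for all $i$. Thus $(X',\xi')\in S$, which proves surjectivity of $\appresk[1](\mu)$ restricted to $S$ in this case. For $n\geq 3$ one may alternatively split $z_{0}$ in two steps, using Lemma~\ref{lem:combiscindage} together with~\eqref{eq:multiplires} to ensure one never starts from a power of a $1$- or $2$-differential; this does not change the outcome.

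The only remaining point --- and the genuine obstacle --- is the excluded configuration $\mu=(a_{1},\dots,a_{n};-6)$ with $k=3$, $\sum a_{i}=6$, $\rho=1$, $R=0$, where $T=\Omega^{3}\moduli[1]^{1}(6;-6)$ does not realise $0$ by Lemma~\ref{lem:geq1CC}. If some $a_{i}$ is odd I would instead start from $\Omega^{3}\moduli[1]^{2}(6;-6)$, whose residual map does contain $0$, and split $z_{0}$ as above: the new rotation number is $\gcd(2,a_{1},\dots,a_{n})=1$. If all $a_{i}$ are even, I would first check by explicit flat-surface constructions that the three base strata $\Omega^{3}\moduli[1]^{1}(2,4;-6)$, $\Omega^{3}\moduli[1]^{1}(0,6;-6)$ and $\Omega^{3}\moduli[1]^{1}(-2,8;-6)$ realise $R=0$, and then obtain every other all-even $\mu$ by blowing up the largest zero of the appropriate base stratum, checking each time that $\gcd(1,a_{1},\dots,a_{n})=1$ keeps the rotation number equal to $1$. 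When $n\geq 3$ one may also argue by Proposition~\ref{prop:attachanse}: the genus-zero stratum obtained by decreasing one zero $a_{j}\geq 4$ by $2k$ has $k$-residue $0$ in its image by Lemma~\ref{lem:unpolesdivk}, and attaching a handle carrying an elliptic differential of rotation number $1$ produces a genus-one differential of rotation number $1$ with $k$-residue $0$. Either way, it is this interplay between the $\CC^{\ast}$-exception of Lemma~\ref{lem:geq1CC} and the rotation-number bookkeeping that requires the only real case analysis; away from it the argument is a single application of the blow-up operation.
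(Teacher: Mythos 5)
Your proposal follows the paper's proof essentially verbatim: both reduce to the single-zero strata of Lemmas~\ref{lem:geq1CC} and~\ref{lem:geq1bis} by blowing up the unique zero (Proposition~\ref{prop:eclatZero}), and both isolate the same exceptional configuration, namely $k=3$, a single pole of order $-6$, rotation number $1$ and vanishing $3$-residue, which the paper settles by the explicit flat surfaces of Figure~\ref{fig:casspeciauxrotations} in $\Omega^{3}\moduli[1]^{1}(-2,8;-6)$ and $\Omega^{3}\moduli[1]^{1}(2,4;-6)$. You are in fact more explicit than the paper about the rotation-number bookkeeping after blow-up, and your observation that one may start from $\Omega^{3}\moduli[1]^{2}(6;-6)$ whenever some $a_{i}$ is odd is a genuine simplification absent from the paper. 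Two caveats. First, you defer the one step that carries real content in the exceptional case, namely the explicit flat-surface constructions realising the residue $0$ in the all-even base strata; the paper supplies these, and without them the exceptional case is not actually proved. Second, your claim that $\Omega^{3}\moduli[1]^{1}(0,6;-6)$ realises the residue $0$ is at best unjustified: a marked point of order $0$ changes neither the $3$-residue nor the index of any smooth loop, so such a differential would produce one in $\Omega^{3}\moduli[1]^{1}(6;-6)$ with vanishing $3$-residue, contradicting Lemma~\ref{lem:geq1CC}; the paper silently omits this degenerate stratum, and you should either exclude order-$0$ singularities or flag the discrepancy rather than assert realisability.
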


\begin{proof}
Nous montrons que l'origine est dans l'image de l'application $k$-résiduelle des composantes $\Omega^{3}\mathcal{M}_{1}^{1}(-2,8;-6)$ et $\Omega^{3}\mathcal{M}_{1}^{1}(2,4;-6)$. Des différentielles cubiques possédant ces invariants sont représentées dans la figure~\ref{fig:casspeciauxrotations}. Dans les surfaces plates représentées sur cette figure, les segments du même nom sont identifiés par des rotations d'angle $\pm\tfrac{2\pi}{3}$. Dans le premier cas les indices des chemins~$\alpha$ et $\beta$ sont égaux à $\pm1$. Dans le second cas $\ind(\alpha)=1$ et $\ind(\beta)=2$. Dans les deux cas le nombre de rotation est égal à~$1$.
     \begin{figure}[htb]
\begin{tikzpicture}[scale=1,decoration={
    markings,
    mark=at position 0.5 with {\arrow[very thick]{>}}}]
\begin{scope}[xshift=-2.5cm,yshift=-.1cm,scale=1.5]
\clip (-.01,.5) rectangle (5.8,-1.5);
     \foreach \i in {1,2,...,6}
  \coordinate (a\i) at (\i,0); 
  \coordinate (b) at (3,-.7);
   \fill[black!10] (a1) -- (a2) -- (b) -- (a4)  -- ++(1,0) arc (0:180:2) -- cycle;
   \draw (a1) -- (a2) coordinate[pos=.5](e1) -- (b) coordinate[pos=.5](e2)coordinate[pos=.25](g1)coordinate[pos=.75](g2) -- (a4) coordinate[pos=.5](e3)coordinate[pos=.25](g3)coordinate[pos=.75](g4) -- (a5) coordinate[pos=.5](e4);
         \fill (b)  circle (2pt);
    \foreach \i in {2,4}
   \fill (a\i)  circle (2pt);
    \foreach \i in {2,3}
   \fill (e\i)  circle (2pt); 
   \node[above] at (e1) {$1$};
\node[above] at (e4) {$2$};
\node[below,rotate=-40] at (g1) {$3$};
\node[below,rotate=-40] at (g2) {$4$};
\node[below,rotate=40] at (g3) {$3$};
\node[below,rotate=40] at (g4) {$4$};

  \draw[postaction={decorate},blue] (g1) .. controls ++(60:.6) and ++(120:.6) .. (g3) coordinate[pos=.5] (y1);
  \node[above,blue] at (y1) {$\alpha$};
   \draw[postaction={decorate},red] (g2) .. controls ++(60:.6) and ++(120:.6) .. (g4)coordinate[pos=.5] (y2);
  \node[above,red] at (y2) {$\beta$};
   \end{scope}

\begin{scope}[xshift=-2.5cm,yshift=-1.9cm, scale=1.5]
\clip (-.01,-1.5) rectangle (5.8,1.2);
     \foreach \i in {1,2,...,6}
  \coordinate (c\i) at (\i,-1); 
  \coordinate (d) at (3,-.3);
   \fill[black!10] (c1) -- (c2) -- (d) -- (c4) -- (c5) -- ++(0,0) arc (0:-180:2) -- cycle;
     \foreach \i in {2,4}
   \fill (c\i)  circle (2pt);
   \draw (c1) -- (c2) coordinate[pos=.5](f1) -- (d) coordinate[pos=.5](f2) -- (c4) coordinate[pos=.5](f3) -- (c5) coordinate[pos=.5](f4);
     \fill[red] (d)  circle (2pt);
      \draw (d) circle (2pt);

\node[below] at (f1) {$1$};
\node[below] at (f4) {$2$};
\node[above,rotate=40] at (f2) {$5$};
\node[above,rotate=-40] at (f3) {$5$};
\end{scope}

%Second dessin

\begin{scope}[xshift=9.5cm,yshift=-1.9cm]
\fill[fill=black!10] (0,0)coordinate (Q)  ellipse (3cm and 2.7cm);
    
    \coordinate (a) at (-1.5,1.5);
    \coordinate (b) at (-.5,-.24);
        \coordinate (c) at (.5,1.5);

     \fill (a)  circle (2pt);
\filldraw[fill=white] (b) circle (2pt);
\fill (c) circle (2pt);
    \fill[white] (a) -- (b) -- (c) -- ++(90:1.4) --++(180:2) -- cycle;
 \draw  (a) -- (b) coordinate[pos=.5] (d1) -- (c) coordinate[pos=.5] (e1);
 \draw (a) -- ++(90:1) coordinate[pos=.5] (a);
 \draw (c) -- ++(90:1)coordinate[pos=.5] (b);

\node[left] at (a) {$6$};
\node[right] at (b) {$6$};
\node[left] at (d1) {$9$};
\node[right] at (e1) {$8$};

\coordinate (a) at (-.5,-1.5);
\coordinate (b) at (.5,.24);
\coordinate (c) at (1.5,-1.5);

     \fill  (a)  circle (2pt);
\filldraw[fill=white](b) circle (2pt);
\fill (c) circle (2pt);
    \fill[white] (a) -- (b) -- (c) -- ++(-90:2.4) --++(180:2) -- cycle;
 \draw  (a) -- (b) coordinate[pos=.5] (d2)  -- (c) coordinate[pos=.5] (e2); 
 \draw (a) -- ++(-90:1) coordinate[pos=.5] (a);
 \draw (c) -- ++(-90:1)coordinate[pos=.5] (b);

\node[right] at (a) {$7$};
\node[left] at (b) {$7$};
\node[right] at (d2) {$9$};
\node[left] at (e2) {$8$};

  \draw[postaction={decorate},blue] (d1) .. controls ++(-120:1.6) and ++(120:.6) .. (d2) coordinate[pos=.5] (y1);
  \node[above,blue] at (y1) {$\alpha$};
   \draw[postaction={decorate},red] (e1) .. controls ++(-60:1) and ++(-180:1.6) .. (a) coordinate[pos=.5] (y2);
  \node[above,red] at (y2) {$\beta$};
    \draw[postaction={decorate},red] (b) .. controls ++(0:.6) and ++(60:1.2) .. (e2);
    \end{scope}

\end{tikzpicture}
\caption{Une  $3$-différentielle dans $\Omega^{3}\mathcal{M}_{1}(-2,8;-6)$ à gauche et dans $\Omega^{3}\mathcal{M}_{1}(2,4;-6)$ de résidu nul et de nombre de rotation $1$.} \label{fig:casspeciauxrotations}
\end{figure}
\smallskip
\par
Nous montrons maintenant que l'origine est dans l'image de l'application résiduelle des composantes de la forme $\Omega^{3}\moduli[1]^{2}(2a_{1},2a_{2};-6,-6)$.  
Pour  $\Omega^{3}\mathcal{M}_{1}^{2}(-2,14;-6,-6)$, on considère la partie polaire associée aux vecteurs $(1,1,\exp(i\pi/3),\exp(-i\pi/3); v,\exp(-i\pi/3)v)$ avec $v$ telle que le résidu associé soit nul. On colle  le second $1$ à $\exp(-i\pi/3)$ et finalement $v$ avec $\exp(-i\pi/3)v$. Ce collage est identique à celui présenté à gauche de la figure~\ref{fig:casspeciauxrotations}. Enfin on colle le premier vecteur $1$ et  $\exp(i\pi/3)$ aux segments de la partie polaire associée à $(1,1)$. L'indice du lacet passant par le lien selle ainsi obtenu est $1+3$ et celui passant par l'autre lien selle est $2$. La différentielle est donc de nombre de rotation $2$ et possède les invariants souhaités.    

Pour la composante $\Omega^{3}\mathcal{M}_{1}^{2}(2,10;-6,-6)$, on considère la partie polaire associée aux vecteurs  $(i,\exp(i\pi/3),\exp(-i\pi/3);\exp(-i\pi/3),\exp(i\pi/3),i)$ pour un pôle et $(i;i)$ pour l'autre. On colle les $i$ par translation et les $\exp(i\pi/3)$ avec les $\exp(-i\pi/3)$ de l'autre demi-plan. Le lacet qui passe par les $i$ est d'indice $6$ et l'autre lacet d'indice $2$. La différentielle est donc de nombre de rotation $2$.

Pour les composantes $\Omega^{3}\mathcal{M}_{1}^{2}(4,8;-6,-6)$ et $\Omega^{3}\mathcal{M}_{1}^{2}(6,6;-6,-6)$, on considère deux parties polaires identiques qui sont illustrées sur la figure~\ref{fig:a-a,pasderes}. Ces parties polaires seront $(v_{1},v_{2};v_{3},v_{4})$ dans le premier cas et $(v_{5},v_{6};v_{7},v_{8})$ dans le second. Pour obtenir les zéros d'ordres $4$ et $8$ on utilise les collages
$$ v_{1} \sim v_{4}, \ v_{2} \sim v_{5}, \ v_{3} \sim v_{8}, \ v_{6} \sim v_{7}, $$
et pour obtenir les zéros d'ordre $6$, on utilise
$$v_{1} \sim v_{8}, \ v_{2} \sim v_{3}, \ v_{4} \sim v_{5}, \ v_{6} \sim v_{7}.$$
Dans les deux cas, les indices sont respectivement $2$ et $4$ et les différentielles sont bien de nombre de rotation $2$.
% 
% Maintenant on considère les composantes de nombre de rotation $2$ avec $p\geq4$ pôles. Notons que dans toutes les différentielles que nous venons d'obtenir il existe un lien selle fermé contenant le zéro d'ordre maximal. De plus, dans les cas différents de $(-2,14)$ il existe un lien selle entre les deux zéros. Donc si les deux zéros sont de la forme $(-2,6p+2)$ on ajoute les pôles cycliquement en coupant un lien-selle contenant le zéro d'ordre maximal de la différentielle de $\Omega^{3}\mathcal{M}_{1}^{2}(-2,14;-6,-6)$. Dans les autres cas supposons que $a_{2}\geq a_{1}$. On a $a_{1}=6s +r_{1}$ avec  $0<r_{1}\leq6$ et $a_{2} = 6s +12t +r_{2}$ avec $6\leq r_{2} \leq 10$. On part alors d'une différentielle de $\Omega^{3}\mathcal{M}_{1}^{2}(r_{1},r_{2};-6,-6)$ dont les résidus sont nuls. On lui colle de manière cyclique $s$ pôles qui contribuent aux deux zéros et $t$ pôles qui ne contribuent qu'à $a_{2}$. 
\smallskip
\par
Dans les autres cas il suffit d'éclater le zéro d'une $k$-différentielle de la composante $\komoduli[1]^{\rho}(\sum a_{i};-b_{1},\dots,-b_{p};-c_{1},\dots,-c_{r};\rec[-k][s])$ dont l'existence est assurée par le lemmes précédents ou l'un des zéros de $\komoduli[1]^{\rho}(a_{1},a_{2};-6,-6)$ est d'ordre impair (et donc la composante avec $\rho=2$ n'existe pas dans cette composante).
\end{proof}

Nous terminons par le cas des composantes en genre $g\geq2$.
\begin{lem}\label{lem:geq1quabis}
Étant donné $g\geq2$.
L'application $k$-résiduelle des composantes paires et impaires de $\Omega^{k}\mathcal{M}_{g}(\mu)$ est surjective.
\end{lem}

\begin{proof}
Supposons que $\mu$ possède un unique zéro. Toutes les composantes paires et impaires s’obtiennent par couture d'anses à partir des $k$-différentielles de genre un ayant un unique zéro (voir la proposition~\ref{prop:attachanse}) et un nombre de rotation égal à~$1$ ou~$2$. La surjectivité de l'application $k$-résiduelle est une conséquence de la surjectivité des composantes de genre $1$ sauf dans les cas $\Omega^{3}\mathcal{M}_{g}^{1}(6+6g;-6)$ et $\Omega^{3}\mathcal{M}_{g}^{2}(12+6g;\rec[-6][p])$. Toutefois, la parité est additive par couture d'anse (voir le lemme~5.7 de \cite{chge}). La couture d'anse revient à coller une différentielle de $\Omega^{3}\moduli[1](12;-12)$ dans le premier cas et $\Omega^{3}\moduli[1](18;-18)$ dans le second. On peut donc choisir une telle différentielle cubique dont le nombre de rotation est $1$ ou $2$. Cela permet d'obtenir les deux parités des composantes.
% Les composantes hyperelliptiques s'obtiennent par éclatement à partir de la puissance $k$-ième d'une différentielle abélienne. Le résultat est une conséquence directe du théorème~1.1 de \cite{getaab}.
\par
Considérons les strates $\komoduli(a_{1},\dots,a_{n};-c_{i};-b_{j};\rec[-k][s])$ avec $n\geq2$ zéros. La surjectivité de l'application $k$-résiduelle est obtenue en éclatant l'unique zéro des $k$-différentielles des composantes paires, impaires et hyperelliptiques de $\komoduli(\sum a_{i};-c_{i};-b_{j};\rec[-k][s])$.
\end{proof}

\subsection{$k$-différentielles dont tous les pôles sont d'ordre~$-k$}
\label{sec:pluri2}
Dans cette section, nous considérons les strates de $k$-différentielles dans $\komoduli(a_{1},\dots,a_{n};\rec[-k][s])$. Ces strates sont  connexes pour $g=1$ et seules des composantes hyperelliptiques sont connues pour $g\geq2$. Nous commençons par traiter le cas des strates de genre un avec un unique zéro.

\begin{lem}\label{lem:g=1residugen}
L'application $k$-résiduelle de $\komoduli[1](ks;\rec[-k][s])$  est surjective.
\end{lem}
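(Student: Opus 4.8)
Since $\pgcd(ks,k)=k$ and the only divisor of $k$ coprime with $k$ is $1$, Lemme~\ref{lem:rotprim} together with the classification of the connected components of genus-one strata shows that the primitive stratum $\komoduli[1](ks;(-k^{s}))$ is connected and coincides with $\komoduli[1]^{1}(ks;(-k^{s}))$. It therefore suffices to exhibit, for every $(R_{1},\dots,R_{s})\in(\CC^{\ast})^{s}$, one flat surface of genus one carrying a single cone point, $s$ poles of order $-k$ with $k$-residues $R_{1},\dots,R_{s}$, and of rotation number $1$.

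The case $s=1$ is quickest to treat on the elliptic curve itself: writing $E=\CC/\Lambda$ and picking $Q-P$ a $k$-torsion point of exact order $k$, the $k$-differential $\xi$ with $\divisor{\xi}=k[Q]-k[P]$ lies in $\komoduli[1](k;-k)$ — it is primitive precisely because $Q-P$ has order $k$, so $\xi$ is not the power of a $d$-differential for any proper divisor $d\mid k$ — and rescaling $\xi$ by a constant realizes any prescribed non-zero value of $\Resk_{P}(\xi)$.

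For $s\geq2$ I would assemble the surface from the elementary bricks of Section~\ref{sec:briques}, in the spirit of the construction illustrated by Figure~\ref{fig:nonresg1ter}. Fix a $k$-th root $r_{i}$ of each $R_{i}$; after permuting and rotating the $r_{i}$, as permitted by Lemme~\ref{lem:noninter}, we may assume they form a non self-intersecting broken line and that $\sum_{i}r_{i}\neq0$. To each $P_{i}$ we attach a domaine basique d'ordre $k$ along a segment of holonomy $r_{i}$, which by Lemme~\ref{lm:kresidu} produces a pole of order $-k$ with $k$-residue $r_{i}^{k}=R_{i}$ and contributes angle $\pi$ at the cone point. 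We close the picture with a handle formed of two pairs of saddle connections $(v_{1},v_{2})$, $(w_{1},w_{2})$ identified by rotations of angle $2\pi/k$ followed by translations, inserted among the segments $r_{i}$ in the interleaved (``$aba^{-1}b^{-1}$'') pattern that produces genus one. Because $k\geq3$, the rotation factor $\zeta=\exp(2i\pi/k)$ satisfies $1-\zeta\neq0$, so the requirement that the resulting $(s+4)$-gon close up is a single $\CC$-linear relation determining $v_{1}+w_{1}$ in terms of $-\sum_{i}r_{i}$, leaving a one–complex–parameter family of admissible pairs $(v_{1},w_{1})$; a suitable one makes the polygon simple and non-degenerate. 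One then checks that: the underlying surface has genus one (Euler characteristic count — the \coeur is a genus-one surface with $s$ boundary circles); all polygon vertices collapse to a single cone point, whose total angle is $(s+2)\pi$ (the angle sum of the $(s+4)$-gon) $+\,s\pi$ (the half-cylinders) $=(2s+2)\pi=(ks+k)\tfrac{2\pi}{k}$, so the unique singularity is a zero of order $ks$; and the generator of $H_{1}$ crossing the $v$-identification has Gauss index coprime with $k$, whence $\rot=\pgcd\bigl(ks,k,\dots\bigr)=1$ and $\xi$ is primitive, i.e. lies in $\komoduli[1](ks;(-k^{s}))$. By Lemme~\ref{lm:kresidu} the $k$-residues are then exactly $(R_{1},\dots,R_{s})$.

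The main difficulty, as everywhere in this paper, is combinatorial and geometric rather than conceptual: one must choose the $k$-th roots $r_{i}$ and interleave them suitably among the handle edges so that the broken line of the polygon stays embedded and so that the half-cylinder gluings together with the $v$- and $w$-identifications chain all vertices into the single cone-point class; a finite list of degenerate configurations (small $s$, roots forced to be collinear, $\sum_{i} r_{i}=0$, and so on) then has to be handled by ad hoc variants of the brick assembly — exactly the type of case analysis carried out in Lemmes~\ref{lem:noninter} and~\ref{lem:nonintersecp3}.
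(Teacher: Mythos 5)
Your construction is essentially the paper's: both close up the chain of $k$-th roots $r_{1},\dots,r_{s}$ (each edge carrying a half-infinite cylinder, hence contributing the residue $R_{i}$ and an angle $\pi$) by four extra edges identified in two pairs via a rotation of angle $\tfrac{2\pi}{k}$, and both conclude by the same angle count $(s+2)\pi+s\pi=(ks+k)\tfrac{2\pi}{k}$ and a rotation-number/cut-and-disconnect argument for primitivity and genus one. The only substantive difference is that the paper disposes of the loose end you acknowledge (embeddedness of the $(s+4)$-gon) uniformly rather than by case analysis — it normalizes the arguments of the $r_{i}$ to lie in $\left]-\pi,0\right]$ and concatenates them in increasing order, so that the $r$-chain and the four handle edges sit on opposite sides of the line $(DE)$ — while your algebraic treatment of $s=1$ via a point of exact $k$-torsion is a clean alternative to the paper's appeal to Figure~\ref{fig:a-a}.
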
 

\begin{proof}
La figure~\ref{fig:a-a} montre que $\komoduli[1](k;-k)$ est non vide, et donc son application résiduelle est surjective. Traitons le cas des strates avec $s\geq2$ pôles d'ordre~$-k$. Comme $k\geq3$, il existe des racines $k$-ième $r_{i}$ des $R_{i}$ qui génèrent $\CC$ comme $\RR$-espace vectoriel. Quitte à multiplier tous les résidus par une même constante, on peut supposer que l'argument de chaque $r_{i}$ est dans $\left]-\frac{\pi}{2},\frac{\pi}{2}\right]$.
Nous concaténons alors les $r_{i}$ par argument croissant du point initial $D$ au point final $E$. L'intérieur de ce segment brisé se trouve dans le demi-plan supérieur ouvert déterminé par la droite $(DE)$.
Nous joignons $D$ à $E$ par quatre segments~$v_{i}$ d'égale longueur de la façon suivante. Le segment brisé est formé en concaténant les~$v_{i}$ par $i$ croissant avec $v_{1}=v_{2}$, $v_{3}=v_{4}$ et $v_{3}=\exp\left(\pi-\tfrac{2\pi}{k}\right) v_{2}$. Notons que ce segment brisé se trouve dans le demi plan inférieur à la droite~$(DE)$.

Nous collons maintenant des demi-cylindres infinis aux segments $r_{i}$ de ce polygone. La surface plate est obtenue en collant $v_{1}$ à $v_{3}$ et $v_{2}$ à $v_{4}$ par translation et rotation. Cette $k$-différentielle possède un unique zéro. Elle est primitive car on colle avec des rotations d'angle~$\tfrac{2\pi}{k}$. Enfin, son genre est $g=1$ car on la déconnecte en la coupant le long des deux courbes correspondant aux $v_{i}$ et une autre courbe fermée quelconque.
\end{proof}

Nous traitons maintenant le cas général.
\begin{lem}\label{lem:ggentousres}
L'application $k$-résiduelle des strates $\komoduli(a_{1},\dots,a_{n};\rec[-k][s])$ est surjective, sauf dans le cas hyperelliptique avec deux pôles où l'image est  $\lbrace(R,(-1)^{k}R):R\in\CC^{\ast}\rbrace$.
\end{lem}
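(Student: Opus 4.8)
The plan is to reduce the general statement to the three building blocks already established: the genus-one case with a single zero (Lemma~\ref{lem:g=1residugen}), the genus-zero case with only poles of order $-k$ (Theorem~\ref{thm:geq0kspe}), and the two structural operations of splitting a zero (Proposition~\ref{prop:eclatZero}) and sewing a handle (Proposition~\ref{prop:attachanse}). First I would dispose of the genus-one strata with $n\geq 2$ zeros: start from a primitive $k$-differential of $\komoduli[1](\sum a_i;(-k^s))$ realizing an arbitrary tuple of $k$-residues, which exists by Lemma~\ref{lem:g=1residugen}, and split the single zero into zeros of orders $a_1,\dots,a_n$. By Lemma~\ref{lem:combiscindage}, once $n\geq 3$ the splitting may be arranged so that the ambient strata are never powers of abelian or quadratic differentials, so Proposition~\ref{prop:eclatZero} applies and preserves the $k$-residues; the case $n=2$ is direct since then the starting stratum is itself $\komoduli[1](a_1+a_2;(-k^2))$ (or $\komoduli[1](a_1+a_2;-k)$ with $s=1$) and one only needs to check that $\pgcd(a_1,a_2,k)=1$ forces primitivity, which is automatic since the zero we split carries a primitive differential.

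Next I would handle $g\geq 2$. For the even and odd components, every such component is obtained by sewing a handle onto a genus-one $k$-differential with rotation number $1$ or $2$ and then splitting zeros; so the surjectivity onto $\espresk(\mu)$ follows from the genus-one result together with Propositions~\ref{prop:attachanse} and~\ref{prop:eclatZero}, exactly as in the proof of Lemma~\ref{lem:geq1quabis}. The one delicate family is $\Omega^{3}\moduli(3(2g)+\ldots;(-3^s))$ coming from the rotation-number-$1$ component, where the relevant genus-one construction is unavailable; there I would invoke additivity of parity (Lemma~5.7 of~\cite{chge}) to instead sew a handle of suitable rotation number onto a cubic differential of $\Omega^{3}\moduli[1]^{2}(ks;(-k^s))$, which does realize all $3$-residues.

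The remaining and genuinely restrictive case is the hyperelliptic components. Here the involution forces the pole divisor to be symmetric, so there is at most one pole off the two Weierstrass-type fixed points, and the only possibilities compatible with a hyperelliptic $k$-differential are $s=1$ or $s=2$. When $s=1$, the hyperelliptic component is obtained by splitting zeros from the $k$-th power of an abelian differential on a hyperelliptic curve, and surjectivity of the single $k$-residue follows from Theorem~1.1 of~\cite{getaab} together with the multiplicativity formula~\eqref{eq:multiplires}. When $s=2$, the two poles of order $-k$ are exchanged (or each fixed) by the involution; pulling back to a double cover and using that the $k$-residue transforms by $(-1)^k$ under the deck transformation forces the image to lie in the diagonal $\{(R,(-1)^kR)\}$, and conversely every such value is realized by the symmetric construction. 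The case $R=0$ is excluded precisely when both poles have order exactly $-k$ (since then $0\notin\espresk(\mu)$), and allowed when the hyperelliptic stratum has poles of order $-\ell k$ with $\ell\geq 2$; this is the source of the $\CC^{\ast}$ versus $\CC$ dichotomy. The main obstacle, as in Lemma~\ref{lem:geq1quabis}, is verifying that the hyperelliptic constraint really does pin the image down to the diagonal and no more — i.e.\ proving the \emph{non}-realizability of off-diagonal tuples — which I expect to require a careful analysis of the hyperelliptic involution acting on the flat structure, rather than a further application of the splitting/sewing machinery.
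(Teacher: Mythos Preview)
Your core strategy — reduce to the genus-one stratum $\komoduli[1](ks;(-k^{s}))$ via Lemma~\ref{lem:g=1residugen}, then propagate by couture d'anse and éclatement de zéros — is exactly what the paper does, and its entire proof is two sentences. The extra machinery you bring in is imported from Section~\ref{sec:pluri1} and does not apply here.

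Concretely: since every pole has order exactly $-k$, the rotation number of any primitive genus-one differential divides $\pgcd(ks,k)=k$ and is coprime to $k$, hence equals~$1$. The primitive genus-one stratum is therefore \emph{connected} (this is stated at the beginning of Section~\ref{sec:pluri2}), so there is no ``delicate family'' to worry about, and your proposed workaround via $\Omega^{3}\moduli[1]^{2}(ks;(-k^{s}))$ refers to a component that does not exist (rotation number~$2$ does not divide $\pgcd(3s,3)=3$). Likewise, the even/odd parity distinction plays no role here: as the paper notes, in these strata only the hyperelliptic components are singled out in genus $g\geq 2$. Your discussion of poles of order $-\ell k$ with $\ell\geq 2$ and the ``$\CC^{\ast}$ versus $\CC$'' dichotomy is also out of scope — the lemma concerns only poles of order exactly~$-k$, so residues are automatically nonzero.

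Finally, the hyperelliptic case is much shorter than you fear: the involution swaps the two poles and multiplies the $k$-residue by $(-1)^{k}$, so containment in the diagonal $\lbrace(R,(-1)^{k}R)\rbrace$ is automatic, and realizability of every such pair comes for free from the symmetric models. The paper simply says ``le résultat est clair''.
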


\begin{proof}
Dans le cas de la composante non hyperelliptique, il suffit de coudre des anses et éclater le zéro d'une $k$-différentielle de $\komoduli[1](ks;\rec[-k][s])$. Le lemme~\ref{lem:g=1residugen} donne la surjection dans ce cas. Dans le cas hyperelliptique, par définition les résidus sont de la forme $(R,(-1)^{k}R)$ avec $R\in\CC^{\ast}$. Dans le cas où cette composante existe, le fait que cet espace soit unidimensionnel implique le résultat.
\end{proof}

\subsection{$k$-différentielles d'aire finie}
\label{sec:plurifini}

Nous terminons par la preuve du théorème~\ref{thm:strateshol}. Ce résultat dit que les strates primitives $\komoduli(a_{1},\dots,a_{n})$ en genre $g\geq1$ sont vides si et seulement si $\mu=\emptyset$ ou $\mu=(1,-1)$ avec $g=1$.
Chaque $k$-différentielle de type $(\emptyset)$ est la puissance $k$-ième d'une différentielle de $\omoduli[1](\emptyset)$. De plus, le théorème d'Abel implique que $\komoduli[1](1,-1)$ est vide. Nous montrons maintenant que toutes les autres strates sont non vides.

Si $g=1$, il existe des $k$-différentielles de type $\mu$ pour tout $\mu\neq(1,-1)$. Supposons que $\mu:=(a_{1},\dots,a_{n})\neq\emptyset$, il reste \`a montrer que certaines d'entre elles sont primitives. La figure~8 de \cite{chge} donne des exemples pour toutes les partitions de la forme $\mu:=(s,-s)$ avec $2\leq s <k$. Pour obtenir les partitions générales en genre $1$, il suffit d'éclater ces deux zéros (ce qui est toujours possible par la proposition~3.1 de \cite{chge}).

Soit $\komoduli[g](a_{1},\dots,a_{n})$ une strate de genre $g\geq 2$ avec $a_{i}>-k$. Par le théorème~\ref{thm:CC1},
la strate $\komoduli[1](a_{1},\dots, a_{n};\rec[-2k][g-1])$ contient une différentielle primitive $(X_{0},\omega_{0})$ dont tous les $k$-résidus sont nuls. On obtient une $k$-différentielle entrelacée en collant la puissance $k$-ième d'une $1$-forme holomorphe sur un tore aux pôles de $(X,\omega_{0})$.
Par le lemme~\ref{lem:lissdeuxcomp} cette $k$-différentielle entrelacée est lissable et le lissage est primitif.

\printbibliography

\end{document}